\documentclass[11pt]{article}

\usepackage[utf8]{inputenc}
\usepackage[T1]{fontenc}
\usepackage[english]{babel}
\usepackage{bbold}
\usepackage{mathtools}
\usepackage{fullpage}
\usepackage{graphicx}
\usepackage{amsmath}
\usepackage{amssymb}
\usepackage{comment}
\usepackage{tablefootnote} 
\usepackage[title]{appendix}
\usepackage{microtype}
\usepackage[export]{adjustbox}
\usepackage{FleuratSalvy_arxiv_style}

\usepackage{sectsty}
\paragraphfont{\sffamily}
\allowdisplaybreaks 


\DeclarePairedDelimiter\ceil{\lceil}{\rceil}
\DeclarePairedDelimiter\floor{\lfloor}{\rfloor}

\renewcommand{\leq}{\leqslant}
\renewcommand{\geq}{\geqslant}
\renewcommand{\(}{\left(}
\renewcommand{\)}{\right)}

\newcommand{\vect}[1]{\overrightarrow{#1}}

\renewcommand{\epsilon}{\varepsilon}

\newcommand{\N}{\mathbb{Z}_{\geq0}}
\newcommand{\Z}{\mathbb{Z}}
\newcommand{\R}{\mathbb{R}}

\newcommand{\m}{\mathfrak{m}}
\renewcommand{\b}{\mathfrak{b}}
\renewcommand{\t}{\mathfrak{t}}
\newcommand{\q}{\mathfrak{q}}

\newcommand{\M}{\mathbf{M}}
\newcommand{\B}{\mathbf B}
\newcommand{\T}{\mathbf{T}}
\newcommand{\Q}{\mathbf{Q}}

\newcommand{\pr}[2][]{\mathbb{P}#1\( #2\)}
\newcommand{\E}[1]{E\left[ #1 \right]}

\newcommand{\prnu}[1]{\mathbb{P}_{n,u}\( #1\)}

\newcommand{\diam}{\mathrm{diam}}
\newcommand{\indic}[1]{\mathbb{1}_{\left\{#1\right\}}}
\newcommand{\indicBis}[1]{\mathbb{1}_{#1}}

\newcommand{\xic}{{\widehat\xi}}

\title{A phase transition in block-weighted random maps}
\author{William Fleurat\footnote{ENS de Lyon, UMPA, CNRS UMR 5669, 46 allée d’Italie, 69364 Lyon Cedex 07, France} \qquad Zéphyr Salvy\footnote{Univ Gustave Eiffel, CNRS, LIGM, F-77454 Marne-la-Vallée, France}}

\begin{document}

\maketitle


\begin{abstract}
We consider the model of random planar maps of size $n$ biased by a weight $u>0$ per $2$-connected block, and the closely related model of random planar quadrangulations of size $n$ biased by a weight $u>0$ per simple component. We exhibit a phase transition at the critical value $u_C=9/5$. If $u<u_C$, a condensation phenomenon occurs: the largest block is of size $\Theta(n)$. Moreover, for quadrangulations we show that the diameter is of order $n^{1/4}$, and the scaling limit is the Brownian sphere. When $u > u_C$, the largest block is of size $\Theta(\log(n))$, the scaling order for distances is $n^{1/2}$, and the scaling limit is the Brownian tree. Finally, for $u=u_C$, the largest block is of size $\Theta(n^{2/3})$, the scaling order for distances is $n^{1/3}$, and the scaling limit is the stable tree of parameter $3/2$.
\end{abstract}


\section{Introduction}


Models of planar maps exhibit a form of \emph{universality}: many ``natural'' classes of random maps exhibit a similar behaviour when the size grows to infinity. This can be made precise by considering \emph{scaling limits}: when taking an object $\M_n$ uniformly among all objects of size $n$ in some class, then, after an appropriate rescaling, the sequence $(\M_n)_{n\geq1}$ converges in distribution towards some random metric space. This was first proved for uniform quandrangulations by Miermont \cite{Miermont2013} and independently for the cases of uniform $2q$-angulations ($q\geq2$) and uniform triangulations by Le Gall \cite{LeGall2013}, following a sequence of results on this subject \cite{marckert_depth_2003,ChassaingSchaeffer2004,LeGall2007,LG10,LGM11}. Since then, these results have been extended to other families of maps: the sequence $(\M_n)$ converges towards the \emph{Brownian sphere} $\mathcal{M}_e$ (also called Brownian map, see \cref{img-brownian-sphere}), always with a rescaling by $c n^{1/4}$ for some model-dependent $c>0$. Gromov-Hausdorff's topology allows to make sense of the convergence of a sequence of maps to a certain limit, considering them as (isometry classes of) compact metric spaces. In particular, uniform planar maps also converge towards the Brownian sphere \cite{brownianmap}, as well as other families such as uniform triangulations and uniform $2q$-angulations ($q\geq2$) \cite{LeGall2013}, uniform simple triangulations and uniform simple quadrangulations \cite{add-alb}, bipartite planar maps with a prescribed face-degree sequence \cite{Marzouk2018}, $(2q+1)$-angulations \cite{AddarioBerryAlbenque2021} and Eulerian triangulations~\cite{Carrance2021}.

\begin{figure}
\begin{center}
\includegraphics[width=16.5cm,center]{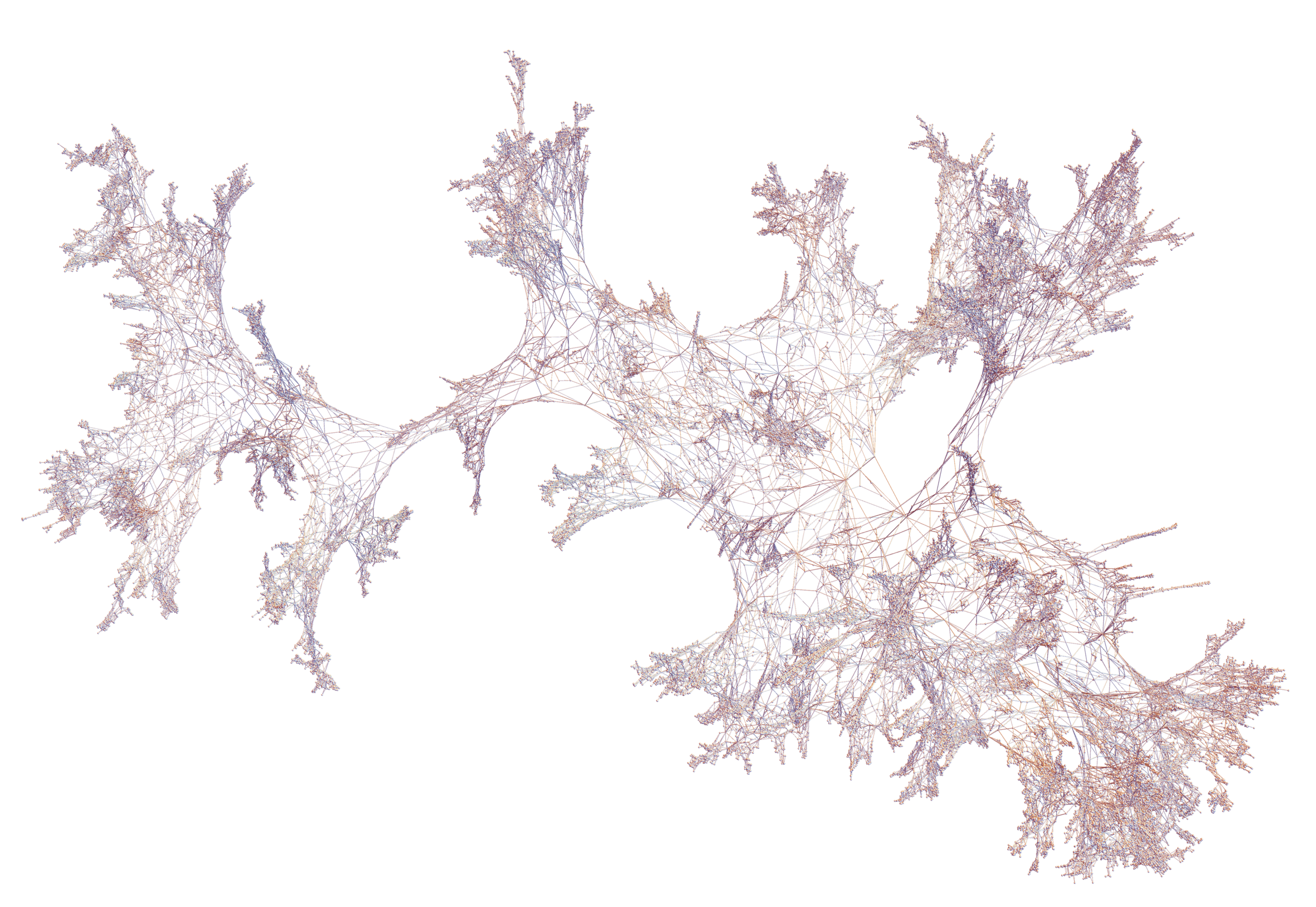}
\end{center}
\caption{Approximation of the Brownian sphere by a simple quadrangulation of size $50\ 000$, using a generator by Éric Fusy.}
\label{img-brownian-sphere}
\end{figure}

On the other hand, ``degenerate'' classes of maps that ``look like'' trees exhibit another universality phenomenon. In particular, upon rescaling by $cn^{1/2}$, there is a convergence to the \emph{Brownian tree} $\mathcal{T}^{(2)}$ (see \cref{img-brownian-tree}), the scaling limit of critical Galton-Watson trees with finite variance \cite{CRT,legallcrt}. This is the case for classes of maps with a tree-decomposition such as stack triangulations \cite{AlbenqueMarckert2008}; classes of maps with some particular boundary conditions, such as quadrangulations of a polygon \cite{Bettinelli2015}, outerplanar maps \cite{Caraceni2016}; or, more generally for ``subcritical'' classes \cite{Stufler-survey-2020} (see \cite{3graphCRT} for the case of graphs).

\begin{figure}
\begin{center}
\includegraphics[width=16.5cm, center]{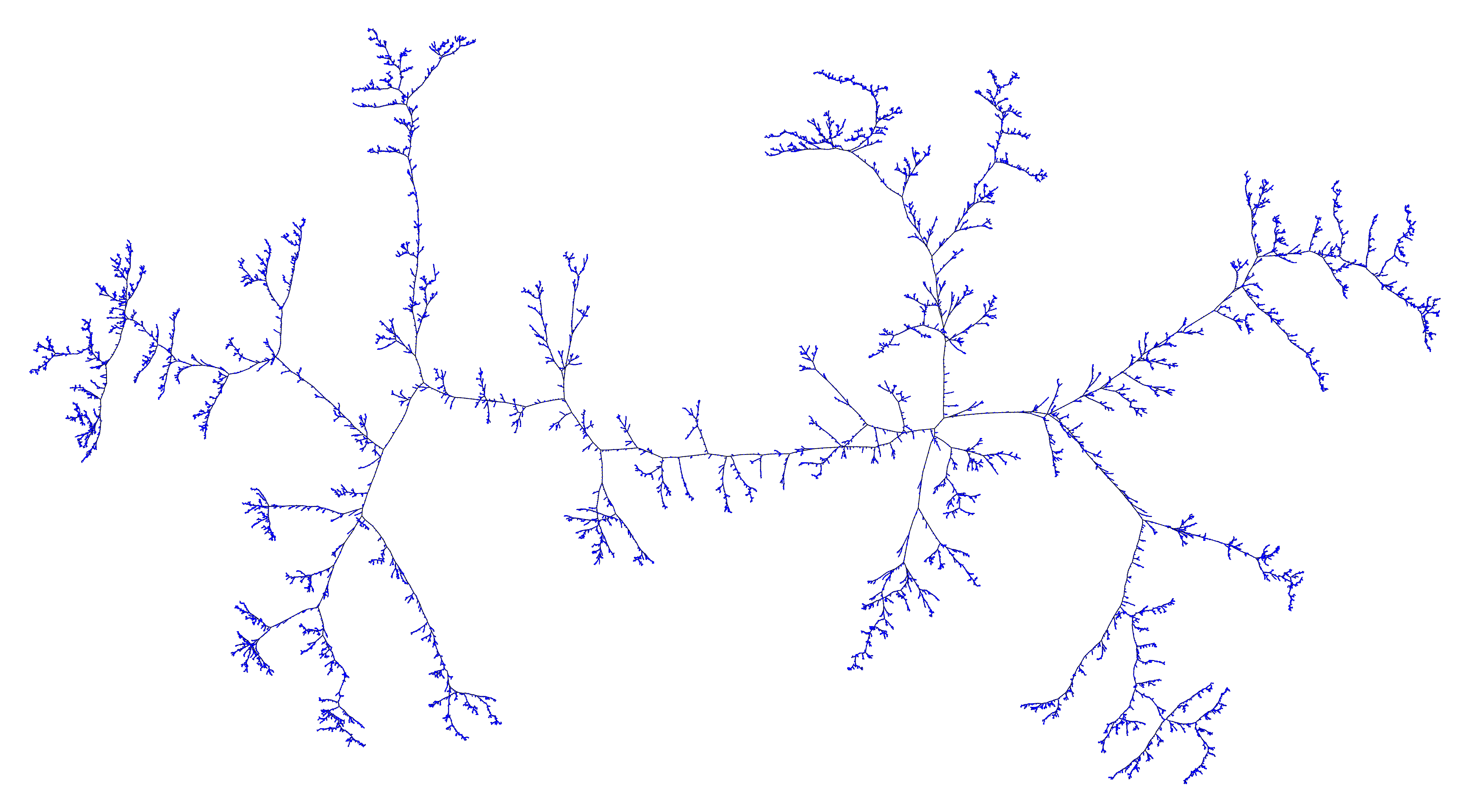}
\end{center}
\caption{Approximation of the Brownian tree by a binary tree of size approximately $70\ 000$.}
\label{img-brownian-tree}
\end{figure}

Models interpolating between the Brownian tree and the circle can be obtained by using \emph{looptrees} \cite{CurienKortchemski2013a}. Curien and Kortchemski considered the boundary of large percolation clusters in the \emph{uniform infinite planar triangulation} (which is the local limit of large triangulations) where each vertex is coloured (independently) white with probability $a \in(0,1)$ and black otherwise. They showed that if $a \in (0,1/2)$, the scaling limit is the Brownian tree, if $a \in (1/2, 1)$ it is the unit circle and if $a=1/2$ it is the \emph{stable looptree of parameter $3/2$} \cite{CurienKortchemski2013b}, which correspond to the \emph{stable tree of parameter $3/2$} (see \cref{img-stable-tree}) where each branching point is replaced by a circle. Richier \cite{Richier2018} also showed that the boundary of critical Boltzmann planar maps with face-degrees in the domain of attraction of a stable distribution with parameter $\alpha \in (1,2]$ exhibit a similar phase transition: if $\alpha \in (1,3/2)$, the scaling limit is the stable looptree of parameter $(\alpha-1/2)^{-1}$, and, with Kortchemski, Richier showed that it is the circle of unit length if $\alpha\in(3/2,2]$ and conjectured that this holds also for $\alpha=3/2$ \cite{KortchemskiRichier2020}. Stef\'ansson and Stufler showed that face-weighted outerplanar maps have a similar phase diagram, with looptrees the $\alpha$-stable looptree being the scaling limit when their $\alpha\in(1,2)$, the Brownian tree when $\alpha=2$ and the deterministic circle of unit length when $\alpha=1$ \cite{SS19}. In all three cases, the parameter of the model allows the number of cut vertices appearing on the boundary to be adjusted, thus changing from a ``round'' to a ``tree'' phase.

\begin{figure}
\begin{center}
\includegraphics[width=16.5cm, center]{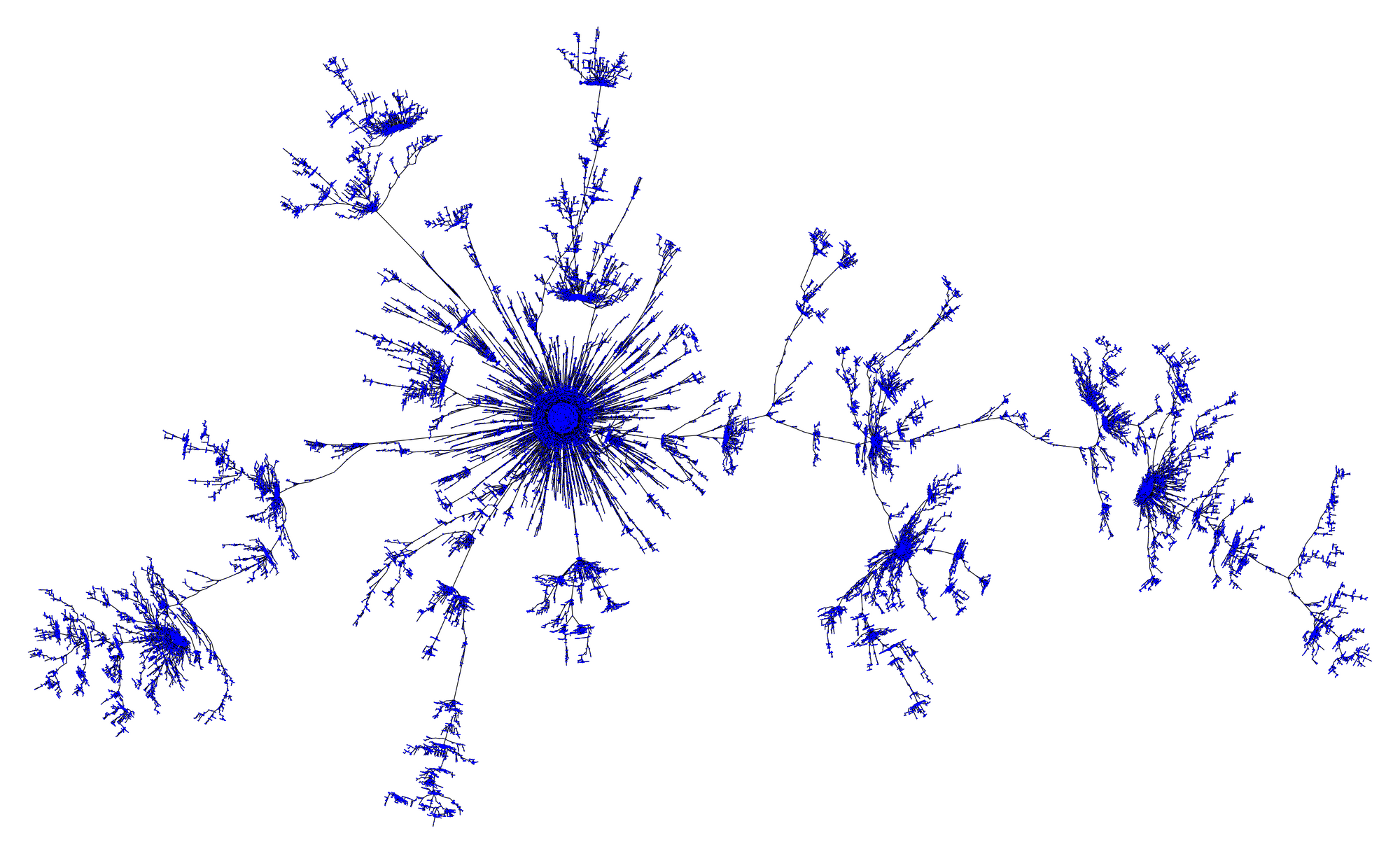}
\end{center}
\caption{Approximation of the stable tree $3/2$ by a tree of size approximately $150\ 000$.}
\label{img-stable-tree}
\end{figure}

Some natural models also interpolate between the Brownian sphere and the Brownian tree. For example, consider random quadrangulations with $n$ faces and a boundary of length $\ell$, where $\ell/\sqrt{n}\to \sigma$. When $\sigma=0$, the scaling limit is the Brownian sphere, when $\sigma=\infty$ it is the Brownian tree, and for all $\sigma \in(0,\infty)$ it is the \emph{Brownian disk} with boundary length $\sigma$ \cite{Bettinelli2015}. Another example is random bipartite planar maps with properly normalized face-weights, which converge towards the Brownian tree when the distribution for the weights has expected value smaller than $1$ \cite{JanSte15}, and towards the
 Brownian map when the expected value is $1$ and the variance is finite \cite{Marzouk2018}. Moreover, when the the expected value is $1$ and the distribution is in the domain attraction of a stable law of parameter $\alpha\in(1,2)$, these maps converge, at least along suitable subsequences, towards a limit which is not the Brownian sphere, and is conjectured to be the \emph{stable map} of parameter $\alpha$ \cite{LGM11}.


\paragraph{Model.} The purpose of this paper is to propose yet another model interpolating between the Brownian sphere and the Brownian tree, but where the transition does not appear through the boundary. It relies on a parameter tuning the density of separating elements. In this model, a map $\m$ is sampled with a probability that depends on its number $b(\m)$ of maximal 2-connected components, or ``blocks'', for which a precise definition will be given later in \cref{sec:tree-decomp}.

\sloppy In fact, we will consider two probability distributions on maps, both indexed by a parameter $u>0$. The first one is a fixed size model: for any $n\in \N$, we define
\begin{equation}\label{eq:defPrnu}
\prnu{\m} = \frac{u^{b(\m)}}{[z^n] M(z,u)}\quad\text{for any }\m \in \mathcal{M}_n,
\end{equation}
where $\mathcal{M}_n$ is the set of maps with $n$ edges, $M(z,u) \allowbreak=\allowbreak \sum_{\m \in\mathcal{M}} u^{b(\m)} z^{|\m|} \allowbreak=\allowbreak \sum_{n\in\N} \([z^n]M(z,u)\) z^{n}$ and $|\m|$ is the number of edges of $\m$. The second one is a Boltzmann-type distribution which samples maps with random sizes. More precisely, write $\rho(u)$ for the radius of convergence of $z\mapsto M(z,u)$. We define\footnote{The finiteness of $M(\rho(u),u)$ is justified in \cref{decomp-blocks}.}:
\begin{equation}\label{eq:defPru}
\pr[_u]{\m}=\frac{u^{b(\m)}}{M(\rho(u),u)}\rho(u)^{|\m|}\quad\text{for any }\m \in \mathcal{M}.
\end{equation}

The qualitative properties of maps sampled according to these measures change drastically when $u$ varies, and we will see that it gives rise to different regimes with a phase transition. Examples of such maps are represented on \cref{fig:simul-sub-1-small,fig:simul-sub-8/5-small,fig:simul-crit-9/5-small,fig:simul-super-5/2-small,fig:simul-super-5-small}. In this paper, blocks will be either maximal 2-connected components of maps, or maximal simple components of quadrangulations. Indeed, both models have the same underlying structure, so one study gives results for both (see \cref{subseq:quad,subsec:conseq-proba}), except for some of the scaling limit results, where some convergence results for 2-connected maps are missing. However, our approach could be generalised to many other models with an underlying tree structure (see \cref{tab:values-models-airy}), such as the ones described in \cite{airy}. In particular, the case $u=1$ corresponds to sampling a uniform map and $u\to0$ to sampling a uniform block.

Block decompositions have already been used in the context of scaling limits, and some joint convergences are known: a quadrangulation, its largest 2-connected block, and its largest simple block jointly converge to the same Brownian sphere \cite{quadrang-coeur-cv}.

The scaling limit of a tree-decomposed model like ours depends on the geometries of the blocks and of the underlying decomposition tree. In our setting, one of the behaviour always ends up dominating, but this is not always the case: Sénizergues, Stef\'ansson and Stufler study situations where both geometries play a role in the scaling limit, and define the \emph{decorated $\alpha$-stable trees} which are the corresponding scaling limits \cite{SSS22}. Our results for the scaling limits in the critical and supercritical cases confirm their conjecture in \cite[Remark 1.1]{SSS22}. They build on a model introduced by Archer, which, contrary to this work, develops the \emph{local limit} point of view \cite[Chapter 6]{archer-random-walks}. In particular, Archer shows that the fractal dimension of the local limit for the critical and supercritical cases are respectively $3$ and $2$\footnote{This uses that the diameter for uniform blocks is $\Theta\(n^{1/4}\)$, which is known for simple quadrangulations but only assumed for $2$-connected maps.}. Both cases correspond to what Archer called the ``tree regime'', where the local geometry of the tree is preponderant in the limit. Both articles consider only \emph{critical} offspring distributions for the trees, which does not hold in our subcritical regime.

The model with a weight per 2-connected blocks was already analysed with a combinatorics point of view by Bonzom \cite[\S8]{bonzomLagrange} with physical applications in mind (see \cite{DuplantierSheffield2009} for a thorough discussion). The so-called \emph{quadric} model studied in his work can be specialized to our model. Bonzom obtains rational parametrisations for the generating series, and exhibits the possible singular behaviours, which suggest the existence of three different regimes: a ``map behaviour'', a ``tree-behaviour'', and in-between a ``proliferation of baby universes''. Since his focus is much broader, he does not go into details to study this particular model from a probabilistic point of view, and this is the main topic of the present article. For $u=1$, which corresponds to sampling maps uniformly, this model has also been studied with the point of view of block decomposition in \cite{airy} and \cite{2Louigi}.


\begin{figure}
    \centering
    \begin{minipage}{6.5cm}
        \centering
        \includegraphics[height=5cm]{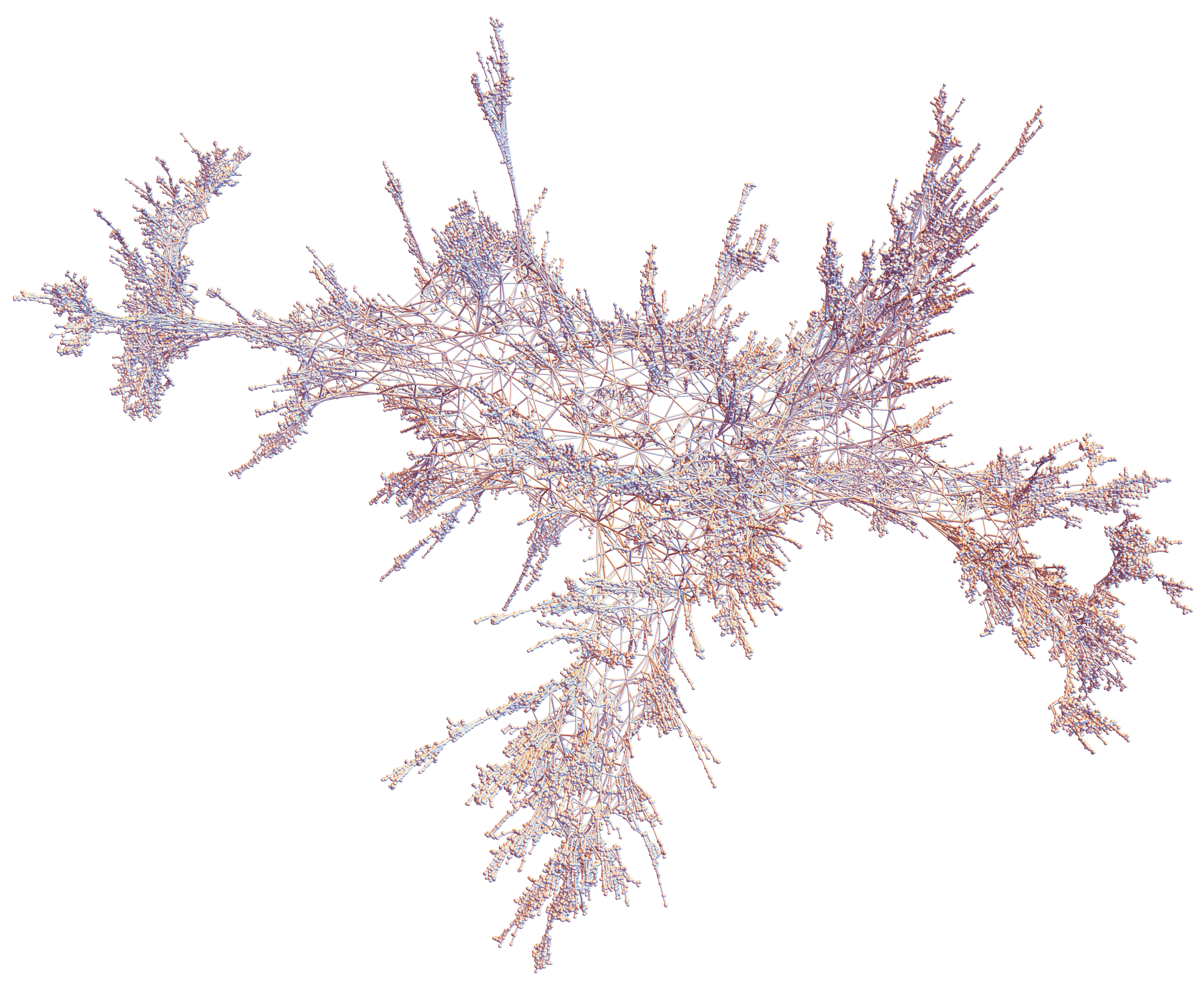}
        \caption{Map drawn according to the subcritical model $\mathbb{P}_{n,1}$ of size around $55\ 000$ (see larger version in \cref{fig:simul-sub-1}).}
        \label{fig:simul-sub-1-small}
    \end{minipage}\hfill
    \begin{minipage}{6.5cm}
        \centering
        \includegraphics[height=5cm]{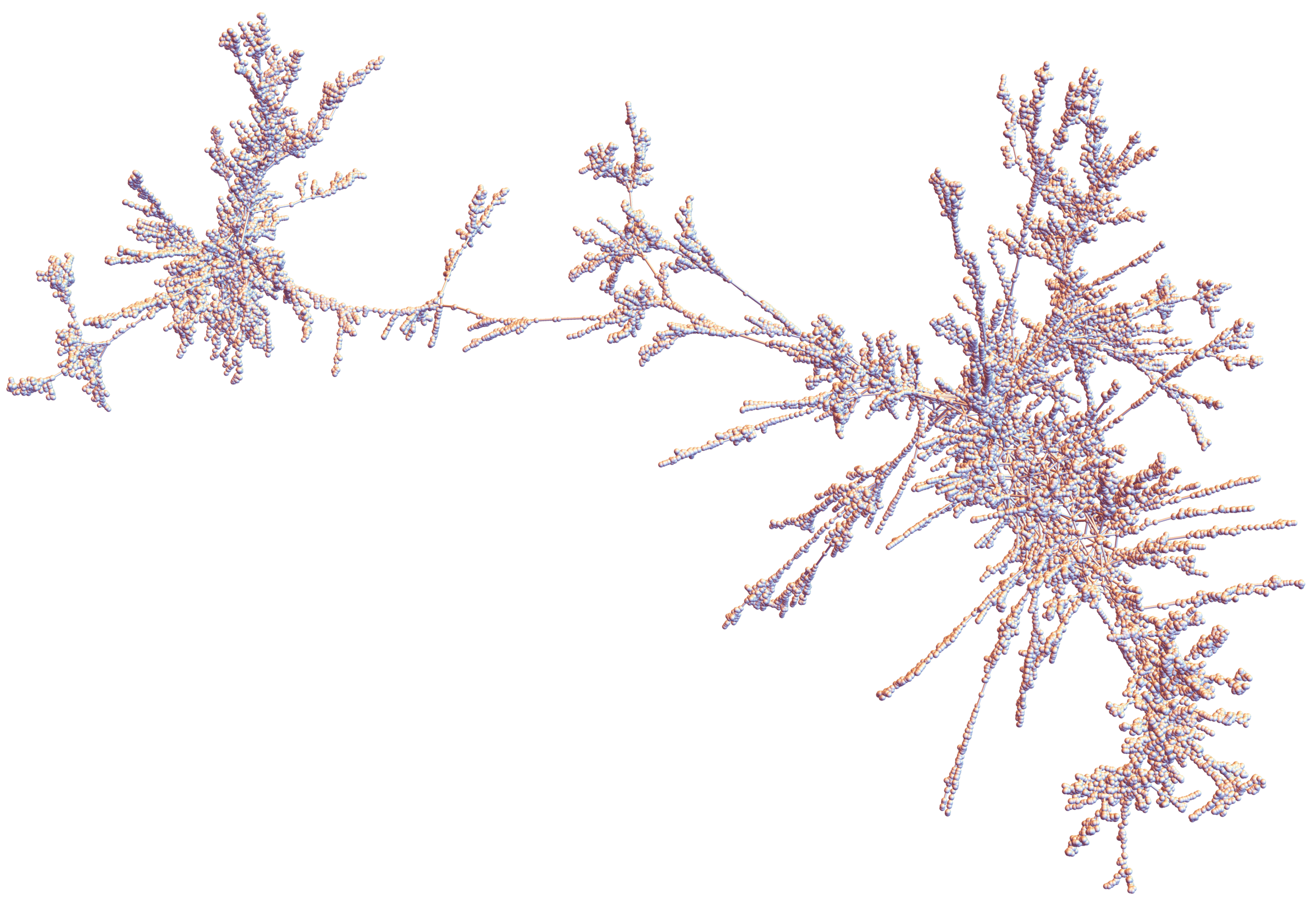}
        \caption{Map drawn according to the subcritical model $\mathbb{P}_{n,8/5}$ of size around $55\ 000$ (see larger version in \cref{fig:simul-sub-8/5}).}
        \label{fig:simul-sub-8/5-small}
    \end{minipage}
\end{figure}

\begin{figure}
\begin{center}
\includegraphics[height=5cm, center]{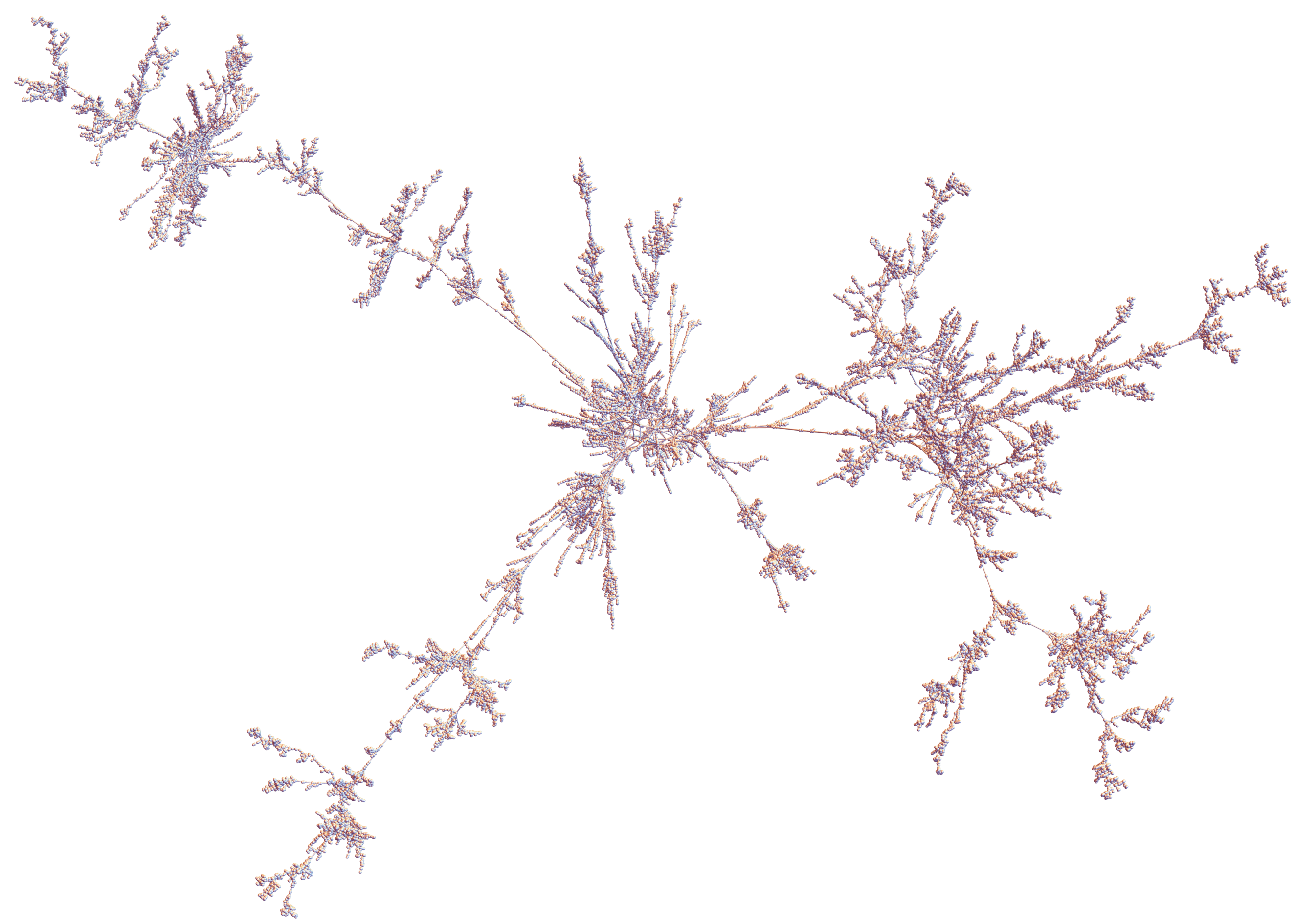}
\end{center}
\caption{Map drawn according to the critical model $\mathbb{P}_{n,9/5}$ of size around $80\ 000$ (see larger version in \cref{fig:simul-crit-9/5}).}
\label{fig:simul-crit-9/5-small}
\end{figure}

\begin{figure}
    \centering
    \begin{minipage}{6.5cm}
        \centering
        \includegraphics[height=5cm]{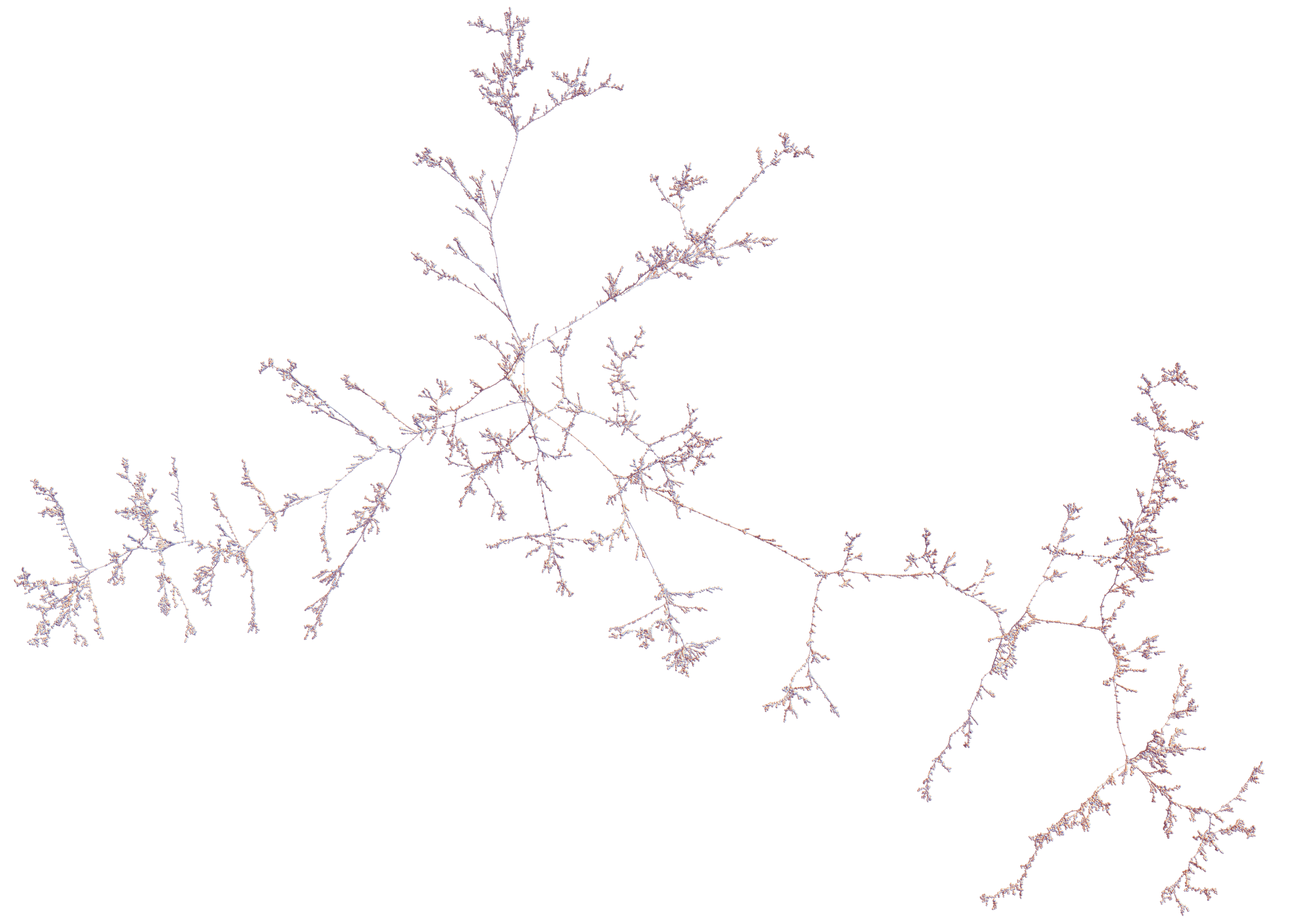}
        \caption{Map drawn according to the supercritical model $\mathbb{P}_{n,5/2}$ of size around $75\ 000$ (see larger version in \cref{fig:simul-super-5/2}).}
        \label{fig:simul-super-5/2-small}
    \end{minipage}\hfill
        \begin{minipage}{6.5cm}
        \centering
        \includegraphics[height=5cm]{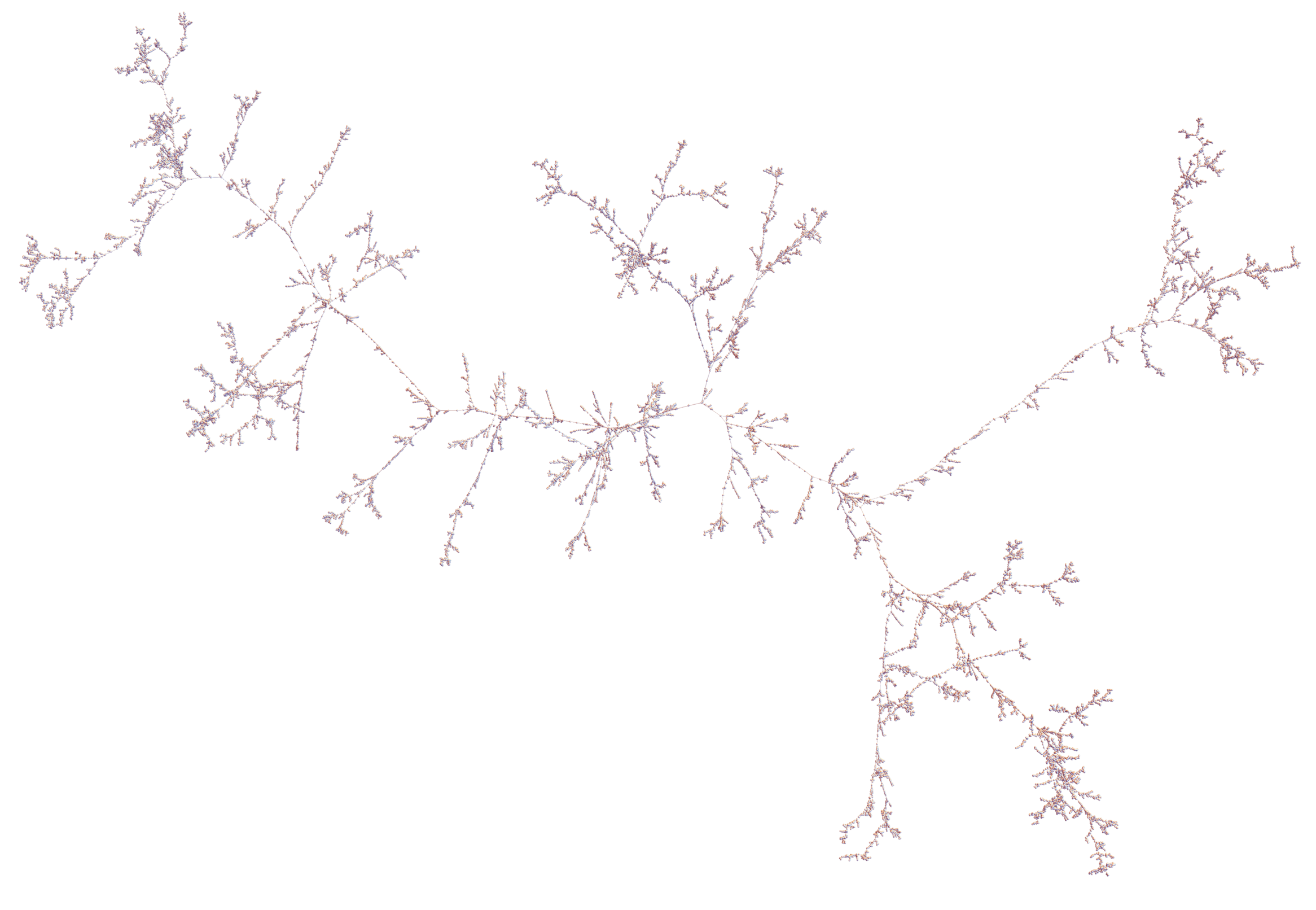}
        \caption{Map drawn according to the supercritical model $\mathbb{P}_{n,5}$ of size around $50\ 000$ (see larger version in \cref{fig:simul-super-5}).}
        \label{fig:simul-super-5-small}
    \end{minipage}
\end{figure}


\paragraph{Results.} Our results are summarized in \cref{results}. In \cref{section:blocks}, we show that, with high probability, when $u<9/5$, there is condensation with one block of size $\Theta(n)$ and all others of size $O(n^{2/3})$, see \cref{souscritique}; when $u>9/5$, the largest block has size $\Theta(\log(n))$, see \cref{grandbloc}; and when $u=9/5$ the largest block is of size $\Theta(n^{2/3})$, see \cref{size-bloc-critical}.

In \cref{section:scaling-limits}, we give a unified proof of the convergence towards $\mathcal{T}^{(2)}$, after renormalising distances by $n^{1/2}$, in the supercritical case $u>9/5$; and towards $\mathcal{T}^{(3/2)}$, after renormalising distances by $n^{2/3}$, in the critical case $u=u_C$ (\cref{scaling-limit-super-critical}). For $u>9/5$, we retrieve a previous result by Stufler for more general weighted models \cite{Stufler-survey-2020}. All these results hold for both maps and their 2-connected cores, and quadrangulations and their simple cores. Finally, when $u<9/5$, we show in \cref{scaling-limit-subcritical} that quadrangulations converge towards the Brownian sphere when renormalising distances by $n^{1/4}$. In the case of quadrangulations, these results are consistent with existing literature for the case $u=1$ \cite{Miermont2013,LeGall2013,brownianmap}, as well as when $u\to 0$ \cite{add-alb}. We rely crucially on the convergence of uniform \textit{simple} quadrangulations with the same normalisation, which is proven in \cite{add-alb}, and recalled in \cref{scaling-lim-quad} below. A similar convergence result for uniform 2-connected maps would be needed in order to prove a version of \cref{scaling-limit-subcritical} for maps, see the discussion after the statement of \cref{scaling-lim-quad}. Such a convergence is expected to hold and hinted at for instance by Lehéricy's results \cite{Lehericy2022}, which show that graph distances on a uniform map of size $n$ and on its quadrangulation via Tutte's bijection behave similarly when $n\to \infty$.

 \Cref{sec:tree-decomp,tree-critical} introduce tools to prove these theorems. We show that maps and quadrangulations can be decomposed into blocks with an underlying tree structure. We show that the law of such trees can be described by a Galton-Watson model (as in several papers cited above). From there, we exhibit in \cref{sec:phase-diagram} a phase diagram going from a condensation phenomenon ($u<9/5$) to a critical ``generic'' regime ($u>9/5$) going through a ``non-generic'' critical point ($u=9/5$).

\begin{table}
\begin{center}
\begin{tabular}{ l | c c c }
   & Largest block & Scaling & Scaling limit \\
   \hline
    $u<9/5$ & $\Theta(n)$ & $n^{1/4}$ & Brownian sphere\tablefootnote{
    	We only prove convergence to the Brownian sphere in the case of quadrangulations and their simple blocks, see the discussion after \cref{add-alb}.} $\mathcal{M}_{e}$\\
    $u=9/5$ & $\Theta(n^{2/3})$ & $n^{1/3}$ & Stable tree $\mathcal{T}^{(3/2)}$ \\
    $u>9/5$ & $\Theta(\log(n))$ & $n^{1/2}$ & Brownian tree $\mathcal{T}^{(2)}$ \\
 \end{tabular}
\end{center}
\caption{Behaviour of the model when $u$ varies.}
\label{results}
\end{table}

\section*{Acknowledgments}
The authors would like to thank Marie Albenque, Éric Fusy and Grégory Miermont for their supervision throughout this work, and for all the invaluable comments and discussions. They also wish to thank the anonymous EJP referees for their careful reading and helpful suggestions.


\section{Tree decomposition of maps}
\label{sec:tree-decomp}

\subsection{Maps and their enumeration}
A \emph{planar map} $\m$ is the proper embedding into the two-dimensional sphere of a connected planar finite multigraph, considered up to homeomorphisms. Let $V(\m)$ be the set of its vertices, $E(\m)$ the set of its edges and $F(\m)$ the set of its faces. The size of a planar map $\m$~---~denoted by $|\m|$~---~is defined as its number of edges.

A \emph{half-edge} $e$ is an oriented edge from $u$ to $v$ (with possibly $v = u$) and is represented as half of an edge starting from $u$. Its \emph{starting vertex} $u$ is denoted by $e^-$ and its \emph{end vertex} $v$ is denoted $e^+$. Let $\vect{E}(\m)$ be the set of half-edges of $\m$.

A \emph{corner} is the angular sector between two consecutive edges in counterclockwise order around a vertex. Each half-edge is canonically associated to the corner that follows it in counterclockwise order around its starting vertex. The \emph{degree} of a face is the number of corners incident to it.

All the maps considered in this paper are \emph{rooted}, meaning that one of their half-edges (or one of their corners) is distinguished. The set of rooted planar maps~---~simply called maps in the following~---~is denoted by~$\mathcal{M}$. For $n$ in $\N$, let $m_n$ be the number of maps of size $n$ and $M(z) = \sum_{n\in\N}m_n z^n$ be the associated generating series. By convention, we set $m_0 = 1$ which corresponds to the \emph{vertex map}: the map reduced to a single vertex. Similarly, define the \emph{edge map} as the map reduced to a single edge between two vertices.

Rooting simplifies the study by avoiding symmetry problems, however we expect our results remain true in the non-rooted setting due to the general results of \cite{RichmondWormald1995}. The enumerative study of rooted planar maps was initiated by Tutte in the 60s. In particular, he obtained the following result:

\begin{proposition}[\cite{tutte_1963}] The number $m_{n}$ of maps of size $n$ is equal to
\begin{equation}
\label{enum-cartes}m_n = \frac{2 (2n)! 3^n}{(n+2)!n!} \sim \frac{2}{\sqrt{\pi}} 12^n n^{-5/2},\quad n\to\infty.
\end{equation}
This implies in particular that $\rho_M=1/12$ and $M(\rho_M)<\infty$, where $\rho_M$ denotes the radius of convergence of $M(z)$.
\end{proposition}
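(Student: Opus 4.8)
The plan is to first establish the closed‑form count $m_n=\tfrac{2(2n)!\,3^n}{(n+2)!\,n!}$ and then read off the asymptotics and the statements about $\rho_M$. Since this is Tutte's classical enumeration one would in practice simply invoke \cite{tutte_1963}; what follows is how I would reprove it.

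\emph{Exact formula.} Introduce the bivariate series $M(z,y)=\sum_{\m\in\mathcal M} z^{|\m|} y^{d(\m)}$, where $d(\m)$ is the degree of the root face, and decompose a map with at least one edge according to its root edge $e$. If $e$ is a bridge, removing it splits the map into an ordered pair of rooted maps whose root‑face degrees add up to $d(\m)-2$, contributing $z\,y^2\,M(z,y)^2$. If $e$ is not a bridge, removing it merges the root face with the (distinct) face on its other side, and undoing this operation amounts to a pointing of the root face of the smaller map, contributing $z\,y\cdot\frac{yM(z,y)-M(z,1)}{y-1}$. Adding the vertex map yields the functional equation
\[
M(z,y) \;=\; 1 \;+\; z\,y^2\,M(z,y)^2 \;+\; z\,y\,\frac{yM(z,y)-M(z,1)}{y-1},
\]
which determines $M(z,y)$ uniquely as a formal power series in $z$ with coefficients in $\mathbb Z[y]$. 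I would then apply the quadratic method: writing the equation as a quadratic in $M=M(z,y)$, its discriminant $\Delta(z,y)$ is a perfect square; evaluating $\Delta$ and $\partial_y\Delta$ at the branch point $Y(z)$ (the power‑series solution of $\partial_y\Delta=0$ with $Y(0)=1$) gives two relations, and eliminating $Y$ leaves an algebraic equation for $M(z):=M(z,1)$, which one solves to get $M(z)=\frac{18z-1+(1-12z)^{3/2}}{54\,z^2}$. Expanding $(1-12z)^{3/2}$ by the generalized binomial theorem and simplifying gives $m_n=\frac{2\cdot 3^n}{(n+1)(n+2)}\binom{2n}{n}=\frac{2(2n)!\,3^n}{(n+2)!\,n!}$. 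Two shortcuts are available: one may instead guess this closed form and verify it solves the functional equation by direct substitution; or prove the count bijectively, composing Tutte's bijection between rooted maps with $n$ edges and rooted quadrangulations with $n$ faces with the Cori--Vauquelin--Schaeffer bijection to labelled trees.

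\emph{Asymptotics and the statements on $\rho_M$.} Applying Stirling's formula to $m_n=\frac{2(2n)!\,3^n}{(n+2)!\,n!}$ and using $(n+2)!\sim n^2\, n!$ gives $m_n\sim\frac{2}{\sqrt\pi}\,12^n n^{-5/2}$; equivalently, $M(z)$ has a unique dominant singularity at $z=1/12$ of type $(1-12z)^{3/2}$, and the transfer theorem of singularity analysis gives the same estimate. Since the exponential growth rate is $12$, we get $\rho_M=1/12$; and because $\sum_n n^{-5/2}<\infty$, the series $M(1/12)=\sum_n m_n 12^{-n}$ converges, i.e.\ $M(\rho_M)<\infty$. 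The only genuinely non‑routine ingredient is the exact enumeration — the root‑edge functional equation together with the quadratic‑method elimination (or, alternatively, the two bijections); once the closed form is in hand, the asymptotic statement and the properties of $\rho_M$ are immediate.
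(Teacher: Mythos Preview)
Your proposal is correct; the paper itself does not prove this proposition but simply cites it from \cite{tutte_1963}, and what you sketch (root-edge deletion functional equation with a catalytic variable, solved by the quadratic method, then Stirling or singularity analysis) is precisely Tutte's original approach in that reference. There is nothing to compare beyond noting that you have reproduced the cited argument.
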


\subsection{2-connected maps and block decomposition}
\label{decomp-blocks}
\begin{defin}
A map $\m \in \mathcal{M}$ is said to be \emph{separable} if it is possible to partition its edge-set $E(\m)$ into two non-empty sets $E$ and $E'$ such that there is exactly one vertex (called \emph{cut vertex}) incident to both a member of $E$ and a member of $E'$. The map $\m$ is said to be \emph{2-connected} otherwise, see \cref{separable}.
\end{defin}

Note that, by definition, the vertex map is $2$-connected. For $n\in\N$, we write $\mathcal{B}_n$ for the set of 2-connected maps of size $n$, and $b_{n} = |\mathcal{B}_{n}|$. From \cref{2_connected}, we see that $b_{0} = 1$, $b_1 = 2$ and $b_2=1$. Contrary to Tutte \cite{tutte_1963}, we choose $m_0 = b_0 = 1$ (and not $m_0 = b_0= 0$) and express the results accordingly. Notice in particular that the only 2-connected map with a loop is the map reduced to a loop-edge.

\begin{figure}
    \centering
    \begin{minipage}{6.5cm}
        \centering
        \includegraphics[height=4cm]{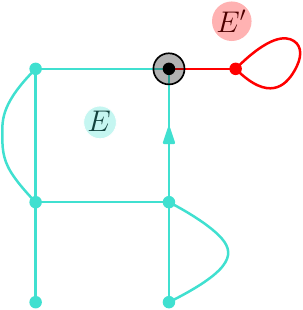} 
        \caption{Example of a separable map. The circled black dot is a cut vertex.}
        \label{separable}
    \end{minipage}\hfill
    \begin{minipage}{6.5cm}
        \centering
        \includegraphics[height=4cm]{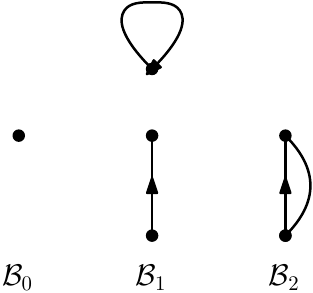} 
        \caption{The classes $\mathcal{B}_0$, $\mathcal{B}_1$ and $\mathcal{B}_2$ of 2-connected maps with respectively $0$, $1$ and $2$ edges.}
        \label{2_connected}
    \end{minipage}
\end{figure}

\begin{defin}
\label{def-block}
A \emph{block} of a planar map $\m$ is a maximal 2-connected submap \emph{of positive size}. The number of blocks of $\m$ is denoted by $b(\m)$.
\end{defin}

In the 60's, Tutte introduced the so-called ``block decomposition of maps'' \cite{tutte_1963}, which roughly speaking corresponds to cutting the map at all cut-vertices, and is illustrated on \cref{blocs} (this is known for graphs as well and called \emph{block-cut tree}, see \emph{e.g.} \cite{harary1969}).

\begin{figure}
    \centering
    \begin{minipage}{6.5cm}
        \centering
        \includegraphics[height=4cm]{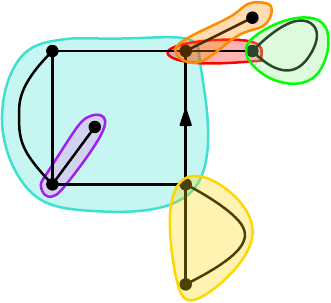}
        \caption{Decomposition of a map into blocks.}
        \label{blocs}
    \end{minipage}\hfill
    \begin{minipage}{6.5cm}
        \centering
        \includegraphics[height=4cm]{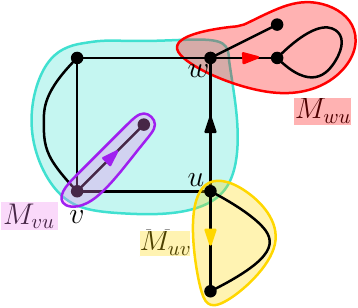} 
        \caption{Pendant submap: the block to which the half-edges $uv$, $vu$ and $wu$ belong is in blue.}
        \label{pendant}
    \end{minipage}
\end{figure}

We describe here this decomposition drawing inspiration from Addario-Berry's presentation \cite[\S2]{2Louigi}. Let $\m$ be a map and let $\b$ be the block containing its root. For each half-edge $e$ of $\b$, we define the \emph{pendant submap} $\m_{e}$ of $e$ as the maximal submap of $\m$ disjoint from $\b$ except at $e^-$ and located to the left of $e$ (it is possibly reduced to the vertex map). If $\m_e$ has at least one edge, we root it at the half-edge of $\m$ following $e$ in counterclockwise order around $e^-$ (see \cref{pendant}).

From $\b$ and the $2|E(\b)|$ pendant submaps $\{\m_{e}, e \in \overrightarrow{E}(\b)\}$, it is possible to reconstruct the map $\m$: for each $\m_{e}$ rooted at the half-edge $\rho$, insert $\m_{e}$ in the corner associated to $e$ in such a way that $\rho$ is the first edge after $e$ in counterclockwise order and merge $\rho^-$ and $e^-$. Thus, a map can be encoded as a block where each edge is decorated by two maps. This decomposition induces an identity of generating series, thanks to the symbolic method \cite[Ch1]{flajolet_sedgewick_2009}. Letting $B(y) =\sum_{n\in\N}b_n y^n$, Tutte's block decomposition translates into the following equality of generating series:
\begin{equation}
\label{eqn-cartes}
M(z) = B(zM(z)^2).
\end{equation}
Thanks to \cref{eqn-cartes} and an explicit expansion for $M(z)$ obtained in \cite{tutte_1962}, Tutte obtained the following enumerative results for 2-connected maps.

\begin{proposition}[\cite{tutte_1963}]\label{proposition-B}
The number $b_n$ of 2-connected maps of size $n$ is
\begin{equation}
\label{asymptB}
b_0 = 1,\qquad\text{and for }n\geq1,\ b_n = \frac{2(3n-3)!}{n!(2n-1)!} \sim \sqrt{\frac{3}{\pi}} \frac{2}{27} \(\frac{27}{4}\)^n n^{-5/2},\quad n\to\infty.
\end{equation}

Moreover, writing $\rho_B$ for the radius of convergence of the series $B$, we have
\begin{equation}
\label{tutte-valeurs}
\rho_B = \frac{4}{27},\qquad B(\rho_B) = 4/3\qquad\text{and}\qquad\rho_B\cdot B'(\rho_B) = \sum_{n\in\N}n b_n \rho_B^{\,n} = 4/9.
\end{equation}
\end{proposition}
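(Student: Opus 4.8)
The plan is to derive all three assertions from Tutte's block‑decomposition identity \eqref{eqn-cartes}, $M(z) = B(zM(z)^2)$, together with the closed form for $M$ underlying \eqref{enum-cartes}. The first step is to record the classical rational uniformization of $M$ from \cite{tutte_1962}: writing $s = \sqrt{1-12z}$ (so that $s=1$ at $z=0$ and $s\downarrow 0$ as $z\uparrow \rho_M = 1/12$), one has $z = \tfrac{1-s^2}{12}$ and, after simplifying $M(z) = \tfrac{(1-12z)^{3/2}+18z-1}{54z^2}$,
\[
M(z) = \frac{4(2s+1)}{3(1+s)^2}, \qquad\text{whence}\qquad M(\rho_M) = \tfrac{4}{3}, \quad M'(\rho_M) = 16 .
\]

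Since $\psi(z) := zM(z)^2$ satisfies $\psi(0)=0$ and $\psi'(0) = M(0)^2 = 1$, it is invertible as a formal power series and \eqref{eqn-cartes} reads $B = M\circ\psi^{-1}$. Substituting the uniformization of $M$ produces a rational uniformization of $B$:
\[
y := \psi(z) = \frac{4(1-s)(2s+1)^2}{27(1+s)^3}, \qquad B(y) = \frac{4(2s+1)}{3(1+s)^2},
\]
with $s=1$ at $y=0$ and $s=0$ at $y=\tfrac{4}{27}$. The exact count $b_n$ then comes out of Lagrange inversion: $b_n = [y^n]B(y) = \tfrac1n[z^{n-1}]M'(z)M(z)^{-2n}$ for $n\geq1$, and the change of variable $s\leftrightarrow z$ turns this into the extraction of a coefficient of an explicit rational function of $s$, which collapses to $\tfrac{2(3n-3)!}{n!(2n-1)!}$ after a Vandermonde‑type summation (alternatively one quotes Tutte's computation \cite{tutte_1963} for this identity, with $b_0=1$ the vertex map by convention). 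The asymptotics \eqref{asymptB} then follow from Stirling's formula applied to the closed form: the exponential rate is $3^3/2^2 = 27/4$, the polynomial factor is $n^{-5/2}$, and bookkeeping of the Gaussian prefactors together with the shifts $3n-3$, $2n-1$ (versus $3n$, $2n$) yields the constant $\sqrt{3/\pi}\cdot\tfrac{2}{27}$.

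It remains to establish \eqref{tutte-valeurs}. The value $\rho_B = \tfrac{4}{27}$ is immediate from \eqref{asymptB}; alternatively, $\tfrac{dy}{ds} = -\tfrac{8s(2s+1)}{9(1+s)^4}$ vanishes on $[0,1)$ only at $s=0$, so the uniformization is a bijection up to $s=0$ and $\rho_B = y\big|_{s=0} = \tfrac{4}{27}$. Both $B(\rho_B) = \sum_n b_n\rho_B^n$ and $\rho_B B'(\rho_B) = \sum_n n\,b_n\rho_B^n$ converge (by the $n^{-5/2}$, resp. $n^{-3/2}$, decay from \eqref{asymptB}), and are read off the uniformization at $s=0$: $B(\rho_B) = M(\rho_M) = \tfrac43$, and a one‑line computation gives $\tfrac{dB}{dy} = \tfrac{dM/ds}{dy/ds} = \tfrac{3(1+s)}{2s+1}$, hence $B'(\rho_B) = 3$ and $\rho_B B'(\rho_B) = \tfrac{4}{27}\cdot 3 = \tfrac{4}{9}$. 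Equivalently, differentiating \eqref{eqn-cartes} at $z=\rho_M$ gives $B'(\tfrac{4}{27}) = \tfrac{M'(\rho_M)}{M(\rho_M)^2 + 2\rho_M M(\rho_M)M'(\rho_M)} = 3$.

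The only step that is not pure bookkeeping is the Vandermonde summation collapsing the Lagrange‑inversion residue to $\tfrac{2(3n-3)!}{n!(2n-1)!}$ — which is exactly what Tutte carried out in \cite{tutte_1963}. So the proof can take that identity as given, after which everything reduces to manipulation of the explicit rational uniformization of $B$ and a single application of Stirling's formula.
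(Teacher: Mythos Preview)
Your derivation is correct and complete: the rational uniformization of $M$ via $s=\sqrt{1-12z}$, the composition $B=M\circ\psi^{-1}$ with $\psi(z)=zM(z)^2$, and the resulting parametrization $y=\tfrac{4(1-s)(2s+1)^2}{27(1+s)^3}$, $B(y)=\tfrac{4(2s+1)}{3(1+s)^2}$ all check out, as do the evaluations $B(\rho_B)=4/3$, $B'(\rho_B)=3$, and the Stirling asymptotics.

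The paper itself does not prove this proposition: it is stated with attribution to Tutte \cite{tutte_1963}, preceded only by the remark that it follows from \eqref{eqn-cartes} together with the explicit expansion of $M(z)$ from \cite{tutte_1962}. Your proposal is precisely a fleshing-out of that one-sentence indication, so there is nothing to compare --- you have supplied the argument the paper only points to. The one place where you defer to Tutte (the collapse of the Lagrange-inversion residue to the closed form $\tfrac{2(3n-3)!}{n!(2n-1)!}$) is appropriate, since that is the genuinely combinatorial step and the paper takes it as input anyway.
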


In the following, we consider maps enumerated by both their number of edges and their numbers of blocks. Namely, we consider the following bivariate series: $M(z,u) = \sum_{\m\in \mathcal{M}}z^{|\m|}u^{b(\m)}$ (recall that $b(\m)$ is the number of blocks of $\m$ and $|\m|$ is its number of edges). Tutte's decomposition of a map into blocks translates in the following refined version of \cref{eqn-cartes}:
\begin{equation}
\label{bivar}
M(z,u) - 1= u\(B(zM(z,u)^2) -1\)\qquad\text{\emph{i.e.}}\qquad M(z,u) = uB(zM(z,u)^2) + 1-u,
\end{equation}
where the term $1-u$ accounts for the fact that the vertex map has no block by \cref{def-block} (even if it is $2$-connected). For $u>0$, denote by $\rho(u)$ the radius of convergence of $z\mapsto M(z,u)$.
Since for $z\geq0$ and $u\geq 1$
\[M(z,u) \leq \sum_{\m\in\mathcal{M}} z^{|\m|}u^{|\m|} = M(uz),\]
if $|uz| < \rho_M = 1/12$, then $M(z,u)$ is a converging sum. Hence, for $u\geq 1$, $\rho(u) \geq \frac{1}{12u} > 0$. On the other hand, since $\rho(u)$ is decreasing, for $u\leq 1$ we have $\rho(u) \geq \rho(1)=\rho_M =1/12$ (and $\rho(u) \leq \rho(0)=\rho_B = 4/27$).

In view of the form of the equation \cref{bivar} and in particular that it is non-linear, it holds that $M(\rho(u), u)<\infty$. Indeed, since $B(y)\geq 1 + 2y$ for all $y\geq 0$, we get $M(z,u) \geq 1 + 2uzM(z,u)^2$. This shows that it is impossible that $M(z,u) \xrightarrow[z\to\rho(u)^-]{} +\infty$.

\subsection{Block tree of a map and its applications}
\label{louigi}
Tutte's block decomposition can also be applied recursively, \emph{i.e.} we consider first the root block and then apply the block decomposition to each of the pendant submaps. By doing so, for any map $\m$ we can obtain a decomposition tree $T_{\m}$, which was first explicity described by Addario-Berry in \cite[\S2]{2Louigi}. More precisely:
\begin{enumerate}
\item Let $\b=(\b,\rho)$ be the maximal 2-connected submap containing the root $\rho$. The root $v_\rho$ of $T_\m$ represents $\b$, and has $2|E(\b)|$ children (in particular, if $\b$ is of size $0$, $v_\rho$ is a leaf);
\item List the half-edges of $\b$ as $a_1, \dots, a_{2|E(\b)|}$ according to an arbitrarily fixed deterministic order on half-edges (\emph{e.g.} the order in a left-to-right depth first search). Let $\m_i$ be the pendant submap in the corner corresponding to the half-edge $a_i$ in $\b$. The $i$-th pendant subtree of $T_\m$ is the subtree encoding $T_{\m_i}$.
\end{enumerate}
An example of such a correspondence is described in \cref{ex_TM}. This decomposition has three essential properties, that follow directly from its definition, and that we summarize in the following proposition.

\begin{figure}
\begin{center}
\includegraphics[width=8.5cm, center]{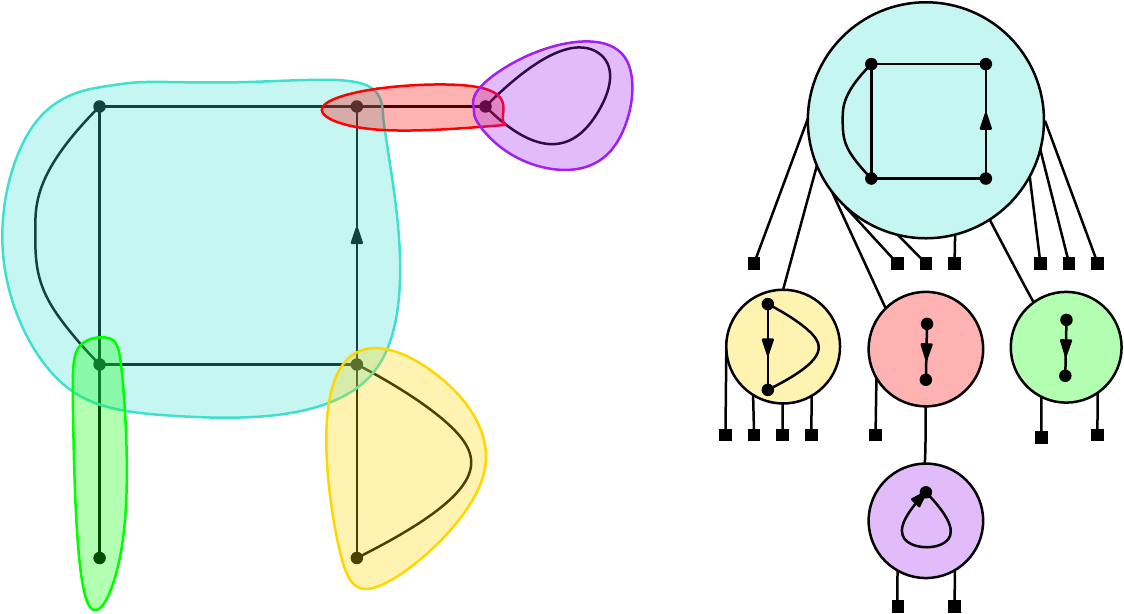}
\end{center}
\caption{Block tree corresponding to a planar map.}
\label{ex_TM}
\end{figure}

\begin{proposition}[\cite{tutte_1963,2Louigi}]
\label{prop-t-m-n}
The block tree $T_\m$ of a map $\m$ satisfies the following properties:
\begin{itemize}
\item The edges of $T_\m$ correspond to the half-edges of $\m$;
\item The internal nodes of $T_\m$ correspond to the blocks of $\m$: if an internal node $v$ of $T_\m$ has $r$ children, then the corresponding block $\b_v$ of $\m$ has size $r/2$;
\item The map $\m$ is entirely determined by $\(T_\m, (\b_v, v\in T_\m)\)$ where $\b_v$ is the block of $\m$ represented by $v$ in $T_\m$ if $v$ is an internal node; else, by convention, $\b_v$ is the vertex map.
\end{itemize}
\end{proposition}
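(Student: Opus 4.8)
The plan is to prove the three properties simultaneously by induction on the number of edges $|\m|$, directly following the recursive definition of $T_\m$. The base case $|\m|=0$ is immediate: then $\m$ is the vertex map, $T_\m$ is a single leaf carrying, by convention, the vertex map, and there are neither half-edges nor blocks, so all three assertions hold vacuously.

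For the inductive step I would fix $\m$ with $|\m|\geq 1$, let $\b$ be its root block, of size $k=|E(\b)|$, list its half-edges $a_1,\dots,a_{2k}$ in the fixed order, and consider the pendant submaps $\m_1,\dots,\m_{2k}$, so that the root $v_\rho$ of $T_\m$ has $2k$ children, the $i$-th being the root of $T_{\m_i}$. First I would observe that $k\geq 1$: the root half-edge lies in an edge, a one-edge map is $2$-connected (it is not separable), so $\b$ has positive size. Since the $k\geq1$ edges of $\b$ lie in no pendant submap, one gets $|\m_i|\leq |\m|-k<|\m|$, so the induction hypothesis applies to every $T_{\m_i}$. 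For the first property I would use that $\vect{E}(\m)$ is the disjoint union of $\vect{E}(\b)=\{a_1,\dots,a_{2k}\}$ and of the $\vect{E}(\m_i)$ — the pendant submaps being pairwise edge-disjoint and edge-disjoint from $\b$, yet covering $E(\m)$ — then match the $2k$ tree-edges at $v_\rho$ with $\vect{E}(\b)$ via $a_i$ and glue in the bijections $E(T_{\m_i})\leftrightarrow\vect{E}(\m_i)$ supplied by the induction hypothesis. For the second property I would note that $v_\rho$ is internal (as $k\geq 1$), represents $\b$, and has $r=2k=2|E(\b)|$ children, so the represented block has size $r/2$; every other node lies in some $T_{\m_i}$ and is handled by the induction hypothesis. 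That the internal nodes are in bijection with the blocks of $\m$ then follows since, by \cref{def-block}, the blocks of $\m$ are exactly $\b$ together with the blocks of the $\m_i$. For the third property I would invoke the reconstruction recipe recalled before the statement: from $\b$ and the family $(\m_i)_{1\leq i\leq 2k}$ one rebuilds $\m$ by inserting each $\m_i$ into the corner associated to $a_i$ with the prescribed cyclic position of its root; feeding into this recipe the maps $\m_i$ reconstructed from $(T_{\m_i},(\b_v)_{v\in T_{\m_i}})$ by the induction hypothesis recovers $\m$ from $(T_\m,(\b_v)_{v\in T_\m})$, the recursion terminating because $|\m_i|<|\m|$.

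The only step requiring genuine care — and the one I would treat most carefully — is the reconstruction: one must check that re-inserting a pendant submap in its corner, with its root half-edge placed immediately after $a_i$ counterclockwise around $a_i^-$, yields a well-defined planar map, and that this insertion is truly inverse to the block decomposition, so that no embedding information is lost when passing to $(T_\m,(\b_v)_v)$. This is precisely the content of Tutte's block decomposition of maps, so rather than reprove it I would cite the construction set up in \cref{decomp-blocks,louigi}; everything else is bookkeeping of half-edges together with the straightforward induction above.
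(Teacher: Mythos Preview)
Your proposal is correct and is precisely the natural elaboration of what the paper does: the paper gives no explicit proof, merely stating that the three properties ``follow directly from its definition'' of the block tree, and your induction on $|\m|$ along the recursive construction of $T_\m$ is exactly how one makes that precise. The only place where you invoke something not purely formal is the claim that the blocks of $\m$ are exactly $\b$ together with the blocks of the $\m_i$; this is indeed the standard fact underlying Tutte's decomposition (a maximal $2$-connected submap edge-disjoint from $\b$ must lie in a single pendant submap, and maximality there is equivalent to maximality in $\m$ since the attachment is at a single vertex), and your reference to the block decomposition set up in \cref{decomp-blocks} is an appropriate way to handle it.
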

By abuse of language, we might refer to $(\b_v, v\in T_\m)$ as the family of blocks (even if blocks necessarily have positive size). A direct consequence of this proposition is that to study the block sizes of a map $\m$, it is sufficient to study the degree distribution of $T_{\m}$. This is precisely the strategy developed by Addario-Berry in \cite{2Louigi}. This allows him to study the block sizes of a uniform random map $\M_n$ of size $n$, by describing $T_{\M_n}$ as a Galton-Watson tree with an explicit degree distribution conditioned to have $2n$ edges, and one of our contributions is to extend his result to our model.

\subsection{Block tree of a quadrangulation}
\label{subseq:quad}
We describe in this section how a quadrangulation can be decomposed into maximum simple quadrangular components, in the same way that a map can be decomposed into maximum 2-connected components.

\begin{defin}
A \emph{quadrangulation} is a map with all faces of degree $4$.
\end{defin}

Planar quadrangulations are \emph{bipartite}, \emph{i.e.} their vertices can be properly bicolored in black and white. In the following, we always assume that they are endowed with the unique such coloring having a black root vertex. Although quadrangulations are maps, when an object is explicitly defined as a quadrangulation, its size will be its number of faces. Thus, a quadrangulation of size $n$ has $2n$ edges.

\begin{defin}
A \emph{quadrangulation of the $2$-gon} is a map where the \emph{root face}~---~the face containing the corner associated to the root~---~has degree $2$ and all other faces have degree $4$.
\end{defin}
A quadrangulation of the $2$-gon with at least two faces can be identified with a quadrangulation of the sphere by simply gluing together both edges of the root face.

\begin{defin}
A quadrangulation is called \emph{simple} if it has neither loops nor multiple edges.
\end{defin}

\begin{defin}
Let $e_1 e_2$ be a $2$-cycle of a quadrangulation $\q$, its \emph{interior} is the submap of $\q$ between $e_1$ and $e_2$ (both included) which does not contain the root corner of $\q$. A $2$-cycle is \emph{maximal} when it does not belong to the interior of another $2$-cycle.
\end{defin}

\begin{defin}
Let $e_1 e_2$ be a maximal $2$-cycle of a quadrangulation $\q$, its \emph{pendant subquadrangulation} is defined as its interior, which is turned into a quadrangulation of the $2$-gon by rooting it at the corner incident to the unique black vertex of $e_1e_2$.

Let $e$ be a half-edge of a quadrangulation $\q$. If $e$ is oriented from black to white and there exists a half-edge $f$ such that $ef$ is a maximal $2$-cycle of $\q$, then the \emph{pendant subquadrangulation} of $e$ is the pendant subquadrangulation of $ef$. Else, it is the edge map (which is also a quadrangulation of the $2$-gon).
\end{defin}

\begin{figure}
\begin{center}
\includegraphics[width=12cm, center]{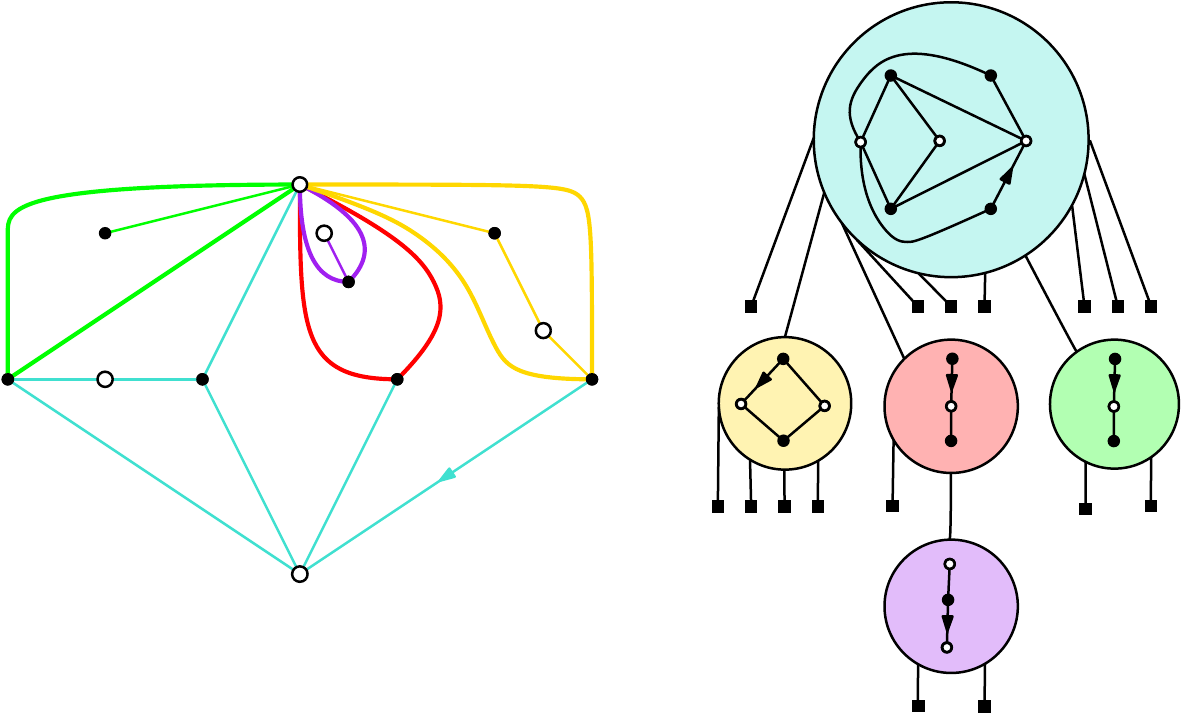}
\end{center}
\caption{The image of the map of \cref{ex_TM} via Tutte's bijection, and its block tree.}
\label{ex_TMq}
\end{figure}

For $\q$ a quadrangulation, its \emph{simple core} $\q_{s}$~---~the simple block containing the root~---~is obtained by collapsing the interior of every maximal $2$-cycle of $\q$. Similarly as for maps, a decomposition tree $T^{(q)}_{\q}$ can be associated to a quadrangulation $\q$, by recursively decomposing the pendant subquadrangulations at the simple core, see \cref{ex_TMq}. \emph{Simple blocks} are recursively defined as the simple cores appearing in the underlying arborescent decomposition. We then have an exact parallel with the situation of maps and their 2-connected components.

\begin{figure}
\begin{center}
\includegraphics[width=8.5cm, center]{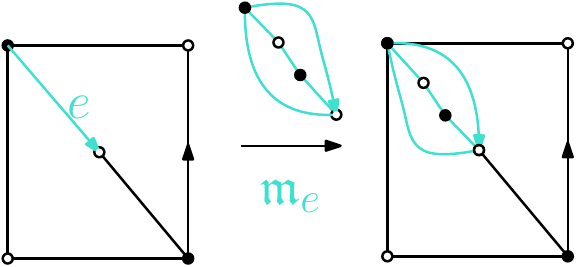}
\end{center}
\caption{Reconstructing a quadrangulation from its simple core and the pendant subquadrangulations}
\label{pendant_subquad}
\end{figure}

Given a simple quadrangulation $\q_{s}$ and a collection of $|E(\q_s)| = 2|q_{s}|$ quadrangulations of the $2$-gon $\{\m_{e}, e \in E(\q_s)\}$, it is possible to construct a quadrangulation: for each $\m_{e}$ of root $\rho_e$ replace $e$ by $\m_e$ such that $\rho_e$ has the orientation $e$. See \cref{pendant_subquad} for an illustration. This transformation is invertible. Thus, a quadrangulation can be encoded as a simple quadrangulation where each edge is decorated by one quadrangulation of the $2$-gon, \emph{i.e.} each face is decorated by two quadrangulations of the $2$-gon:
\begin{equation}
\label{bivar-quad}
Q(z,u) + 1 = uS(z(Q(z,u)+1)^2)+1-u,
\end{equation}
where $Q$ is the generating series for quadrangulations (with a weight $z$ for faces, and $u$ for simple blocks) and $S$ is the generating series for simple quadrangulations (with a weight $z$ for faces). Note that this equation is isomorphic to \cref{bivar}.

This decomposition and the former one presented for general maps are in fact two sides of the same coin. Indeed, they can be related via Tutte's bijection as we now present: there exists an explicit bijective construction between quadrangulations of size $n$ and (general) maps of size $n$. More precisely, for a map $\m$ (rooted in $\rho$), its image by $\varphi$, called its \emph{angular map}, can be constructed as follows, see \cref{construction-incidence-map}.
\begin{enumerate}
    \item Add a (white) vertex inside each face of $\m$ and draw an edge from this new (white) vertex to each corner around the face (respecting the order of the corners);
    \item The half-edge $e$ created in the corner of $\rho$ is now the root, oriented from black to white;
    \item Remove the original edges.
\end{enumerate}

\begin{figure}
\begin{center}
\includegraphics[width=12cm, center]{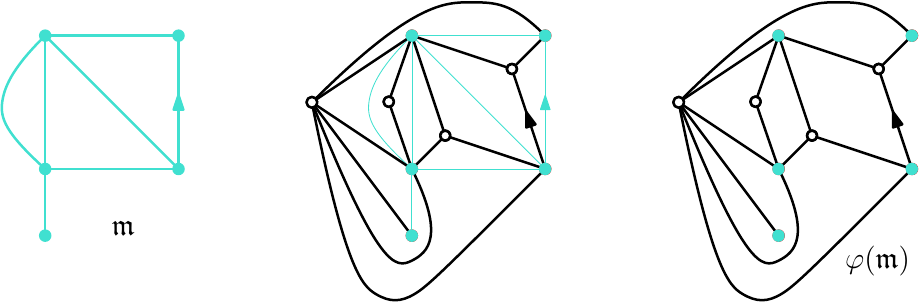}
\end{center}
\caption{The quadrangulation corresponding to a map via Tutte's bijection.}
\label{construction-incidence-map}
\end{figure}

\begin{proposition}
\label{bij-quad-maps}
For $n\in\Z_{>0}$, the function $\varphi$ is a bijection between maps of size $n$ and quadrangulations of size $n$. Moreover, it maps bijectively 2-connected maps of size $n \geq 1$ to simple quadrangulations of size $n$.
\end{proposition}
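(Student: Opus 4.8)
The plan is to exhibit an explicit inverse of $\varphi$ and then to check that it respects both the size and the $2$-connected/simple structure. First I would describe the inverse map $\psi$. Let $\q$ be a quadrangulation of size $n\geq1$, equipped with its canonical bicoloring with black root vertex. Since $\q$ is bipartite, the boundary of every face is a $4$-cycle alternating in colour, so each face is incident to exactly two black and two white vertices, the two black ones being opposite along the face. I would draw in each face the diagonal joining its two black vertices and then erase every white vertex and every original edge; planarity ensures that the diagonals can be drawn without crossings and that the outcome $\psi(\q)$ is a connected planar map whose vertex set is the set of black vertices of $\q$ and whose edges are in bijection with the faces of $\q$, so that $|\psi(\q)|=n$. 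One roots $\psi(\q)$ at the diagonal of the face bordering the root half-edge of $\q$, with an orientation inherited from the rooting of $\q$; the exact convention is routine bookkeeping that I would fix so as to match the rooting rule used in the definition of $\varphi$.

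Next I would verify that $\varphi(\m)$ is a quadrangulation of size $n$ and that $\psi$ inverts $\varphi$, for $\m$ a map of size $n\geq1$. The key local observation is that an edge $e$ of $\m$, together with its two endpoints and the white vertices placed in the faces on its two sides, bounds a single quadrangular face of $\varphi(\m)$; since every face of $\varphi(\m)$ arises this way, $\varphi(\m)$ has exactly $n$ faces, all of degree $4$, it is connected because $\m$ is, its canonical bicoloring makes the original vertices black and the new face-vertices white, and its root half-edge points from black to white. Then $\psi\circ\varphi=\mathrm{id}$ because the diagonal of the quadrangular face attached to $e$ is $e$ itself, and $\varphi\circ\psi=\mathrm{id}$ for the symmetric reason, once the root conventions are aligned. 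This settles the first assertion.

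The heart of the argument, and the part I expect to require the most care, is the claim that $\m$ is $2$-connected if and only if $\varphi(\m)$ is simple. Once this is known, the restriction follows formally: $\varphi$ is already a bijection between maps of size $n$ and quadrangulations of size $n$, and if it carries the separable maps exactly onto the non-simple quadrangulations then it must carry the $2$-connected maps bijectively onto the simple quadrangulations. To establish the claim I would note that $\varphi(\m)$, being bipartite, has no loop, so it fails to be simple precisely when it has a multiple edge, i.e.\ a pair of edges joining a black vertex $v$ (a vertex of $\m$) to a white vertex $w_f$ (a face $f$ of $\m$); and such a pair exists exactly when $v$ meets $f$ through at least two distinct corners, that is, when $v$ occurs at least twice along the boundary walk of the face $f$ of $\m$. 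Everything thus reduces to the combinatorial equivalence: \emph{some vertex of $\m$ occurs at least twice on the boundary walk of some face of $\m$ if and only if $\m$ is separable.}

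For the ``if'' direction I would use that a cut vertex $v$ splits $\m$ into two submaps glued at $v$, the nontrivial one being drawn inside a single corner, hence a single face, of the other; following the boundary walk of the resulting face one enters and later exits the detour around the inserted submap, visiting $v$ at least twice. For the ``only if'' direction I would argue topologically: a face $f$ of a connected map on the sphere is an open disk, so two occurrences of $v$ on $\partial f$ can be joined by an arc drawn inside $f$, producing a simple closed curve through $v$ that separates the sphere into two disks; a standard planar separation argument then shows that each disk carries a nonempty part of $E(\m)$ and that the two parts meet only at $v$, so $\m$ is separable. The delicate points are the degenerate cases — when the two occurrences of $v$ are adjacent on the walk, so that there is a loop at $v$ (which forces separability unless $\m$ is the loop map, and the loop map is easily checked to meet each of its two faces only once), and, more generally, making sure both sides of the curve genuinely contain at least one edge. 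Handling these cases cleanly, together with pinning down the rooting conventions so that $\varphi$ and $\psi$ are honest inverses, is where the real work lies.
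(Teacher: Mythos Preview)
The paper does not actually prove this proposition: it is stated as a classical fact and attributed to Tutte~\cite{tutte_1963} and Brown~\cite{Brown1965}, with no argument given beyond the description of the construction $\varphi$. So there is no ``paper's own proof'' to compare your proposal against.

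Your proposal is the standard argument and is essentially correct. The inverse $\psi$ via black diagonals is exactly the classical one, and your reduction of the $2$-connected/simple correspondence to the criterion ``some vertex occurs at least twice on the boundary walk of some face'' is the right key lemma. The two directions of that equivalence are fine as you sketch them; the only point worth a sentence of extra care is that when you draw the Jordan arc through the face $f$ between two occurrences of $v$, you should take two occurrences that are \emph{consecutive among all occurrences of $v$ on $\partial f$}, so that one side of the resulting closed curve is bounded entirely by a subwalk of $\partial f$ that does not revisit $v$, and the other side contains the rest of the map. With that choice, the ``both sides carry at least one edge'' step is immediate (the subwalk side contains the edges of that subwalk, and the other side contains the remaining edges, which exist since $|\m|\geq 2$ once you are outside the loop-map case you already singled out). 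Your handling of the rooting conventions and of the loop map is adequate.
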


The construction $\varphi$ is due to Tutte \cite[\S5]{tutte_1963} (he defines the notion of derived map, from which the angular map is extracted by deleting one of the 3 classes of vertices, as explained in \cite[\S7]{Brown1965}). The specialization to 2-connected maps is explained \emph{e.g.} in \cite{Brown1965}. In particular, it implies that $S(y) = B(y)$. Moreover, given \cref{bivar,bivar-quad}, this gives $M(z,u) = Q(z,u) + 1$.

Finally, when constructing the decomposition tree $T^{(q)}_{\q}$, if the deterministic orders used for the half-edges of 2-connected maps and for the edges of simple quadrangulations are consistent via Tutte's bijection, then the decomposition trees of $\m$ and of $\varphi(\m)$ are the same, and for each node $v$ of the tree, the 2-connected map (resp.~simple quadrangulation) at $v$ are in correspondence by Tutte's bijection, \emph{e.g.} the example of \cref{ex_TM} is consistent with the example of \cref{ex_TMq} via Tutte's bijection. This can be rephrased as the following result.

\begin{proposition}
\label{arbre-bloc-quad-carte}
For all $\m \in \mathcal{M}$,
\[T^{(q)}_{\varphi(\m)} = T_{\m}\]
and, for all $v \in T^{(q)}_{\varphi(\m)}$,
\[\b_v^{(q)} = \varphi(\b_v).\]
\end{proposition}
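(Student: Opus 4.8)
The plan is to prove both identities simultaneously by induction on the size $n=|\m|$, reducing everything to a compatibility between $\varphi$ and a single step of the two block decompositions. For the base case $n=0$, the map $\m$ is the vertex map and $T_\m$ is a single leaf; adopting the convention that $\varphi$ sends the vertex map to the edge map (i.e.\ to the trivial quadrangulation of the $2$-gon, which plays the role of the ``empty'' pendant subquadrangulation), the quadrangulation $\varphi(\m)$ has no maximal $2$-cycle, so $T^{(q)}_{\varphi(\m)}$ is also a single leaf and both statements hold trivially.

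For the inductive step, let $\m$ have size $n\geq1$, let $\b$ be its root block (which has size $k\geq 1$, since the underlying edge of the root half-edge spans a $2$-connected submap of size at least $1$), and let $\m_1,\dots,\m_{2k}$ be the pendant submaps of $\m$, listed according to the fixed order $a_1,\dots,a_{2k}$ on the half-edges of $\b$. Reconstructing $\m$ from $\b$ by inserting each $\m_i$ in the corner associated to $a_i$ shows $n=k+\sum_i|\m_i|$, hence $|\m_i|\leq n-k<n$, so the induction hypothesis applies to every $\m_i$. The heart of the argument is the following single-level claim: writing $\q=\varphi(\m)$, the simple quadrangulation $\varphi(\b)$ (simple by \cref{bij-quad-maps}) is exactly the simple core $\q_s$ of $\q$; and under the bijection between the half-edges of $\b$ and the edges of $\q_s$ induced by $\varphi$ --- a half-edge $e$ being sent to the edge drawn in the corner associated to $e$, oriented from its black endpoint $e^-$ --- the pendant subquadrangulation of $\q$ at the edge corresponding to $a_i$ equals $\varphi(\m_i)$, viewed as a quadrangulation of the $2$-gon (using the convention above when $\m_i$ is the vertex map).

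Granting the claim, the induction closes: $T^{(q)}_{\q}$ has its root node representing $\q_s=\varphi(\b)$ with $2k$ children, the $i$-th being $T^{(q)}_{\varphi(\m_i)}$, which by the induction hypothesis equals $T_{\m_i}$ with blocks matching under $\varphi$; while $T_\m$ has its root representing $\b$ with $2k$ children, the $i$-th being $T_{\m_i}$. Since the two fixed orderings are assumed consistent through $\varphi$, these two descriptions agree as plane trees, so $T^{(q)}_{\varphi(\m)}=T_\m$; and combining the root-level identity $\b^{(q)}_{v_\rho}=\varphi(\b)=\varphi(\b_{v_\rho})$ with the identities $\b^{(q)}_v=\varphi(\b_v)$ obtained inductively inside each subtree gives $\b^{(q)}_v=\varphi(\b_v)$ for all $v$.

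The main obstacle is the single-level claim, which is a purely local verification that Tutte's construction intertwines ``inserting a pendant submap in the corner associated to a half-edge $e$ of a $2$-connected map $\b$'' with ``replacing the edge $\varphi(e)$ of $\varphi(\b)$ by the corresponding quadrangulation of the $2$-gon''. Since distinct pendant submaps occupy distinct corners of $\b$ and do not interact, it suffices to analyse a single insertion, tracking the face of $\b$ that contains the corner associated to $e$ together with the white vertex that $\varphi$ places inside it: one checks that attaching a non-trivial pendant submap there turns the cut vertex $e^-$ into a maximal $2$-cycle of $\q$ whose interior, rerooted as prescribed, is precisely $\varphi(\m_e)$ as a quadrangulation of the $2$-gon, and hence that collapsing all maximal $2$-cycles of $\q$ returns $\varphi(\b)$. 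The surrounding points --- agreement of the left/right and orientation conventions between the two decompositions, and the degenerate edge-map cases --- are routine, and the consistency of the two chosen orderings is part of the hypothesis; the example of \cref{ex_TM} and \cref{ex_TMq} illustrates the whole correspondence.
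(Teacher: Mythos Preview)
Your inductive argument is correct and in the same spirit as the paper, but note that the paper does not actually give a formal proof of this proposition: it is stated as a direct rephrasing of the paragraph preceding it, which simply observes that once the deterministic orderings on half-edges of $2$-connected maps and on edges of simple quadrangulations are chosen consistently via Tutte's bijection, the two recursive decompositions manifestly agree. Your proof unpacks this into an explicit induction on $|\m|$, isolating the single-level compatibility claim (that $\varphi(\b)$ is the simple core of $\varphi(\m)$ and that pendant submaps go to pendant subquadrangulations) as the only nontrivial step; this is precisely the content the paper leaves implicit, so your write-up is a genuine expansion rather than a different route.
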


\subsection{Probabilistic consequences}\label{subsec:conseq-proba}
Recall the model defined in \cref{eq:defPrnu,eq:defPru} for general maps. As promised, we now define its analogue on quadrangulations, and show their equivalence. To that end, we set for all $\m \in \mathcal{M}_n$, and for all $\q \in \mathcal{Q}_n$,
\[\pr[_{n,u}^{\mathrm{map}}]{\m} = \frac{u^{b(\m)}}{[z^n] M(z,u)} \propto u^{b(\m)}\qquad\text{and}\qquad
\pr[_{n,u}^{\mathrm{quad}}]{\q} = \frac{u^{b(\q)}}{[z^n] Q(z,u)} \propto u^{b(\q)},
\]
and consider for all $\m \in \mathcal{M}$ and $\q \in \mathcal{Q}$ the singular Boltzmann laws (remember that, as explained in \cref{decomp-blocks}, $M(\rho(u),u) = Q(\rho(u),u) <\infty$)
\[\pr[_{u}^{\mathrm{map}}]{\m} = \frac{u^{b(\m)} \rho(u)^{|\m|}}{M(\rho(u),u)}\qquad\text{and}\qquad\pr[_{u}^{\mathrm{quad}}]{\q} = \frac{u^{b(\q)} \rho(u)^{|\q|}}{Q(\rho(u),u)},
\]
then
\[\mathbb{P}_{n,u}^{\mathrm{map}} = \pr[_u^{\mathrm{map}}]{\cdot\mid\mathcal{M}_n}\qquad\text{and}\qquad\mathbb{P}_{n,u}^{\mathrm{quad}} = \pr[_u^{\mathrm{quad}}]{\cdot\mid\mathcal{Q}_n}.\]

By \cref{arbre-bloc-quad-carte}, one has:
\begin{proposition}\label{prop-Tutte}
For all $\q \in \mathcal{Q}$ and $n\in\N$,
\[\pr[_{n,u}^{\mathrm{quad}}]{\q} = \pr[_{n,u}^{\mathrm{map}}]{\varphi^{-1}(\q)}\qquad\text{and}\qquad\pr[_{u}^{\mathrm{quad}}]{\q} = \pr[_{u}^{\mathrm{map}}]{\varphi^{-1}(\q)},\]
so, denoting by $*$ the pushforward, for all $n\in\N$,
\[\mathbb{P}_{n,u}^{\mathrm{quad}} = \varphi _* \mathbb{P}_{n,u}^{\mathrm{map}}\qquad\text{and}\qquad\mathbb{P}_{u}^{\mathrm{quad}} = \varphi _* \mathbb{P}_{u}^{\mathrm{map}}.\]
\end{proposition}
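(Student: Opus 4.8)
The plan is to deduce all four equalities from a single structural fact: via the angular map $\varphi$ both the size and the number of blocks are preserved, after which everything reduces to substitution into the defining formulas. First I would record the two invariances. For the size, \cref{bij-quad-maps} already says that $\varphi$ maps $\mathcal{M}_n$ bijectively onto $\mathcal{Q}_n$ for every $n\geq1$, so $|\varphi(\m)|=|\m|$ for every map $\m$ of positive size; the size-$0$ case is degenerate (the vertex map is not a quadrangulation, so $\mathcal{Q}_0=\emptyset$ and the size-$0$ laws are not to be compared), and I would dispatch it in one sentence.

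The only step with genuine content is the block count. I would read $b(\varphi(\m))=b(\m)$ directly off \cref{arbre-bloc-quad-carte}: the block tree of $\varphi(\m)$ coincides with $T_\m$, and at each node $v$ the simple block $\b_v^{(q)}$ of $\varphi(\m)$ is $\varphi(\b_v)$. Since $\varphi$ sends $2$-connected maps of size $\geq1$ bijectively onto simple quadrangulations of size $\geq1$ (\cref{bij-quad-maps}), a node is internal in $T_\m$ if and only if the corresponding node is internal in the block tree of $\varphi(\m)$; but those are the same tree, so they have the same internal nodes, and by \cref{prop-t-m-n} (together with its verbatim analogue for quadrangulations) these internal nodes enumerate respectively the blocks of $\m$ and the simple blocks of $\varphi(\m)$. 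Hence $b(\varphi(\m))=b(\m)$.

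With the two invariances in hand I would finish by substituting into the definitions. For $\q\in\mathcal{Q}$ of size $n\geq1$ one has $\varphi^{-1}(\q)\in\mathcal{M}_n$, $b(\q)=b(\varphi^{-1}(\q))$, and $[z^n]Q(z,u)=[z^n]M(z,u)$ because $M(z,u)=Q(z,u)+1$, which gives $\pr[_{n,u}^{\mathrm{quad}}]{\q}=\pr[_{n,u}^{\mathrm{map}}]{\varphi^{-1}(\q)}$; using in addition $|\q|=|\varphi^{-1}(\q)|$ and the identity $M(\rho(u),u)=Q(\rho(u),u)$ recalled before the statement, the same one-line computation yields $\pr[_{u}^{\mathrm{quad}}]{\q}=\pr[_{u}^{\mathrm{map}}]{\varphi^{-1}(\q)}$. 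The pushforward identities $\mathbb{P}_{n,u}^{\mathrm{quad}}=\varphi_*\mathbb{P}_{n,u}^{\mathrm{map}}$ and $\mathbb{P}_{u}^{\mathrm{quad}}=\varphi_*\mathbb{P}_{u}^{\mathrm{map}}$ are then just a restatement of these pointwise equalities, $\varphi$ being a bijection between the respective supports; alternatively, the fixed-size one follows from the Boltzmann one by conditioning, via $\mathbb{P}_{n,u}^{\mathrm{map}}=\pr[_u^{\mathrm{map}}]{\cdot\mid\mathcal{M}_n}$, $\mathbb{P}_{n,u}^{\mathrm{quad}}=\pr[_u^{\mathrm{quad}}]{\cdot\mid\mathcal{Q}_n}$ and $\varphi(\mathcal{M}_n)=\mathcal{Q}_n$. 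I do not anticipate a real obstacle here: the substance of the statement is entirely supplied by \cref{arbre-bloc-quad-carte,bij-quad-maps}, which have already been established, and the only point needing a little care is the bookkeeping around the degenerate size-$0$ / vertex-map case.
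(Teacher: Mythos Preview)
Your proposal is correct and matches the paper exactly: the paper's entire proof is the single phrase ``By \cref{arbre-bloc-quad-carte}, one has'', and your unpacking via \cref{bij-quad-maps,prop-t-m-n} to get $|\varphi(\m)|=|\m|$ and $b(\varphi(\m))=b(\m)$, followed by substitution into the defining formulas, is precisely the intended justification. The only bookkeeping caveat (which the paper glosses over too) is that $M(z,u)=Q(z,u)+1$, so the Boltzmann normalizations differ by the mass of the vertex map; the literal equality $\mathbb{P}_u^{\mathrm{quad}}=\varphi_*\mathbb{P}_u^{\mathrm{map}}$ therefore holds after excluding the size-$0$ atom, which you already flagged and which is harmless for every subsequent use.
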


\subsection{A word on the probabilistic setting}\label{subsec:proba-setting}

We denote by $\M : \mathcal{M} \to \mathcal{M}$ the canonical random variable on the space of maps, and let $\Q = \varphi(\M)$. We denote by $\T$ the block tree associated to $\M$ (and also to $\Q$ by \cref{arbre-bloc-quad-carte}). In this way, under $\mathbb P_u$ (resp.~$\mathbb{P}_{n,u}$), $\M$ has law $\mathbb{P}_u^{\mathrm{map}}$ (resp.~$\mathbb{P}_{n,u}^{\mathrm{map}}$), and, by \cref{prop-Tutte}, $\Q$ has law $\mathbb{P}_u^{\mathrm{quad}}$, (resp.~$\mathbb{P}_{n,u}^{\mathrm{quad}}$). Therefore, we will simply use $\mathbb{P}_{n,u}$ and $\mathbb{P}_{u}$ as a shorthand notation for $\mathbb{P}_{n,u}^{\mathrm{map}}$ and $\mathbb{P}_{u}^{\mathrm{map}}$.

Maximal simple components of quadrangulations will also be called ``blocks'' because everything that has been said about blocks (in the sense of maximum 2-connected components) can also be said about the maximum simple quadrangular components of quadrangulations; and likewise in everything that follows. As a consequence, every result about the size of the blocks of a map of size $n$ is valid for blocks of quadrangulations of size $n$ as well.

For $v$ a vertex of $\T$, we denote by $\b_v^\M$ (resp.~$\b_v^\Q$) the 2-connected block of $\M$ (resp.~simple block of $\Q$) represented by $v$ in $\T$.
By \cref{arbre-bloc-quad-carte}, it holds that $\b_v^\Q=\varphi(\b_v^\M)$ for all $v\in \T$, where $\varphi$ is Tutte's bijection.

These random variables will be studied under probability measures $\mathbb P_u$ and $(\mathbb P_{n,u})_{n\geq 1}$, which were introduced in \cref{subsec:conseq-proba}.
We write accordingly $\mathbb E_u[\dots]$ and $\mathbb E_{n,u}[\dots]$ the expectations with respect to these probability measures.
Unless mentioned otherwise or if it is clear from context, other random variables shall be viewed as defined on some probability space $(\Omega,P)$, and the according expectations will be written as $\E{\dots}$.
In particular we will use the following random variables defined on $(\Omega,P)$:
\begin{itemize}
	\item For each $u\geq 0$, the triplet $(\T_{n,u},\M_{n,u},\Q_{n,u})$ is $(\T,\M,\Q)$ under the law $\mathbb P_{n,u}$.
	\item For each $k\geq 1$, the pair $(B_k^{\mathrm{map}},B_k^{\mathrm{quad}})$ consists of a 2-connected map $B_k^{\mathrm{map}}$ with $k$ edges sampled uniformly, together with $B_k^{\mathrm{quad}}=\varphi(B_k^{\mathrm{map}})$ its image by Tutte's bijection. By \cref{bij-quad-maps}, the latter is a simple quadrangulation with $k$ faces sampled uniformly.
\end{itemize}


\section{Phase diagram}
\label{sec:phase-diagram}

For $\mu$ a probability distribution on $\N$ and $n\in\N$, we denote by $GW(\mu, n)$ the law of a Galton-Watson tree with offspring distribution $\mu$ and conditioned to have $n$ edges. Following \cite{2Louigi}, for $u>0$ we aim at finding a measure $\mu^{u}$ such that $\T$ under $\mathbb{P}_{n,u}$ has law $GW(\mu^u,2n)$. To that end, for any $y\in[0, \rho_B]$ we introduce the following probability distribution
\begin{equation}
\label{mu}
\mu^{y,u}(2j) := \frac{b_j y^j u^{\mathbb{1}_{j\ne 0}}}{1+u(B(y)-1)}\quad\text{and}\quad\mu^{y,u}(2j+1):=0\qquad\text{for all }j\in \N
\end{equation}
where $b_j$ and $B$ are defined in \cref{proposition-B}. Moreover (see \cref{rem-choice-y-u} for a discussion), we set
\begin{equation}
\label{def-y-u}
y(u) := \rho(u) M^2(\rho(u),u)\quad\text{and}\quad\mu^{u} := \mu^{y(u),u}\qquad\text{for any }u>0,
\end{equation}
where we recall that $\rho(u)$ is the radius of convergence of $z\mapsto M(z,u)$. On \cref{z_function_u}, the value of $y(u)$ is represented, using an explicit expression (see \cref{rem-y-fonction-u}). Notice that in view of \cref{bivar}, $y(u) \leq \rho_{B}$ for all $u>0$ and
\begin{equation}
\label{Mrhouu}
1 + u(B(y(u)) -1) = M(\rho(u),u).
\end{equation}
Then, by \cref{asymptB}, for all $u>0$, we have:
\[\mu^{u}(\{2j\}) \sim \sqrt{\frac{3}{\pi}} \frac{2}{27} \frac{u}{M(\rho(u),u)} \(\frac{27}{4} y(u)\)^j j^{-5/2},\quad\text{as }j\to\infty,\]
so that by setting
\begin{equation}
\label{cu}
c(u) = \sqrt{\frac{3}{\pi}} \frac{2}{27} \frac{u}{M(\rho(u),u)},
\end{equation}
it holds that
\begin{equation}
\label{asympt-mu}
\mu^{u}(\{2j\}) \sim c(u) \(\frac{27}{4}y(u)\)^j j^{-5/2},\quad\text{as }j\to\infty.
\end{equation}
The following proposition extends \cite[Proposition 3.1]{2Louigi} to our setting.

\begin{figure}
\begin{center}
\includegraphics[width=8cm, center]{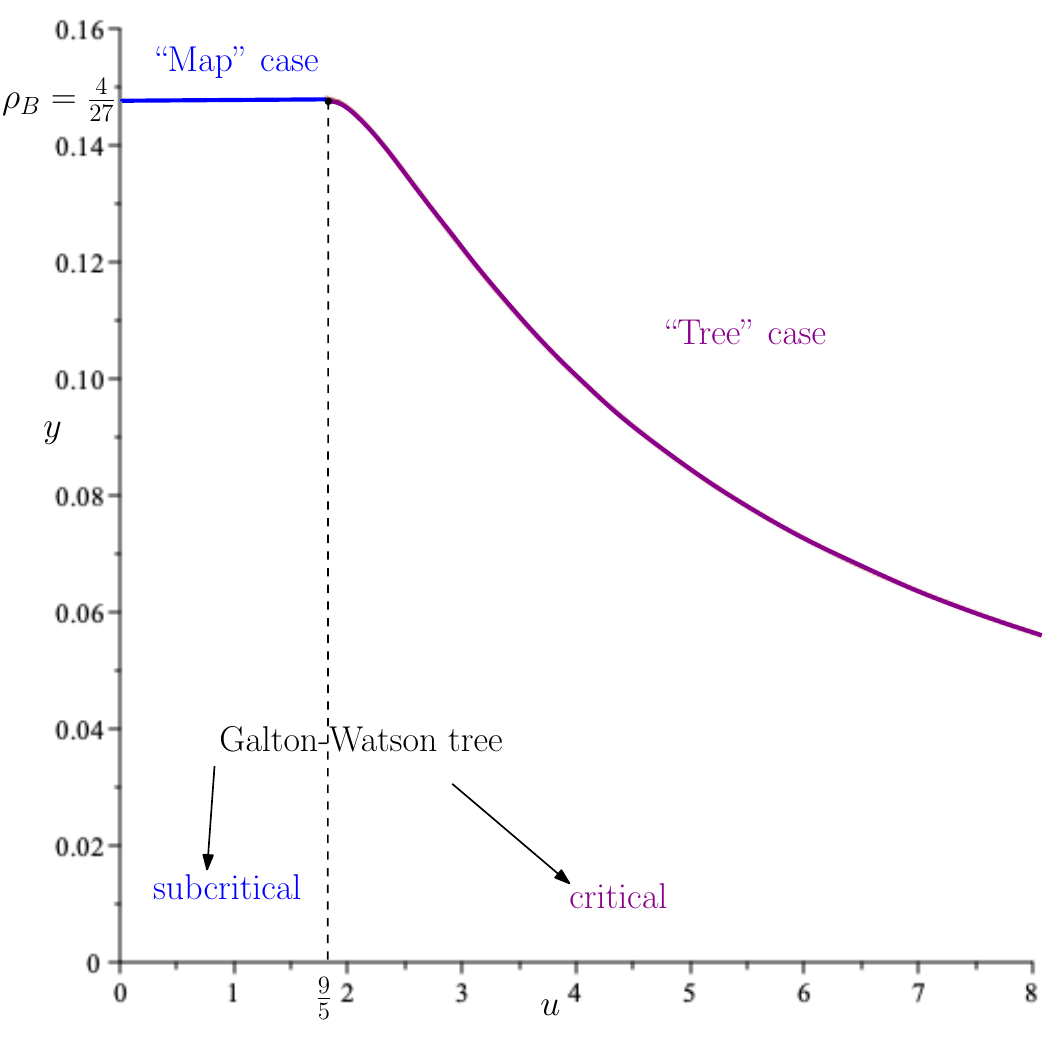}
\end{center}
\caption{Plot of $y$ as a function of $u$.}
\label{z_function_u}
\end{figure}

\begin{proposition}
\label{th-trees}
Let $(\mathbf{B}_v,v\in \mathbf{T})$ be either the family $(\b_v^\M)_{v\in\T}$ of blocks of $\M$, or $(\b_v^\Q)_{v\in\T}$ of blocks of $\Q$.
For every $u>0$, under $\mathbb{P}_u$, the law of tree of blocks
$(\mathbf{T},(\mathbf{B}_v,v\in \mathbf{T}))$ can be described as
follows.\begin{itemize}
\item $\mathbf{T}$ follows the law $GW(\mu^u)$;
\item Conditionally given $\mathbf{T}=\mathfrak{t}$,
the blocks $(\mathbf{B}_{v},v\in \mathfrak{t})$ are independent random variables, and, for $v\in\t$, $\B_v$ follows a uniform distribution on the set of blocks of size $k_v(\t)/2$, where $k_v(\mathfrak{t})$ is the number of children of $v$ in $\mathfrak{t}$.
\end{itemize}
For every $n\geq 1$, the same statements hold under $\mathbb{P}_{n,u}$, only replacing $GW(\mu^u)$ with $GW(\mu^u,2n)$.
\end{proposition}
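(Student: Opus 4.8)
The plan is to establish the claim directly from Tutte's recursive block decomposition (Proposition~\ref{prop-t-m-n}), first for the Boltzmann law $\mathbb{P}_u$ and then transferring to $\mathbb{P}_{n,u}$ by conditioning on the size. The key observation is that a map $\m$ is encoded bijectively by the pair $\bigl(T_\m,(\b_v,v\in T_\m)\bigr)$, and that under $\mathbb{P}_u$ the Boltzmann weight $u^{b(\m)}\rho(u)^{|\m|}$ factorizes over the internal nodes of $T_\m$: writing $r_v$ for the number of children of $v$, each internal node of degree $r_v$ contributes a block of size $r_v/2$, hence a factor $u$, a factor $b_{r_v/2}$ for the choice of block among 2-connected maps of that size (in the measure computation this is the counting, in the law it becomes uniformity), and a factor $\rho(u)^{r_v/2}$ from the edges of that block. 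Leaves contribute nothing (factor $1$, which is $b_0 u^0$). So I would first check that the product, over internal nodes $v$, of $b_{r_v/2}\,u\,\rho(u)^{r_v/2}$, together with the edge-identification from Proposition~\ref{prop-t-m-n} (edges of $T_\m$ $\leftrightarrow$ half-edges of $\m$, i.e.\ $2|\m|$ edges total), reproduces exactly $u^{b(\m)}\rho(u)^{|\m|}$ up to a normalising constant.

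Next I would identify that normalising constant with the Galton-Watson normalisation. Define $\mu^u$ as in~\eqref{mu}--\eqref{def-y-u}; the point is that with $y(u)=\rho(u)M^2(\rho(u),u)$ one has $\mu^u(2j)\propto b_j y(u)^j u^{\mathbb{1}_{j\neq 0}}$, and I would show that the ``per-node'' weight $b_{r_v/2}u\rho(u)^{r_v/2}$ in the Boltzmann computation equals $\mu^u(r_v)$ up to the factors of $M(\rho(u),u)$ hidden in $y(u)=\rho(u)M^2$. Concretely: a GW$(\mu^u)$ tree $\t$ has probability $\prod_v \mu^u(r_v)$; expanding each $\mu^u(r_v)=b_{r_v/2}(\rho(u)M^2)^{r_v/2}u^{\mathbb{1}}/M(\rho(u),u)$ and using $\sum_v r_v = (\text{number of edges of }\t)=(\text{number of internal nodes})+(\text{number of leaves})-1$ to collect the powers of $M(\rho(u),u)$, all the $M$-factors telescope (this is the standard ``$M$ counts subtrees hanging off edges'' bookkeeping, exactly as in Tutte's functional equation $M=uB(zM^2)+1-u$), leaving precisely $\rho(u)^{|\m|}u^{b(\m)}/M(\rho(u),u)$ — i.e.\ $\mathbb{P}_u^{\mathrm{map}}(\m)$ when we further replace the counting $b_{r_v/2}$ by a uniform choice among $\mathcal{B}_{r_v/2}$ for the block at $v$. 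This gives the joint law statement: $\mathbf{T}\sim GW(\mu^u)$ and, conditionally on $\mathbf{T}=\t$, the blocks are independent with $\mathbf{B}_v$ uniform on blocks of size $r_v/2$. The quadrangulation version is then immediate from Proposition~\ref{arbre-bloc-quad-carte} and Proposition~\ref{prop-Tutte}, since $\varphi$ preserves the tree and maps 2-connected maps of size $k$ bijectively to simple quadrangulations of size $k$, hence preserves uniformity block-by-block.

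For the conditioned statement under $\mathbb{P}_{n,u}$: since $\mathbb{P}_{n,u}^{\mathrm{map}}=\mathbb{P}_u^{\mathrm{map}}(\,\cdot\mid\mathcal{M}_n)$ and the event $\{\m\in\mathcal{M}_n\}$ translates under the tree encoding to $\{|\m|=n\}$, which by Proposition~\ref{prop-t-m-n} (size of block at $v$ is $r_v/2$, and $|\m|=\sum_v r_v/2$, while $T_\m$ has $\sum_v r_v = 2n$ edges) is exactly the event $\{\mathbf{T}\text{ has }2n\text{ edges}\}$ — an event measurable with respect to $\mathbf{T}$ alone — conditioning factorizes: $\mathbf{T}$ becomes $GW(\mu^u,2n)$ and the conditional law of the blocks given $\mathbf{T}=\t$ is unchanged. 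I expect the main obstacle to be purely bookkeeping: getting the telescoping of the $M(\rho(u),u)$ factors exactly right (equivalently, verifying that the counting constant in Tutte's decomposition is $M(\rho(u),u)^{\#\text{leaves}}$ or the appropriate power, matching the GW normalisation). Everything else — the bijection, the factorisation of the weight, the passage to uniform blocks, and the conditioning argument — is a direct transcription of Proposition~\ref{prop-t-m-n} and the identity~\eqref{bivar}, following \cite[Proposition 3.1]{2Louigi} verbatim with the extra weight $u$ per internal node carried along.
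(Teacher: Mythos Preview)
Your proposal is correct and follows essentially the same approach as the paper: compute $\mathbb{P}_u(\mathbf{T}=\t,\mathbf{B}_v=\b_v\ \forall v)=\mathbb{P}_u(\m)=\rho(u)^{|\m|}u^{b(\m)}/M(\rho(u),u)$, substitute $\rho(u)=y(u)/M(\rho(u),u)^2$, and use the tree identity $1+\sum_v k_v(\t)=|V(\t)|$ so that the powers of $M(\rho(u),u)$ distribute as one factor per vertex, yielding exactly $GW(\mu^u)(\t)\prod_v 1/b_{k_v(\t)/2}$. The conditioning step and the quadrangulation transfer via Tutte's bijection are handled just as you describe.
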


\begin{proof}
It suffices to prove the statement for $\mathbb{P}_u$ as, by \cref{prop-t-m-n}, the block-tree of a map of size $n$ has size $2n$.
  
  Let $\mathfrak{t}$ be a tree where each vertex has
  an even number of children, and let $\(\mathfrak{b}_{v},v\in
  \mathfrak{t}\)$ be a family of ($2$-connected, or simple) blocks, with
  $2|\mathfrak{b}_{v}|=k_v(\mathfrak{t})$ for any $v\in\t$. Let $\mathfrak{m}$ be the map (or quadrangulation) with block decomposition given by
   $(\mathfrak{t},(\mathfrak{b}_v,v\in \mathfrak{t}))$.

   Then, we have
   \begin{align*}
&\mathbb{P}_u\(\mathbf{T}=\mathfrak{t},\mathbf{B}_v=\mathfrak{b}_v\ \forall v \in \t\)
=\mathbb{P}_u(\mathfrak{m})\\
&=\frac{\rho(u)^{|\mathfrak{m}|}u^{b(\mathfrak{m})}}{M(\rho(u),u)} = \frac{\rho(u)^{\sum_{v\in\t} k_v(\t)/2} u^{\sum_{v\in\t} \mathbb{1}_{k_v(\t) \ne 0}} }{M(\rho(u),u)} \prod_{v\in \t} \frac{b_{\frac{k_v(\t)}{2}}}{b_{\frac{k_v(\t)}{2}}}\\
&=\frac{1}{M(\rho(u),u)} \(\frac{y(u)}{M^2(\rho(u),u)}\)^{\sum_{v\in\t} k_v(\t)/2}\prod_{v\in
          \mathfrak{t}}{b_{\frac{k_v(\t)}{2}}u^{\mathbb{1}_{k_v(\mathfrak{t})\ne
              0}}}\times \prod_{v\in
          \mathfrak{t}}\frac{1}{b_{\frac{k_v(\t)}{2}}}
        \\
        &        =\frac{\prod_{v\in
          \mathfrak{t}}b_{\frac{k_v(\t)}{2}}y(u)^{k_v(\mathfrak{t})/2}u^{\mathbb{1}_{k_v(\mathfrak{t})\ne
              0}} }{M(\rho(u),u)^{1+\sum_{v\in\t} k_v(\t)}}\times \prod_{v\in
          \mathfrak{t}}\frac{1}{b_{\frac{k_v(\t)}{2}}}
        \\
        &=\prod_{v\in
          \mathfrak{t}}\frac{b_{\frac{k_v(\t)}{2}}y(u)^{k_v(\mathfrak{t})/2}u^{\mathbb{1}_{k_v(\mathfrak{t})\ne
              0}}}{M(\rho(u),u)}\times \prod_{v\in
          \mathfrak{t}}\frac{1}{b_{\frac{k_v(\t)}{2}}}\\
        &=GW(\mu^u)(\mathfrak{t}) \times \prod_{v\in
          \mathfrak{t}}\frac{1}{b_{\frac{k_v(\t)}{2}}}\, .
\end{align*}
This concludes the proof.
    \end{proof}

\begin{theorem}
\label{tree-critical} Recall the definition of $c(u)$ given in \cref{cu}. Then, depending on the value of $u$, the model $\mathbb{P}_u$ undergoes the following phase transition, driven by the properties of $\mu^u$:
\begin{description}
\item[Subcritical case.] For $u < u_C := 9/5$,
\begin{equation}\label{Eu}
E(u) := \E{\mu^{u}} = \frac{8u}{3(3+u)} <1\qquad\text{and}\qquad\mu^{u}(\{2j\}) \sim c(u) j^{-5/2}
\end{equation}
where $c(u)=\sqrt{\frac{3}{\pi}} \frac{2u}{9(3+u)}$;
\item[Critical case.] For $u = u_C:= 9/5$,
\[\E{\mu^{u_C}} =1\qquad\text{and}\qquad\mu^{u_C}(\{2j\}) \sim \frac{1}{4\sqrt{3\pi}}j^{-5/2};\]
\item[Supercritical case.] For $u > 9/5$,
\[\E{\mu^u} =1\qquad\text{and}\qquad\mu^{u}(\{2j\}) \sim c(u) \(\frac{27}{4} y(u)\)^j j^{-5/2},\]
where $y(u)<4/27$ so that $\mu^u$ has exponential moments.
\end{description}
\end{theorem}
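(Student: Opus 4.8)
The plan is to analyze the probability distribution $\mu^u$ defined in \cref{mu,def-y-u} by combining the defining relation \cref{bivar} with Tutte's enumerative constants \cref{tutte-valeurs}. The key observation is that $\mu^u$ is (up to a shift by a factor of $2$ in the argument) essentially the sequence $b_j y(u)^j u^{\mathbb{1}_{j \ne 0}}$ normalised by $M(\rho(u),u)$, so all three items reduce to understanding the single function $y(u)$ and the associated normalising constant. Since $y(u) \le \rho_B = 4/27$ always, the asymptotic \cref{asympt-mu} is already available; the tail behaviour in each case follows immediately from whether $\frac{27}{4}y(u) = 1$ (polynomial tail, $j^{-5/2}$) or $\frac{27}{4}y(u) < 1$ (exponential decay, hence exponential moments). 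Thus the entire theorem hinges on locating the threshold where $y(u) = \rho_B$ and computing $\E{\mu^u}$ on either side.

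First I would obtain an explicit handle on $y(u)$ in the regime $y(u) = \rho_B$, i.e.\ the ``saturated'' regime. When $y(u) = \rho_B = 4/27$, \cref{Mrhouu} gives $M(\rho(u),u) = 1 + u(B(\rho_B)-1) = 1 + u/3$ using $B(\rho_B) = 4/3$. On the other hand $y(u) = \rho(u)M^2(\rho(u),u)$ by \cref{def-y-u}, so in this regime $\rho(u) = \frac{4}{27}(1+u/3)^{-2}$. The condition that the model is actually saturated (that $y(u)$ has reached $\rho_B$) should be detected by the derivative condition coming from the implicit function $M(z,u) = uB(zM^2) + 1-u$: differentiating in $z$ shows that $\partial_z M$ blows up at $z = \rho(u)$ precisely when $1 = 2u\rho(u)M(\rho(u),u)B'(y(u))$, i.e.\ when $2u\rho(u)M(\rho(u),u)B'(\rho_B) = 1$ if we are at $y(u)=\rho_B$. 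Using $\rho_B B'(\rho_B) = 4/9$ from \cref{tutte-valeurs}, one has $B'(\rho_B) = \frac{9}{4}\cdot\frac{27}{4} = \frac{3}{1}$... more precisely $B'(\rho_B) = (4/9)/\rho_B = (4/9)\cdot(27/4) = 3$. Substituting $\rho(u) = \frac{4}{27}(1+u/3)^{-2}$ and $M(\rho(u),u) = 1+u/3$ into $2u\rho(u)M(\rho(u),u)B'(\rho_B) = 1$ yields $2u \cdot \frac{4}{27}(1+u/3)^{-2}\cdot(1+u/3)\cdot 3 = \frac{8u}{9(1+u/3)} = \frac{8u}{3(3+u)}$. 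Setting this equal to $1$ gives $8u = 3(3+u)$, i.e.\ $5u = 9$, i.e.\ $u = 9/5$. So for $u \ge 9/5$ the model is in the saturated/critical regime with $y(u) = \rho_B$ (and $\E{\mu^u}=1$), while for $u < 9/5$ it is subcritical with $y(u) < \rho_B$.

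Next I would compute $\E{\mu^u} = \sum_j 2j\,\mu^u(\{2j\})$ in general. By \cref{mu}, $\E{\mu^u} = \frac{2 y(u) B'(y(u)) u}{M(\rho(u),u)}$ (the $u$ in the numerator is legitimate since the $j=0$ term contributes zero to the mean). In the saturated regime this equals $\frac{2\rho_B B'(\rho_B)\,u}{M(\rho(u),u)} = \frac{(8/9)u}{1+u/3} = \frac{8u}{3(3+u)}$, which at $u = 9/5$ equals $1$ and for $u > 9/5$ would exceed $1$ — but this is impossible for a conditioned Galton-Watson model unless we are not actually in the saturated regime. The resolution, and the subtle point, is that for $u > 9/5$ the value $y(u)$ must \emph{decrease} below $\rho_B$ so as to keep $\E{\mu^u} = 1$; the relation $\rho(u) = y(u)/M^2(\rho(u),u)$ together with the fixed-point equation \cref{bivar} then determines $y(u) < \rho_B$, and one checks $\frac{27}{4}y(u) < 1$ strictly, giving the exponential moments. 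For $u < 9/5$, instead $y(u) = \rho_B$ stays saturated, the formula $\E{\mu^u} = \frac{8u}{3(3+u)}$ holds and is $<1$, and $\mu^u(\{2j\}) \sim c(u)j^{-5/2}$ with $c(u) = \sqrt{3/\pi}\,\frac{2}{27}\,\frac{u}{1+u/3} = \sqrt{3/\pi}\,\frac{2u}{9(3+u)}$ from \cref{cu,asympt-mu}. The critical formula $\mu^{u_C}(\{2j\}) \sim \frac{1}{4\sqrt{3\pi}}j^{-5/2}$ is then just $c(9/5)$ with $M(\rho(9/5),9/5) = 1 + 3/5 = 8/5$, so $c(9/5) = \sqrt{3/\pi}\,\frac{2}{27}\cdot\frac{9/5}{8/5} = \sqrt{3/\pi}\,\frac{2}{27}\cdot\frac{9}{8} = \sqrt{3/\pi}\cdot\frac{1}{12}$, and $\sqrt{3/\pi}/12 = \frac{1}{12}\sqrt{3/\pi} = \frac{1}{4\sqrt{3\pi}}$ after simplification.

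The main obstacle I anticipate is justifying rigorously \emph{which} of the two regimes (saturated $y(u) = \rho_B$ versus $y(u) < \rho_B$) holds as a function of $u$ — that is, proving that the singularity of $z \mapsto M(z,u)$ is of the ``branch-point'' type (governed by the square-root singularity of $B$ at $\rho_B$) exactly when $u \le 9/5$, and of the ``smooth'' type (where $M$ itself reaches a value at which $\partial_z M$ blows up before $zM^2$ reaches $\rho_B$) when $u > 9/5$. This is a standard but delicate analysis of the implicit equation $M = uB(zM^2) + 1 - u$ near its dominant singularity, and is presumably carried out in the referenced work of Bonzom \cite{bonzomLagrange} or via the general framework for such composition schemes; once that dichotomy is in place, every numerical claim in the theorem is a short substitution using \cref{tutte-valeurs,asymptB}.
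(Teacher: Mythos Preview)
Your approach is essentially the same as the paper's --- reduce everything to locating whether $y(u)=\rho_B$ or $y(u)<\rho_B$, then read off the mean and the tail from \cref{tutte-valeurs} and \cref{asympt-mu}. All your numerical substitutions are correct.

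However, you have the two regimes swapped in your first paragraph. You write ``for $u\ge 9/5$ the model is in the saturated regime with $y(u)=\rho_B$ \dots\ while for $u<9/5$ it is subcritical with $y(u)<\rho_B$.'' It is the other way round: $y(u)=\rho_B$ holds precisely for $u\le 9/5$ (this is what produces the polynomial tail $j^{-5/2}$ in the subcritical and critical cases), and $y(u)<\rho_B$ for $u>9/5$ (giving exponential moments). Your second paragraph silently corrects this, so the proposal is internally inconsistent; the second version is the right one.

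The sentence ``$\E{\mu^u}>1$ is impossible for a conditioned Galton--Watson model'' is not a valid justification --- nothing about the conditioning forces the mean below~$1$. What actually pins down the dichotomy is exactly the singularity analysis you flag as the ``main obstacle'': the dominant singularity $\rho(u)$ of $z\mapsto M(z,u)$ arises either because $\partial_m H(\rho(u),M(\rho(u),u))=0$ (implicit-function-theorem failure, which forces $2y(u)B'(y(u))-B(y(u))+1=1/u$, equivalently $\E{\mu^u}=1$), or because $zM^2$ hits $\rho_B$ (so $y(u)=\rho_B$). The paper resolves which case occurs by observing that $d(y):=2yB'(y)-B(y)+1=\sum_{n\ge1}(2n-1)b_ny^n$ is increasing on $(0,\rho_B]$ with $d(0)=0$ and $d(\rho_B)=5/9$; hence $d(y)=1/u$ has a solution $y\in(0,\rho_B]$ iff $u\ge 9/5$, and that solution is the actual $y(u)$ in the supercritical phase. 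This monotonicity argument is short and self-contained; you do not need to defer to \cite{bonzomLagrange}.
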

Notice that the case $u=1$, which corresponds to uniform planar maps, as studied by Addario-Berry \cite{2Louigi}, falls in the subcritical regime.

\begin{proof}[Proof of \cref{tree-critical}]
Let us first explain how the value $u_C:=9/5$ appears. Let $u > 0$ and $y \in \left (0,4/27\right]$. By \cref{mu},
\begin{equation}
\label{esperance}
\E{\mu^{y,u}} = \sum_{j\in \N}\frac{2jb_jy^ju^{\mathbb{1}_{j\ne0}}}{1+u(B(y)-1)} = \frac{2u yB'(y)}{1+u(B(y)-1)}.
\end{equation}

It follows that
\begin{equation}
\label{bij}
\E{\mu^{y,u}} = 1 \Leftrightarrow u = \frac{1}{2y{B}'(y)-B(y)+1}.
\end{equation}

The mapping $y \in \left(0,4/27\right] \mapsto d(y) := 2y{B}'(y)-B(y)+1$ is increasing. Indeed, for all $y \in \left(0,4/27\right]$,
\[d(y) = \sum_{n\geq 1} 2 n b_{n} y^{n} - \sum_{n\geq 0} b_{n} y^{n} + 1 = \sum_{n\geq 1} (2n-1) b_{n} y^n.\]

Moreover, if follows from \cref{tutte-valeurs} that $d(0) = 0$ and $d(4/27) = 5/9$. So $1/d(y)$ maps bijectively $\left (0,4/27\right]$ to $[9/5,+\infty)$. Therefore, there exists $y \in \left(0,4/27\right]$ such that the law $\mu^{y,u}$ is critical if and only if $u \in [9/5, +\infty)$, and this $y$ is unique.

We now conclude the proof of the theorem. For the sake of completeness, we recall an argument from \cite[\S8.2.2]{bonzomLagrange}. Recall \cref{bivar}:
\begin{equation*}
M(z,u) = uB(zM(z,u)^2) + 1-u.
\end{equation*}

For a fixed $u$, there are two possible sources of singularity:
\begin{enumerate}
  \item The pair $(z_0=\rho(u), m_0=M(\rho(u),u))$ satisfies $\frac{\partial H}{\partial m}(z_0,m_0) = 0$ for $H :(z,m) \mapsto m - uB(zm^2) - 1 +u$, thus being a singularity by the contraposition of the implicit function theorem. In this case,
\begin{equation*}
1 - 2 \rho(u) M(\rho(u), u) u B'(\rho(u) M^2(\rho(u),u)) = 0,\quad\text{so}\quad 2 \rho(u) M(\rho(u), u) u B'(y(u)) = 1.
\end{equation*}
Then, by \cref{Mrhouu},
\begin{equation}\label{eq-y-fonctions-implicites}
2 y(u) B'(y(u)) - B(y(u)) + 1 = \frac{2 \rho(u) M^2(\rho(u),u)}{2 u \rho(u) M(\rho(u),u)} - \frac{M(\rho(u),u)+u-1}{u} + 1 = \frac{1}{u},
\end{equation}
which is to say that $y(u) = \rho(u)M^2(\rho(u),u)$ satifies \cref{bij}. This is possible if and only if $u\geq 9/5$. Then, it follows that $\E{\mu^u} = 1$, and \cref{asympt-mu} gives the asymptotic behaviour of $\mu^u(2j)$.
\item A singularity of $B$ is reached so $\rho(u)M^2(\rho(u),u) = \rho_B = 4/27$ \emph{i.e.} $y(u) = 4/27$. Then, the value of $E(u)$ is obtained as an immediate consequence of \cref{esperance,tutte-valeurs}, and the asymptotic behaviour of $\mu^u(2j)$ comes from \cref{cu,asympt-mu}. This happens iff $u\leq 9/5$.
\end{enumerate}
Notice that at $u=u_C$, both types of singularity are reached.
\end{proof}

\begin{rem}
\label{rem-choice-y-u}
The proof of \cref{tree-critical} highlights the reasons behind our choice of $y(u)$ in \cref{def-y-u}. When $u\geq 9/5$, we choose $y(u)$ such that $E(u) = 1$. When $u<9/5$, this is not possible, and we choose the value of $y(u)$ maximising $E(u)$ so that, when conditioning the trees to be of size $2n$, the conditioning is as little degenerated as possible. See \cite[\S7]{survey-trees} for further details.
\end{rem}

\begin{rem}
\label{rem-y-fonction-u}
Using \cref{bij}, we obtain an explicit expression for $y$ in terms of $u$ for $u\geq u_C$. By \cite{tutte_1963}, the series $B$ is algebraic and for all $y \in [0,4/27]$,
\begin{equation}
\label{B-alg}
B(y)^3 - B(y)^2 - 18yB(y) + 27y^2 + 16y = 0.
\end{equation}
This gives an expression of $B'$ in terms of $B$, and taking the resultant between this new equation and \cref{bij} allows to eliminate $B$. Initial conditions then give
\begin{equation}
\label{y-en-fonction-u}
u = \frac{1}{2y{B}'(y)-B(y)+1} \Leftrightarrow y=\(1-\sqrt{1 - \frac1u}\)\(1 - \frac1u\).
\end{equation}
\end{rem}


\section{Study of the size of the largest blocks}
\label{section:blocks}

\subsection{Subcritical case}
To investigate the distribution of the size of the largest blocks, in the subcritical case, we follow the approach developped in \cite{2Louigi}, which consists in studying the degrees in the block tree of a map. To that end, we rely on results of \emph{condensation} in Galton-Watson trees: exactly one of the nodes has a degree linear in the size. To that end, we rely on Janson's survey \cite{survey-trees}, in which there is a refinement of the study of the largest degree of a subcritical Galton-Watson tree with condensation by Jonsson and Stefánsson \cite{tree-deg}. The condensation phenomenon is visible in the following result where, denoting by $d_{TV}$ the total variation distance, we write $X_n \overset{(d)}{\approx} Y_{n}$ if $d_{TV}(X_n,Y_n) \to 0$ as $n\to\infty$:

\begin{proposition}[\protect{\cite[Theorem 19.34]{survey-trees}}]
\label{survey-tree-subcritical} Let $\mu$ be a probability distribution on $\N$ such that $\mu(0) > 0$, $\E\mu < 1$ and there exists $c$ satisfying $\mu(k) \sim_{k\to\infty} c k^{-5/2}$.
 Let $D_{n,1} \geq D_{n,2} \geq \dots \geq D_{n,n}$ be the ranked list of the number of children of a $\mu$-Galton-Watson tree conditioned to have $n$ edges. Then, letting $\xi_1, \dots, \xi_{n-1}$ be a family of $n-1$ independent random variables of law $\mu$ and $\(\xi_{1}^{(n)}, \dots, \xi_{n-1}^{(n)}\)$ their decreasing reordering, it holds that:
\begin{equation}
\label{lemma3.4Louigi}
\(D_{n,1},\dots,D_{n,n}\) \overset{(d)}{\approx}\(n-\sum_{i=1}^{n-1} \xi_i, \xi_{1}^{(n)},\dots,\xi_{n-1}^{(n)}\).
\end{equation}
\end{proposition}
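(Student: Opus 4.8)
\emph{Plan.} This is the condensation theorem of Jonsson and Stef\'ansson \cite{tree-deg}, in the sharp total-variation form of \cite[Theorem 19.34]{survey-trees}, and I would follow that route. First I would reduce it to a statement about conditioned sums of i.i.d.\ variables: a $\mu$-Galton--Watson tree conditioned on having $n$ edges has $n+1$ vertices, and by the cycle lemma its multiset of out-degrees has exactly the law of $\{\zeta_1,\dots,\zeta_{n+1}\}$, where $\zeta_1,\dots,\zeta_{n+1}$ are i.i.d.\ of law $\mu$ conditioned on $S_{n+1}:=\zeta_1+\dots+\zeta_{n+1}=n$. Thus $(D_{n,1},\dots,D_{n,n})$ is the top $n$ entries of the decreasing rearrangement of such a conditioned family. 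Since $m:=\E\mu<1$, this conditions $S_{n+1}$ to exceed its mean $(n+1)m$ by $n(1-m)-m=\Theta(n)$, i.e.\ we sit deep in a large-deviation regime (the excess is of order $n$, while the natural fluctuation scale of $S_{n+1}$, see below, is only $n^{2/3}\ll n$), which is where condensation is expected.

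\emph{Local limit estimates.} From $\mu(k)\sim ck^{-5/2}$, the $\zeta_i$ lie in the domain of attraction of a spectrally positive $3/2$-stable law (finite mean, infinite variance, norming $k^{2/3}$). I would rely, for $S_k=\zeta_1+\dots+\zeta_k$, on the stable local limit theorem $\mathbb P(S_k=\ell)\sim k^{-2/3}g\big((\ell-km)/k^{2/3}\big)$ uniformly for $\ell-km=\OO(k^{2/3})$ (with $g$ the stable density), on the one-big-jump local limit theorem $\mathbb P(S_k=\ell)\sim k\,\mu(\ell-\floor{km})$ uniformly for $\ell-km\ge\varepsilon k$, and on the slow variation of $\mu$, i.e.\ $\mu(\ell')/\mu(\ell)\to1$ whenever $\ell,\ell'\to\infty$ with $\ell'/\ell\to1$. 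Combining these gives in particular $\mathbb P(S_{n+1}=n)\sim(n+1)\,\mu(\floor{n(1-m)})$: in the decomposition $\mathbb P(S_{n+1}=n)=\sum_{j}\mu(j)\,\mathbb P(S_n=n-j)$ the dominant contribution comes from \emph{typical} $j$, for which $n-j$ sits $\Theta(n)$ above the mean of $S_n$ and the big-jump estimate applies.

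\emph{Condensation and equivalence of ensembles.} From these estimates two facts would follow by first/second moment computations. First: under $\mathbb P(\,\cdot\mid S_{n+1}=n)$, with probability $1-o(1)$ there is a unique maximal coordinate, equal to $n(1-m)+\OO(n^{2/3})$, while every other coordinate is $\OO(n^{2/3})$. Second, and this is the crux: writing $I$ for the (a.s.\ unique) argmax,
\[
\mathbb P\big((\zeta_i)_{i\neq I}=(x_1,\dots,x_n)\ \big|\ S_{n+1}=n\big)=\frac{(n+1)\,\mu\big(n-\textstyle\sum_j x_j\big)}{\mathbb P(S_{n+1}=n)}\;\indic{n-\sum_j x_j>\max_j x_j}\;\prod_j\mu(x_j),
\]
so the Radon--Nikodym density of $\mathcal L\big((\zeta_i)_{i\neq I}\mid S_{n+1}=n\big)$ with respect to $\mu^{\otimes n}$ equals $(n+1)\mu(n-\sum_j x_j)/\mathbb P(S_{n+1}=n)$ on $\{n-\sum_j x_j>\max_j x_j\}$. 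For $\mu^{\otimes n}$-typical $(x_j)$ one has $n-\sum_j x_j=n(1-m)+\OO(n^{2/3})$, of order $n$ with $\oo(n)$ fluctuations, so by slow variation the numerator is $\sim(n+1)\mu(\floor{n(1-m)})\sim\mathbb P(S_{n+1}=n)$ and the density tends to $1$ in $\mu^{\otimes n}$-probability; a uniform-integrability bound (again from the local estimates) upgrades this to $d_{TV}\big(\mathcal L((\zeta_i)_{i\neq I}\mid S_{n+1}=n),\mu^{\otimes n}\big)\to0$. Here the heavy, slowly varying tail of $\mu$ is essential: for a light-tailed $\mu$ the constraint factor $\mu(n-\sum_j x_j)$ would vary too fast across the typical range for this total-variation statement to hold. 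I expect this step — making the local limit estimates uniform over the \emph{entire} range of $\ell$ (stable-CLT window and big-jump range simultaneously, with matching error control) so that the density can be controlled in $L^1$ rather than merely in probability — to be the main obstacle.

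\emph{Conclusion.} Sorting the non-maximal coordinates, they are $o(1)$-close in total variation to $(Y_1^{(n)},\dots,Y_n^{(n)})$, the decreasing rearrangement of $n$ i.i.d.\ copies $Y_1,\dots,Y_n$ of $\mu$, while the maximal coordinate is $n-\sum_{i=1}^n Y_i$. Dropping $D_{n,n+1}$ amounts to dropping $Y_n^{(n)}=\min_i Y_i$, which equals $0$ with probability $1-o(1)$ (as $\mu(0)>0$), so $n-\sum_{i=1}^n Y_i=n-\sum_{i=1}^{n-1}Y_i^{(n)}$ w.h.p.; and $(Y_1^{(n)},\dots,Y_{n-1}^{(n)})$, being the $n-1$ largest of $n$ i.i.d.\ $\mu$'s, is itself $o(1)$-close in total variation to the decreasing rearrangement of $n-1$ i.i.d.\ copies of $\mu$ (the only discrepancy being an $\OO(1)$ shift in the number of zeros, washed out by the $\Theta(\sqrt n)$ fluctuation of a $\mathrm{Binomial}(n-1,\mu(0))$ count). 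Finally $n-\sum_{i=1}^{n-1}\xi_i$ is of order $n(1-m)$ whereas the $\xi_i^{(n)}$ are $\OO(n^{2/3})$, hence it is automatically the largest coordinate of the right-hand side of \eqref{lemma3.4Louigi} and no extra reordering is needed. Assembling these total-variation bounds yields \eqref{lemma3.4Louigi}; all of this is carried out in \cite[§19]{survey-trees}, building on \cite{tree-deg}, which is why we simply invoke it.
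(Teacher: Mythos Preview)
The paper does not prove this proposition; it is quoted from \cite[Theorem~19.34]{survey-trees} and used as a black box in the proof of \cref{souscritique}. Your proposal correctly outlines the argument found there (cycle-lemma reduction to conditioned i.i.d.\ sums, one-big-jump local limit theorem, Radon--Nikodym comparison showing the non-maximal coordinates are asymptotically unconditioned) and rightly concludes by invoking the reference, which is exactly what the paper does.
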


We combine this proposition with the fact that $\T$ is a Galton-Watson tree under $\mathbb P_u$ to get the following generalization of \cite[Theorem 3.3]{2Louigi}\footnote{One may notice that our $c(1)$ differs from Addario-Berry's $c$ which is because there is a small miscalculation for $c$ in \cite{2Louigi} due to the fact that $T_{n,u}$ does not have $n$ edges but $2n$ edges.} to every value of $u\in(0,9/5)$. This is a rephrasing of the results for trees of \cite{survey-trees}, to which we add the proof of the joint convergence. For $\m$ a map of size $n$, denote by $\mathrm{LB}_{1}(\m) \geq \dots \geq \mathrm{LB}_{b(\m)}(\m)$ the sizes of its blocks in decreasing order. By convention, we set $\mathrm{LB}_{k}(\m) = 0$ if $k>b(\m)$.

\begin{theorem}
\label{souscritique}
Let $u \in \(0,9/5\)$. Recall that $E(u)$ and $c(u)$ are defined in \cref{Eu,cu}. Then,
\[\mathrm{LB}_{1}(\M_{n,u}) = (1-E(u))n + O_{\mathbb{P}}(n^{2/3})\quad\text{and}\quad \mathrm{LB}_{2}(\M_{n,u})=O_{\mathbb{P}}(n^{2/3}).\]
Moreover, the following joint convergence holds:
\begin{equation}
\label{th-taille-blocs-sous-crit}
\(\frac{1}{2n c(u)}\)^{2/3}\((1-E(u))n-\mathrm{LB}_{1}(\M_{n,u}), \(\mathrm{LB}_{j}(\M_{n,u}), j\geq 2\)\) \xrightarrow[n\to\infty]{(d)} \(L_1,\(\Delta L_{(j-1)},j\geq 2\)\)
\end{equation}
where $(L_t)_{t\in[0,1]}$ is a Stable process of parameter $3/2$ such that
\[\E{e^{-sL_1}} = e^{\Gamma(-3/2)s^{3/2}}\]
and $\Delta L_{(1)} \geq \Delta L_{(2)} \geq \dots$ is the ranked sequence of its jumps.
\end{theorem}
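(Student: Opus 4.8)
The plan is to reduce the statement about block sizes of $\M_{n,u}$ to a statement about the vertex degrees of the block tree $\T_{n,u}$, and then invoke the abstract result on condensation in subcritical Galton-Watson trees. By \cref{prop-t-m-n}, an internal node $v$ of $\T$ with $r$ children corresponds to a block of size $r/2$, so if $D_{2n,1}\geq D_{2n,2}\geq\cdots$ is the ranked list of offspring numbers in $\T_{n,u}$, then $\mathrm{LB}_k(\M_{n,u}) = D_{2n,k}/2$ for every $k$ (leaves contribute $0$, which matches the convention $\mathrm{LB}_k=0$ for $k>b(\m)$). By \cref{th-trees}, under $\mathbb P_{n,u}$ the tree $\T$ has law $GW(\mu^u,2n)$, and by \cref{tree-critical}, in the subcritical regime $u<9/5$ we have $\E{\mu^u}=E(u)<1$, $\mu^u(0)>0$, and $\mu^u(\{2j\})\sim c(u)j^{-5/2}$. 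The only mild technical point is that $\mu^u$ is supported on even integers, so writing $\mu^u(k)\sim c'\,k^{-5/2}$ in the form required by \cref{survey-tree-subcritical} (over all $k$) does not literally hold; I would handle this by passing to the pushforward of $\mu^u$ under $j\mapsto j$ restricted to the even sublattice, or equivalently by noting that the proof of \cite[Theorem 19.34]{survey-trees} goes through verbatim on a sublattice, with the constant $c(u)$ replaced accordingly — this is where the footnote about the discrepancy with Addario-Berry's constant comes from.

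Granting that, \cref{survey-tree-subcritical} gives
\[
\(D_{2n,1},\dots,D_{2n,2n}\) \overset{(d)}{\approx} \(2n-\textstyle\sum_{i=1}^{2n-1}\xi_i,\ \xi_1^{(2n)},\dots,\xi_{2n-1}^{(2n)}\),
\]
where $\xi_1,\dots,\xi_{2n-1}$ are i.i.d.\ with law $\mu^u$ and $\xi_i^{(2n)}$ their decreasing rearrangement. Dividing by $2$, this says $\mathrm{LB}_1(\M_{n,u})$ is close in total variation to $n-\frac12\sum_{i=1}^{2n-1}\xi_i$ and the remaining block sizes are close to $\frac12\xi_i^{(2n)}$. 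The first-order statement then follows from the weak law of large numbers: $\frac12\sum_{i=1}^{2n-1}\xi_i = n E(u) + o_{\mathbb P}(n)$, and the $O_{\mathbb P}(n^{2/3})$ error, together with the $O_{\mathbb P}(n^{2/3})$ bound on $\mathrm{LB}_2$, comes from the heavy-tailed central limit behaviour: since $\mu^u$ has a tail exponent $5/2\in(1,3)$, the centred sum $\sum_{i=1}^{2n-1}(\xi_i-E(u))$, rescaled by $(2n c(u)\,\Gamma(-3/2)^{-1}\cdots)^{2/3}\sim n^{2/3}$, converges to a spectrally positive $3/2$-stable law, and the largest summands $\xi_i^{(2n)}$ are of order $n^{2/3}$ as well (their rescaled ranked values converging to the ranked jumps of the limiting stable subordinator).

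For the joint convergence \cref{th-taille-blocs-sous-crit}, the point is the classical fact that for i.i.d.\ variables in the domain of attraction of a stable law of index $\alpha=3/2$, one has the \emph{joint} convergence of the rescaled centred partial sum together with the rescaled order statistics to $(L_1,(\Delta L_{(j)})_j)$, where $L$ is the corresponding $3/2$-stable Lévy process and $\Delta L_{(1)}\geq\Delta L_{(2)}\geq\cdots$ its ranked jumps on $[0,1]$; see e.g.\ the Lévy-Itô description of stable processes, or the references in \cite{survey-trees}. Concretely: the tail asymptotic $\mu^u(\{2j\})\sim c(u)j^{-5/2}$ gives $\P{\xi_1\geq x}\sim \tfrac{2}{3}c(u)\,x^{-3/2}$ (summing the tail), which pins down the normalisation $(2nc(u))^{2/3}$ and the Laplace exponent $\Gamma(-3/2)s^{3/2}$ of $L_1$ stated in the theorem. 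Transferring this joint convergence through the total-variation approximation of \cref{survey-tree-subcritical} (total variation is preserved under measurable maps, and weak convergence is stable under adding a $d_{TV}\to0$ perturbation) and through the factor-$1/2$ rescaling yields \cref{th-taille-blocs-sous-crit}. The main obstacle is bookkeeping: getting the constants exactly right — the $\tfrac23$ from summing the $j^{-5/2}$ tail, the $\Gamma(-3/2)$ in the Laplace transform, the $(2nc(u))^{2/3}$ normalisation, and the factor $2$ between tree degrees and block sizes — and making sure the sublattice issue in invoking \cref{survey-tree-subcritical} is dispatched cleanly; the probabilistic content is entirely standard once the translation to the Galton-Watson tree is in place.
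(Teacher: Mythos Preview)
Your proposal is correct and follows essentially the same route as the paper: reduce block sizes to block-tree degrees via \cref{prop-t-m-n}, apply \cref{survey-tree-subcritical} using the tail asymptotics from \cref{tree-critical}, and then invoke the heavy-tailed limit theorem for i.i.d.\ sums in the domain of attraction of a $3/2$-stable law. The only substantive difference is that where you cite the joint convergence of $(\text{centred sum},\text{ranked order statistics})$ to $(L_1,(\Delta L_{(j)})_j)$ as a classical fact, the paper actually supplies the argument: it establishes functional convergence of the rescaled partial-sum process in the Skorokhod topology and then extracts the ranked jumps one by one via \cite[Chapter VI, Proposition 2.4]{JS87}, iterating by subtracting the current largest jump — this is worth knowing since a clean off-the-shelf reference for the joint statement is not so easy to locate.
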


When $u\to 0$, we have $1-E(u)\to1$: as expected, if the map has only one block, its size is $n$.

\begin{rem}
If $(L_t)_{t\in[0,1]}$ is a Stable process of parameter $3/2$ satisfying $\E{e^{-sL_1}} = e^{\Gamma(-3/2)s^{3/2}}$ for $s$ such that $Re(s)\geq 0$; then, it is known that (see \cite[Theorem 1]{bertoin1996levy} and its proof):
\[L_1 \overset{(d)}{=} \lim_{\varepsilon\to 0}\sum_{j:\Delta L_{(j)}\geq \varepsilon} \Delta L_{(j)} - \frac{2}{\sqrt{\varepsilon}}.\]
\end{rem}

\begin{proof}[Proof of \cref{souscritique}]
Recall that the subcritical case corresponds to $u \in \left(0,9/5\right)$, for which we have \[\rho(u)M^2(\rho(u),u)=4/27.\]

We follow essentially the same lines of proof as in \cite{2Louigi}, but refining the arguments so as to establish the joint convergence stated in \cref{th-taille-blocs-sous-crit}. \Cref{tree-critical} shows that the hypotheses of \cref{survey-tree-subcritical} are satisfied in the subcritical case.

Let $\(\xi_i\)_{i\geq1}$ be a family of iid random variables of law $\mu^u$ and let $\(\xi^{(n)}_{1}, \dots, \xi^{(n)}_{n}\)$ be the decreasing reordering of its first $n$ variables (take the convention $\xi^{(n)}_{i} = 0$ if $i> n$). Let us consider the following cumulative process:
\[L^{(n)}_t = \frac{\sum_{i=1}^{\ceil{2nt}} \xi_i - 2ntE(u)}{C(u) (2n)^{2/3}}\quad\text{for}\ t\in[0,1],\quad\text{where}\quad C(u) = 2 c(u)^{2/3}.\]

It is standard \cite[Theorem XVII.5.2]{feller} \cite[Chapter VII, Corollary 3.6]{JS87} that there exists a Lévy process $(L_t)_{t\in[0,1]}$ with Lévy measure $\pi(dx)= x^{-5/2}dx \mathbb{1}_{\{x>0\}}$ so that for $s$ such that $Re(s)\geq0$,
\[\E{e^{-sL_1}} = e^{\Gamma(-3/2)s^{3/2}},\]
and such that the following convergence holds in the Skorokhod topology
\begin{equation}
\label{convergence-process}
\(L^{(n)}_t\)_{t\in[0,1]} \xrightarrow[n\to\infty]{(d)} \(L_t\)_{t\in [0,1]}.
\end{equation}

By definition of the process $L^{(n)}_t$, $\frac{\xi_{i}}{{C(u) (2n)^{2/3}}}$ is its $i$-th jump. In particular, denoting by $\Delta P_t$ the jump of the process $(P_t)$ at time $t$ (which may equal $0$),
\[\frac{\xi^{(2n)}_{1}}{{C(u) (2n)^{2/3}}} = \sup_{0\leq t\leq1} \Delta L^{(n)}_t.\]
By \cite[Chapter VI, Proposition 2.4]{JS87}, \cref{convergence-process} gives
\[\frac{\xi^{(2n)}_{1}}{C(u) (2n)^{2/3}} \xrightarrow[n\to\infty]{(d)} \sup_{0\leq t\leq1} \Delta L_t:=\Delta L_{(1)}.\]
By construction of a Lévy process, $(\Delta L_{(j)})_{j\geq 1}$ has same law as the decreasing rearrangement of the atoms of a Poisson random measure with intensity $\pi$ on $\mathbb{R}^+$ (see e.g. \cite[Theorem 1]{bertoin1996levy}). By denoting $t^{(n)}_1$ the time at which the jump $\xi^{(2n)}_{1}$ of the process $L^{(n)}_t$ is realised, one has:
\[\frac{\xi^{(2n)}_{2}}{{C(u)(2n)^{2/3}}} = \sup_{0\leq t\leq1} \Delta \(L^{(n)}_t - \frac{\xi^{(2n)}_{1}}{{C(u)(2n)^{2/3}}} \mathbb{1}_{t\geq t^{(n)}_1}\)_t.\]
So, applying again \cite[Chapter VI, Proposition 2.4]{JS87}, one gets, denoting by $t_1$ the time of the largest jump of $(L_1)$:
\[\frac{\xi^{(2n)}_{2}}{{C(u)(2n)^{2/3}}} \xrightarrow[n\to\infty]{(d)} \sup_{0\leq t\leq1} \Delta \(L_t-\Delta L_{(1)}\mathbb{1}_{t\geq t_1}\)_t=\Delta L_{(2)}.\]
It is again possible to iterate by subtracting the largest jump: for all $k\geq 1$,
\begin{equation}
\label{cv-sauts}
\frac{1}{{C(u)(2n)^{2/3}}}\(\xi^{(2n)}_{1}, \dots, \xi^{(2n)}_{k}\) \xrightarrow[n\to\infty]{(d)} \(\Delta L_{(1)}, \dots, \Delta L_{(k)}\).
\end{equation}
However, by \cref{survey-tree-subcritical,lemma3.4Louigi}, one has (recall that a map of size $n$ has $2n+1$ components, some of which might be empty):
\begin{equation*}
2\(\mathrm{LB}_{1}(\M_{n,u}), \dots, \mathrm{LB}_{2n+1}(\M_{n,u})\) \overset{(d)}{\approx}\(2n-\sum_{i=1}^{2n} \xi_i, \xi^{(2n)}_{1},\dots,\xi_{2n}^{(2n)}\).
\end{equation*}

Therefore, for all $k \geq 2$ fixed
\begin{align*}
\label{app_lemme_janson}
&\(\frac{(1-E(u))n-\mathrm{LB}_{1}(\M_{n,u})}{\frac12C(u)(2n)^{2/3}}, \frac{\mathrm{LB}_{2}(\M_{n,u})}{\frac12C(u)(2n)^{2/3}}, \dots,\frac{\mathrm{LB}_{k}(\M_{n,u})}{\frac12C(u)(2n)^{2/3}}\)\\
&\overset{(d)}{\approx}
\(\frac{\sum_{i=1}^{2n} \xi_i - 2E(u)n}{C(u)(2n)^{2/3}}, \frac{\xi^{(2n)}_{1}}{C(u)(2n)^{2/3}}, \dots,\frac{\xi^{(2n)}_{k}}{C(u)(2n)^{2/3}}\)\\
&\xrightarrow[n\to\infty]{(d)}
\(L_1, \Delta L_{(1)}, \dots, \Delta L_{(k)}\).
\end{align*}
This allows to conclude since $k$ is arbitrary.

\end{proof}

\subsection{Supercritical case}
\label{large-block-supercritical}
The supercritical case corresponds to $u \in \left(9/5,+\infty\right)$ and $y(u) = \rho(u)M^2(\rho(u),u)\in \left(0,4/27\right)$. Recall that in this case $T$ is distributed under $\mathbb P_u$ as a critical Galton-Watson tree with finite exponential moments by \cref{th-trees} and \cref{tree-critical}.

Properties of the maximum degree of critical Galton-Watson trees have been extensively studied by Janson \cite{survey-trees}, building on work by Meir and Moon \cite{MeirMoon}. For the case where the offspring distribution admits finite exponential moments, Janson shows the following result.

\begin{proposition}[\protect{\cite[Theorem 19.16]{survey-trees}}]
Let $\mu$ be a probability distribution on $\N$ such that $\mu(0) > 0$, and $\mu(k+1)/\mu(k)$ converges to a finite limit as $k \to \infty$. Let $D_{n,i}$ be the $i$-th maximal number of children of nodes in a $\mu$-Galton-Watson tree conditioned to have $n$ edges. Denote by $\rho$ the radius of convergence of $\Phi:t\mapsto \sum_{k\in\N} \mu(k)t^k$, and $\nu= \lim_{x\to\rho^-} x \frac{\Phi'(x)}{\Phi(x)}$. Suppose $\nu > 1$. Then, denoting $k(n) = \max\{k \in \N \mid \mu(k) \geq 1/n\}$, for all $j \geq 1$,
\[D_{n,j} = k(n) + O_{\mathbb{P}}(1).\]
\end{proposition}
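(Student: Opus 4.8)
The plan is to reduce this statement about the conditioned tree to the behaviour of the top order statistics of a large i.i.d.\ sample with a geometric-type tail, and then to run a direct extreme-value argument. First I would use that $GW(\mu,n)$ depends on $\mu$ only through its exponential-tilting class: for $\theta\in(0,\rho]$ the tilted law $\mu_\theta(k):=\mu(k)\theta^k/\Phi(\theta)$ satisfies $GW(\mu_\theta,n)=GW(\mu,n)$, since the Radon--Nikodym factor $\prod_v\theta^{d_v}/\Phi(\theta)$ depends only on the (fixed) numbers of edges and vertices. The map $\theta\mapsto\theta\Phi'(\theta)/\Phi(\theta)$ (the mean of $\mu_\theta$) increases from $0$ (as $\theta\to 0$, using $\mu(0)>0$) to $\nu$ (as $\theta\to\rho$), so $\nu>1$ gives a unique $\theta_\ast\in(0,\rho)$ with $\mu_{\theta_\ast}$ critical; since $\theta_\ast<\rho$, $\mu_{\theta_\ast}$ has exponential moments, hence finite variance $\sigma^2\in(0,\infty)$. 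So I may assume $\mu$ critical — in the use made of this Proposition, $\mu=\mu^u$ with $u>9/5$ is already critical by \cref{tree-critical}, so this step is vacuous. Then, by the classical coding of conditioned Galton--Watson trees (the cycle lemma), the unordered family of out-degrees of $GW(\mu,n)$ has the law of $(\xi_1,\dots,\xi_{n+1})$ with $\xi_i$ i.i.d.\ $\sim\mu$, conditioned on $\{S_{n+1}=n\}$ where $S_m:=\xi_1+\cdots+\xi_m$; hence $(D_{n,1},D_{n,2},\dots)$ is the decreasing rearrangement of this conditioned sample.

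The second ingredient is that, $\mu$ being critical with finite variance, the local limit theorem gives $\pr{S_{n+1}=n}\sim d/(\sigma\sqrt{2\pi n})$, where $d$ is the span of $\mu$ (for $\mu^u$ one has $d=2$ and works throughout on the even sublattice, which changes nothing essential). In particular the conditioning is only polynomially costly, and $\pr{A\mid S_{n+1}=n}\leq C\sqrt{n}\,\pr{A\cap\{S_{n+1}=n\}}$ for every event $A$. For the upper bound I would then combine a union bound with the uniform estimate $\sup_j\pr{S_m=j}=O(m^{-1/2})$: writing $\bar\mu(\ell):=\pr{\xi_1\geq\ell}$ and using $\pr{\xi_1\geq\ell, S_{n+1}=n}=\sum_{j\geq\ell}\mu(j)\,\pr{S_n=n-j}=O(n^{-1/2})\,\bar\mu(\ell)$, one gets
\[
\pr{D_{n,1}\geq k(n)+m}\leq\frac{(n+1)\,\pr{\xi_1\geq k(n)+m, S_{n+1}=n}}{\pr{S_{n+1}=n}}\leq C(n+1)\,\bar\mu\bigl(k(n)+m\bigr).
\]
Since $\mu(k+1)/\mu(k)\to 1/\rho=:r<1$, one has $\bar\mu(\ell)\asymp\mu(\ell)$, while maximality of $k(n)$ gives $\mu(k(n)+1)<1/n$ and hence $\mu(k(n)+m)\leq(r+o(1))^{m-1}/n$; therefore $\limsup_n(n+1)\,\bar\mu(k(n)+m)\leq Cr^{m-1}\to 0$ as $m\to\infty$. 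This is exactly the statement $D_{n,1}=k(n)+O_{\mathbb P}(1)$, and a fortiori the same holds for each fixed $D_{n,j}$.

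For the lower bound I would show that, for each fixed $j$, $\pr{D_{n,j}\leq k(n)-m}\to 0$ as $m\to\infty$ uniformly in $n$. In the \emph{unconditioned} sample the count $N_m:=\#\{i\leq n+1:\xi_i\geq k(n)-m\}$ is $\mathrm{Bin}(n+1,\bar\mu(k(n)-m))$, whose mean $(n+1)\,\bar\mu(k(n)-m)\asymp n\,\mu(k(n)-m)\geq c_m$ for $n$ large, with $c_m\to\infty$ as $m\to\infty$ (using $\mu(k(n))\geq 1/n$ and the backward ratio bound $\mu(k)/\mu(k-1)\to 1/r$); so $\pr{N_m\leq j-1}\to 0$ as $m\to\infty$ uniformly in $n$, by a Chernoff bound. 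To pass to the conditioned law, I would estimate $\pr{N_m\leq j-1, S_{n+1}=n}$ directly: condition on the at most $j-1$ exceptional coordinates and apply a local limit theorem to the remaining coordinates truncated below $k(n)-m$ (a law at total-variation distance $\bar\mu(k(n)-m)\to 0$ from $\mu$), obtaining a bound $o(n^{-1/2})$, and then invoke the transfer inequality from the previous paragraph.

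The hard part will be precisely this last point: one must show that \emph{simultaneously} forcing few large values \emph{and} the constraint $S_{n+1}=n$ is much more costly than the constraint alone, which requires a local limit theorem for the truncated increments that is uniform both as $n\to\infty$ and as the truncation level $k(n)-m\to\infty$, together with the bookkeeping for the span $d$ when $\mu$ is lattice; everything else is routine. I would also note that this scheme yields the joint statement that $D_{n,1},\dots,D_{n,j}$ all equal $k(n)+O_{\mathbb P}(1)$, but not their finer fluctuations, which oscillate in $n$ through the fractional part of the threshold defining $k(n)$.
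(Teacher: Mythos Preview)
The paper does not prove this proposition; it is quoted verbatim from Janson's survey \cite[Theorem 19.16]{survey-trees} and used as a black box to derive \cref{grandbloc}. So there is no proof in the paper to compare against, and your sketch stands on its own.

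Your outline is correct and is essentially the standard route: tilt to a critical distribution with exponential moments, code the conditioned tree by an i.i.d.\ sample conditioned on its sum via the cycle lemma, then do extreme-value analysis using the local limit theorem to control the cost of the conditioning. The upper bound via the union bound and the transfer inequality $\mathbb P(A\mid S_{n+1}=n)\leq C\sqrt n\,\mathbb P(A)$ is clean and complete.

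Two remarks. First, on the tilting step: if $\mu$ were not already critical, replacing $\mu$ by $\mu_{\theta_\ast}$ leaves the conditioned tree unchanged but does \emph{not} leave $k(n)=\max\{k:\mu(k)\geq 1/n\}$ unchanged, since $\mu_{\theta_\ast}(k)/\mu(k)=\theta_\ast^k/\Phi(\theta_\ast)$ is not bounded in $k$. The correct statement after tilting is $D_{n,j}=k_{\theta_\ast}(n)+O_{\mathbb P}(1)$, and in general $k(n)$ and $k_{\theta_\ast}(n)$ differ by a multiplicative constant in the leading $\log n$ term. You correctly observe that this is irrelevant for the paper's application, since $\mu^u$ with $u>u_C$ is already critical; but your sentence ``So I may assume $\mu$ critical'' glosses over the fact that the statement as written would need this caveat in the non-critical case.

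Second, on the lower bound: your truncation-plus-uniform-LLT plan works but is heavier than necessary. A shorter route, which avoids any uniformity issues, is a first/second moment computation using only the LLT for the \emph{fixed} critical $\mu$. For $\ell=k(n)-m$ and $Z_\ell:=\#\{i\leq n{+}1:\xi_i=\ell\}$ one has, by exchangeability and the LLT applied at a point $O(\log n)=o(\sqrt n)$ away from the mean,
\[
\mathbb E[Z_\ell\mid S_{n+1}=n]=(n{+}1)\,\mu(\ell)\,\frac{\mathbb P(S_n=n-\ell)}{\mathbb P(S_{n+1}=n)}\sim (n{+}1)\,\mu(\ell),
\]
and similarly $\mathbb E[Z_\ell(Z_\ell{-}1)\mid S_{n+1}=n]\sim\bigl((n{+}1)\mu(\ell)\bigr)^2$. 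Summing over $\ell=k(n)-m,\dots,k(n)$ gives $\mathbb E[N_m\mid S_{n+1}=n]\asymp\sum_{m'=0}^m r^{-m'}\to\infty$ with comparable second moment, so $\mathbb P(N_m\leq j{-}1\mid S_{n+1}=n)\to 0$ by Chebyshev. This sidesteps the uniform LLT for truncated increments that you flag as the hard part.
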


In our case, the asymptotic of $k(n)$ can be computed thanks to results about the Lambert $W$ function, which is the compositional inverse of $x \in \R \mapsto xe^x \in [-e^{-1}, +\infty)$. This gives the following theorem.

\begin{theorem}
\label{grandbloc}
Let $u>u_C$. For all fixed $j \geq 1$, it holds as $n\rightarrow\infty$ that
\[\mathrm{LB}_{j}(\M_{n,u}) = \frac{\ln(n)}{2\ln\(\frac{4}{27y(u)}\)} - \frac{5\ln(\ln(n))}{4\ln\(\frac{4}{27y(u)}\)}+O_{\mathbb{P}}(1).\]
\end{theorem}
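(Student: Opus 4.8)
The plan is to apply the cited result of Janson on maximal degrees of critical Galton--Watson trees with exponential moments to the block tree $\T$ under $\mathbb{P}_{n,u}$, which by \cref{th-trees} has law $GW(\mu^u, 2n)$, and then convert the resulting asymptotics for $k(n)$ into the claimed double-logarithmic expansion. First I would check the hypotheses: by \cref{tree-critical}, in the supercritical regime $\mu^u$ is critical, supported on even integers with $\mu^u(\{2j\}) \sim c(u)(27 y(u)/4)^j j^{-5/2}$ and $y(u) < 4/27$, so the ratio $\mu^u(k+1)/\mu^u(k)$ converges (it converges to $0$ along odd $k$ and to $27y(u)/4 < 1$ along even-to-even steps — here one should be slightly careful and either restrict attention to the support $2\N$ or invoke the version of the theorem phrased for aperiodic shifts; since all degrees are even, it is cleanest to view $\mu^u$ as a distribution on $2\N \cong \N$). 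The exponential generating radius $\rho$ of $\Phi$ equals $(27y(u)/4)^{-1} > 1$, and a standard computation with the $j^{-5/2}$-type singularity shows $\nu = \lim_{x \to \rho^-} x\Phi'(x)/\Phi(x) = +\infty > 1$ (the derivative series diverges at the radius because $5/2 - 1 > 1$). Hence the proposition applies and gives $D_{2n,j} = k(2n) + O_{\mathbb{P}}(1)$ for each fixed $j$, where $D_{2n,j}$ is the $j$-th largest number of children in $\T$; since a block corresponding to a node with $r$ children has size $r/2$ (\cref{prop-t-m-n}), this yields $\mathrm{LB}_j(\M_{n,u}) = k(2n)/2 + O_{\mathbb{P}}(1)$.

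The remaining work is purely analytic: estimate $k(N) = \max\{k : \mu^u(\{k\}) \geq 1/N\}$ as $N \to \infty$. Write $q := 27 y(u)/4 \in (0,1)$ so that $\mu^u(\{2j\}) = (c(u) + o(1)) q^j j^{-5/2}$, or equivalently, reindexing by the number $j$ of children-pairs, the condition $\mu^u(\{2j\}) \geq 1/N$ becomes, after taking logarithms, $j \ln(1/q) + \tfrac52 \ln j \leq \ln N + \ln c(u) + o(1)$. Solving this asymptotically for the largest admissible $j$ is a textbook inversion: to leading order $j \sim \ln N / \ln(1/q)$, and plugging back the correction $\tfrac52 \ln j \sim \tfrac52 \ln\ln N / \ln(1/q)$ shifted to the right-hand side gives $j = \ln N/\ln(1/q) - \tfrac{5}{2}\ln\ln N/\ln(1/q) + O(1)$; this is exactly the place where the Lambert $W$ function enters, since $j$ is (up to rescaling) $W$ evaluated at something of order $N^{\text{const}}$, and the known expansion $W(x) = \ln x - \ln\ln x + o(1)$ produces the two explicit terms. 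Dividing $k(2n)/2 = \max j$ by nothing further (it is already in ``number of pairs'' units) and noting $\ln(2n) = \ln n + O(1)$ so the $\ln\ln(2n) = \ln\ln n + o(1)$, and that $\ln(1/q) = \ln(4/(27y(u)))$, gives precisely
\[
\mathrm{LB}_j(\M_{n,u}) = \frac{\ln n}{2\ln\!\left(\frac{4}{27y(u)}\right)} - \frac{5\ln\ln n}{4\ln\!\left(\frac{4}{27y(u)}\right)} + O_{\mathbb{P}}(1).
\]

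The main obstacle I anticipate is not conceptual but bookkeeping: making the inversion of $j\ln(1/q) + \tfrac52\ln j = \ln N + O(1)$ rigorous to the stated $O(1)$ precision, i.e.\ checking that the error terms hidden in the asymptotic equivalence $\mu^u(\{2j\}) \sim c(u) q^j j^{-5/2}$ (which are multiplicative $(1+o(1))$, hence additive $o(1)$ after taking logarithms) really do propagate to an $O(1)$ — rather than growing — error in $j$, and similarly that the $O_{\mathbb{P}}(1)$ from Janson's theorem combines additively with the deterministic $O(1)$ from the inversion. A secondary, minor point to be careful about is the periodicity of $\mu^u$ (supported on $2\N$): one must either cite the form of \cite[Theorem 19.16]{survey-trees} that allows lattice offspring distributions, or pass to the quotient lattice, and correspondingly track the factor of $2$ relating ``number of children'' to ``block size'' — this is the source of the $\tfrac12$ and $\tfrac14$ in the two coefficients. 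Everything else is routine.
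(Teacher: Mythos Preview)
Your proposal is correct and follows essentially the same approach as the paper: apply Janson's \cite[Theorem 19.16]{survey-trees} to the block tree (which is a critical Galton--Watson tree with exponential moments by \cref{tree-critical}), then invert the tail asymptotic $\mu^u(\{2j\})\sim c(u)(27y(u)/4)^j j^{-5/2}$ to extract the two-term expansion of $k(n)$ via the Lambert $W$ function. The paper carries out the inversion by rewriting the inequality in the form $x e^x \leq (\text{const})$ and applying $W(x)=\ln x-\ln\ln x+o(1)$ directly, whereas you take logarithms first and iterate; these are equivalent, and your remarks about the periodicity of $\mu^u$ and the factor-of-$2$ bookkeeping are apt (the paper silently absorbs both into $O(1)$).
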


\begin{proof}
The probability $\mu^u(\{2k\})$ is decreasing with $k$. So, by \cref{asympt-mu}, for $n$ large enough, to study $k(n)$ it is sufficient to study for which $k$ one has
\[c(u) \(\frac{27}{4} \rho(u) M^2(\rho(u),u)\)^k k^{-5/2} \(1+o(1)\) \geq \frac1n.\]
For sake of compactness, set $w(u) = \(\frac{27}{4} \rho(u) M^2(\rho(u),u)\)^{{-1}} =\(\frac{27}{4} y(u)\)^{{-1}}$. Note that $w(u)>1$ since $u>u_C$.
Consequently, the previous inequality is equivalent to
\[w(u)^{k} k^{5/2} \leq c(u)n \(1+o(1)\).\]
Notice that this is equivalent to
\[\frac25 \ln(w(u)) k \cdot e^{\frac25 \ln(w(u)) k}\leq \frac25 \ln(w(u)) (n c(u))^{2/5} \(1+o(1)\).\]
Therefore, $k(n)$ is the largest integer such that:
\[\frac25 \ln(w(u)) k(n) \leq W\(\frac25 \ln(w(u)) (n c(u))^{2/5} \(1+o(1)\)\)\]
where $W$ denotes the Lambert $W$ function. It is known that $W$ satisfies, for $x\to\infty$,
\[W(x) = \ln(x) - \ln(\ln(x)) + o(1),\]
which concludes the proof.
\end{proof}

\subsection{Critical case}
The critical case corresponds to $u=9/5$ and $\rho(u)M^2(\rho(u),u) = 4/27$. As shown in \cref{tree-critical}, the offspring distribution has a power law tail in $c j^{-\alpha-1}$, where $\alpha=3/2\in (1,2)$. In this case, the variance is infinite, so that the method of \cref{large-block-supercritical} cannot be used. However, this case is directly treated in Janson's survey \cite[Example 19.27 and Remark 19.28]{survey-trees}.

\begin{theorem}
\label{size-bloc-critical}
The following convergence holds:
\[\(\frac{\mathrm{LB}_{j}(\M_{n,u_c})}{n^{2/3}}, j \geq 1\) \xrightarrow[n\to\infty]{(d)} \(E_{(j)}, j \geq 1\),\]
where the $\(E_{(j)}\)$ are the ordered atoms of a Point Process $E$ on $[0,\infty]$, satisfying that the random variable $E_{a,b} = \# \(E \cap [a, b]\)$ has a probability generating function convergent for all $z\in\mathbb{C}$ with
\[\E{z^{E_{a,b}}} = \frac{1}{2\pi g(0)} \int_{-\infty}^{\infty} \exp\(c\Gamma(-3/2) (-it)^{3/2} + (z-1)c \int_{a}^b x^{-5/2}e^{itx}dx\)dt,\]
where
\[g:x\mapsto \frac{1}{2\pi} \int_{-\infty}^{\infty}e^{-ixt + c \Gamma(-3/2) (-it)^{3/2}} dt.\]
The intensity measure $\pi$ of $E$ satisfies, for $x>0$,
\[\pi(dx) = c x^{-5/2} \frac{g(-x)}{g(0)} dx,\]
and, for all $j\geq 1$, \[E_{(j)} > 0\qquad\text{almost surely}.\]
\end{theorem}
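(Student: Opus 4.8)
The plan is to reduce everything to the corresponding statement about conditioned Galton--Watson trees. By \cref{th-trees} together with \cref{tree-critical}, under $\mathbb{P}_{n,u_C}$ the block tree $\T$ is distributed as $GW(\mu^{u_C},2n)$, where $\mu^{u_C}$ is a critical offspring distribution supported on even integers with a power-law tail $\mu^{u_C}(\{2j\})\sim c\, j^{-5/2}$, i.e. index $\alpha=3/2$. By the second and third bullets of \cref{prop-t-m-n}, the block of $\M$ attached to an internal vertex $v$ has size exactly $k_v(\T)/2$, so the ranked block-sizes $(\mathrm{LB}_j(\M_{n,u_C}))_{j\geq 1}$ are precisely one half of the ranked list of out-degrees $(D_{2n,j})_{j\geq 1}$ of the tree. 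Hence the theorem is exactly the translation, through this bijection, of Janson's result on the largest degrees of a conditioned Galton--Watson tree with offspring tail of index $3/2$.

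Concretely, the steps are: First, verify that $\mu^{u_C}$ satisfies the hypotheses of \cite[Example 19.27 and Remark 19.28]{survey-trees}: it is critical (mean $1$), aperiodic after the obvious change of lattice (supported on $2\N$, so one works with $D_{2n,j}/2$ and a distribution on $\N$), has $\mu^{u_C}(0)>0$, and has tail $\mu^{u_C}(\{2j\})\sim \tfrac{1}{4\sqrt{3\pi}} j^{-5/2}$; the constant $c$ appearing in the statement is $c=c(u_C)=\tfrac{1}{4\sqrt{3\pi}}$ from \cref{cu}. Second, quote Janson's theorem: for such a tree conditioned to have $2n$ edges, the point process $\sum_j \delta_{D_{2n,j}/n^{2/3}}$ converges in distribution to a point process $E$ on $(0,\infty]$ whose law is described through the Laplace/Fourier transform of the one-dimensional stable density $g$ (the density of the stable random variable $L_1$ with $\E{e^{-sL_1}}=e^{c\Gamma(-3/2)s^{3/2}}$ already appearing in \cref{souscritique}), with the explicit probability generating function for $E_{a,b}=\#(E\cap[a,b])$ and intensity $\pi(dx)=c\,x^{-5/2}\frac{g(-x)}{g(0)}\,dx$. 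Third, transfer: since $\mathrm{LB}_j(\M_{n,u_C})=D_{2n,j}/2$, rescaling by $n^{2/3}$ yields the claimed convergence of $\big(\mathrm{LB}_j(\M_{n,u_C})/n^{2/3}\big)_{j\geq 1}$ to the ordered atoms $(E_{(j)})_{j\geq 1}$ (absorbing the harmless factor $2$ and the $(2n)^{2/3}$ versus $n^{2/3}$ discrepancy into the constants exactly as in the statement; note the final formulas are stated with $n^{2/3}$ and a specific $c$, so one simply checks the bookkeeping matches Janson's normalization). Fourth, the almost-sure positivity $E_{(j)}>0$ for every fixed $j$: this says that for each fixed $j$ the $j$-th largest degree is of order exactly $n^{2/3}$ and does not degenerate to a lower order; it follows from the fact that the intensity $\pi$ has infinite total mass near $0$ (since $x^{-5/2}$ is non-integrable at $0$ and $g(-x)/g(0)\to 1$ as $x\to 0$), so $E$ has infinitely many atoms, whence its $j$-th largest atom is almost surely strictly positive. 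One also notes, as Janson does, that the generating function $\E{z^{E_{a,b}}}$ extends to an entire function of $z$ because $E_{a,b}$ has all exponential moments (the number of atoms in a compact set $[a,b]\subset(0,\infty)$ is stochastically dominated using finiteness of $\pi([a,b])$ and a Poissonian comparison).

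The main obstacle is essentially bookkeeping rather than a genuine difficulty: one must be careful that the block tree has $2n$ edges (not $n$), that degrees are even and must be halved to recover block sizes, and that the normalization constant $c$ and the scale $n^{2/3}$ in the final statement are consistent with Janson's conventions (his results are typically phrased for a tree with $n$ edges and an offspring distribution on $\N$). Matching these — in particular confirming that the stated $g$, $\pi$ and the pgf formula are the ones produced by Janson's machinery after substituting $\alpha=3/2$ and $c=\tfrac{1}{4\sqrt{3\pi}}$, and that the factor of $2$ does not alter the final expressions — is the only place where care is required; the probabilistic content is entirely contained in \cite{survey-trees}.
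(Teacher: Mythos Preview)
Your proposal is correct and takes essentially the same approach as the paper: both reduce the statement to Janson's results \cite[Example 19.27 and Remark 19.28]{survey-trees} on the largest degrees of a critical conditioned Galton--Watson tree with offspring tail of index $3/2$, via the identification of block sizes with half the out-degrees in $\T_{n,u_C}$. In fact the paper gives no proof beyond this citation, so your write-up is strictly more detailed than what appears there; the bookkeeping you flag (the factor $2$ from $\mathrm{LB}_j=D_{2n,j}/2$ and the $2n$ versus $n$ edges) is indeed the only thing to check and the paper leaves it entirely implicit.
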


\begin{rem}
By \cref{tree-critical} and \cite[Proposition 4.3]{arbrestable}\footnote{The result is stated under an aperiodicity hypothesis for the reproduction law, which can be omitted; see the discussion in the proof of \cref{limite-echelle-periodique}.}, one has the convergence of the (appropriately) rescaled Łukasiewicz path of $\T_{n,u_C}$ towards a $3/2$-stable excursion.
Therefore, using \cite[Chapter VI, Proposition 2.4]{JS87} and following the same line of arguments as in the proof of \cref{souscritique}, one gets that the $E_{(j)}$ are distributed like the reordered jumps of a $3/2$-stable excursion (multiplied by a constant factor).
\end{rem}


\section{Scaling limits}
\label{section:scaling-limits}

The preceding sections exhibited, \textit{via} a study of the block-tree, a phase transition of a combinatorial nature, in terms of the size of the largest blocks, when the parameter $u$ reaches $u_C=9/5$, both for the model on general maps and the one on quadrangulations. The goal of the present section is to expand on this phase transition by considering metric properties of the models in each phase, in the sense of taking \textit{scaling limits}, see \cref{sec:prelim} for definitions.

Because Tutte's bijection commutes with the block decomposition of both models under consideration, as stated in \cref{arbre-bloc-quad-carte}, the combinatorial picture of \cref{section:blocks} is the same for both models. However, obtaining global metric properties under either model requires a good understanding of the metric behaviour of the underlying blocks. As of now, the required results exist only for simple quadrangulations.
Consequently, our scaling limit results are complete only for the quadrangulation model.

In \cref{sec:prelim}, we introduce the relevant formalism to state our scaling limit results, as well as a deviation estimate for the diameters of blocks, which will be useful for all values of $u$.

In \cref{sec:surcritique}, we prove \cref{scaling-limit-super-critical}, which identifies scaling limits simultaneously when $u>u_C$ and $u=u_C$. For both models, there is convergence after suitable rescaling to a random continuous tree, namely a \textit{Brownian tree }when $u>u_C$ and a $3/2$ \textit{stable tree} when $u=u_C$. This convergence holds in the Gromov-Hausdorff-Prokhorov ($\mathrm{GHP}$) sense -- between measured metric spaces -- when maps and quadrangulations are
equipped with the uniform measure on their vertices.

Finally in \cref{sec:sous-critique}, we prove \cref{scaling-limit-subcritical} which deals with the $\mathrm{GHP}$ scaling limit when $u<u_C$. In this phase, the one-big-block identified in \cref{souscritique} converges after rescaling to a scalar multiple of the \textit{Brownian sphere}, and the contribution of all other blocks is negligible. This result is proved only for the quadrangulation model since it relies crucially on the scaling limit result for uniform simple quadrangulations obtained in \cite{add-alb}. No such result is available yet for uniform $2$-connected general maps, although one expects that it should hold.

\subsection{Preliminaries}\label{sec:prelim}
\subsubsection{The Gromov-Hausdorff and Gromov-Hausdorff-Prokhorov topologies}

Originating from the ideas of Gromov, the following notions of metric geometry have become widely used in probability theory to state scaling limit results. We refer the interested reader to \cite{burago_course_2001} for general background on metric geometry and \cite[Section 6]{miermont_tessellations_2007} for an exposition of the main properties of the Gromov-Hausdorff and Gromov-Hausdorff-Prokhorov topologies, and especially their definition \textit{via} correspondences and couplings that we use here.

Define a \textit{correspondence} between two sets $X$ and $Y$ as a subset $C$ of $X\times Y$ such that for all $x\in X$, there exists $y\in Y$ such that $(x,y)\in C$, and \textit{vice versa}. The set of correspondences between $X$ and $Y$ is denoted as $\mathrm{Corr}(X,Y)$.
If $(X,d_X)$ and $(Y,d_Y)$ are compact metric spaces and $C\in\mathrm{Corr}(X,Y)$ is a correspondence, one may define its \emph{distortion}:
\[
\mathrm{dis}(C;d_X,d_Y)=\sup\Bigl\{|d_X(x,\widetilde x)-d_Y(y,\widetilde y)|\colon (x,y)\in C,\,(\widetilde x,\widetilde y)\in C\Bigr\}.
\]
This allows to define the \emph{Gromov-Hausdorff} distance between (isometry classes of) compact metric spaces
\[
d_{\mathrm{GH}}\bigl((X,d_X),(Y,d_Y)\bigr)=\frac{1}{2}\inf\Bigl\{\mathrm{dis}(C;d_X,d_Y)\colon C\in \mathrm{Corr}(X,Y)\Bigr\}.
\]

One can modify this notion of distance in order to get a distance between compact measured metric spaces. For measured spaces $(X,\nu_X)$ and $(Y,\nu_Y)$ such that $\nu_X$ and $\nu_Y$ are \emph{probability} measures, let us denote by $\mathrm{Coupl}(\nu_X,\nu_Y)$ the set of couplings between $\nu_X$ and $\nu_Y$, \emph{i.e.}~the set of measures $\gamma$ on $X\times Y$ with respective marginals $\nu_X$ and $\nu_Y$. Then the \textit{Gromov-Hausdorff-Prokhorov} distance is defined as
\begin{multline*}
d_{\mathrm{GHP}}\bigl((X,d_X,\nu_X),(Y,d_Y,\nu_Y)\bigr)\\
=\inf\Bigl\{\max\Bigl(\tfrac{1}{2}\,\mathrm{dis}(C;d_X,d_Y),\gamma\bigl((X\times Y)\setminus C\bigr)\Bigr)\colon C\in \mathrm{Corr}(X,Y),\,\gamma\in\mathrm{Coupl}(\nu_X,\nu_Y)\Bigr\}.
\end{multline*}
When $(X,d_X)$ and $(Y,d_Y)$ are the same metric space, one can bound this distance by the \emph{Prokhorov distance} between the measures $\nu_X$ and $\nu_Y$. This distance is defined for $\nu_1$ and $\nu_2$ two Borel measures on the same metric space $(X,d)$ by
\begin{equation}\label{def-Prokhorov}
d_{\mathrm{P}}^{(X,d)}(\nu_1,\nu_2)=\inf\Bigl\{\epsilon>0\colon\nu_1(A)\leq \nu_2(A^\epsilon)+\epsilon\text{ and }\nu_2(A)\leq \nu_1(A^\epsilon)+\epsilon,\forall A\in \mathcal B(X)\Bigr\},
\end{equation}
where $A^\epsilon$ is the set of points $x\in X$ such that $d(x,A)<\epsilon$. The bound mentioned above then corresponds to the inequality
\begin{equation}\label{eq-Strassen}
d_{\mathrm{GHP}}\bigl((X,d,\nu_1),(X,d,\nu_2)\bigr)
\leq d_{\mathrm{P}}^{(X,d)}(\nu_1,\nu_2),
\end{equation}
which is a consequence of Strassen's Theorem, see \cite[Section 11.6]{dudley_real_2002}.

Finally we will use the following fact, the proof of which is left to the reader. For $a\in [0,1)$ and Borel probability measures $\mu$, $\nu$ and $\nu'$ on some metric space $(X,d)$, it holds that
\begin{equation}\label{eq-decomp-Prokhorov}
d_{\mathrm P}^{(X,d)}\bigl(a\mu+(1-a)\nu,a\mu+(1-a)\nu'\bigr)=(1-a)d_{\mathrm P}^{(X,d)}(\nu,\nu').
\end{equation}

\subsubsection{Formulation of the GHP-scaling limit problem}

Let us begin by setting the notation for the measured metric spaces that one can canonically associate to the combinatorial objects under consideration.
	We associate to a tree (resp.~map or quadrangulation) the following measured metric spaces:
	\begin{itemize}
		\item For $\t$ a \textit{tree}, denote by ${V}(\t)$ the set of its vertices, by $d_\t$ the distance that the graph distance induces on ${V}(\t)$, by $\nu_\t$ the uniform probability measure on ${V}(\t)$ and by $\underline\t$ the measured metric space $\underline\t=({V}(\t),d_\t,\nu_t)$. Recall that for $v\in\t$, the number of children of $v$ is denoted by $k_v(\t)$.
		\item For $\m$ a \textit{map}, recall that $V(\m)$ is its vertex set, and denote by $d_\m$ the graph distance on $V(\m)$, by $\nu_\m$ the uniform probability measure on $V(\m)$ and by $\underline\m$ the measured metric space $\underline\m=(V(\m),d_\m,\nu_m)$. 
		\item For $\q$ a \textit{quadrangulation}, denote by $V(\q)$ its vertex set, by $d_\q$ the graph distance on $V(\q)$, by $\nu_\q$ the uniform probability measure on $V(\q)$ and by $\underline\q$ the measured metric space $\underline\q=(V(\q),d_\q,\nu_q)$. 
	\end{itemize}

The problem of finding a $\mathrm{GHP}$-scaling limit consists in finding a suitable rescaling of a sequence of random compact measured metric spaces so that it admits a non-trivial limit in distribution for the $\mathrm{GHP}$-topology. Let us introduce a convenient notation for the rescaling operation on a measured metric space.
	For $\underline X=(X,d,\nu)$ a measured metric space and $\lambda>0$, we denote by $\lambda\cdot \underline X$ the measured metric space $(X,\lambda d,\nu)$.

%


\subsubsection{A useful deviation estimate}

We shall now prove a deviation estimate for the diameters of the blocks of $\M$ and $\Q$. It will prove useful for all values of $u>0$. We recall the definition of \emph{stretched-exponential} quantities, as this notion provides a concise way to deal with the probabilities of exceptional events.

\begin{defin}
	A sequence $(p_n)$ of real numbers is said to be \emph{stretched-exponential} as $n\rightarrow\infty$ if there exist constants $\gamma,C,c>0$ such that
	\[
	|p_n|\leq C\exp(-cn^\gamma).
	\]
\end{defin}

As is evident from the definition, if $(p_n)_n$ and $(q_n)_n$ are stretched-exponential sequences, then so are the sequences $(p_n+q_n)_n$, $(p_n q_n)_n$, $(n^\alpha p_n)_n$ and $(n^\alpha \sup_{k\geq n^\beta}p_k)_n$ with arbitrary $\alpha,\beta>0$.

The input we shall rely on to derive our estimate is a deviation estimate for the diameter of \textbf{one} block, in both the case of 2-connected blocks of maps and simple blocks of quadrangulations. 

\begin{proposition}\label{deviation-diam-blocks}
	For any $\epsilon>0$, the probabilities
	\[
	P\({\diam({B}_k^{\mathrm{map}})\geq k^{1/4+\epsilon}}\)
	\qquad \text{ and }\qquad P\({\diam({B}_k^{\mathrm{quad}})\geq k^{1/4+\epsilon}}\)
	\]
	are stretched-exponential as $k\rightarrow\infty$.
\end{proposition}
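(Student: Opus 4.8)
The plan is to reduce the deviation estimate for a \emph{uniform} block of a given size to a deviation estimate for a \emph{Boltzmann} block, for which exponential-type tail bounds on the diameter are typically easier to obtain, and which are available in the literature for both $2$-connected maps and simple quadrangulations. Concretely, fix $\epsilon>0$ and recall that $B_k^{\mathrm{map}}$ is a uniform $2$-connected map with $k$ edges (and $B_k^{\mathrm{quad}}=\varphi(B_k^{\mathrm{map}})$ a uniform simple quadrangulation with $k$ faces). Since Tutte's bijection $\varphi$ changes distances by at most a bounded multiplicative/additive factor between a map and its angular map (the vertices of $\m$ embed into those of $\varphi(\m)$, and a path in one can be transported to a path in the other up to a factor $2$), it suffices to treat one of the two cases; I would treat the quadrangulation case, where the scaling-limit machinery and Boltzmann estimates are most readily cited.

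\paragraph{Step 1: a Boltzmann estimate.} First I would introduce a Boltzmann distribution on simple quadrangulations: sample a simple quadrangulation $\mathbf S$ with probability proportional to $\rho_B^{|\mathbf S|}$ (recall $\rho_B=\rho_S=4/27$ and $S(\rho_B)=B(\rho_B)=4/3<\infty$ by \cref{proposition-B}), so that $\mathbf S$ has a heavy polynomial tail $\P{|\mathbf S|=k}\sim c\, k^{-5/2}$ on its size. The key input is a bound of the form $\P{\diam(\mathbf S)\geq t \mid |\mathbf S|=k}\leq C\exp(-c\, t^4/k)$ (or, more simply, $\P{\diam(\mathbf S)\geq t}\leq C\exp(-c\,t^{4/3})$ for the unconditioned law), which is the standard Boltzmann tail bound on diameters of quadrangulation-type models, going back to the Bouttier--Guitter / Le Gall type estimates on the tail of distances via the CVS or Schaeffer-type bijections; analogous bounds for simple quadrangulations follow from the bijective encoding used in \cite{add-alb}. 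The point is that under the Boltzmann law the diameter has a \emph{stretched-exponential} tail.

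\paragraph{Step 2: de-Poissonization.} Next I would extract the conditional bound. Writing $p_k:=\P{|\mathbf S|=k}$ and using $p_k\geq c\,k^{-5/2}$, one has
\[
\P{\diam(B_k^{\mathrm{quad}})\geq k^{1/4+\epsilon}}
=\P{\diam(\mathbf S)\geq k^{1/4+\epsilon}\,\middle|\,|\mathbf S|=k}
\leq \frac{\P{\diam(\mathbf S)\geq k^{1/4+\epsilon}}}{p_k}
\leq C\, k^{5/2}\exp\!\bigl(-c\,k^{(1/4+\epsilon)\cdot 4/3}\bigr),
\]
and since $(1/4+\epsilon)\cdot 4/3=1/3+4\epsilon/3>1/3>0$, the right-hand side is stretched-exponential in $k$ (the polynomial prefactor $k^{5/2}$ is harmless, as noted after the definition of stretched-exponential). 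The same argument, transported through $\varphi$, gives the bound for $B_k^{\mathrm{map}}$.

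\paragraph{Main obstacle.} The genuine work is Step 1: one needs a clean stretched-exponential tail bound on the diameter of a \emph{Boltzmann simple} quadrangulation (equivalently, a Boltzmann $2$-connected map). For ordinary quadrangulations this is classical, but the simple/$2$-connected constraint means one cannot cite Miermont or Le Gall verbatim; instead I would either (a) invoke the bijective encoding of simple quadrangulations used in \cite{add-alb} together with the fact that distances in a quadrangulation are controlled by its labeled-tree encoding, whose height has a sub-Gaussian tail under the Boltzmann law; or (b) use the block decomposition \cref{bivar-quad} itself: a Boltzmann general quadrangulation is built from a Boltzmann simple core by substituting Boltzmann quadrangulations-of-the-$2$-gon into edges, so a diameter bound for general quadrangulations plus the fact that collapsing $2$-cycles only \emph{decreases} distances yields, after the $1/12$-vs-$4/27$ bookkeeping, the desired bound for the simple core. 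Either route is routine in spirit but requires care with the precise distributional identities; this is the step I expect to spell out in full.
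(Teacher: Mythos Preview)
Your de-Poissonization framework has a genuine gap in Step~1. A \emph{critical} Boltzmann simple quadrangulation $\mathbf S$ (weight $\rho_B=4/27$ per face) has $P(|\mathbf S|=k)\sim c\,k^{-5/2}$, so its size has a heavy polynomial tail. Since the diameter conditionally on size $k$ is of order $k^{1/4}$, the \emph{unconditional} tail satisfies
\[
P\bigl(\diam(\mathbf S)\geq t\bigr)\;\gtrsim\; P\bigl(|\mathbf S|\geq t^{4}\bigr)\;\sim\; c'\,t^{-6},
\]
which is polynomial, not stretched-exponential. The bound $P(\diam(\mathbf S)\geq t)\leq C\exp(-c\,t^{4/3})$ that you want to feed into Step~2 is therefore false. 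On the other hand, the \emph{conditional} bound $P(\diam(\mathbf S)\geq t\mid |\mathbf S|=k)\leq C\exp(-c\,t^4/k)$ that you also mention is (a strong form of) exactly the statement you are trying to prove, so invoking it is circular and the de-Poissonization step becomes superfluous. Your route~(b) does not rescue this: collapsing $2$-cycles gives $\diam(\mathbf S)\leq\diam(\mathbf Q)$, but $\mathbf Q$ is again critical Boltzmann with polynomial diameter tail, and once you try to condition on $|\mathbf S|=k$ you must control the law of $\mathbf Q$ under this extra conditioning, which is not the uniform law on size-$n$ quadrangulations.

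The paper avoids all of this by going in the opposite direction. It cites directly a known stretched-exponential diameter bound for uniform $2$-connected maps of size $k$ (from the literature on diameters of random planar graphs, namely \cite[Theorem~3.7]{diameter-graphs} specialized to $x=1$), and then transfers it to simple quadrangulations via the elementary inequality $\diam(\varphi(\m))\leq 2\,\diam(\m)$ coming from Tutte's bijection. So the result for $B_k^{\mathrm{map}}$ is taken off the shelf, and the quadrangulation case is a one-line corollary. Your instinct that Tutte's bijection lets you treat only one of the two cases is correct; the efficient choice is just the map case, where the needed bound already exists in the literature.
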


\begin{proof}
	The estimate for uniform 2-connected maps $(B_k^{\mathrm{map}})_{k\geq 0}$ is obtained from \cite[Theorem 3.7, specialized to $x=1$]{diameter-graphs}. To obtain the estimate for uniform simple blocks of quadrangulations $({B}_k^{\mathrm{quad}})_{k\geq 0}$, one easily checks that for any path of length $l\geq 0$ in a map $\m$, there exists a path with same endpoints and length at most $2l$ in $\varphi(\m)$, its image by Tutte's bijection. Therefore for every map $\m$ one has $\diam(\varphi(\m))\leq2\,\diam(\m)$. In particular $\diam({B}_k^{\mathrm{quad}})\leq2\,\diam({B}_k^{\mathrm{map}})$, and the conclusion follows from the estimate for $(B_k^{\mathrm{map}})_{k\geq 0}$
\end{proof}

%

This deviation estimate for the diameter of \textbf{one} block allows to control the deviations of the diameter of \textbf{every} block of $\M_{n,u}$ and $\Q_{n,u}$, in the sense of the following corollary.

\begin{corollary}\label{diam-blocs}
	For all $u>0$ and all $\delta>0$, the probabilities
	\begin{align}
	&P\(\exists v\in\T_{n,u},\,\diam(\b_v^{\M_{n,u}})\geq \max\(n^{1/6},k_v(\T_{n,u})^{(1+\delta)/4}\)\),\quad n\geq 1,\label{deviation-blocs-map}\\
	&P\(\exists v\in\T_{n,u},\,\diam(\b_v^{\Q_{n,u}})\geq \max\(n^{1/6},k_v(\T_{n,u})^{(1+\delta)/4}\)\),\quad n\geq1,\label{deviation-blocs-quad}
	\end{align}
	are stretched-exponential as $n\rightarrow\infty$.
\end{corollary}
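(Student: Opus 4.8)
The plan is to deduce the corollary from the single-block estimate of \cref{deviation-diam-blocks} by a union bound over the vertices of $\T_{n,u}$, using \cref{th-trees} to handle the randomness of the block sizes. I will treat only the map case \cref{deviation-blocs-map}; the quadrangulation case \cref{deviation-blocs-quad} is identical, replacing $\b_v^{\M_{n,u}}$ by $\b_v^{\Q_{n,u}}$ and $B_k^{\mathrm{map}}$ by $B_k^{\mathrm{quad}}$, since \cref{deviation-diam-blocks} provides the same estimate for both families.

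First I would record, via \cref{th-trees}, that conditionally on $\T_{n,u}=\t$ the blocks $(\b_v^{\M_{n,u}})_{v\in\t}$ are independent, with $\b_v^{\M_{n,u}}$ uniform among $2$-connected maps of size $k_v(\t)/2$; in particular $\b_v^{\M_{n,u}}$ has the law of $B_{k_v(\t)/2}^{\mathrm{map}}$. So, conditionally on $\T_{n,u}=\t$,
\[
P\bigl(\exists v\in\t,\ \diam(\b_v^{\M_{n,u}})\geq \max(n^{1/6},k_v(\t)^{(1+\delta)/4})\bigr)
\leq \sum_{v\in\t} P\bigl(\diam(B_{k_v(\t)/2}^{\mathrm{map}})\geq \max(n^{1/6},k_v(\t)^{(1+\delta)/4})\bigr).
\]
A vertex $v$ with $k_v(\t)=0$ contributes nothing (its block is the vertex map, of diameter $0$). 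For a vertex with $k=k_v(\t)/2\geq 1$ edges in its block, the threshold satisfies $\max(n^{1/6},(2k)^{(1+\delta)/4})\geq (2k)^{(1+\delta)/4}\geq k^{1/4+\epsilon}$ for a suitable fixed $\epsilon=\epsilon(\delta)>0$ and all $k$ large; for the finitely many small $k$ not covered by this, the threshold is still $\geq n^{1/6}$, which exceeds the deterministic diameter bound of a block with $k$ edges once $n$ is large, so those terms vanish for $n$ large. Hence by \cref{deviation-diam-blocks} there are constants $C,c,\gamma>0$ with $P(\diam(B_k^{\mathrm{map}})\geq k^{1/4+\epsilon})\leq C\exp(-ck^\gamma)$ for all $k\geq 1$; in particular each summand above is at most $C\exp(-c\,k_v(\t)^{\gamma'})$ for a suitable $\gamma'>0$ (absorbing the factor $2$), uniformly in $v$ and $n$ large.

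Next I bound the sum. Since $\t$ has $2n$ edges, there are at most $2n+1$ vertices, each of degree $k_v(\t)\leq 2n$, so crudely $\sum_{v\in\t}C\exp(-c\,k_v(\t)^{\gamma'})\leq (2n+1)C$, which is not small — the point is that for the bound to be useful we must exploit that \emph{most} vertices have bounded degree, but actually we only need a bound that tends to $0$ stretched-exponentially, so I instead split according to whether the block is "large". Fix $a\in(0,1)$ (e.g.\ $a=1/2$). On the event that all blocks have size $\leq n^{a}$, i.e.\ $k_v(\T_{n,u})\leq 2n^{a}$ for all $v$, each individual summand is at most $C\exp(-c\,n^{a\gamma'})$ and there are at most $2n+1$ of them, giving a total bounded by $(2n+1)C\exp(-c\,n^{a\gamma'})$, which is stretched-exponential. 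It remains to control the complementary event $\{\exists v:\ k_v(\T_{n,u})>2n^a\}$, i.e.\ $\{\mathrm{LB}_1(\M_{n,u})>n^a\}$. In the supercritical and critical cases ($u\geq u_C$) this has stretched-exponential probability: indeed the offspring law $\mu^u$ has exponential moments when $u>u_C$ (\cref{tree-critical}), and a standard large-deviation / union-bound argument (the largest degree of a conditioned Galton-Watson tree with a light-tailed offspring law is $O_{\mathbb P}(\log n)$, with stretched-exponential — indeed polynomial — tails beyond that) shows $P(\mathrm{LB}_1(\M_{n,u})>n^a)$ decays faster than any stretched-exponential; for $u=u_C$, \cref{size-bloc-critical} gives $\mathrm{LB}_1(\M_{n,u_C})=\Theta_{\mathbb P}(n^{2/3})$, so taking $a>2/3$ suffices and the deviation above $n^a$ is again stretched-exponential. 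In the subcritical case $u<u_C$ one big block of size $(1-E(u))n$ persists, so the split by "largest block size" must instead be a split between the root-containing giant block and the rest: by \cref{souscritique}, $\mathrm{LB}_2(\M_{n,u})=O_{\mathbb P}(n^{2/3})$ with stretched-exponential deviations, so with $a\in(2/3,1)$ all blocks except the biggest one satisfy $k_v\leq 2n^a$ off a stretched-exponential event, and the single giant block $\b$ satisfies $\diam(\b)\leq 2\diam(\text{giant 2-connected block})$ with $\diam\leq |\b|^{1/4+\epsilon}\leq n^{1/4+\epsilon}\leq k_v(\T_{n,u})^{(1+\delta)/4}$ off a stretched-exponential event by \cref{deviation-diam-blocks} again (here $k_v=2|\b|\geq 2(1-E(u))n$, so $k_v^{(1+\delta)/4}\gg n^{1/4+\epsilon}$ for $\delta$ fixed and $n$ large). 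Combining the three regimes, \cref{deviation-blocs-map} follows, and \cref{deviation-blocs-quad} follows verbatim.

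The main obstacle is the bookkeeping around the largest block: the naive union bound over all $\Theta(n)$ vertices is too lossy unless one first knows that, apart from a few exceptional blocks, all block sizes are at most $n^{a}$ for some $a<1$ — and this requires invoking the correct large-block estimate in each of the three phases (\cref{grandbloc}/\cref{size-bloc-critical}/\cref{souscritique}) together with its stretched-exponential tail. Once that structural fact is in hand, everything else is a routine union bound plus the stability of the class of stretched-exponential sequences under sums, products and polynomial factors, as recorded after the definition. One should also double-check the edge case of small blocks (where $k^{1/4+\epsilon}$ may be smaller than the diameter of a block but $n^{1/6}$ is not), which is why the threshold is written as a maximum with $n^{1/6}$.
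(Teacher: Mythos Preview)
Your argument has a genuine gap, and it also misses the much simpler route the paper takes.

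\textbf{The gap.} The splitting you introduce according to whether some block has size larger than $n^{a}$ forces you to control $P(\mathrm{LB}_1(\M_{n,u})>n^{a})$ (and, in the subcritical phase, $P(\mathrm{LB}_2(\M_{n,u})>n^{a})$) by a stretched-exponential quantity. You cite \cref{souscritique}, \cref{grandbloc}, \cref{size-bloc-critical} for this, but none of those results gives stretched-exponential tails: they give $O_{\mathbb P}$ bounds and distributional limits only. In fact at $u=u_C$ the offspring law has a polynomial tail $\mu^{u_C}(\{2j\})\sim c\,j^{-5/2}$, and a union bound over the $2n+1$ vertices (via the cycle lemma) shows that $P(\exists v:\,k_v(\T_{n,u_C})>2n^{a})$ is at best polynomially small in $n$, not stretched-exponential. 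So your argument cannot close as written in the critical case, and in the subcritical case the claimed tail for $\mathrm{LB}_2$ is unproved. (Your phrase ``stretched-exponential --- indeed polynomial --- tails'' also conflates two incompatible rates.)

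\textbf{The simpler route you overlooked.} The threshold $\max(n^{1/6},k_v^{(1+\delta)/4})$ is designed so that you never need to know anything about block sizes. A $2$-connected block $\b$ trivially satisfies $\diam(\b)\leq |\b|=k_v/2$ (and $\diam(\b)\leq k_v$ in the quadrangulation case). Hence every vertex $v$ with $k_v/2<n^{1/6}$ automatically has $\diam(\b_v)<n^{1/6}$ and cannot violate the bound. The only ``bad'' vertices are those with $k_v/2\geq n^{1/6}$; for each such $v$, conditionally on $\T_{n,u}$ one has by \cref{th-trees}
\[
P\bigl(\diam(\b_v^{\M_{n,u}})\geq k_v^{(1+\delta)/4}\bigm|\T_{n,u}\bigr)
\leq \sup_{k\geq n^{1/6}}P\bigl(\diam(B_k^{\mathrm{map}})\geq k^{(1+\delta)/4}\bigr).
\]
A union bound over the $2n+1$ vertices then gives
\[
P\bigl(\text{event in \cref{deviation-blocs-map}}\bigr)\leq(2n+1)\sup_{k\geq n^{1/6}}P\bigl(\diam(B_k^{\mathrm{map}})\geq k^{(1+\delta)/4}\bigr),
\]
which is stretched-exponential in $n$ directly from \cref{deviation-diam-blocks} and the closure of stretched-exponential sequences under $n^\alpha\sup_{k\geq n^\beta}(\cdot)$. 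No phase distinction on $u$ and no information on block sizes is needed.
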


\begin{proof}
	Let $\b$ be either a $2$-connected map, or a simple quadrangulation. Then $\diam(\b)$ is bounded by its number of edges, which is $|\b|$ if $\b$ is a map, and $2|\b|$ if it is a quadrangulation. In particular, recalling that the outdegrees in the block-tree are twice the sizes of the respective blocks, we get for all $u>0$ and $n\geq 1$, that
	\[
	\forall v\in\T_{n,u},\,\left[\diam(\b_v^{\M_{n,u}})\leq k_v(\T_{n,u})/2\right]\text{ and }\left[\diam(\b_v^{\Q_{n,u}})\leq \,2\cdot k_v(\T_{n,u})/2\right].
	\]
	Denote by $\mathrm{A}(\M_{n,u})$ the ``bad'' subset of $\T_{n,u}$ made of the vertices $v$ such that both $k_v(\T_{n,u})/2\geq n^{1/6}$ and $\diam(\b_v^{\M_{n,u}}) \geq k_v(\T_{n,u})^{(1+\delta)/4}$. By the above trivial bound on diameters, to show that the probabilities \cref{deviation-blocs-map} are stretched-exponential as $n\rightarrow\infty$, it suffices to see that the probability of the event $\{\mathrm{A}(\M_{n,u})\neq\emptyset\}$ is stretched-exponential as $n\rightarrow\infty$.
	
	By \cref{th-trees}, conditionally on $\T_{n,u}$, each block $\b_v^{\M_{n,u}}$ is sampled uniformly from $2$-connected maps with size $k_v(\T_{n,u})/2$ respectively. Therefore, conditionally on $\T_{n,u}$, for each vertex $v$ in $\T_{n,u}$ we have
	\begin{align*}
	P\bigl(v\in\mathrm{A}({\M_{n,u}})\,|\,\T_{n,u}\bigr)
	&=\indic{k_v(\T_{n,u})/2\geq n^{1/6}}
	P\({\diam( B_{k/2}^{\mathrm{map}})\geq {k}^{(1+\delta)/4}}\)\Big|_{k=k_v(\T_{n,u})}\\
	&\leq\sup_{k/2\geq n^{1/6}}P\({\diam( B_{k/2}^{\mathrm{map}})\geq {k}^{(1+\delta)/4}}\)\\
	&\leq \sup_{k\geq n^{1/6}}P\({\diam( B_{k}^{\mathrm{map}})\geq {k}^{(1+\delta)/4}}\).
	\end{align*}
	Since $\T_{n,u}$ has $2n+1$ vertices, this yields by a union bound,
	\[
	P(\mathrm{A}({\M_{n,u}})\neq\emptyset)\leq (2n+1)\sup_{k\geq n^{1/6}/2}P\({\diam( B_{k}^{\mathrm{map}})\geq {k}^{(1+\delta)/4}}\),
	\]
	which is stretched-exponential as $n\rightarrow\infty$ by \cref{deviation-diam-blocks}, as announced. A similar use of \cref{deviation-diam-blocks} proves that the probabilities \cref{deviation-blocs-quad} are stretched-exponential as $n\rightarrow\infty$.
\end{proof}

%


\subsection{The supercritical and critical cases}\label{sec:surcritique}

\subsubsection{Statement of the result}
\label{subsubsec:statement-of-the-result}
For $1<\theta\leq 2$, let us denote by ${\mathcal T}^{(\theta)}$ a $\theta$-stable Lévy tree equipped with its mass measure. There are several equivalent constructions of these objects. A common way is to define them \textit{via} excursions of $\theta$-stable Lévy processes. Namely, $\mathcal{T}^{(\theta)}$ is the real tree encoded by the height process of an excursion of length one of a $\theta$-stable Lévy process, see \cite{arbrestable}. To fix a normalization for $\mathcal{T}^{(\theta)}$, we consider in the construction an excursion obtained by a cyclic shift from a $\theta$-stable Lévy Bridge with Laplace exponent $\lambda\mapsto \lambda^\theta$. Note that the measured metric space $\mathcal{T}^{(2)}$ corresponds to $\sqrt 2$ times the \emph{Brownian Continuum Random Tree}, which is encoded by an excursion of length $1$ of the standard Brownian motion.
The precise definition \textit{via} excursions is not important for our statement and one can take \cref{limite-echelle-periodique} below as an alternative definition.

\begin{theorem}
	\label{scaling-limit-super-critical}
	There exist positive constants $(\kappa^{\mathrm{map}}_u,\kappa^{\mathrm{quad}}_u)_{u\geq u_C}$ such that we have the following joint convergences in distribution, in the Gromov-Hausdorff-Prokhorov sense:
	\begin{enumerate}
		\item 
		If $u>u_C$, we have
		\[
		\frac{\sigma(u)}{\sqrt 2} (2n)^{-1/2}\cdot\(\underline{\T}_{n,u},\underline{\M}_{n,u},\underline{\Q}_{n,u}\)
		\xrightarrow[n\rightarrow\infty]{\mathrm{GHP},\,(d)}
		\({\mathcal{T}}^{(2)},\kappa^{\mathrm{map}}_u\cdot{\mathcal{T}}^{(2)},\kappa^{\mathrm{quad}}_u\cdot{\mathcal{T}}^{(2)}\),
		\]
		where we set
		\begin{equation}\label{eqn-explicite-sigma}
		\sigma(u)^2={1+\frac{4u\, \(y(u)\)^2\,B''\(y(u)\)}{uB\(y(u)\)+1-u}} =
		\frac{3 u -3+2 \sqrt{u \left(u -1\right)}}{5 u -9}.
		\end{equation}
		\item 
		If $u=u_C=9/5$, we have
		\[
		\frac{2}{3} (2n)^{-1/3}\cdot\(\underline\T_{n,u_C},\underline\M_{n,u_C},\underline\Q_{n,u_C}\)
		\xrightarrow[n\rightarrow\infty]{\mathrm{GHP},\,(d)}
		\({\mathcal{T}}^{(3/2)},\kappa^{\mathrm{map}}_{u_C}\cdot{\mathcal{T}}^{(3/2)},\kappa^{\mathrm{quad}}_{u_C}\cdot{\mathcal{T}}^{(3/2)}\).
		\]
	\end{enumerate}
	Additionally, the constants $(\kappa^{\mathrm{map}}_u,\kappa^{\mathrm{quad}}_u)_{u\geq u_C}$ can be expressed as follows.
	\begin{equation}
	\label{defin-kappa-u}
	\kappa_u^{\mathrm{map}}=\sum_{j\geq 1}2j\mu^u(2j)\mathcal D_j^{\mathrm{map}}
	\qquad \text{ and }\qquad
	\kappa_u^{\mathrm{quad}}=\sum_{j\geq 1}2j\mu^u(2j)\mathcal D_j^{\mathrm{quad}},
	\end{equation}
	where $\mathcal D_j^{\mathrm{map}}$ (resp.~$\mathcal D_j^{\mathrm{quad}}$) is the expectation of the distance, in a uniform $2$-connected map with $j$ edges (resp.~simple quadrangulation with $j$ faces) of the distance of the root vertex to the base vertex of a uniform corner (resp.~to the closest endpoint of a uniform edge).
\end{theorem}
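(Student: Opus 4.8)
The plan is to transport the scaling limit from the block tree $\T_{n,u}$, whose law is identified in \cref{th-trees,tree-critical}, to the decorated objects $\M_{n,u}$ and $\Q_{n,u}$, using that both are measurable functions of $\T_{n,u}$ together with a family of conditionally independent uniform blocks. First I would establish, as a standalone proposition (this is \cref{limite-echelle-periodique} in the sequel), the scaling limit of the block tree alone. Under $\mathbb P_{n,u}$ the tree $\T_{n,u}$ has law $GW(\mu^u,2n)$; when $u>u_C$ the distribution $\mu^u$ is critical with finite variance, which a short computation (size-biasing the factorial moments of $\mu^u$, using the criticality relation $2uy(u)B'(y(u))=uB(y(u))+1-u$, then substituting \cref{B-alg} and \cref{y-en-fonction-u}) identifies with the quantity $\sigma(u)^2$ of \cref{eqn-explicite-sigma}; Aldous' invariance principle \cite{CRT,legallcrt} then gives the convergence of $\tfrac{\sigma(u)}{\sqrt2}(2n)^{-1/2}\cdot\underline\T_{n,u}$ to $\mathcal T^{(2)}$. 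When $u=u_C$, the tail $\mu^{u_C}(2j)\sim\tfrac1{4\sqrt{3\pi}}j^{-5/2}$ places $\mu^{u_C}$ in the domain of attraction of a $3/2$-stable law, and Duquesne's limit theorem (see \cite{arbrestable}) yields the convergence of $\tfrac23(2n)^{-1/3}\cdot\underline\T_{n,u_C}$ to $\mathcal T^{(3/2)}$, the constant $\tfrac23$ being read off from the tail constant. The one subtlety, that $\mu^u$ is supported on even integers, is dealt with by a standard periodicity argument. I would then fix, via Skorokhod's representation theorem, a probability space on which this convergence (and that of the coding functions) holds almost surely.

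For the distances, the structural input is that in any connected graph the distance between two vertices equals the sum of the distances traversed in the successive blocks met along the block-cut-tree geodesic between them. Thus for $x,y\in V(\M_{n,u})$, lying in blocks $\b_{v_x},\b_{v_y}$, one has $d_{\M_{n,u}}(x,y)=\sum_v C_v$, the sum over the vertices $v$ of the geodesic from $v_x$ to $v_y$ in $\T_{n,u}$, where $C_v$ is the distance inside $\b_v$ between the cut-points at which the geodesic enters and leaves it. For the two extremities this involves $x$ and $y$; for every interior $v$ the geodesic enters $\b_v$ at its root vertex and leaves it at the base vertex of the corner coding the next step, so that, conditionally on $\T_{n,u}$, $C_v$ has the law of the distance defining $\mathcal D^{\mathrm{map}}_{|\b_v|}$, and the $C_v$ are independent across $v$ by \cref{th-trees}. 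I then want to prove that $\sum_v C_v=\kappa^{\mathrm{map}}_u\,d_{\T_{n,u}}(v_x,v_y)+o\big((2n)^{1/\theta}\big)$ uniformly in $x,y$, with $\theta\in\{2,3/2\}$. Since the empirical offspring distribution of $GW(\mu^u,2n)$ converges to $\mu^u$ (a standard property of conditioned Galton-Watson trees, see \cite{survey-trees}), a uniform tree-edge has parent-outdegree $2j$ with asymptotic probability $2j\mu^u(2j)$, and $\sum_j 2j\mu^u(2j)=1$; the conditional mean of a single $C_v$, namely $\mathcal D^{\mathrm{map}}_{|\b_v|}$, thus averages along the geodesic to $\kappa^{\mathrm{map}}_u=\sum_{j\ge1}2j\mu^u(2j)\mathcal D^{\mathrm{map}}_j$, a finite and positive sum since $\mathcal D^{\mathrm{map}}_j=O(j^{1/4+\epsilon})$ (use the trivial bound $\diam\le|\b|$ away from the stretched-exponential bad event of \cref{deviation-diam-blocks}), which also yields $\kappa^{\mathrm{map}}_u\in(0,\infty)$. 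The fluctuations are bounded by splitting off the $O_{\mathbb P}(1)$ largest blocks met along the geodesic — whose total contribution is $O_{\mathbb P}(n^{1/6+\epsilon})=o\big((2n)^{1/\theta}\big)$ by \cref{diam-blocs} together with the size estimates of \cref{grandbloc} (for $u>u_C$) and \cref{size-bloc-critical} (for $u=u_C$) — and applying a Bernstein-type inequality to the remaining, now suitably truncated, conditionally centred increments; the stretched-exponential tails furnished by \cref{deviation-diam-blocks} are precisely what make a union bound over all $O(n^2)$ pairs $(x,y)$ go through. The same argument runs verbatim for $\Q_{n,u}$ with $\mathcal D^{\mathrm{quad}}_j$ in place of $\mathcal D^{\mathrm{map}}_j$. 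Taking the correspondence $\{(v,x):x\in V(\b_v^{\M_{n,u}})\}$ and its quadrangulation analogue, this shows their rescaled distortions tend to $0$ in probability.

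For the measures, I would equip $\M_{n,u}$ with its degree-biased vertex measure (the ad hoc measure announced in the introduction): by \cref{prop-t-m-n} the half-edges of $\M_{n,u}$ are in bijection with the edges of $\T_{n,u}$, so this measure is the pushforward of the uniform measure on edges of $\T_{n,u}$ through the map sending an edge $(v,\mathrm{child})$ to the vertex $e^-$ of $\b_v$; since the uniform measure on edges of $\T_{n,u}$ converges, like that on its vertices, to the mass measure of $\mathcal T^{(\theta)}$ (on a downward-closed subtree $S$ it charges $\tfrac1{2n}\sum_{v\in S}k_v(\T_{n,u})=\tfrac{|S|-1}{2n}$, essentially the uniform vertex mass of $S$), and $e^-$ lies within $\diam(\b_v)=o\big((2n)^{1/\theta}\big)$ of the root vertex of $\b_v$ uniformly by \cref{diam-blocs}, this measure is carried onto $\nu_{\T_{n,u}}$ up to $o\big((2n)^{1/\theta}\big)$ in Prokhorov distance, whence by \cref{eq-Strassen} the rescaled $\underline\M_{n,u}$ is GHP-close to $\kappa^{\mathrm{map}}_u\cdot(\text{rescaled }\underline\T_{n,u})$. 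For $\Q_{n,u}$ equipped with its uniform vertex measure, recall that $V(\Q_{n,u})=2+\sum_v|\b_v^{\Q_{n,u}}|$ (each face belonging to exactly one block), so the pushforward of this measure to the tree is essentially $\tfrac1{|V(\Q_{n,u})|}\sum_v|\b_v^{\Q_{n,u}}|\,\delta_v$; using that the largest blocks carry asymptotically vanishing vertex mass (by \cref{grandbloc,size-bloc-critical}) while, along every downward-closed subtree, the remaining small blocks contribute a total number of faces proportional to the number of their tree-vertices up to lower order, one checks that this pushforward also converges, after the same rescaling, to the mass measure of $\mathcal T^{(\theta)}$. Combining the three convergences on the Skorokhod space then yields the joint statement, and reading off $\sigma(u)^2$ and the formulas \cref{defin-kappa-u} from the above completes the proof.

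The step I expect to be the main obstacle is the uniform-in-$(x,y)$ law of large numbers in the second paragraph, i.e.\ upgrading the pointwise estimate $\sum_v C_v\approx\kappa^{\mathrm{map}}_u\,d_{\T_{n,u}}(v_x,v_y)$ to a supremum bound. This is genuinely delicate in the critical case, where the rescaling is only of order $n^{1/3}$ while the largest blocks have size $\Theta(n^{2/3})$, hence diameter of order $n^{1/6+\epsilon}$: a crude Azuma bound with the worst-case increment does not give a negligible error, and one must really exploit that all but $O_{\mathbb P}(1)$ of the blocks along a geodesic are small — which is exactly what the stretched-exponential estimate \cref{deviation-diam-blocks} is tailored for. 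A secondary, softer difficulty is the identification of the canonical vertex measure: it works for quadrangulations because the number of vertices of a uniform simple quadrangulation of size $j$ is the deterministic $j+2$, whereas for a uniform $2$-connected map it fluctuates, which is the source of the extra work on the map side and the reason only an ad hoc measure is handled there.
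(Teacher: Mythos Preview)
Your overall strategy is essentially the paper's: scaling limit of the block tree, then transfer of distances via an additive decomposition along tree geodesics combined with a law of large numbers, then transfer of measures. The tree part and the identification of the constants $\sigma(u)$, $\kappa_u$ are fine. However, the heart of the argument --- the uniform estimate $\sum_v C_v=\kappa\, d_\T(v_x,v_y)+o(n^{(\alpha-1)/\alpha})$ --- has two genuine gaps, both of which the paper addresses with specific tools that your sketch does not reproduce.

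\textbf{First gap: the law of the increments along a path.} You argue that the empirical offspring distribution of $\T_{n,u}$ converges to $\mu^u$, hence ``a uniform tree-edge has parent-outdegree $2j$ with probability $\sim 2j\mu^u(2j)$'', and conclude that the $C_v$ average to $\kappa$. But global convergence of the empirical degree profile says nothing about the degree sequence \emph{along a fixed geodesic}; you need that the degrees along the ancestor line of a vertex are (asymptotically) i.i.d.\ with the size-biased law $\widehat\mu^u$. The paper obtains this rigorously via a spine decomposition (\cref{loi-biaisee}): under the \emph{unconditioned} law $\mathbb P_u$ with a marked vertex $V_\star$ at height $h$, the blocks and attachment-corners along the spine are genuinely i.i.d.\ (\cref{lemma:law-dist-along-spine}), so the increments $D(\b_{V_i}^\M,L_i)-\kappa$ are i.i.d.\ centered. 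One then deconditions using that $\mathbb P_u(|\T|=n)$ is only polynomially small while the bad event will be shown to be stretched-exponentially small. Your quenched approach would instead have to control separately (a) $\sum_v(C_v-\mathcal D_{k_v/2})$, a sum of conditionally independent but \emph{non-identically distributed} variables, and (b) $\sum_v(\mathcal D_{k_v/2}-\kappa)$, a purely tree-dependent quantity; neither is addressed.

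\textbf{Second gap: concentration in the critical case.} You invoke a ``Bernstein-type inequality'' after truncating the few largest blocks. But at $u=u_C$ the spine increment $D(\widehat\B_{u_C},U)$ has moments of order $\beta$ only for $\beta<2$ and infinite variance (\cref{lemma:tail_D-hat} and the discussion after it: $E[\xi^{1/2}]=\infty$ for $\xi\sim\widehat\mu^{u_C}$). A Bernstein bound with variance proxy blows up; even after removing $O_{\mathbb P}(1)$ largest blocks the remaining ones have sizes up to $n^{2/3-\epsilon}$, hence diameters up to $n^{1/6-\epsilon'}$, and the sum of conditional variances along a path of length $\sim n^{1/3}$ is not $o(n^{2/3})$. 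The paper's fix is a tailored moderate-deviations estimate (\cref{prop:moderate-dev}): for i.i.d.\ centered variables with a $\beta$-th moment, the truncated partial sums $\sum X_i\indic{X_i\le n^\gamma}$ still have stretched-exponentially small deviation probability at scale $n^{1/\beta+\delta}$. This is applied with $\beta$ close to $2$ and $\gamma=(1+\delta)^2/2$, the truncation being justified by \cref{diam-blocs} and the tree's maximal-degree bound. Your sentence ``stretched-exponential tails furnished by \cref{deviation-diam-blocks} make a union bound go through'' conflates two things: \cref{deviation-diam-blocks} gives stretched-exponential tails for $\diam(B_k)$ \emph{conditional on the size $k$}, but the unconditional tail of $D(\widehat\B_{u_C},U)$ is only polynomial.

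Two further points. (i) For quadrangulations the additivity of distances along blocks is only up to $\pm 2$ (\cref{additivite-dist-quad}), and the quantity $D_\q(\b,l)$ depends on a \emph{global} choice (which endpoint of the root edge of $\b$ is closer to the root of $\q$), so the increments are not obviously i.i.d.; the paper resolves this by a symmetry/re-rooting argument (\cref{prop-indep-dist-epine-quad}). (ii) For the measure on maps, the paper does \emph{not} use the degree-biased measure you describe; it compares $\nu_\m$ to the tree equipped with the random weights $\mathbf f^\m(v)=|V(\b_v^\m)|-1$, whose normalized version is shown to converge to the mass measure via a functional law of large numbers on conditioned Galton--Watson trees (\cref{lemma-cvg-trees-LLN}, \cref{scaling-lim-trees-random-measure}). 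The correspondence and coupling are packaged in \cref{map-vs-tree}, which also reduces the distortion bound to a supremum over \emph{single} vertices $v_\star$ (not over pairs), i.e.\ to the quantities $R(\m,v_\star,\kappa)$.
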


\begin{figure}
\begin{center}
\includegraphics[width=8cm]{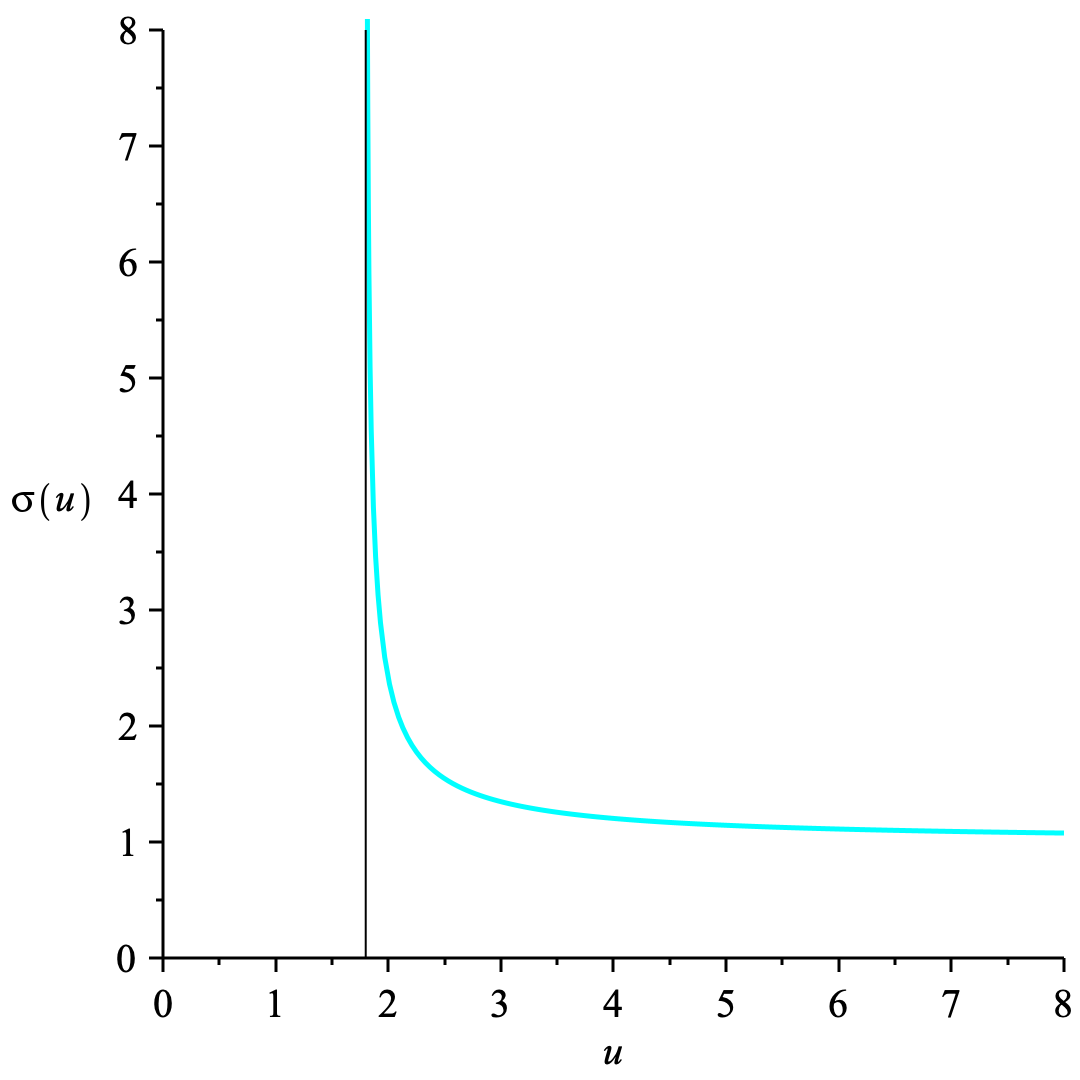}
\end{center}
\caption{Plot of $\sigma$ as a function of $u$. The vertical line corresponds to $u=u_C$.}
\label{plot-sigma-u}
\end{figure}

\begin{rem}\label{rem-sigma}
Let us explain how one gets the second equality of \cref{eqn-explicite-sigma}, which allows to draw \cref{plot-sigma-u}. From the proof of the convergence, one gets:
\[\sigma(u)^2={1+\frac{4u\, (y(u))^2\,B''(y(u))}{uB(y(u))+1-u}}.\]

Differentiating the algebraic equation \cref{B-alg} satisfied by $B$ with respect to $y$ and taking \cref{bij} as a fourth equation gives a polynomial system from which one can get $B(y(u))$ and $B''(y(u))$ as functions of $u$ only (using the resultant or a Gr\"obner basis algorithm). Then, one can conclude from the expression of $\sigma(u)$ in \cref{eqn-explicite-sigma}.
\end{rem}

\begin{rem}The quantity $\kappa_u^{\mathrm{quad}}$ could in principle be obtained \textit{via} the explicit formula obtained in \cite{Bouttier_2010} for the generating function $g_\ell$ of simple edge-rooted quadrangulations with a distinguished edge at prescribed distance $\ell$ from the root vertex.
\end{rem}

\subsubsection{Discussion and overview of the proof}

Let $u\geq u_C$.

Consider a geodesic in either $\M_{n,u}$ or $\Q_{n,u}$ between two distant blocks $\b$ and $\widetilde \b$, respectively indexed by $v$ and $\widetilde v$ in the block-tree. This geodesic must go through all the blocks whose index $w$ in the block-tree $\T_{n,u}$ is on the path from $v$ to $\widetilde v$, in the order induced by this path in the tree.

We have seen in \cref{th-trees} that under the law of $\M_{n,u}$ or $\Q_{n,u}$, the blocks are independent conditionally on the block-tree, and when $u\geq u_C$ they tend to all have non-macroscopic $o(n)$ size by \cref{grandbloc,size-bloc-critical}. One therefore expects that when $n$ is large, the distance between two distant blocks $\b$ and $\widetilde\b$ falls into a \textit{law of large numbers} behaviour and is of the same order as $d_{\T_{n,u}}(v,\widetilde v)$.

According to this heuristic, the macroscopic distances in $\M_{n,u}$ and $\Q_{n,u}$ should be concentrated around a deterministic scalar multiple of the distances in $\T_{n,u}$. But $\T_{n,u}$ is a critical Galton-Watson tree conditioned to have $2n+1$ vertices, with explicit tail asymptotic given by \cref{tree-critical} for its offspring distribution, yielding that its scaling limit is a stable tree.

To make this heuristic work, one needs to understand the typical distribution of degrees on a typical path in the tree. It turns out that on a typical path in a size-$n$ critical Galton-Watson tree, the degrees are asymptotically independent and identically distributed; and moreover they are distributed as the size-biased version of the offspring distribution.
This will be obtained by a spine decomposition for trees, adapted to our context.

We bring the attention of the reader to the fact that a proof similar in spirit has been done for the Gromov-Hausdorff metric in the general abstract setup of enriched trees by Stufler \cite[Theorem 6.60]{Stufler-survey-2020}, and we could readily apply this result to deal with the case $u>u_C$, modulo a technical complication regarding the additivity of distances in the quadrangulation case. When $u=u_C$ however, the distances within blocks have fat tails, so we fall outside the scope of Stufler's result. To deal with this, our last technical ingredient is a suitable large deviation estimate: we show that after an adequate truncation of the variables depending on $n$, large (and moderate) deviation events still have very small probability.

We now proceed with the proof.

\subsubsection{Additivity of the distances along consecutive blocks}

In \cref{additivite-dist-carte,additivite-dist-quad}, we justify that a macroscopic distance is indeed a sum of distances on ``in-between'' blocks, in the case of blocks lying on the same branch in the block-tree.

\paragraph{The map case.} For $\b$ a $2$-connected map, and $l$ an integer in $\{1,\dotsc,2|\b|\}$, let us denote by $D(\b,l)$ the graph distance in $\b$ between its root vertex and the vertex on which lies the $l$-th corner of $\b$ in breadth-first order (or in whatever arbitrary ordering rule is chosen in the block-tree decomposition, see \cref{louigi}).

\begin{figure}
\begin{center}
\includegraphics[width=6.5cm]{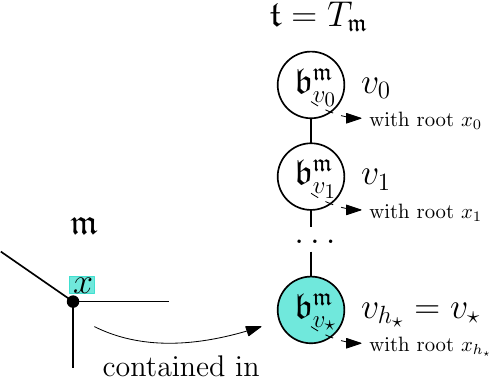}
\hfill
\end{center}
\caption{Situation of \cref{additivite-dist-carte}.}
\label{fig:additivity-distances}
\end{figure}

Fix a vertex $x$ on $\m$.
Let $v_\star$ be the vertex $v$ of the block-tree $\t$, closest to the root of $\t$, such that $x$ is a vertex of $\b_v^\m$. Denote by $h_\star:=h_\t(v_\star)$, and by $(v_i)_{0\leq i\leq h_\star}$ the ancestor line of $v_\star$ in $\t$, with $v_0$ the root and $v_{h_\star}=v_\star$. For $0\leq i\leq h_\star$, let $x_i$ be the root vertex of $\b_{v_i}^\m$. Finally, let $(l_i)_{0\leq i< h_\star}$ be the respective breadth-first index of the corner in $\b^\m_{v_{i}}$ in which the block $\b^\m_{v_{i+1}}$ is attached. The situation is illustrated on \cref{fig:additivity-distances}.

\begin{lemma}\label{additivite-dist-carte}	
	For $0\leq i\leq h_\star$, we have
	\[
	d_\m(x,x_i)=d_{\b_{v_\star}^\m}(x,x_{h_\star})+\sum_{i\leq j<h_\star}D(\b_{v_j}^\m,l_j).
	\]
\end{lemma}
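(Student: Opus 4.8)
The plan is to prove the identity by induction on the number of blocks separating $x$ from $x_i$, i.e.\ downward induction on $i$ from $i=h_\star$ to $i=0$. The base case $i=h_\star$ is immediate: the right-hand side reduces to $d_{\b_{v_\star}^\m}(x,x_{h_\star})$, and since $x$ and $x_{h_\star}$ both lie in the block $\b_{v_\star}^\m$, which is an (isometrically embedded, as far as \emph{this} geodesic is concerned) submap of $\m$, I must argue that $d_\m(x,x_{h_\star})=d_{\b_{v_\star}^\m}(x,x_{h_\star})$. This is the prototype of the key geometric fact and will be isolated as a sublemma (see below).

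For the inductive step, suppose the formula holds for some $i$ with $0<i\leq h_\star$; I want it for $i-1$. The block $\b_{v_i}^\m$ is attached to the block $\b_{v_{i-1}}^\m$ at the cut vertex which, inside $\b_{v_{i-1}}^\m$, is the base vertex of the corner with breadth-first index $l_{i-1}$; call this cut vertex $c$. By construction $c$ is also the root vertex $x_i$ of $\b_{v_i}^\m$. The crucial structural observation is that $c$ is a cut vertex of $\m$ separating $x$ (which lies "below" $v_i$ in the block-tree, inside the pendant submap hanging at $v_i$) from $x_{i-1}$ (the root of $\b_{v_{i-1}}^\m$, which lies "above"). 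Hence \emph{every} path in $\m$ from $x$ to $x_{i-1}$ must pass through $c=x_i$, so $d_\m(x,x_{i-1})=d_\m(x,x_i)+d_\m(x_i,x_{i-1})$. Then I apply the induction hypothesis to the first term, and to the second term I use the sublemma again inside the single block $\b_{v_{i-1}}^\m$: $d_\m(x_i,x_{i-1})=d_{\b_{v_{i-1}}^\m}(x_{i-1},c)=D(\b_{v_{i-1}}^\m,l_{i-1})$, by the very definition of $D$. Summing gives the formula for $i-1$.

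It therefore remains to prove the sublemma: \emph{if $a,b$ are two vertices both belonging to a single block $\b$ of $\m$, then $d_\m(a,b)=d_\b(a,b)$}. More generally one needs the cut-vertex additivity used above; both follow from the same principle. The point is Tutte's block decomposition (\cref{prop-t-m-n}): $\m$ is obtained from $\b$ by inserting pendant submaps in the corners of $\b$, and these pendant submaps meet $\b$ only at single (cut) vertices. Consequently, given any path $\gamma$ in $\m$ between two vertices of $\b$, one can "project" $\gamma$ onto $\b$: whenever $\gamma$ makes an excursion into a pendant submap $\m_e$, it must enter and leave through the same cut vertex $e^-$, so deleting that excursion yields a (no longer) path in $\b$ of length at most $|\gamma|$ with the same endpoints. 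Iterating over the finitely many excursions — or more cleanly, arguing recursively on the block-tree — gives a path in $\b$ no longer than $\gamma$, whence $d_\b(a,b)\leq d_\m(a,b)$; the reverse inequality is trivial since $\b$ is a submap. The same excursion-deletion argument, applied to a path between $x$ and $x_{i-1}$ and the cut vertex $c$, shows that the path must visit $c$ and splits at it, giving the additivity $d_\m(x,x_{i-1})=d_\m(x,x_i)+d_\m(x_i,x_{i-1})$ used above.

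I expect the main obstacle to be purely expository rather than mathematical: making precise the "projection onto a block / excursion deletion" argument in a way that respects the recursive nature of the block-tree, i.e.\ that a pendant submap hanging off $\b$ is itself decomposed into blocks and a geodesic of $\m$ may wander deep into it. The clean way is to phrase the sublemma as a statement about the whole block-tree (distances in $\m$ between two vertices are governed by the path between their "lowest" blocks in $\t$) and prove it by induction on the size of $\t$, which simultaneously yields both the sublemma and the cut-vertex additivity; the displayed identity of \cref{additivite-dist-carte} is then just an unrolling of this along the ancestor line of $v_\star$. No genuinely hard estimate is involved.
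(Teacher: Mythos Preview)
Your proposal is correct and follows essentially the same approach as the paper: both arguments rest on the single geometric fact that the root vertex $x_j$ of $\b_{v_j}^\m$ is a cut vertex of $\m$ separating the submap below $v_j$ from the rest, so any geodesic between $x$ and $x_i$ must pass through each $x_j$ in order. The only difference is organizational: the paper proves the identity in one shot by showing every geodesic from $x$ to $x_i$ visits all the $x_j$ (then combining with the trivial bound $d_\m\leq d_\b$ and triangle inequality to force equality), whereas you unroll this as a downward induction on $i$ with an explicit sublemma that distances inside a block agree with distances in $\m$.
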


\begin{proof}
	By definition, $D(\b_{v_j}^\m,l_j)=d_\m(x_j,x_{j+1})$ for $i\leq j<h_\star$. We get by the triangle inequality that the left-hand-side is at most the right-hand side.
	Therefore it suffices to show that any geodesic path in $\m$ from $x$ to $x_i$ visits each of the points $(x_j)_{i<j\leq h_\star}$, in decreasing order of $j$.
	
	Let $j$ be such that $i<j \leq h_\star$. Denote by $\t_j$ the tree of descendants of $v_j$ in $\t$ (rooted in $v_j$) and also $\m_j$ and $\widetilde \m_j$ the submaps of $\m$ made of the blocks $(b_v^\m)_{v\in\t_j}$ and $(b_v^\m)_{v\in\t\setminus\t_j}$ respectively. By the recursive description of the block-tree, the submaps $\m_j$ and $\widetilde\m_j$ share only the vertex $x_j$. But $x$ is a vertex of $\m_j$ since $v_\star$ is a descendant of $v_j$, and $x_i$ is a vertex of $\widetilde\m_j$ since $v_i$ is an ancestor of $v_j$. Hence any injective path between $x$ and $x_i$ must visit $x_j$ in decreasing order of $j$; and in particular for a geodesic path.
	
	Notice that it does not require the $(x_j)$ to be mutually distinct. This concludes the proof.
\end{proof}

\paragraph{The quadrangulation case.}

A slight complication arises for quadrangulations because the ``interface'' between two blocks is a double edge, containing two vertices instead of a single vertex in the map case. At first sight it is thus unclear through which of these vertices a geodesic should go. We show that there is a canonical choice: the vertex between those two which is closest to the root vertex. This relies crucially on the fact that quadrangulations are bipartite.

Fix a quadragulation $\q$. For $\b$ a simple block of $\q$, and $l$ an integer in $\{1,\dotsc,2|\b|\}$, let us denote by $D_\q(\b,l)$ the graph distance in $\b$ between the endpoint of the $l$-th edge of $\b$ in the ordering described thereafter, and the endpoint of the root edge of $\b$ which is closest to the root vertex of $\q$. The order on the edges of $\b$ that we use is the image of the lexicographic order on vertices of $\t$ \textit{via} the block-tree decomposition. This is consistent with the ordering of corners in the map case.

Fix a vertex $x$ on $\q$. Similarly to the map case, let $v_\star$ be the vertex $v$ of the block-tree $\t$, closest to the root of $\t$, such that $x$ is a vertex of $\b_v^\q$. Define accordingly $h_\star=h_\t(v_\star)$ and the ancestor line $(v_i)_{0\leq i\leq h_\star}$ of $v_\star$ in the block-tree $\t$, with $v_0$ the root and $v_{h_\star}=v_\star$. Let also $(x_i)_{0\leq i\leq h_\star}$ be the respective root vertex of $\b_{v_i}$. Finally, let $(l_i)_{0\leq i< h_\star}$ be the respective breadth-first index of the edge in $\b^\q_{v_{i}}$ to which the root edge of $\b^\q_{v_{i+1}}$ is attached.

\begin{lemma}\label{additivite-dist-quad}
	For all $0\leq i \leq h_\star$, there exists $\delta_{x,x_i}\in\{0,\pm 1,\pm 2\}$ such that
	\[
	d_\q(x,x_i)=\delta_{x,x_i}+d_{\b_{v_\star}^\q}(x,x_{h\star})+\sum_{i\leq j<h_\star}D_\q(\b_{v_j}^\q,l_j).
	\]
\end{lemma}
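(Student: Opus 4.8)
The plan is to mimic the argument of Lemma \ref{additivite-dist-carte}, but to carefully track the parity obstruction coming from the fact that the interface between two consecutive simple blocks is a double edge rather than a single vertex. First I would set up, exactly as in the map case, the submaps $\q_j$ and $\widetilde\q_j$ of $\q$ carried by the subtree $\t_j$ of descendants of $v_j$ and by its complement $\t\setminus\t_j$. By the recursive description of the block-tree for quadrangulations, these two submaps share exactly the \emph{two} vertices of the double edge that is the root edge of $\b^\q_{v_j}$; call these vertices $x_j$ (the root vertex of $\b^\q_{v_j}$, which by our convention on $D_\q$ is the one closest to the root vertex of $\q$) and $x_j'$ (the other one), with $d_\q(x_j,x_j')=1$ since they are joined by an edge. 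As in the map case, $x$ lies in $\q_j$ and $x_i$ lies in $\widetilde\q_j$ for $i<j\le h_\star$, so any path in $\q$ between $x$ and $x_i$ must pass through $\{x_j,x_j'\}$. Hence a geodesic from $x$ to $x_i$ decomposes along the sequence of interface pairs $(\{x_j,x_j'\})_{i<j\le h_\star}$, entering each pair through one of its two vertices.

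The key point is then: because $\q$ is bipartite and $x_j,x_j'$ are neighbours, they lie in different colour classes, so $d_\q(y,x_j)$ and $d_\q(y,x_j')$ differ by exactly $\pm1$ for every vertex $y$. Consequently, for any vertex $y$ in $\widetilde\q_j$ and any vertex $z$ in $\q_j$, the distance $d_\q(y,z)$ equals $\min$ over $w\in\{x_j,x_j'\}$ of $d_\q(y,w)+d_\q(w,z)$, and the two terms in this min differ by at most $2$; in particular, choosing always the vertex $x_j$ (the one closest to the root of $\q$) as the passage point costs at most an additive error of absolute value at most $1$ at each interface, and in fact one can be more precise. I would argue that the geodesic restricted to $\widetilde\q_j$ realizes $d_\q(x_i,\cdot)$ to whichever of $x_j,x_j'$ it actually visits, and likewise on $\q_j$; writing $d_\q(x,x_i)=d_\q(x,w_{h_\star})+\sum_{i\le j<h_\star}d_\q(w_{j+1},w_j)$ along the actual passage points $w_j\in\{x_j,x_j'\}$, and then comparing each $d_\q(w_{j+1},w_j)$ with $D_\q(\b^\q_{v_j},l_j)=d_\q(x_{j+1},x_j)$ (the quantity defined with the canonical ``closest to the root'' endpoints), the discrepancies telescope: the error in replacing $w_j$ by $x_j$ is cancelled (up to $\pm1$) by the error in the adjacent term where $w_j$ also appears. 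This leaves a total accumulated error $\delta_{x,x_i}$ which is a bounded sum of $\pm1$-type corrections at the two ``ends'' of the chain (near $x$ and near $x_i$), hence in $\{0,\pm1,\pm2\}$.

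The main obstacle I expect is making the telescoping of the $\pm1$ corrections fully rigorous while keeping the final bound as tight as $\{0,\pm1,\pm2\}$: one has to be careful that an interior passage vertex $w_j\ne x_j$ contributes a $+1$ or $-1$ error to the term $d_\q(w_{j+1},w_j)$ but the \emph{opposite} contribution (again up to a further $\pm1$ depending on the colour of $w_{j+1}$) to the term $d_\q(w_j,w_{j-1})$, so that only boundary effects survive. The clean way to handle this is to observe that for $i<j<h_\star$ (strictly interior indices) the geodesic, once it reaches the two-vertex interface $\{x_j,x_j'\}$, must traverse \emph{both} vertices or none beyond what is forced, and that the parity of $d_\q(x,\cdot)$ on the two vertices of an interface is pinned down by bipartiteness; this forces $d_\q(w_{j+1},w_j)=D_\q(\b^\q_{v_j},l_j)$ exactly for all but the innermost block $\b^\q_{v_\star}$ and the ``direction'' of the correction to be consistent, so the only genuine corrections come from the term $d_{\b^\q_{v_\star}}(x,x_{h_\star})$ (whose endpoint convention may be off by the colour of $x$) and from the top end near $x_i$. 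Once this bipartiteness bookkeeping is in place, collecting all corrections into a single $\delta_{x,x_i}$ with the stated bounded range is immediate, and the remaining identity is exactly the quadrangulation analogue of Lemma \ref{additivite-dist-carte}.
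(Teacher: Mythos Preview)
Your proposal identifies the right ingredients---the two-vertex interface, bipartiteness, and the $\pm1$ parity---but the telescoping argument has a genuine gap. You write that replacing $w_j$ by the canonical vertex $x_j$ costs an error that is ``cancelled (up to $\pm1$)'' by the adjacent term, and later that interior errors cancel ``up to a further $\pm1$ depending on the colour of $w_{j+1}$''. An error of $\pm1$ at each of $h_\star-i$ interior interfaces would accumulate to something of order $h_\star$, not something bounded by $2$. Your subsequent claim that ``this forces $d_\q(w_{j+1},w_j)=D_\q(\b^\q_{v_j},l_j)$ exactly for all but the innermost block'' is exactly the statement that needs proof, and nothing in the plan justifies it: a geodesic from $x$ to $x_i$ may genuinely prefer $x_j'$ over $x_j$ at some interior interface, and you have not shown why the resulting discrepancy with $D_\q$ does not persist.

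The paper sidesteps this entirely by proving an \emph{exact} additivity for the canonical vertices, rather than tracking actual passage points. Writing $y_j$ for the endpoint of the root edge of $\b_{v_j}^\q$ closest to the root vertex of $\q$, the key claim is
\[
d_\q(y_i,y_k)=d_\q(y_i,y_j)+d_\q(y_j,y_k)\qquad\text{for all }0\leq i\leq j\leq k\leq h_\star.
\]
This is proved first for $i=0$: a geodesic from $y_0$ to $y_k$ must hit $\{y_j,y_j'\}$; if it hits $y_j'$, bipartiteness turns $d_\q(y_0,y_j)\leq d_\q(y_0,y_j')$ into a \emph{strict} inequality, so one may reroute the first portion through $y_j$ and then along the edge $(y_j,y_j')$ without increasing length. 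Thus geodesics from $y_0$ can always be taken through every $y_j$, giving exact additivity; the general case follows by subtraction. Once this is in hand, $\sum_{i\leq j<h_\star}D_\q(\b_{v_j}^\q,l_j)=d_\q(y_{h_\star},y_i)$ exactly, and the only corrections are the two endpoint adjustments ($x_i$ versus $y_i$, and the entry point into $\b_{v_\star}^\q$ versus $y_{h_\star}$), each worth at most $1$, giving $|\delta_{x,x_i}|\leq 2$. This is the missing idea in your plan: rather than controlling errors along an arbitrary geodesic, one shows that the canonical choice $y_j$ is always attainable.
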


\begin{proof}
	The idea is quite similar in principle as in the preceding lemma, except that consecutive blocks share two vertices in the quadrangulation case, instead of one.
	
	For $0\leq j\leq h_\star$, denote by $y_j$ the endpoint of the root-edge of $\b_{v_j}^\q$ which is closest to the root vertex of $\q$. In particular $y_0$ is the root vertex of $\q$. Let $0\leq i\leq h_\star$. Then, by construction, $y_i$ and $x_i$ are adjacent to the root edge of $\b_{v_i}^\q$. Notice that a geodesic from $x$ to $x_i$ must visit at least one of the endpoints of the root-edge of $\b_{v_\star}$, which is at distance $0$ or $1$ of $y_{h_\star}$, and that $x_i$ and $y_i$ are at distance $0$ or $1$. Therefore, there exists some $\delta_{x,x_i}\in\{0,\pm 1,\pm 2\}$ such that
	\[
	d_\q(x,x_i)=\delta_{x,x_i}+d_{\b_{v_\star}^\q}(x,x_{h\star})+d_\q(y_{h_\star},y_i).
	\]
	We shall prove the following, which are sufficient to conclude:
	\begin{enumerate}
		\item For $0\leq i\leq h_\star$, it holds that $d_\q(y_{h_\star},y_i)=\sum_{i\leq j<h_\star}d_\q(y_{j+1},y_j)$;
		\item For $0\leq j<h_\star$, it holds that $d_\q(y_{j+1},y_j)=D_\q(\b_{v_j}^\q,l_j)$.
	\end{enumerate}
	It is even sufficient to show the following:
	\begin{equation}\label{eq-additivite-dist-quad}
		\forall 0\leq i\leq j\leq k\leq h_\star,\quad d_\q(y_i,y_k)=d_\q(y_i,y_j)+d_\q(y_j,y_k).
	\end{equation}
	Indeed, assuming \cref{eq-additivite-dist-quad} holds, by applying it iteratively, we directly get that $d_\q(y_{h_\star},y_i)=\sum_{i\leq j<h_\star}d_\q(y_{j+1},y_j)$. To verify the second set of identities, recall that $y_j$ is defined as the endpoint of the root edge of $b_{v_j}^\q$ which is closest to the root vertex $y_0$ of $\q$. Denote by $y'_j$ the other endpoint. Then, for $0\leq j<h_\star$ we have
	\[
	D_\q(\b_{v_j}^\q,l_j)=\min\(d_{\b_{v_j}^\q}(y_{j+1},y_j),d_{\b_{v_j}^\q}(y'_{j+1},y_j)\)=\min\(d_\q(y_{j+1},y_j),d_\q(y'_{j+1},y_j)\).
	\]
	The first equality comes from the definition of $D_\q(\b_{v_j}^\q,l_j)$, and the second one from the fact that within a block $\b$ of $\q$, the graph distance respective to $\q$ and the graph distance respective to $\b$ coincide.
	Then, assuming \cref{eq-additivite-dist-quad}, it holds that
	\[
	d_\q(y_{j+1},y_j)=d_\q(y_{j+1},y_0)-d_\q(y_{j},y_0)
	\leq d_\q(y'_{j+1},y_0)-d_\q(y_{j},y_0)
	\leq d_\q(y'_{j+1},y_j),
	\]
	where the first inequality comes from the definition of $y_{j+1}$, and the second inequality from triangle inequality. In particular, the above minimum is $d_\q(y_{j+1},y_j)$ and we have, as needed, $d_\q(y_{j+1},y_j)=D_\q(\b_{v_j}^\q,l_j)$.
	
	It still remains to prove \cref{eq-additivite-dist-quad}. Let us first prove the case $i=0$ and then deduce the general case. Let $0\leq j\leq k\leq h_\star$, and let $\gamma$ be a geodesic path from $y_0$ to $y_k$. If $\gamma$ visits $y_j$, we readily have
	\begin{equation}\label{eq-additivite-dist-a-racine}
	d_\q(y_0,y_k)=d_\q(y_0,y_j)+d_\q(y_j,y_k).
	\end{equation}
	Otherwise it visits $y'_j$, and denote by $\gamma_1$, $\gamma_2$ the portions of $\gamma$ form $y_0$ to $y'_j$, and from $y'_j$ to $y_k$ respectively. By definition of $y_j$, we have $d_\q(y_0,y_j)\leq d_\q(y_0,y'_j)$. But since $\q$ is a quadrangulation, it is bipartite and the inequality is strict $d_\q(y_0,y_j)< d_\q(y_0,y'_j)$. Form $\widetilde\gamma_1$ the concatenation of a geodesic path from $y_0$ to $y_j$ and of the oriented edge $(y_j,y'_j)$. Then, from the strict inequality we mentioned, $\mathrm{len}(\widetilde\gamma_1)\leq \mathrm{len}(\gamma_1)$, and in particular the concatenation of $\widetilde\gamma_1$ and $\gamma_2$ is a geodesic path from $y_0$ to $y_k$ which visits $y_j$. Therefore, the identity \cref{eq-additivite-dist-a-racine} also holds.
	
	Finally, let us deduce the case $i\neq 0$. Let $0\leq i\leq j\leq k\leq h_\star$. We have
	\begin{multline*}
	d_\q(y_i,y_k)=d_q(y_0,y_k)-d_\q(y_0,y_i)
	=\(d_q(y_0,y_k)-d_\q(y_0,y_j)\)+\(d_q(y_0,y_j)-d_\q(y_0,y_i)\)\\
	=d_\q(y_j,y_k)+d_\q(y_i,y_j).
	\end{multline*}
	This proves \cref{eq-additivite-dist-quad} and concludes the proof of \cref{additivite-dist-quad}.

\end{proof}

\subsubsection{Scaling limit and largest degree of critical Galton-Watson trees}

A slight technical complication that arises in our setting is that the block-tree has a \emph{lattice} offspring distribution with span $2$, in the sense of the following definition

\begin{defin}
	A measure $\mu$ on $\Z$ is called \emph{lattice} if its support is included in a subset $b+d\Z$ of $\Z$, with $d\geq 2$. The largest such $d$ is called its \emph{span}. If $d=1$, $\mu$ is called \emph{non-lattice}.
\end{defin}

The results that we need \cite[Theorem 3]{kortchemski_simple_2013} are stated for non-lattice offspring distributions. This turns out to be purely for convenience and we state the following more general result that is suited to our needs.

We recall that a probability distribution $\mu$ with mean $m_\mu$ is said to be in the domain of attraction of a stable law of index $\theta\in(1,2]$ if there exist positive constants $(C_n)_{n\geq 0}$ such that we have the following convergence in distribution
\begin{equation}\label{eq:cvgence-distrib}
\frac{U_1+\dots+U_n-nm_\mu }{C_n}\xrightarrow[n\rightarrow\infty]{(d)}X^{(\theta)},
\end{equation}
where $(U_1,\dots U_n)$ are i.i.d.~samples of the law $\mu$, and $X^{(\theta)}$ is a random variable with Laplace transform $E\left[\exp(-\lambda X^{(\theta)})\right]=\exp(\lambda^\theta)$.

\begin{proposition}\label{limite-echelle-periodique}
	For all $1<\theta\leq 2$, there exists a random measured metric space ${\mathcal T}^{(\theta)}=\(\mathcal T^{(\theta)},d^{(\theta)},\nu^{(\theta)}\)$ satisfying the following scaling limit result.
	
	Let $\mu$ be a probability distribution on $\Z_{\geq 0}$, with $\mu(1)\neq 1$, and which is assumed to be critical. Assume additionally that it is in the domain of attraction of a stable law of index $\theta\in(1,2]$.
	Let $d\geq 1$ be the span of the measure $\mu$. Then under those assumptions, we have
	\begin{enumerate}
		\item
		For all $m$ large enough, the $\mathrm{GW}_\mu(\mathrm d T)$-probability that $T$ has $dm$ edges is positive. This probability is equivalent to $c_\theta/{ (C_{dm}dm)}$ for some constant $c_\theta>0$.
		\item
		If we denote by $T_n$ a $\mathrm{GW}_\mu$-tree conditioned to have $n$ edges, then
		\[
		\(\frac{C_{dm}}{dm}\)\cdot \underline{T}_{dm}\xrightarrow[m\rightarrow\infty]{\quad (d)\quad}
		{\mathcal{T}}^{(\theta)},
		\]
		in the \emph{Gromov-Hausdorff-Prokhorov} sense,
		with $(C_n)_{n\geq 0}$ the sequence in \cref{eq:cvgence-distrib}.
		\item The largest degree in $T_{dm}$ is of order at most $C_{dm}$, in the sense that for any $\epsilon>0$
		\[
		P\(\exists v\in T_{dm}, k_v(T_{dm})\geq (C_{dm})^{1+\epsilon}\)\xrightarrow[m\rightarrow\infty]{} 0.
		\]
	\end{enumerate}
\end{proposition}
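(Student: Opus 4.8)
The plan is to reduce each of the three assertions to the corresponding results for non-lattice offspring distributions, essentially those of \cite{kortchemski_simple_2013} and \cite{arbrestable}, by a standard device: if $\mu$ has span $d\geq 2$ and support in $d\Z_{\geq 0}$ (note that criticality forces $\mu(0)>0$, hence $b=0$ in the lattice decomposition), then the pushforward $\widetilde\mu$ of $\mu$ under $k\mapsto k/d$ is a non-lattice critical probability measure on $\Z_{\geq 0}$ with $\widetilde\mu(1)\neq 1$. Moreover, $\widetilde\mu$ is in the domain of attraction of the same $\theta$-stable law: indeed if $(U_i)$ are i.i.d.\ $\mu$-distributed, then $(U_i/d)$ are i.i.d.\ $\widetilde\mu$-distributed, and $(U_1+\dots+U_n-nm_\mu)/C_n$ and $d\cdot\bigl((U_1/d+\dots+U_n/d)-n(m_\mu/d)\bigr)/C_n$ are the same random variable, so the normalising constants for $\widetilde\mu$ are $C_n/d$ and the limit law is unchanged. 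The key combinatorial observation is that a $\mathrm{GW}_\mu$-tree and a $\mathrm{GW}_{\widetilde\mu}$-tree are related by a deterministic bijection on the level of plane trees that multiplies every outdegree by $d$: this bijection preserves the tree structure, hence the graph metric and the uniform measure on vertices, but multiplies the number of edges by $d$. Concretely, a tree $T$ with all outdegrees divisible by $d$ corresponds to the tree $\widetilde T$ obtained by dividing each outdegree by $d$; one has $|E(\widetilde T)|=|E(T)|/d$, $\mathrm{GW}_\mu(T)=\mathrm{GW}_{\widetilde\mu}(\widetilde T)$ whenever all outdegrees of $T$ are multiples of $d$ (and $\mathrm{GW}_\mu(T)=0$ otherwise), and conditioning $T$ to have $dm$ edges is the same as conditioning $\widetilde T$ to have $m$ edges. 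Note that the metric spaces $\underline T$ and $\underline{\widetilde T}$ are \emph{not} isometric — $\widetilde T$ has the same vertex set but fewer edges — so some care is needed; however since the bijection is a graph isomorphism after relabelling (it does not change adjacency, only the planar embedding and the outdegree bookkeeping), in fact $\underline{T_{dm}}$ and $\underline{\widetilde T_m}$ \emph{are} isometric as measured metric spaces. [More precisely: the map sending a plane tree with all outdegrees in $d\Z$ to the plane tree with outdegrees divided by $d$ keeps the underlying combinatorial tree unchanged; it is purely a relabelling of which trees are ``allowed'' under which offspring law.]

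With this correspondence in hand, the three points follow:

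\begin{enumerate}
\item The $\mathrm{GW}_\mu$-probability that $T$ has $dm$ edges equals the $\mathrm{GW}_{\widetilde\mu}$-probability that $\widetilde T$ has $m$ edges. Since $\widetilde\mu$ is non-lattice, critical, with $\widetilde\mu(1)\neq 1$ and in the domain of attraction of a $\theta$-stable law, the local limit theorem for such walks (see \cite[Theorem 3.1 and its proof]{kortchemski_simple_2013}, via the cycle lemma identity $\mathrm{GW}_{\widetilde\mu}(|E(\widetilde T)|=m)=\tfrac1m\,P(\widehat U_1+\dots+\widehat U_m=m-1)$ where $\widehat U_i$ are i.i.d.\ $\widetilde\mu$) yields that this probability is equivalent to $c_\theta/(\widetilde C_m\, m)$ for an explicit $c_\theta>0$, where $\widetilde C_m=C_{dm}/d$. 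Substituting gives the claimed equivalent $c_\theta'/(C_{dm}\,dm)$ (absorbing the factor $d$ into the constant).
\item Conditionally on having $dm$ edges (resp.\ $m$ edges), $T_{dm}$ and $\widetilde T_m$ are isometric as measured metric spaces by the discussion above, so it suffices to prove $(\widetilde C_m/m)\cdot\underline{\widetilde T_m}\to\mathcal T^{(\theta)}$ in the $\mathrm{GHP}$ sense. Because $\widetilde C_m/m=C_{dm}/(dm)$, this is exactly the statement $(C_{dm}/(dm))\cdot\underline{T_{dm}}\to\mathcal T^{(\theta)}$. The convergence for $\widetilde\mu$ is the content of the scaling-limit theorem for conditioned Galton-Watson trees in the domain of attraction of a stable law (Duquesne's theorem, see \cite{arbrestable} and also \cite[Theorem 3.1]{kortchemski_simple_2013} for the $\mathrm{GHP}$ version including the measure), which applies since $\widetilde\mu$ is non-lattice. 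This also serves to \emph{define} $\mathcal T^{(\theta)}$ with the normalisation used in the rest of the paper, matching the excursion-of-length-one description.
\item Again by the isometry, $\max_{v\in T_{dm}}k_v(T_{dm})=d\cdot\max_{v\in\widetilde T_m}k_v(\widetilde T_m)$. For the non-lattice tree $\widetilde T_m$, the largest degree is $O_P\bigl(\widetilde C_m\bigr)$; more precisely $P(\exists v\in\widetilde T_m,\ k_v(\widetilde T_m)\geq \widetilde C_m^{\,1+\epsilon})\to 0$, which is a standard consequence of the single-vertex tail estimate $\widetilde\mu(k)=O(k^{-\theta}\ell(k))$ (slowly varying $\ell$, from the domain-of-attraction hypothesis) combined with a union bound over the $m+1$ vertices and the local-limit asymptotics for the conditioning event; see \cite[Section 2 and Proposition 2.6]{kortchemski_simple_2013}. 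Since $d\widetilde C_m^{\,1+\epsilon}=d(C_{dm}/d)^{1+\epsilon}\leq C_{dm}^{\,1+\epsilon}$ for $m$ large (as $C_{dm}\to\infty$), the bound transfers to $T_{dm}$.
\end{enumerate}

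The main obstacle is not any single estimate — each is classical — but rather setting up the span-$d$/span-$1$ dictionary cleanly enough that one can genuinely \emph{quote} the non-lattice results as black boxes, in particular checking that the local limit theorem survives (it does: the relevant random walk $\widehat U_1+\dots+\widehat U_m$ for $\widetilde\mu$ is aperiodic precisely because $\widetilde\mu$ is non-lattice and the span was factored out) and that the normalising sequences match up as $\widetilde C_m=C_{dm}/d$. The one genuinely delicate point to get right is the identification of the metric spaces $\underline{T_{dm}}$ and $\underline{\widetilde T_m}$: one must argue that multiplying every outdegree by $d$ is a purely notational operation on the \emph{set} of plane trees that does not alter the abstract rooted tree (equivalently, that the bijection commutes with the forgetful map to the underlying graph), so that graph distances and uniform vertex measures are literally preserved — once this is granted, everything reduces to the classical span-$1$ statements with the bookkeeping $n=dm$, $C_n\leftrightarrow dC_m^{\widetilde{}}$.
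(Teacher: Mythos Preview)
Your reduction has a fatal flaw: the claimed bijection between $\mathrm{GW}_\mu$-trees and $\mathrm{GW}_{\widetilde\mu}$-trees does not exist, and neither does the claimed equality of probabilities. A plane tree $T$ with $dm$ edges has $dm+1$ vertices, while your $\widetilde T$ with $m$ edges has $m+1$ vertices; there is no operation ``divide each outdegree by $d$'' sending one to the other, since even the number of outdegrees to be divided does not match. In particular $\underline{T_{dm}}$ and $\underline{\widetilde T_m}$ cannot be isometric as measured metric spaces (they have different cardinalities), and the identity $\mathrm{GW}_\mu(T)=\mathrm{GW}_{\widetilde\mu}(\widetilde T)$ is meaningless at the level of individual trees. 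Even the weaker claim underlying your item~1, that $\mathrm{GW}_\mu(|E(T)|=dm)=\mathrm{GW}_{\widetilde\mu}(|E(\widetilde T)|=m)$, is false: by the cycle lemma the left-hand side equals $\frac{1}{dm+1}P(\xi_1+\dots+\xi_{dm+1}=dm)$ with $\xi_i\sim\mu$, while the right-hand side equals $\frac{1}{m+1}P(\widetilde\xi_1+\dots+\widetilde\xi_{m+1}=m)$ with $\widetilde\xi_i\sim\widetilde\mu$, and these involve sums of different lengths. You seem aware of the tension (first writing that the spaces are ``\emph{not} isometric'' and then reversing yourself), but the reversal rests on the assertion that changing every outdegree is ``purely notational'', which it is not: a vertex with $2d$ children and a vertex with $2$ children are different combinatorial objects.

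The paper does not attempt any such reduction. Instead it observes that the proofs of \cite{kortchemski_simple_2013} go through verbatim in the lattice case once the non-lattice local limit theorem is replaced by its lattice version \cite[Theorem~4.2.1]{ibragimov_independent_1971}, which introduces an extra factor $1/d$. This factor appears simultaneously in the numerator and denominator of every discrete absolute-continuity ratio used in Kortchemski's arguments and therefore cancels; the remainder of the proof is unchanged, provided the number of edges is taken in $d\Z$. For item~3, the paper deduces tightness of $C_{dm}^{-1}\max_v k_v(T_{dm})$ directly from the functional convergence of the rescaled \L ukasiewicz path, rather than from a union bound on single-vertex tails.
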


\begin{proof}
	The first statement can be obtained by a straightforward adaptation of the proof of \cite[Lemma 1]{kortchemski_simple_2013}, which relies on a local limit theorem and the cycle lemma. We specify below how this local limit theorem should be adapted. The cycle lemma adapts straightforwardly.
	
	For the second statement, let us justify that \cite[Theorem 3]{kortchemski_simple_2013} still applies when the \emph{non-lattice} (or \emph{aperiodic}) assumption is dropped, but with the number of vertices $n+1$ taken only along the subsequence $(dm+1)_{m\geq 0}$. This will prove functional convergence of the contour functions of the trees $(T_{dm})_m$ when properly rescaled, to the contour function of ${\mathcal{T}}^{(\theta)}$. This convergence of contour functions is sufficient to get the announced Gromov-Hausdorff-Prokhorov convergence.
	
	The local limit theorem \cite[Theorem 2, (ii)]{kortchemski_simple_2013} changes as follows
		\[
		\lim_{n\rightarrow\infty}\sup_{k\in\Z}\left|\frac{a_n}{d}P\(Y_n=k\)-p_1\(\frac{k}{a_n}\)\right|=0.
		\]
		See for instance \cite[Theorem 4.2.1]{ibragimov_independent_1971}. Notice that the only difference with the \textit{non-lattice} ($d=1$) local limit theorem is the factor $1/d$ in the last display.
	Examining the details of Kortchemski's arguments, this extra $1/d$ factor would appear only in the discrete absolute continuity relations which are used in the proof. But in each instance, it would appear in both the numerator and denominator of some fraction. Hence the fraction simplifies and this factor has no impact on the proof, which carries without change, except that the integer $n$, which in the paper is the number of \emph{vertices}, should now only be taken in $d\Z+1$.
	
	Finally, in order to get the third statement, one can take as a basis the local limit theorem above. From this, one can get the functional convergence of the Łucasiewicz path of $T_{dm}$, when it is rescaled by $dm$ in time and $C_{dm}$ in space. In particular, $(C_{dm})^{-1}$ times the largest degree in $T_{dm}$ is tight, and one obtains the claimed probabilistic bound. One could for instance use the same arguments as in the proof of \cite[Proposition 3.4]{kortchemski2021large}
\end{proof}

\Cref{scaling-lim-trees} then just identifies the explicit scaling constants in specific instances of the above-mentioned scaling limit theorem.

\begin{corollary}\label{scaling-lim-trees}
	Let $\mu$ be a critical probability distribution on $\Z_{\geq0}$ with span $d\geq 1$, and with $\mu(1)\neq 1$. Denote by $T_n$ a $\mathrm{GW}_\mu$-tree conditioned to have $n$ edges, for $n\in d\Z$ large enough.
	Then the following holds.
	\begin{enumerate}
		\item
		If $\mu$ has finite variance $\sigma^2$, then $P(|T|=dm)\sim cm^{-3/2}$ for some constant $c>0$, and
		\[
		(dm)^{-1/2}\cdot \underline{T}_{dm}\xrightarrow[m\rightarrow\infty]{\quad \mathrm{GHP},\,(d)\quad}
		\frac{\sqrt 2}{\sigma}\cdot{\mathcal{T}}^{(2)}.
		\]
		Additionally for all $\epsilon>0$ the largest degree of $T_{dm}$ is $o(m^{1/2+\epsilon})$ in probability.
		\item
		If $\mu\([x,+\infty)\)\underset{x\rightarrow\infty}{\sim} c x^{-\theta}$ for some $ c >0$ and $\theta\in(1,2)$, then $P(|T|=dm)\sim c'_\theta m^{-(1+1/\theta)}$ for some constant $c'_\theta>0$, and
		\[
		(dm)^{-(1-1/\theta)}\cdot \underline{T}_{dm}\xrightarrow[m\rightarrow\infty]{\quad \mathrm{GHP},\,(d)\quad}
		\left[\frac{\theta-1}{ c \,\Gamma(2-\theta)}\right]^{1/\theta} \cdot{\mathcal{T}}^{(\theta)}.
		\]
		Additionally for all $\epsilon>0$ the largest degree of $T_{dm}$ is $o(m^{1/\theta+\epsilon})$ in probability.
	\end{enumerate}
\end{corollary}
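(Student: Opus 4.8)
The strategy is to feed the two tail assumptions into Proposition \ref{limite-echelle-periodique}, which already gives the qualitative GHP scaling limit and the degree bound, and then to compute the explicit normalising constants $(C_n)$ and the probability equivalents. First I would treat the finite-variance case. By the classical central limit theorem, the hypotheses of Proposition \ref{limite-echelle-periodique} hold with $\theta=2$ and $C_n=\sqrt{n\sigma^2/2}$ — the factor is chosen precisely so that the limiting Laplace transform is $\exp(\lambda^2)$, i.e. $(U_1+\dots+U_n-nm_\mu)/C_n$ converges to a Gaussian of variance $2$, matching the normalisation $E[\exp(-\lambda X^{(2)})]=\exp(\lambda^2)$ fixed in \eqref{eq:cvgence-distrib}. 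Plugging $C_{dm}/(dm)=\sqrt{\sigma^2/(2dm)}$ into item (2) of the proposition and absorbing the $\sqrt{\sigma^2/2}$ factor into the tree via the rescaling notation gives $(dm)^{-1/2}\cdot\underline T_{dm}\to (\sqrt2/\sigma)\cdot\mathcal T^{(2)}$. The probability equivalent $P(|T|=dm)\sim cm^{-3/2}$ comes from item (1) of the proposition, since $c_2/(C_{dm}\,dm)$ is of order $m^{-1/2}\cdot m^{-1}=m^{-3/2}$; the constant $c$ is $c_2\sqrt{2/\sigma^2}/d$ up to the obvious bookkeeping. The degree statement is item (3): $(C_{dm})^{-1-\epsilon}$ times the largest degree goes to $0$, and $C_{dm}$ is of order $m^{1/2}$, so the largest degree is $o(m^{1/2+\epsilon'})$ in probability for every $\epsilon'>0$ (reparametrising $\epsilon$).

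For the stable case with $\mu([x,\infty))\sim c\,x^{-\theta}$, the plan is to invoke the classical characterisation of the domain of attraction of a $\theta$-stable law (see e.g.\ \cite{feller} or \cite{ibragimov_independent_1971}): a regularly varying tail of index $-\theta$ with $\theta\in(1,2)$ puts $\mu$ in the domain of attraction of a $\theta$-stable law, and the correct norming sequence is $C_n=(a\,n)^{1/\theta}$ for an explicit constant $a$ depending on $c$ and $\theta$. The bookkeeping here is the one routine computation I would actually carry out: one matches the tail constant $c$ of $\mu$ against the tail of the $\theta$-stable law with Laplace exponent $\lambda\mapsto\lambda^\theta$. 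Using that the one-sided $\theta$-stable law $X^{(\theta)}$ with $E[e^{-\lambda X^{(\theta)}}]=e^{\lambda^\theta}$ has right tail $P(X^{(\theta)}\ge x)\sim \frac{1}{\Gamma(1-\theta)}\,x^{-\theta}$ — equivalently $\frac{\theta-1}{\Gamma(2-\theta)}x^{-\theta}$ after using $\Gamma(1-\theta)=\Gamma(2-\theta)/(1-\theta)$ — the convergence-of-types argument forces $C_n=\bigl(\frac{c\,\Gamma(2-\theta)}{\theta-1}\bigr)^{1/\theta} n^{1/\theta}$. Then $C_{dm}/(dm)=\bigl(\frac{c\,\Gamma(2-\theta)}{\theta-1}\bigr)^{1/\theta}(dm)^{-(1-1/\theta)}$, and item (2) of the proposition yields the stated limit with constant $\bigl[\frac{\theta-1}{c\,\Gamma(2-\theta)}\bigr]^{1/\theta}$ in front of $\mathcal T^{(\theta)}$. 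The probability equivalent $P(|T|=dm)\sim c'_\theta m^{-(1+1/\theta)}$ follows from item (1) since $c_\theta/(C_{dm}\,dm)$ is of order $m^{-1/\theta}\cdot m^{-1}$, and the degree bound from item (3) since $C_{dm}$ is of order $m^{1/\theta}$, giving largest degree $o(m^{1/\theta+\epsilon})$ in probability. The endpoint $\theta=2$ is consistent with the finite-variance case after noting that a tail exactly $\sim c\,x^{-2}$ corresponds to infinite variance but is still covered by $\theta=2$ with the appropriate (slowly-varying-corrected) norming; in that borderline regime $\Gamma(2-\theta)=\Gamma(0)$ is infinite, which is the formal reflection of the fact that the norming picks up an extra logarithmic factor, so strictly speaking one should read the statement for $\theta\in(1,2)$ and recover $\theta=2$ only through the genuinely finite-variance case.

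The only genuine subtlety — not really an obstacle, more a point requiring care — is that Proposition \ref{limite-echelle-periodique} is phrased with a general norming sequence $(C_n)$ defined abstractly through \eqref{eq:cvgence-distrib}, whereas here I must pin down $(C_n)$ up to asymptotic equivalence from the explicit tail hypothesis; this is exactly where one uses that for regularly varying tails the norming is itself regularly varying and can be computed via the tail-balance / Karamata-type formulas, so that $C_n$ is asymptotically a pure power $n^{1/\theta}$ with the explicit constant extracted by comparing tail constants. Everything else is mechanical substitution into the three conclusions of Proposition \ref{limite-echelle-periodique}, together with the trivial observation that the rescaling notation $\lambda\cdot\underline X$ lets one move the deterministic scalar from the discrete side to the continuous side. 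I would organise the write-up as two short paragraphs, one per case, each (i) citing the domain-of-attraction criterion to check the hypotheses of Proposition \ref{limite-echelle-periodique}, (ii) identifying $C_n$ explicitly, and (iii) reading off the three conclusions.
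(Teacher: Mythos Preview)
Your proposal is correct and, for Statement~1, essentially identical to the paper's: both identify $C_n=\tfrac{\sigma}{\sqrt 2}n^{1/2}$ via the CLT and read off the three conclusions from Proposition~\ref{limite-echelle-periodique}.

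For Statement~2 you take a genuinely different route to the constant. The paper computes the Laplace transform of $U-1$ directly: it introduces the iterated tail integrals $M_1,M_2,M_3$, applies the Karamata Tauberian theorem to get $\widehat M_3(h)\sim \tfrac{c\,\Gamma(2-\theta)}{\theta-1}h^{\theta-2}$, and then integrates by parts three times to obtain the expansion $E[e^{-h(U-1)}]=1+\tfrac{c\,\Gamma(2-\theta)}{\theta-1}h^\theta(1+o(1))$, from which $C_n=\bigl(\tfrac{c\,\Gamma(2-\theta)}{\theta-1}\bigr)^{1/\theta}n^{1/\theta}$ follows by plugging $h=\lambda/C_n$. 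You instead match tails: knowing $P(X^{(\theta)}>x)\sim\tfrac{\theta-1}{\Gamma(2-\theta)}x^{-\theta}$ and that $n\mu([C_n x,\infty))$ must converge to this, you solve for $C_n$ by convergence of types. Both arguments are valid and land on the same $C_n$; yours is shorter if one is willing to import the stable tail asymptotic and the domain-of-attraction criterion as black boxes from \cite{feller} or \cite{ibragimov_independent_1971}, whereas the paper's computation is fully self-contained from the tail hypothesis on $\mu$ alone. One small cosmetic point: your intermediate expression $\tfrac{1}{\Gamma(1-\theta)}x^{-\theta}$ carries a sign (since $\Gamma(1-\theta)<0$ for $\theta\in(1,2)$), so write it as $-1/\Gamma(1-\theta)$ or go straight to $\tfrac{\theta-1}{\Gamma(2-\theta)}$. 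Your remark that the $\theta=2$ endpoint of Statement~2 should really be read through Statement~1 is apt; the paper's own computation also tacitly requires $\theta<2$ at the step $M_3(x)\sim cx^{2-\theta}/(2-\theta)(\theta-1)$.
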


\begin{proof}
	Note that in the case where $\nu$ has finite exponential moments, \cite{marckert_depth_2003} treats the case of lattice distributions. That would suffice for our applications when $u> u_C$. We still need the second statement to treat the case $u=u_C$.
	Let us apply the preceding proposition and identify the right constants, in these two cases.

		\paragraph{Statement 1.} If $\mu$ has finite variance $\sigma^2$, then by the {Central Limit Theorem}, for i.i.d. samples $(U_i)_i$ of the law $\mu$, we have the convergence in distribution
		\[
		\frac{U_1+\dots+U_n-n}{\sigma\cdot n^{1/2}}\xrightarrow[n\rightarrow\infty]{(d)}\mathcal{G},
		\]
		where $\mathcal G$ is a standard normal variable. In particular, $\mathcal G$ has the same law as $\frac{1}{\sqrt 2} \cdot X^{(2)}$.
		Therefore, the hypotheses of \cref{limite-echelle-periodique} are satisfied, with 
		\[
		C_n=\tfrac{\sigma}{\sqrt 2}\cdot n^{1/2},
		\]
		and the conclusion follows from this proposition.

		\paragraph{Statement 2.} We consider the case where $\mu\([x,+\infty)\)\underset{x\rightarrow\infty}{\sim} c x^{-\theta}$ with $\theta\in(1,2)$ and $ c >0$. Let $U:=U_1$ and let us also introduce the notation
		\begin{align*}
		M_1(x)&=\int_x^\infty \mu(\mathrm d y)=\mu\([x,+\infty)\)\\
		M_2(x)&=\int_x^\infty M_1(y)\,\mathrm d y\\
		M_3(x)&=\int_0^x M_2(y)\mathrm d y.
		\end{align*}
		The function $M_3$ is non-decreasing and using the assumed tail asymptotic of $\mu$, one has the asymptotic $M_3(x)\sim cx^{2-\theta}/(2-\theta)(\theta-1)$. We may therefore use the Karamata Tauberian theorem \cite[Theorem 1.7.1]{bingham_regular_1989} to get
		\[
		\widehat M_3(h)\sim \frac{c\,\Gamma(3-\theta)}{(2-\theta)(\theta-1)}h^{\theta-2}
		=\frac{c\,\Gamma(2-\theta)}{\theta-1}h^{\theta-2},
		\]
		where $\widehat M_3$ is the \emph{Laplace-Stieltjes transform} of $M_3$, defined~---~\emph{e.g.} in \cite[Paragraph 1.7.0b]{bingham_regular_1989}~---~as
		\[\widehat M_3(h) = h \int_0^\infty e^{-hx} M_3(x) dx\]
		for all $h$ for which the integral converges absolutely. Then, if we integrate by parts three times, we obtain
		\begin{align*}
		E\left[\exp(-h(U-1))\right]=\int_0^\infty\mathrm e^{-h(x-1)}\,\mu(\mathrm d x)
		&= \mathrm e^{h}-h\mathrm e^h M_2(0)+h^3\mathrm e^h\int_0^\infty \mathrm e^{-hx}M_3(x)\,\mathrm d x\\
		&= \mathrm e^{h}-h\mathrm e^h M_2(0)+h^2\mathrm e^h\, \widehat{M}_3(h),
		\end{align*}
		 
		This, together with the fact that $M_2(0)=1$ since it is the expectation of $\mu$, yields the following expansion when $h\rightarrow0^+$,
		\begin{equation}\label{eq-dvpt-Laplace-stable}
		E\left[\exp(-h(U-1))\right]\underset{}{=}1+\frac{ c \,\Gamma(2-\theta)}{\theta-1}\cdot h^\theta\(1+o(1)\).
		\end{equation}
		
		Now, if we set
		\[
		C_n=\(\frac{ c \,\Gamma(2-\theta)}{\theta-1}\)^{1/\theta}n^{1/\theta},
		\]
		and plug $h=\lambda/C_n$ into \cref{eq-dvpt-Laplace-stable}, we get for all $\lambda\geq 0$,
		\begin{align*}
		E\left[\exp\(-\lambda\,\tfrac{U_1+\dots+U_n-n}{C_n}\)\right]
		= \(E\left[\exp(-\tfrac{\lambda}{C_n}(U-1))\right]\)^n 
		\xrightarrow[n\rightarrow\infty]{}\exp\(\lambda^\theta\).
		\end{align*}
		Hence there is convergence in distribution of $\tfrac{U_1+\dots+U_n-n}{C_n}$ to $X^{(\theta)}$, as required in \cref{limite-echelle-periodique}. So this proposition applies with the above-chosen sequence $(C_n)_n$, and the conclusion follows.
\end{proof}

\subsubsection{Scaling limit of critical Galton-Watson trees equipped with a random measure}

We shall need a version of the GHP scaling limits in \cref{scaling-lim-trees}, when the trees $\t$ under consideration are equipped with some random measure on their vertices, instead of the uniform measure $\nu_\t$. Let us describe more specifically our setting.

Let $\mu$ be a probability measure on $\Z_{\geq 0}$ and $\eta=(\eta_k)_{k\geq 0}$ be a family of Borel probability measures on $\R_{\geq 0}$. We shall define an enriched version $\mathcal{L}(\mu,\eta)$ of the Galton-Watson law $\mathrm{GW}(\mu)$, defined on the set of pairs $(\t,f)$ such that $\t$ is a tree and $f$ is a non-negative function $f\colon V(\t)\rightarrow\R_{\geq 0}$. Namely, to sample a random pair $(\T, \mathbf f)$ with law $\mathcal{L}(\mu,\eta)$ first sample $\T$ according to $\mathrm{GW}(\mu)$, and then sample conditionally on $\T$ the variables $\mathbf f(v)$ for ${v\in\T}$, independently of each other, according to the laws $\eta_{k_v(\T)}(\mathrm d x)$ respectively.

In particular, the random non-negative function $\mathbf f$ defines a random measure on $V(\T)$ assigning weight $\mathbf f(v)$ to the vertex $v$. We shall use the same notation $\mathbf f$ for this measure, and denote by $|\mathbf f|$ its total weight.

\begin{proposition}\label{scaling-lim-trees-random-measure}
	Let $\mu$ be a critical offspring distribution with span $d\geq 1$ such that $\mu(1)\neq 1$. Let also $(\eta_k)_{k\geq 0}$ be Borel probability laws that are supported on $\R_{\geq 0}$.
	For $n\in d\Z$ large enough, denote by $(T_n,\mathbf f_n)$ a sample of the law $\mathcal{L}(\mu,\eta)$ conditioned to the event $\{|\T|=n\}$.
	If the annealed measure $\sum_{k}\mu(k)\eta_k (\mathrm d s)$ admits a positive and finite first moment, then the following holds.
	\begin{enumerate}
		\item
		If $\mu$ has finite variance $\sigma^2$, then
		\[
		(dm)^{-1/2}\cdot \biggl(V({T}_{dm}),d_{T_{dm}},\frac{\mathbf f_{dm}}{|\mathbf f_{dm}|}\biggr)\xrightarrow[m\rightarrow\infty]{\quad \mathrm{GHP},\,(d)\quad}
		\frac{\sqrt 2}{\sigma}\cdot{\mathcal{T}}^{(2)}.
		\]
		\item
		If $\mu\([x,+\infty)\)\underset{x\rightarrow\infty}{\sim} c x^{-\theta}$ for some $ c >0$ and $\theta\in(1,2)$, then
		\[
		(dm)^{-(1-1/\theta)}\cdot \biggl(V({T}_{dm}),d_{T_{dm}},\frac{\mathbf f_{dm}}{|\mathbf f_{dm}|}\biggr)\xrightarrow[m\rightarrow\infty]{\quad \mathrm{GHP},\,(d)\quad}
		\left[\frac{\theta-1}{ c \,\Gamma(2-\theta)}\right]^{1/\theta} \cdot{\mathcal{T}}^{(\theta)}.
		\]
	\end{enumerate}
\end{proposition}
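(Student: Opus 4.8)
The plan is to reduce the statement to the plain (uniform-measure) scaling limits already established in \cref{scaling-lim-trees}, by showing that the random measure $\mathbf f_{dm}/|\mathbf f_{dm}|$ is, with high probability, close in the Prokhorov sense to the uniform measure $\nu_{T_{dm}}$, once both are viewed on the rescaled tree. Concretely, conditionally on $T_{dm}$, the total mass $|\mathbf f_{dm}|=\sum_{v\in T_{dm}}\mathbf f_{dm}(v)$ is a sum of $dm$ independent bounded-mean random variables whose laws depend only on the outdegrees $(k_v(T_{dm}))_v$. Writing $m_\eta(k)$ for the mean of $\eta_k$ and $\bar m=\sum_k\mu(k)m_\eta(k)$ for the (positive, finite) annealed first moment, the idea is that $|\mathbf f_{dm}|\approx \bar m\cdot dm$ with high probability, and more importantly that the measure $\mathbf f_{dm}$ spreads its mass ``uniformly at the macroscopic scale'': for any subtree or ball, the $\mathbf f_{dm}$-mass is close to $\bar m$ times its cardinality.

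The key steps, in order, are as follows. First, I would condition on $T_{dm}$ and record that the empirical degree distribution of a conditioned Galton–Watson tree converges to $\mu$ (this follows from the Łukasiewicz-path convergence already invoked in the proof of \cref{limite-echelle-periodique}); in particular $\frac1{dm}\sum_{v}m_\eta(k_v(T_{dm}))\to\bar m$ in probability. Second, I would prove a uniform concentration estimate: conditionally on $T_{dm}$, for every vertex $w$, the discrepancy $\big|\mathbf f_{dm}(\t_w)-\bar m\,|\t_w|\big|$ between the $\mathbf f_{dm}$-mass of the fringe subtree $\t_w$ rooted at $w$ and $\bar m$ times its size is $o(dm)$ uniformly in $w$, with probability tending to $1$. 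Since the $\mathbf f_{dm}(v)$ are independent given $T_{dm}$ and there are only $dm$ fringe subtrees, a union bound over $w$ combined with a Bernstein/Hoeffding-type tail for sums of independent variables (after a mild truncation handling the possibly heavy tails of the $\eta_k$'s, exactly as in the truncation step announced in \cref{subsubsec:statement-of-the-result}'s overview) does the job — here one uses only that $\sum_k\mu(k)m_\eta(k)<\infty$, which keeps the relevant variances at most $o((dm)^2)$ after rescaling. Third, I would translate this into a Prokhorov bound: on the rescaled tree $\varepsilon_m\cdot\underline T_{dm}$ with $\varepsilon_m=(dm)^{-1/2}$ (resp.\ $(dm)^{-(1-1/\theta)}$), the control on masses of all fringe subtrees, together with the fact that balls in a tree are unions of boundedly many ``intervals'' each of which is a difference of fringe subtrees, gives $d_{\mathrm P}^{(\varepsilon_m\cdot\underline T_{dm})}\big(\mathbf f_{dm}/|\mathbf f_{dm}|,\nu_{T_{dm}}\big)\to 0$ in probability. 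Fourth, by \cref{eq-Strassen} this Prokhorov bound upgrades to a GHP bound between $(V(T_{dm}),\varepsilon_m d_{T_{dm}},\mathbf f_{dm}/|\mathbf f_{dm}|)$ and $\varepsilon_m\cdot\underline T_{dm}$; combined with the convergence of the latter from \cref{scaling-lim-trees} and Slutsky's lemma, this yields the two claimed convergences with the stated constants $\sqrt2/\sigma$ and $\big[(\theta-1)/(c\,\Gamma(2-\theta))\big]^{1/\theta}$.

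The main obstacle is the uniformity over all fringe subtrees $w$ in the concentration estimate, because the heavy (power-law, index $\theta$) tails of the offspring distribution force the presence of a few vertices of large degree $\sim C_{dm}=(dm)^{1/\theta}$, at which the local measure $\eta_{k_v}$ may also have a fat tail and hence large fluctuations. The remedy is the truncation device already flagged in the paper: replace each $\mathbf f_{dm}(v)$ by its truncation at a threshold growing like a small power of $dm$, show the truncated and untruncated sums agree up to $o(dm)$ with probability $1-o(1)$ (using $|T_{dm}|$ has only $O(dm^{1/\theta+\epsilon})$ vertices of large degree by item (3) of \cref{limite-echelle-periodique} and a first-moment bound on the dropped mass, which is finite precisely because the annealed mean is finite), and then apply a bounded-difference inequality to the truncated sums. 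The analogous care is needed for the normalization $|\mathbf f_{dm}|$ itself, but it is just the $w=\mathrm{root}$ case of the subtree estimate. Everything else is a routine assembly of the GHP-comparison inequalities from \cref{sec:prelim}.
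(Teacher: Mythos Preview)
Your overall strategy—reduce to \cref{scaling-lim-trees} by showing that $\mathbf f_{dm}/|\mathbf f_{dm}|$ is Prokhorov-close to $\nu_{T_{dm}}$ on the rescaled tree—is the same as the paper's. But the implementation you describe has a genuine gap, and a second step that does not work as stated.

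\textbf{The gap in Step 2.} Conditionally on $T_{dm}$, Hoeffding/Bernstein concentrates $\mathbf f_{dm}(\t_w)$ around its \emph{conditional mean} $\sum_{v\in\t_w} m_\eta(k_v)$, not around $\bar m\,|\t_w|$. To bridge this you need
\[
\sup_{w\in T_{dm}}\Bigl|\sum_{v\in\t_w} m_\eta\bigl(k_v(T_{dm})\bigr)-\bar m\,|\t_w|\Bigr|=o(dm)\qquad\text{in probability,}
\]
which is a statement about the degree profile of \emph{every} fringe subtree. Your Step~1 only records the global fact $\frac{1}{dm}\sum_v m_\eta(k_v)\to\bar m$; it says nothing about subtrees. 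Given $T_{dm}$ the degrees are deterministic, so no further concentration is available. Since fringe subtrees are exactly depth-first intervals, the displayed estimate is equivalent to a functional law of large numbers for the sequence $\bigl(m_\eta(k_{v_i})\bigr)_i$ along DFS order, and that is precisely what the paper proves (in one stroke, for $\mathbf f$ itself rather than for the means) in \cref{lemma-cvg-trees-LLN}. The tool there is the cycle lemma: after a uniform cyclic shift the pairs $(k_{\widetilde v_i},\mathbf f(\widetilde v_i))$ are i.i.d.\ with the annealed law, conditioned only on $\sum(\xi_i-1)=-1$; an absolute-continuity argument via the local limit theorem removes this conditioning on the first half of the indices, and then the strong LLN plus Dini's monotone argument give the uniform statement. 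Your proposal does not supply any substitute for this step.

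\textbf{The issue in Step 3.} Balls in a tree are \emph{not} unions of boundedly many fringe subtrees or DFS-intervals (already a ball around the root of radius $r$ is scattered across the DFS when the root has several deep subtrees). So controlling fringe-subtree masses does not directly yield a Prokhorov bound. The paper sidesteps this entirely: from the functional LLN it builds an explicit coupling—$x_n(U)=v_{\lfloor (n+1)U\rfloor}$ has law $\nu_{T_n}$, and $y_n(U)=v_{k_n(U)}$ with $k_n(U)$ the generalized inverse of the $\mathbf f$-cumulative sum has law $\mathbf f_n/|\mathbf f_n|$—and shows their DFS-indices are $o(n)$ apart in probability. The translation to tree distance then comes from the uniform convergence of the rescaled height process (tightness of its modulus of continuity), not from any combinatorial description of balls. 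Your Step~3 would need essentially the same height-process argument once the functional LLN is in hand, and at that point the fringe-subtree/concentration detour is redundant.
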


We shall first prove a rather general functional law of large numbers for the cumulative sum $s\mapsto\sum_{i\leq sn}\mathbf f_n(v_i)$, where $(v_i)_i$ are the vertices of $T_n$ listed in depth-first order.
\begin{lemma}\label{lemma-cvg-trees-LLN}
	Let $\mu$ be a critical offspring distribution with span $d\geq 1$, and with $\mu(1)\neq 1$. Let also $(\eta_k)_{k\geq 0}$ be Borel probability laws on $\R_{\geq 0}$.
	For $n\in d\Z$ large enough, denote by $(T_n,\mathbf f_n)$ a sample of the law $\mathcal{L}(\mu,\eta)$ conditioned to the event $\{|\T|=n\}$.
	Assume that the annealed measure $\sum_{k}\mu(k)\eta_k (\mathrm d s)$ admits a positive and finite first moment and denote by $\overline\eta> 0$ its expectation. Assume also that $\mu$ is in the domain of attraction of a stable distribution of index $\alpha$ with $1< \alpha\leq 2$.
	Then there holds the following convergence in probability
	\[
	\sup_{s\in[0,1]}\biggl|\frac{1}{n}\textstyle \sum_{i\leq sn}\mathbf f_n(v_i)-s\overline\eta \biggr|\xrightarrow[n\rightarrow\infty,\,n\in d\Z]{P}0,
	\]
	where $(v_0,\dots,v_n)$ are the vertices of $T_n$ listed in depth-first order.
\end{lemma}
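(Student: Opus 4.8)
The plan is to decompose the sum $\sum_{i\leq sn}\mathbf f_n(v_i)$ according to the outdegrees along the depth-first ordering, and to reduce the assertion to a law-of-large-numbers-type control together with a convergence of the empirical degree distribution along the tree. First I would condition on the tree $T_n$ itself: conditionally on $T_n$, the variables $(\mathbf f_n(v_i))_{0\le i\le n}$ are independent with $\mathbf f_n(v_i)\sim\eta_{k_{v_i}(T_n)}$. Writing $m(k):=\int x\,\eta_k(\mathrm d x)$ for the conditional mean (which may be infinite a priori, but see below), the natural centering is $\sum_{i\le sn}m(k_{v_i}(T_n))$. I would therefore split
\[
\frac1n\sum_{i\le sn}\mathbf f_n(v_i)-s\overline\eta
=\underbrace{\frac1n\sum_{i\le sn}\bigl(\mathbf f_n(v_i)-m(k_{v_i})\bigr)}_{\text{(I): fluctuation term}}
+\underbrace{\Bigl(\frac1n\sum_{i\le sn}m(k_{v_i})-s\overline\eta\Bigr)}_{\text{(II): ergodic/empirical term}},
\]
and control each, uniformly in $s$, in probability.

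For term (II), observe that $\frac1n\sum_{i\le n}m(k_{v_i}(T_n))=\sum_{k}\widehat\mu_n(k)\,m(k)$, where $\widehat\mu_n$ is the empirical offspring distribution of $T_n$ (a slightly different normalization than the degree counts, but $n+1$ vertices and $n$ edges makes this harmless). It is classical (and follows from, e.g., \cite[Theorem 3]{kortchemski_simple_2013} or a direct second-moment computation using the local limit theorem as in \cref{limite-echelle-periodique}) that for a $\mu$-Galton--Watson tree conditioned to have $n$ edges, the empirical offspring distribution converges in probability to $\mu$, and in fact the partial sums along depth-first order satisfy $\frac1n\#\{i\le sn: k_{v_i}(T_n)=k\}\to s\,\mu(k)$ uniformly in $s\in[0,1]$ and in probability for each fixed $k$ — this is essentially a consequence of the convergence of the rescaled \L ukasiewicz path, whose increments encode the degrees. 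Summing against $m(k)$ and using $\sum_k\mu(k)m(k)=\overline\eta<\infty$ together with a truncation at large $k$ (the tail $\sum_{k>K}\mu(k)m(k)\to0$, and the contribution of large-degree vertices is negligible since there are $O(C_n)=o(n)$ of them by the third item of \cref{limite-echelle-periodique}), one gets that (II) tends to $0$ uniformly in $s$, in probability.

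For term (I), I would argue conditionally on $T_n$. Given $T_n$, the summand is a sum of independent centered (but possibly heavy-tailed) variables. The cleanest route is a truncation argument: fix a threshold $A_n\to\infty$ slowly, write $\mathbf f_n(v_i)=\mathbf f_n(v_i)\mathbb{1}_{\mathbf f_n(v_i)\le A_n}+\mathbf f_n(v_i)\mathbb{1}_{\mathbf f_n(v_i)> A_n}$; the truncated part is handled by a conditional $L^2$ (Doob/Kolmogorov maximal) bound since its conditional variance is $O(nA_n)=o(n^2)$, giving uniform-in-$s$ control; the over-threshold part is handled in $L^1$ by the annealed integrability hypothesis $\sum_k\mu(k)\int x\,\eta_k(\mathrm dx)<\infty$, which forces $E[\sum_{i\le n}\mathbf f_n(v_i)\mathbb{1}_{\mathbf f_n(v_i)>A_n}]=o(n)$ after again discarding the $o(n)$ exceptional high-degree vertices. (This annealed finiteness is exactly why one does not need each individual $m(k)$ to be finite, as long as the weighted sum is; alternatively, if the $\eta_k$ have finite means one can skip the subtler bookkeeping.) Combining the bounds on (I) and (II) and taking $A_n\to\infty$ suitably yields the claim.

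The main obstacle is term (II): one needs the empirical degree distribution along depth-first order to converge \emph{uniformly in $s$}, not just at $s=1$, and to do so in a way robust to the heavy tail of $\mu$ (the case $\alpha<2$) and to the lattice span $d\ge 2$. I would obtain this from the functional convergence of the rescaled \L ukasiewicz path of $T_{dm}$ — available from the local limit theorem as discussed in the proof of \cref{limite-echelle-periodique} — which controls the partial sums of $k_{v_i}-1$; combined with a separate argument for the partial sums of $\mathbb{1}_{k_{v_i}=k}$ for each fixed $k$ (a bounded functional, so an $L^2$/second-moment estimate via the local limit theorem suffices), and a tail truncation in $k$, this delivers the uniform convergence needed to push $m(\cdot)$ through the sum.
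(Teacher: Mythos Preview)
Your approach is correct in outline but takes a genuinely different route from the paper. The paper does not split into a fluctuation term and an empirical-mean term. Instead, it applies a uniform cyclic shift to the depth-first vertex sequence and invokes the cycle lemma: the shifted pair sequence $(k_{\widetilde v_i}(T_n),\mathbf f_n(\widetilde v_i))_{0\le i\le n}$ has the law of an i.i.d.\ sequence $(\xi_i,X_i)_i$ with $\xi_0\sim\mu$ and $X_0\mid\xi_0\sim\eta_{\xi_0}$, conditioned on $\sum_i(\xi_i-1)=-1$. On the first half $i\le n/2$, the Markov property and the local limit theorem give a uniform absolute-continuity bound against the \emph{unconditioned} i.i.d.\ law, so the problem reduces to the functional strong law of large numbers for the i.i.d.\ partial sums $s\mapsto\frac1n\sum_{i\le sn}X_i$, which holds by nonnegativity and Dini's theorem. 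A symmetry argument handles the second half. This is a one-shot reduction that avoids separating ``fluctuation'' from ``empirical mean'' entirely.

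Your decomposition works, but the place where you wave hands is exactly where the paper's trick earns its keep. For term~(II), the tail truncation in $k$ requires that $\frac1n\sum_{i:\,k_{v_i}>K}m(k_{v_i})$ be small in probability under the \emph{conditioned} law; saying ``there are $O(C_n)=o(n)$ large-degree vertices'' is not enough, since $m(k)$ may be large. What you actually need is that the one-vertex marginal degree law under the size conditioning is close to $\mu$, so that the expectation of the tail sum is close to $(n+1)\sum_{k>K}\mu(k)m(k)$ --- and this is precisely the cycle-lemma/local-limit-theorem absolute continuity that the paper uses directly. Similarly, your uniform-in-$s$ convergence of $\frac1n\sum_{i\le sn}\mathbb 1_{k_{v_i}=k}$ to $s\mu(k)$ is not a consequence of the \L ukasiewicz path convergence (which only controls $\sum(k_{v_i}-1)$, not level sets); you would again need the cycle-lemma representation and an $L^2$ bound. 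So your plan ultimately relies on the same ingredient as the paper, but deploys it several times inside a more elaborate scaffolding rather than once at the outset.
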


\begin{proof}
	Let us denote by $(\widetilde v_i)_{0\leq i\leq n}$ a uniform cyclic shift of the sequence $(v_i)_{0\leq i\leq n}$, that is to say $\widetilde v_i=v_{i+\tau_n \,\mathrm{mod}\,(n+1)}$ for all $0\leq i\leq n$, where $\tau_n$ is a uniformly random element of $\{0,\dots,n\}$, sampled independently from other variables. Then an elementary re-arranging of sums yields that
	\[
	\sup_{s\in[0,1]}\biggl|\frac{1}{n} \sum_{i\leq sn}\mathbf f_n(v_i)-s\overline\eta \biggr|
	\leq
	2\sup_{0\leq s \leq t\leq 1}\biggl|\frac{1}{n} \sum_{sn\leq i\leq tn}\mathbf f_n(\widetilde v_i)-(t-s)\overline\eta \biggr|.
	\]
	Distinguishing upon whether $s$ and $t$ are smaller than $1/2$, and cutting the sum at $1/2$ in the case $s<1/2<t$, we can bound further
	\begin{align*}
	&\sup_{0\leq s \leq t\leq 1}\biggl|\frac{1}{n} \sum_{sn\leq i\leq tn}\mathbf f_n(\widetilde v_i)-(t-s)\overline\eta \biggr|\\
	&\leq
	2\sup_{0\leq s \leq t\leq 1/2}\biggl|\frac{1}{n} \sum_{sn\leq i\leq tn}\mathbf f_n(\widetilde v_i)-(t-s)\overline\eta \biggr|
	+
	2\sup_{1/2\leq s \leq t\leq 1}\biggl|\frac{1}{n} \sum_{sn\leq i\leq tn}\mathbf f_n(\widetilde v_i)-(t-s)\overline\eta \biggr|\\
	&=
	2\sup_{0\leq s \leq t\leq 1/2}\biggl|\frac{1}{n} \sum_{sn\leq i\leq tn}\mathbf f_n(\widetilde v_i)-(t-s)\overline\eta \biggr|
	+
	2\sup_{0\leq s \leq t\leq 1/2}\biggl|\frac{1}{n} \sum_{sn\leq i\leq tn}\mathbf f_n(\widetilde v_{i+\ceil{n/2}})-(t-s)\overline\eta \biggr|\\
	&\leq
	4\sup_{0 \leq t\leq 1/2}\biggl|\frac{1}{n} \sum_{ i\leq tn}\mathbf f_n(\widetilde v_i)-t\overline\eta \biggr|
	+
	4\sup_{0 \leq t\leq 1/2}\biggl|\frac{1}{n} \sum_{ i\leq tn}\mathbf f_n(\widetilde v_{i+\ceil{n/2}})-t\overline\eta \biggr|.
	\end{align*}
	Now notice that $(\widetilde v_{i+\ceil{n/2}})_{0\leq i\leq n}$ is itself a uniform cyclic shift of the sequence $(v_i)_{0\leq i\leq n}$, so that the second term in the last display has the same law as the first one, and we only need to bound this one. We have reduced the problem to showing that the following convergence in probability holds
	\begin{equation}\label{cvg-cyclic-shift-LLN}
	\sup_{0 \leq t\leq 1/2}\biggl|\frac{1}{n} \sum_{ i\leq tn}\mathbf f_n(\widetilde v_i)-t\overline\eta \biggr|
	\xrightarrow[n\rightarrow\infty,\,n\in d\Z]{P}0.
	\end{equation}
	
	We now appeal to the so-called cycle lemma, see \cite[Paragraph 6.1]{PitmanBook} and more precisely Lemma 6.1 for the cycle lemma and Lemma 6.3 for its application to trees. In our setting it implies that the cyclically shifted sequence of degrees $(k_{\widetilde v_i}(T_n))_{0\leq i\leq n}$ has the same law as that of an i.i.d. sequence $(\xi_i)_{0\leq i\leq n}$ of samples of the law $\mu$ conditioned to the event $\{\sum_{0,\leq i\leq n}(\xi_i-1)=-1\}$.
	Now recall that conditionally on $T_n$, each variable $\mathbf f(v_i)$ is sampled according to the law $\eta_{k_{v_i}(T_n)}$ and independently of the family $(\mathbf f(v_j))_{j\neq i}$.
	Therefore the identity in distribution obtained from the cycle lemma admits a straightforward generalization for the cyclically shifted sequence $\bigl(k_{\widetilde v_i}(T_n),\mathbf f_n(\widetilde v_i)\bigr)_{0\leq i\leq n}$.
	More precisely, let $(\xi_i,X_i)_{i\geq 0}$ be an i.i.d.~sequence such that $\xi_0$ has law $\mu$, and such that conditionally on $\xi_0$ the variable $X_0$ has law $\eta_{\xi_0}$.
	Then, there holds the following identity in distribution
	\[
	\mathrm{Law}\Bigl(\bigl(k_{\widetilde v_i}(T_n),\mathbf f_n(\widetilde v_i)\bigr)_{0\leq i\leq n};P\Bigr)
	=
	\mathrm{Law}\Bigl(\bigl(\xi_i,X_i\bigr)_{0\leq i\leq n};P\left(\,\cdot\,\Bigm|\textstyle\sum_{0\leq i\leq n}(\xi_i-1)=-1\right)\Bigr).
	\]
	Using the Markov property at time $\floor{n/2}$ for the random walk $(\sum_{0\leq i \leq k}(\xi_i-1))_{k\geq 0}$, we get for every non-negative Borel function $F\colon(\Z\times\R)^{\floor{n/2}+1}\rightarrow\R_{\geq 0}$ the following
	\begin{multline*}
		E\left[F\Bigl(\bigl(k_{\widetilde v_i}(T_n),\mathbf f_n(\widetilde v_i)\bigr)_{0\leq i\leq n/2}\Bigr)\right]
			=	E\left[F\Bigl(\bigl(\xi_i,X_i\bigr)_{0\leq i\leq n/2}\Bigr)\frac{\indic{\textstyle\sum_{0\leq i\leq n}(\xi_i-1)=-1}}{P\left(\textstyle\sum_{0\leq i\leq n}(\xi_i-1)=-1\right)}\right]\\
			=	E\left[F\Bigl(\bigl(\xi_i,X_i\bigr)_{0\leq i\leq n/2}\Bigr)\frac{q_{n-\floor{n/2}}(-1-\textstyle\sum_{0\leq i\leq n/2}(\xi_i-1))}{q_{n+1}(-1)}\right],
	\end{multline*}
	where we used the notation $q_k(j)=P\bigl(\sum_{1\leq i \leq k}(\xi_i-1)=j\bigr)$.
	Let us remark that there exists $n_0$ such that $q_n(-1)\neq 0$ for all the integers $n\geq n_0$ which belong to $d\Z$, and that
	\begin{equation}\label{absolute-continuity-finite}
	\sup_{n\geq n_0,\, n\in d\Z}\:\sup_{j\in \Z}\frac{q_{n-\floor{n/2}}(l)}{q_{n+1}(-1)}<+\infty.
	\end{equation}
	Indeed, we may use the local limit theorem \cite[Theorem 4.2.1]{ibragimov_independent_1971} which covers the case of random walks on $\Z$ whose increments have law a (possibly non-aperiodic) distribution in the domain of attraction of a stable distribution with index $\alpha\in(1,2]$, such as the random walk $\bigl(\sum_{0\leq i \leq k}(\xi_i-1)\bigr)_{k\geq 0}$. This gives us
	\[
	\lim_{k\rightarrow\infty}\sup_{j\in\Z}\left|\frac{B_k}{d}q_k(j)-g\left(\frac{j}{B_k}\right)\right|=0,
	\]
	where $g$ is the density function of some stable distribution with index $\alpha$ satisfying notably $g(0)\neq0$, and where $(B_k)_k$ is a sequence of numbers such that $(k^{-1/\alpha}B_k)_k$ is slowly varying by \cite[Paragraph 2.2]{ibragimov_independent_1971}.
	We easily deduce \cref{absolute-continuity-finite} from the last display. Therefore there exists a constant $C>0$ such that for every non-negative Borel function $F\colon(\Z\times\R)^{\floor{n/2}+1}\rightarrow\R_{\geq 0}$, we have for $n\geq n_0$,
	\[
	E\left[F\Bigl(\bigl(k_{\widetilde v_i}(T_n),\mathbf f_n(\widetilde v_i)\bigr)_{0\leq i\leq n/2}\Bigr)\right]
	\leq C\cdot E\left[F\Bigl(\bigl(\xi_i,X_i\bigr)_{0\leq i\leq n/2}\Bigr)\right].
	\]
	We deduce for every $\epsilon>0$ and every $n\geq n_0$,
	\begin{multline}\label{LLN-comparison-unconditioned}
	P\left(\sup_{0\leq t\leq 1/2}\biggl|\frac{1}{n} \sum_{i\leq tn}\mathbf f_n(\widetilde v_i)-t\overline\eta \biggr|\geq \epsilon\right)
	\leq C\cdot 
	P\left(\sup_{0\leq t\leq 1/2}\biggl|\frac{1}{n} \sum_{i\leq tn}X_i-t\overline\eta \biggr|\geq \epsilon\right).
	\end{multline}
	Notice that the variables $(X_i)_{i\geq 0}$ are i.i.d.~with mean $\overline\eta$ by definition. By the strong law of large numbers, it holds almost surely that for all $t\in[0,1]\cup\mathbb Q$,
	\[
	\frac{1}{N}\sum_{ i\leq tN} X_i\xrightarrow[N\rightarrow\infty]{}t\overline\eta .
	\]
	Since the variables $(X_i)$ are non-negative, the left-hand-side is a (random) non-decreasing function of $t$ for all $N\geq 1$. In particular, the pointwise almost sure convergence above yields by Dini's theorem almost sure convergence in the sup norm, namely
	\[
		\sup_{0\leq t\leq 1/2}\biggl|\frac{1}{n} \sum_{i\leq tn}X_i-t\overline\eta \biggr|
		\xrightarrow[n\rightarrow\infty,\,n\in d\Z]{\mathrm{a.s.}}0.
	\]
	Combining this with \cref{LLN-comparison-unconditioned}, we obtain the desired convergence in probability \cref{cvg-cyclic-shift-LLN} and this concludes the proof.
\end{proof}

\begin{proof}[{Proof of \cref{scaling-lim-trees-random-measure}}]
Let $U$ be sampled uniformly and independently of other variables and for $n\in d\Z$ large enough, let $(v_0,\dots,v_n)$ be the vertices of $T_n$ listed in depth-first order. We denote by $x_n(U)$ the vertex $v_{\floor{(n+1)U}}$. Let also $y_n(U)$ be the vertex $v_{k_n(U)}$, where $k_n(U)$ is the smallest index $k\in\{0,\dots,n\}$ such that $\sum_{i\leq k}\mathbf f_n(v_i)\geq U|\mathbf f_n|$. By construction, conditionally on $(T_n,\mathbf f_n)$, the random vertex $x_n(U)$ has law $\nu_{T_n}$ and $y_n(U)$ has law $\mathbf f_n/|\mathbf f_n|$. Now by \cref{lemma-cvg-trees-LLN}, the sequence of functions $(s\mapsto \frac{1}{|\mathbf f_n|}\sum_{i\leq sn}\mathbf f_n(v_i))_{n}$ converges in probability for the uniform norm to the identity function $s\mapsto s$ when $n$ tends to $\infty$ in $d\Z$. We deduce using the definition of $k_{n}(U)$ that
$
\left|\frac{k_n(U)}{n}-U\right|
$
converges to $0$ in probability, and in particular that the same goes for
$
\left|\frac{k_n(U)}{n+1}-\frac{\floor{(n+1)U}}{n+1}\right|
$.

Let $\theta=2$ if $\mu$ has finite variance as in case 1.~of the statement, or let $\theta$ be such that $\mu\([x,+\infty)\)\underset{x\rightarrow\infty}{\sim} c x^{-\theta}$ for some $ c >0$ and $\theta\in(1,2)$ as in case 2.~of the statement. Let us set $D_n= n^{-(\theta-1)/\theta}\,d_{T_n}$ the rescaled distance function on $V(T_n)$ and $h_n\colon s\in[0,1]\mapsto n^{-(\theta-1)/\theta}\,h_{T_n}(v_{\floor{(n+1)s}})$ be the rescaled height process of $T_n$. Using the following well-known bound on distances in a tree
\[
d_{T_n}(v_i,v_k)\leq h_{T_n}(v_i)+ h_{T_n}(v_k)-2\inf_{j\in\{i,\dots,k\}}h_{T_n}(v_j)+2,
\]
we get the bound
\begin{align*}
D_n(x_n(U),y_n(U))
&=\frac{1}{n^{(\theta-1)/\theta}} \,d_{T_n}(v_{k_n(U)},v_{\floor{(n+1)U}})\\
&\leq 2\,\omega_{h_n}\left(\left|\tfrac{k_n(U)}{n+1}-\tfrac{\floor{(n+1)U}}{n+1}\right|\right)+\frac{2}{n^{(\theta-1)/\theta}},
\end{align*}
where $\omega_{h_n}(\delta)=\sup_{|x-y|\leq\delta}|h_n(x)-h_n(y)|$ is the modulus of continuity of $h_n$ defined for all $\delta>0$.

We justified in the proof of \cref{scaling-lim-trees} that \cite[Theorem 3]{kortchemski_simple_2013} applies, even if $\mu$ is not assumed to be aperiodic in our setting. This theorem tells us in particular that the rescaled height process $h_n$ converges in distribution as $n$ tends to infinity to some limit, in the Skhorokhod topology. Since the limit is almost surely continuous, properties of the Skhorokhod topology imply that the convergence actually holds in distribution with respect to the topology of uniform convergence. By characterization of tightness for this topology, we have for all $\epsilon>0$,
\[
\lim_{n\rightarrow\infty}\limsup_{\delta\rightarrow0}\,P\left(\omega_{h_n}(\delta)\geq \epsilon\right)=0,
\]
from which we deduce that
\begin{equation}\label{conv-coupling-vertices}
D_n(x_n(U),y_n(U))\xrightarrow[n\rightarrow\infty,\,n\in d\Z]{P}0.
\end{equation}

Recall that conditionally on $(T_n,\mathbf f_n)$, the vertices $x_n(U)$ and $y_n(U)$ have law $\nu_{T_n}$ and $\mathbf f_n/|\mathbf f_n|$ respectively.
This yields using the definition \cref{def-Prokhorov} a bound for the Prokhorov distance between these two measures
\begin{align*}
d_{\mathrm P}^{V(T_n),D_n}\left(\nu_{T_n},\tfrac{\mathbf f_n}{|\mathbf f_n|}\right)
&\leq\inf\Bigl\{\epsilon>0\colon P\left(D_n(x_n(U),y_n(U))\geq\epsilon\Bigm|T_n\right)\leq\epsilon \Bigr\}.
\end{align*}
In particular, we have for $\epsilon>0$,
\begin{align*}
P\left(d_{\mathrm P}^{V(T_n),D_n}\left(\nu_{T_n},\tfrac{\mathbf f_n}{|\mathbf f_n|}\right)\geq \epsilon\right)
&\leq P\Bigl(P\left(D_n(x_n(U),y_n(U)) \geq\epsilon\bigm|T_n\right)\geq\epsilon\Bigr)\\
&\leq\epsilon^{-1}P\bigl(D_n(x_n(U),y_n(U)) \geq\epsilon\bigr),
\end{align*}
where we used Markov's inequality to get the last upper bound. By \cref{conv-coupling-vertices}, we get the convergence in probability
\[
d_{\mathrm P}^{V(T_n),D_n}\left(\nu_{T_n},\tfrac{\mathbf f_n}{|\mathbf f_n|}\right)
\xrightarrow[n\rightarrow\infty,\,n\in d\Z]{P}0.
\]
By inequality \cref{eq-Strassen}, we deduce that
\[
d_{\mathrm{GHP}}\left(\left(V(T_n),D_n,\nu_{T_n}\right),\left(V(T_n),D_n,\tfrac{\mathbf f_n}{|\mathbf f_n|}\right)\right)
\xrightarrow[n\rightarrow\infty,\,n\in d\Z]{P}0.
\]
We conclude the proof by combining the last display with \cref{scaling-lim-trees}.
\end{proof}

\subsubsection{The spine decomposition and size-biased laws}

In this section we present a \emph{size-biasing} relation for the block-tree, in the sense of \cite{Lyons1995ConceptualPO}. Actually, we extend in a straightforward way this size-biasing relation to our setting, where we have a Galton-Watson tree and some decorations, namely the blocks.
More precisely, consider the following measure on maps with a distinguished vertex of their block tree $(\m,v)$
\[
\mathbb P_u(\mathrm d\M)\sum_{v\in \T}\delta_v(\mathrm d V_\star),
\]
where $\delta_v$ is the Dirac measure $A\mapsto \delta_v(A)=\indic{v\in A}$.
Then this $\sigma$-finite measure can be decomposed as a sum of probability measures $\sum_{h\geq 1}\widehat{\mathbb P}_{u,h}(\mathrm d \M,\mathrm d V_\star)$, where under $\widehat{\mathbb P}_u^h$ the vertex $V_\star$ has height $h$ in $\T$, its ancestors' degrees having \emph{size-biased} law as defined below.
The present section makes that precise.

{\paragraph{Description of $\widehat {\mathbb P}_{u,h}$.}
	
\begin{defin}
	Let $\nu$ be a probability distribution on $\mathbb Z_{\geq 0}$ with finite expectation $m_\nu$. Then the \emph{size-biased} distribution $\widehat\nu$ is defined by
	\[
	\forall k\in\mathbb Z_{\geq 0},\quad
	\widehat\nu(k)=\frac{k\,\nu(k)}{m_\nu}.
	\]
\end{defin}}

	When $\nu$ is a (sub-)critical offspring distribution with $\nu(0),\nu(1)\neq 0$, denote by $\(\widehat{\mathrm{GW}}_{\nu,h}\)_{h\geq 0}$ the following family of laws, on the sets of discrete trees with a distinguished vertex at height $h$ respectively. It may be described algorithmically:
	\begin{itemize}
		\item Each vertex will either be mutant or normal, and their number of offspring are sampled independently from each other;
		\item Normal vertices have only normal children, whose number is sampled according to~$\nu$;
		\item Mutant vertices of height less than $h$ have a number of children sampled according to the size-biased distribution $\widehat\nu$, all of which are normal except one, chosen uniformly, which is mutant;
		\item The only mutant vertex at height $h$ reproduces like a normal vertex and is the distinguished vertex $V_\star$.
	\end{itemize}
	This yields a pair $(T,V_\star)$, where $T$ is a discrete tree and $V_\star$ is a distinguished vertex of $T$ with height $h$.	
	We denote by $(V_i)_{0\leq i\leq h_\T(V_\star)}$ the ancestor line of $V_\star$, and $L_i$ the order of $V_{i+1}$ in the children of $V_i$ respectively. Observe that the construction gives that $(k_{V_i}(\T))_i$ are i.i.d. with law $\widehat\nu$, and conditionally on those variables, the variables $(L_i)_i$ are independent with uniform law on $\{1,\dotsc,k_{V_i}(\T)\}$ respectively.

We may now define the family of probability measures $(\widehat{\mathbb P}_{u,h})_{h\geq 0}$ as follows. Let $h\geq 0$.
\begin{itemize}
	\item Sample $(\T,V_\star)$ according to the law $\widehat{\mathrm{GW}}_{\mu^u,h}$.
	\item For each $v\in\T$, sample independently and uniformly a 2-connected map $\b_v^\M$ with $k_v(\T)/2$ edges.
	\item Build the map $\M$ whose block decomposition is $(b_v^\M)_{v\in\T}$, and $\Q=\varphi(\M)$ its image by Tutte's bijection.
\end{itemize}
We are now equipped to state the size-biasing relation.

\begin{proposition}\label{loi-biaisee}
	For $u\geq u_C$, the $\sigma$-finite measure $\mathbb P_u(\mathrm d\M)\sum_{v\in \T}\delta_v(\mathrm d V_\star)$ on maps with a distinguished vertex of their block-tree decomposes as the following sum of probability measures,
	\[
	\mathbb P_u(\mathrm d\M)\sum_{v\in \T}\delta_v(\mathrm d V_\star)=\sum_{h\geq 0}\widehat{\mathbb P}_{u,h}(\mathrm d\M,\mathrm d V_\star).
	\]
\end{proposition}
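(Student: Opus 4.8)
The plan is to reduce the identity to a deterministic, vertex-by-vertex bookkeeping of weights in the size-biased Galton-Watson construction, then transport it to maps via \cref{th-trees}. First I would observe that, since everything is conditioned on the block-tree and the blocks are independent uniform samples conditionally on the tree (\cref{th-trees}), it suffices to prove the analogous decomposition at the level of trees: the $\sigma$-finite measure $\mathrm{GW}(\mu^u)(\mathrm d\t)\sum_{v\in\t}\delta_v(\mathrm dV_\star)$ on trees with a distinguished vertex equals $\sum_{h\geq 0}\widehat{\mathrm{GW}}_{\mu^u,h}$. The decoration by blocks then comes along for free: in both $\widehat{\mathbb P}_{u,h}$ and $\mathbb P_u(\mathrm d\M)\sum_v\delta_v(\mathrm dV_\star)$, conditionally on $(\T,V_\star)$ the blocks $(\b_v^\M)_v$ are independent uniform on $2$-connected maps of the prescribed size, so the map-level statement follows from the tree-level statement by an application of Fubini and \cref{th-trees}. (Note that criticality of $\mu^u$ for $u\geq u_C$, from \cref{tree-critical}, is what makes $\widehat{\mathrm{GW}}_{\mu^u,h}$ a genuine probability measure, since the size-biased law $\widehat{\mu^u}$ has total mass $1$ exactly when $\E{\mu^u}=1$.)

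For the tree-level identity, I would fix a finite tree $\t$ and a vertex $v_\star\in\t$ at height $h$, with ancestor line $v_0,\dots,v_h=v_\star$ and $L_i$ the rank of $v_{i+1}$ among the children of $v_i$. On one side, $\mathrm{GW}(\mu^u)(\t)=\prod_{w\in\t}\mu^u(k_w(\t))$. On the other side, tracing the algorithmic description of $\widehat{\mathrm{GW}}_{\mu^u,h}$: each mutant ancestor $v_i$ ($0\le i<h$) contributes a factor $\widehat{\mu^u}(k_{v_i}(\t))\cdot \frac{1}{k_{v_i}(\t)}=\frac{k_{v_i}(\t)\mu^u(k_{v_i}(\t))}{m_{\mu^u}}\cdot\frac{1}{k_{v_i}(\t)}=\frac{\mu^u(k_{v_i}(\t))}{m_{\mu^u}}$ — the $k_{v_i}$ cancels the $1/k_{v_i}$ from the uniform choice of which child is mutant — while $v_\star$ and every normal vertex $w$ contributes $\mu^u(k_w(\t))$. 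Since $m_{\mu^u}=\E{\mu^u}=1$ in the critical regime, the product of these factors over all of $\t$ is exactly $\prod_{w\in\t}\mu^u(k_w(\t))=\mathrm{GW}(\mu^u)(\t)$. Hence $\widehat{\mathrm{GW}}_{\mu^u,h}(\{(\t,v_\star)\})=\mathrm{GW}(\mu^u)(\t)$ for every pair $(\t,v_\star)$ with $h_\t(v_\star)=h$, which is precisely the claimed disintegration: summing over $h$ recovers $\mathrm{GW}(\mu^u)(\mathrm d\t)\sum_{v\in\t}\delta_v(\mathrm dV_\star)$.

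Finally, to pass from trees to maps I would write, for a bounded test function $F$ of $(\M,V_\star)$,
\[
\int F(\m,v_\star)\,\mathbb P_u(\mathrm d\m)\sum_{v\in\T}\delta_v(\mathrm dv_\star)
=\sum_{\t}\sum_{v_\star\in\t}\mathrm{GW}(\mu^u)(\t)\,\mathbb E\!\left[F\big(\Psi(\t,(\B_v)_v),v_\star\big)\right],
\]
where $\Psi$ reconstructs the map from the tree and the independent uniform blocks $(\B_v)_v$, and the expectation is over those blocks; using the tree-level identity to replace $\mathrm{GW}(\mu^u)(\t)$ by $\widehat{\mathrm{GW}}_{\mu^u,h_\t(v_\star)}(\{(\t,v_\star)\})$ and re-summing gives $\sum_{h\ge0}\int F\,\mathrm d\widehat{\mathbb P}_{u,h}$. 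The statement for $\Q=\varphi(\M)$ is automatic since $\varphi$ is a deterministic bijection commuting with the block decomposition (\cref{arbre-bloc-quad-carte}). The only genuinely delicate point is the bookkeeping in the middle paragraph — making sure the combinatorial factor $1/k_{v_i}$ from "choose the mutant child uniformly" is correctly accounted against the size-bias weight $k_{v_i}$, and that this holds for every ancestor including the endpoints $v_0$ and $v_\star$ of the spine — but this is a finite, explicit computation rather than a real obstacle. I expect no serious difficulty here; the proposition is essentially the classical Lyons–Pemantle–Peres spine decomposition (\cite{Lyons1995ConceptualPO}) transported through \cref{th-trees}, and one could even cite it, but the short self-contained argument above is cleaner.
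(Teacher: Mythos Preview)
Your proposal is correct and follows essentially the same route as the paper: reduce to the tree-level spine decomposition for $\mathrm{GW}(\mu^u)$, use criticality $m_{\mu^u}=1$ from \cref{tree-critical}, then transport to maps via \cref{th-trees} by multiplying through the block-sampling factors. The only cosmetic difference is that the paper quotes the standard size-biasing identity $\mathrm{GW}_\nu(\mathrm d\t)\sum_v\delta_v=\sum_h (m_\nu)^h\,\widehat{\mathrm{GW}}_{\nu,h}$ as known, whereas you spell out the cancellation $\widehat{\mu^u}(k)\cdot\tfrac{1}{k}=\mu^u(k)/m_{\mu^u}$ along the spine explicitly; both amount to the same computation.
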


\begin{proof}
	The standard size-biasing relation for (sub-)critical Galton-Watson trees reads
	\[
	\mathrm{GW}_{\nu}(\mathrm d\t)\sum_{v\in \t}\delta_v(\mathrm d v_\star)
	=\sum_{h\geq 0}(m_\nu)^h\cdot\delta_h(h_\t(v_\star))\cdot \widehat{\mathrm{GW}}_{\nu,h}(\mathrm d \t,\mathrm d v_\star).
	\]
	When $u\geq u_C$, the offspring distribution $\mu^u$ is critical, so $m_{\mu^u}=1$. Specializing the last display to $\nu=\mu^u$ and to the value of $\t$ corresponding to the block-tree of some map $\m$, this gives for all such $(\m,v_\star)$,
	\[
	\mathrm{GW}_{\mu^u}(\t)=\sum_{h\geq0}\delta_h(h_\t(v_\star))\widehat{\mathrm{GW}}_{\mu^u,h}(\t,v_\star).
	\]
	Therefore, if we multiply both sides by $\prod_{v\in\t}\frac{1}{b_{k_v(\t)/2}}$, we get the following by \cref{th-trees}:
	\[
	\mathbb P_u(\m)=
	\sum_{h\geq 0}\delta_h(h_\t(v_\star))\cdot \widehat{\mathrm{GW}}_{\mu^u,h}(\t,v_\star)\cdot\prod_{v\in\t}\frac{1}{b_{k_v(\t)/2}}=\sum_{h\geq 0}\widehat{\mathbb P}_{u,h}(\m,v_\star).
	\]
	Since $\sum_{v\in\t}\delta_v(v_\star)=1$, the last display expresses the measure $\mathbb P_u(\mathrm d\M)\sum_{v\in\t}\delta_v(\mathrm d V_\star)$ as a sum of the probability measures $(\widehat{\mathbb P}_{u,h})_{h\geq 0}$.
\end{proof}

\paragraph{Probabilistic properties of \texorpdfstring{$\widehat{\mathbb P}_{u,h}$}{P_{u,h}}.}

Since we need metric information on blocks whose size follows the size-biased law $\widehat \mu^u$, let us introduce adequate notation.
Let $u\geq u_C$. Denote by $\xic_u$ a sample of the distribution $\widehat{\mu}^u$ on some probability space $(\Omega,P)$. Then jointly define the random variables $\widehat{\B}_u^{\mathrm{map}}$ and $\widehat{\B}_u^{\mathrm{quad}}$ as sampled uniformly among respective blocks with size $\xic_u/2$, in such a way that they are linked by Tutte's bijection, i.e.~their joint law satisfies
	\[
	\(\widehat{\B}_u^{\mathrm{map}},\widehat{\B}_u^{\mathrm{quad}}\)
	\overset{(d)}{=}
	\(B^{\mathrm{map}}_{\xic_u/2},B^{\mathrm{quad}}_{\xic_u/2}\).
	\]
	Furthermore, conditionally on $\widehat \xi_u$, sample independently $U$ a uniform label in $\{1,\dotsc,\widehat \xi_u\}$.
	This yields the following $4$-tuple 
	\[\bigl(\xic_u\:,\:
	\widehat{\B}_u^{\mathrm{map}}\:,\:
	\widehat{\B}_u^{\mathrm{quad}}\:,\:
	U\bigr).
	\]

\begin{lemma}\label{lemma:law-dist-along-spine}
	For all $h\geq 1$, we have the identity in law
	\[
	\mathrm{Law}\left(\Bigl(
	k_{V_i}(\T)\:,\:
	\b^\mathrm{\M}_{V_i}\:,\:
	\b^{\mathrm{\Q}}_{V_i}\:,\:
	L_{i}
	\Bigr)_{0\leq i<h}\: ;\:\widehat{\mathbb P}_{u,h}\right)
	=
	\left[
	\mathrm{Law}\Bigl(
	\bigl(\xic_u\:,\:
	\widehat{\B}_u^{\mathrm{map}}\:,\:
	\widehat{\B}_u^{\mathrm{quad}}\:,\:
	U\bigr)
	\: ;\:
	P
	\Bigr)\right]^{\otimes h}
	\]
	where $\mathrm{Law}(X;Q)$ is the law of $X$ under $Q$.
\end{lemma}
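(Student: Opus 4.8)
The plan is to unravel the two constructions involved — that of the law $\widehat{\mathrm{GW}}_{\mu^u,h}$ and that of $\widehat{\mathbb P}_{u,h}$ built on top of it — and to read off the product structure from the conditional independences they contain. I do not expect any genuine obstacle: the whole argument is bookkeeping of independences, and the only point deserving an explicit word is flagged at the end.

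First I would use the description of $\widehat{\mathrm{GW}}_{\mu^u,h}$ recalled just before the statement. Along the ancestor line of $V_\star$, the mutant vertices $V_0,\dots,V_{h-1}$ have their numbers of children sampled independently from the size-biased law $\widehat\mu^u$, and, conditionally on these, the indices $(L_i)_{0\le i<h}$ are independent with $L_i$ uniform on $\{1,\dots,k_{V_i}(\T)\}$. Equivalently: under $\widehat{\mathbb P}_{u,h}$, the pairs $\big(k_{V_i}(\T),L_i\big)_{0\le i<h}$ are i.i.d., each with the law of $(\xic_u,U)$, where $\xic_u\sim\widehat\mu^u$ and, conditionally on $\xic_u$, $U$ is uniform on $\{1,\dots,\xic_u\}$.

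Next I would invoke the very construction of $\widehat{\mathbb P}_{u,h}$ (given just before \cref{loi-biaisee}): conditionally on $\T$, the blocks $(\b_v^\M)_{v\in\T}$ are independent with $\b_v^\M$ uniform among $2$-connected maps with $k_v(\T)/2$ edges, and $\b_v^\Q=\varphi(\b_v^\M)$ by \cref{arbre-bloc-quad-carte}. The ancestor line and the labels $(L_i)_{0\le i<h}$ are measurable functions of $(\T,V_\star)$, so restricting the conditionally independent collection of blocks to this random spine preserves conditional independence: given $\T$, the pairs $(\b_{V_i}^\M,\b_{V_i}^\Q)_{0\le i<h}$ are independent across $i$ and independent of $(L_i)_{0\le i<h}$. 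Since this conditional law depends on $\T$ only through the spine degrees $(k_{V_i}(\T))_{0\le i<h}$, one may condition on those degrees alone, obtaining: conditionally on $(k_{V_i}(\T))_i$, the triples $(\b_{V_i}^\M,\b_{V_i}^\Q,L_i)$ are independent across $i$, with $(\b_{V_i}^\M,\b_{V_i}^\Q)\overset{(d)}{=}\big(B_{k/2}^{\mathrm{map}},B_{k/2}^{\mathrm{quad}}\big)\big|_{k=k_{V_i}(\T)}$ and $L_i$ uniform on $\{1,\dots,k_{V_i}(\T)\}$ independent of this pair.

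Combining the two previous steps gives that the $4$-tuples $\big(k_{V_i}(\T),\b_{V_i}^\M,\b_{V_i}^\Q,L_i\big)$, $0\le i<h$, are mutually independent under $\widehat{\mathbb P}_{u,h}$ and each distributed as $\big(\xic_u,\widehat{\B}_u^{\mathrm{map}},\widehat{\B}_u^{\mathrm{quad}},U\big)$ under $P$ — this last identity being precisely the definition of that $4$-tuple (with $\widehat{\B}_u^{\mathrm{map}},\widehat{\B}_u^{\mathrm{quad}}$ jointly uniform of size $\xic_u/2$ and coupled by Tutte's bijection, and $U$ uniform on $\{1,\dots,\xic_u\}$ given $\xic_u$, independent of the blocks). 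This proves the stated identity in law. The one step that is an argument rather than a reading-off of a definition is the passage, in the previous paragraph, from ``the whole family of blocks is conditionally independent given $\T$'' to ``the spine blocks are conditionally independent given the spine degrees'', which is where I would be slightly careful.
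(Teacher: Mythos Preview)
Your proof is correct and follows essentially the same approach as the paper's: first read off from the definition of $\widehat{\mathrm{GW}}_{\mu^u,h}$ that the spine pairs $(k_{V_i}(\T),L_i)$ are i.i.d.\ with the law of $(\xic_u,U)$, then use that the blocks are conditionally independent given $\T$ to extend this to the $4$-tuples, with the quadrangulation block obtained via Tutte's bijection. The paper organizes the extension in two separate steps (first adjoin $\b^\M_{V_i}$, then $\b^\Q_{V_i}$) whereas you do them together, but the content is the same.
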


\begin{proof}
	Recall that under $\widehat{\mathbb P}_{u,h}$, the pair $(\T,V_\star)$ has law $\widehat{\mathrm{GW}}_{\mu^u,h}$. By definition of the law $\widehat{\mathrm{GW}}_{\mu^u,h}$, the ancestor line of the distinguished vertex $V_\star$ in $\T$ is made of mutant vertices. This means that the family $(k_{v_i}(\T))_{0,\leq i<h}$ is i.i.d. sampled from the size-biased distribution $\widehat\mu^u$, which is the law of $\xic_u$, and that independently of each other, each $V_{i+1}$ has uniform rank $L_i$ among the $k_{V_i}(\T)$ children of $V_i$. Hence we have the identity in law
	\[
	\mathrm{Law}\left(\Bigl(
	k_{V_i}(\T)\:,\:
	L_{i}
	\Bigr)_{0\leq i<h}\: ;\:\widehat{\mathbb P}_{u,h}\right)
	=
	\left[
	\mathrm{Law}\(\bigl(
	\xic_u\:,\:
	U
	\bigr);P\)\right]^{\otimes h}.
	\]
	
	Now under $\widehat{\mathbb P}_{u,h}$ the conditional law of the blocks $(\b^{\M}_v)_{v\in\T}$ with respect to $\T$ is that of independent blocks, sampled uniformly from blocks with size $(k_{V}(\T)/2)_{v\in\T}$ respectively.	
	In particular, the blocks $(\b^\M_{V_i})_{0\leq i<h}$ are sampled independently, uniformly from blocks with size $(k_{V_i}(\T)/2)_{0\leq i<h}$ respectively. Therefore the preceding identity in law extends to the following one
	\[
	\mathrm{Law}\left(\Bigl(
	k_{V_i}(\T)\:,\:
	\b^\mathrm{\M}_{V_i}\:,\:
	L_{i}
	\Bigr)_{0\leq i<h}\: ;\:\widehat{\mathbb P}_{u,h}\right)
	=
	\left[
	\mathrm{Law}\(\bigl(
	\xic_u\:,\:
	\widehat{\B}_u^{\mathrm{map}}\:,\:
	U
	\bigr);P\)\right]^{\otimes h}.
	\]
	Finally, recall from \cref{arbre-bloc-quad-carte} that $\b_{V_i}^\mathrm{\Q}$ is the image of $\b_{V_i}^\mathrm{\M}$ by Tutte's bijection. Since by definition $\widehat\B_u^{\mathrm{quad}}$ is also the image of $\widehat\B_u^{\mathrm{map}}$ by this bijection, the identity in law extends to the one in the proposition.
\end{proof}

We get in particular from \cref{lemma:law-dist-along-spine} that the variables $\bigl(D(\b_{V_i}^\M,L_i)\bigr)_{0\leq i<h}$ are i.i.d.~under $\widehat{\mathbb{P}}_{u,h}$. It is a bit less clear that the variables $\bigl(D_\Q(\b_{V_i}^\Q,L_i)\bigr)_{0\leq i<h}$ from \cref{additivite-dist-quad} are also i.i.d., since they seem to simultaneously depend on global metric properties of $\Q$.

\begin{lemma}\label{prop-indep-dist-epine-quad}
	Denote by $D(\b,l)$ the distance in a simple quadrangulation $\b$ between its root vertex and the closest endpoint of the $l$-th edge in the order induced by the block-tree decomposition, the same order as the one introduced before \cref{additivite-dist-quad}. Then for all $h\geq 1$, there is the identity in law
	\[
	\mathrm{Law}\left(
	(D_\Q(\b^{\mathrm{\Q}}_{V_i},L_i))_{0\leq i<h}\: ;\:\widehat{\mathbb P}_{u,h}\right)
	=
	\left[
	\mathrm{Law}\Bigl(
	D(\widehat{\B}_u^{\mathrm{quad}},U)
	\: ;\:
	P
	\Bigr)\right]^{\otimes h}.
	\]
\end{lemma}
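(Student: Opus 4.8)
The plan is to reduce the statement to the spine decomposition already set up in \cref{lemma:law-dist-along-spine}, after checking that, along the spine, the quantity $D_\Q(\b_{V_i}^\Q,L_i)$ appearing in \cref{additivite-dist-quad} is in fact an \emph{intrinsic} function of the block $\b_{V_i}^\Q$ and the label $L_i$, coinciding with the function $D(\cdot,\cdot)$ of the statement. Once this is known, it suffices to recall from \cref{lemma:law-dist-along-spine} that under $\widehat{\mathbb P}_{u,h}$ the pairs $\bigl(\b_{V_i}^\Q,L_i\bigr)_{0\le i<h}$ are i.i.d.\ with common law that of $\bigl(\widehat{\B}_u^{\mathrm{quad}},U\bigr)$, and to apply the fixed measurable map $D$ coordinatewise.

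First I would recall, from the proof of \cref{additivite-dist-quad}, that for a block $\b_{v_i}^\q$ on the ancestor line of a distinguished block, $D_\q(\b_{v_i}^\q,l_i)$ is a graph distance computed \emph{inside} $\b_{v_i}^\q$: namely between the endpoint $y_i$ of the root edge of $\b_{v_i}^\q$ that lies closest to the root vertex of $\q$ and the endpoint of the $l_i$-th edge of $\b_{v_i}^\q$ that lies closest to $y_i$. Thus the only ingredient that is not manifestly intrinsic is the identity of $y_i$, and the core of the proof is the claim: for every block $\b_{v_i}^\q$ on the ancestor line of a distinguished block, the endpoint of its root edge closest to the root vertex of $\q$ is exactly the root vertex $x_i$ of $\b_{v_i}^\q$ (the black endpoint, by the rooting convention for pendant subquadrangulations). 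I would prove this by induction along the spine. The base case is clear, since the shallowest block is the simple core, whose root vertex is the root vertex of $\q$. For the induction step, combining the additivity relation \cref{eq-additivite-dist-quad} with the inductive hypothesis $y_{i-1}=x_{i-1}$ shows that the distance in $\q$ from the root to either endpoint of the $l_{i-1}$-th edge of $\b_{v_{i-1}}^\q$ is a fixed constant plus the distance inside $\b_{v_{i-1}}^\q$ from its root vertex to that endpoint; hence $y_i$ is the endpoint of the $l_{i-1}$-th edge of $\b_{v_{i-1}}^\q$ closest to its root vertex, and it remains to identify this with the prescribed (black) root vertex of $\b_{v_i}^\q$, using bipartiteness of quadrangulations and the fact that the $l_{i-1}$-th edge is the collapse of the maximal $2$-cycle whose interior carries $\b_{v_i}^\q$.

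This last identification is where the combinatorial geometry of quadrangulations genuinely enters — it cannot be read off from the abstract decorated-tree picture — and I expect it to be the main obstacle. Once the claim is granted, we get $D_\Q(\b_{V_i}^\Q,L_i)=D(\b_{V_i}^\Q,L_i)$ for every $i<h$, so the $\widehat{\mathbb P}_{u,h}$-law of $\bigl(D_\Q(\b_{V_i}^\Q,L_i)\bigr)_{0\le i<h}$ is the image of the i.i.d.\ law of $\bigl(\b_{V_i}^\Q,L_i\bigr)_{0\le i<h}$ under the product map, which is precisely $\bigl[\mathrm{Law}\bigl(D(\widehat{\B}_u^{\mathrm{quad}},U)\bigr)\bigr]^{\otimes h}$, as wanted. (As a variant one could transfer the question to the map side via Tutte's bijection, where \cref{additivite-dist-carte} provides additivity with no error term; but translating the resulting map distances back into quadrangulation distances still requires the same bookkeeping, so this does not really shortcut the argument.)
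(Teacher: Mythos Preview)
Your reduction to \cref{lemma:law-dist-along-spine} is the right idea, but the central claim---that the endpoint $y_i$ of the root edge of $\b_{v_i}^\q$ closest to the global root is \emph{always} the black root vertex $x_i$---is false. Consider the simplest case: the root block is a single square $B_0\!-\!W_1\!-\!B_2\!-\!W_3$ (root at $B_0$), and a pendant sub-quadrangulation is attached along the edge $W_1B_2$. By the rooting convention, the root vertex of the next block is the black endpoint $B_2$, so $x_1=B_2$. But $d_\q(B_0,W_1)=1<2=d_\q(B_0,B_2)$, hence $y_1=W_1\neq x_1$. Bipartiteness only tells you the two distances differ by one; it does not force the black endpoint to be the closer one. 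So the identification $D_\Q(\b_{V_i}^\Q,L_i)=D(\b_{V_i}^\Q,L_i)$ you want does \emph{not} hold pointwise, and the induction breaks at the very first step.

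The paper does not try to prove this false identity. Instead it introduces the involution $F$ that swaps the orientation of the root edge of a simple quadrangulation (together with the relabelling $f_\b$ of its edges), writes
\[
D_\Q(\b_{V_i}^\Q,L_i)=\indic{\mathcal E_i}\,D(\b_{V_i}^\Q,L_i)+(1-\indic{\mathcal E_i})\,D\bigl(F(\b_{V_i}^\Q),f_{\b_{V_i}^\Q}(L_i)\bigr),
\]
and observes two things: first, the event $\mathcal E_i=\{y_i=x_i\}$ is $\mathcal F_i$-measurable (it depends only on blocks $0,\dots,i-1$), hence independent of $(\b_{V_i}^\Q,L_i)$; second, since $F$ preserves the uniform law on simple quadrangulations of a given size and $f_\b$ preserves the uniform law on labels, one has $D(\widehat\B_u^{\mathrm{quad}},U)\overset{(d)}{=}D(F(\widehat\B_u^{\mathrm{quad}}),f_{\widehat\B_u^{\mathrm{quad}}}(U))$. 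These two facts together yield the i.i.d.\ structure. The missing idea in your argument is precisely this symmetry step: rather than forcing $\mathcal E_i$ to hold, you need to show that whether it holds or not is irrelevant in law.
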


\begin{proof}
	Recall from the notation introduced for \cref{additivite-dist-quad} that for $\b$ a simple block of a quadrangulation $\q$, and $l$ an integer in $\{1,\dotsc,2|\b|\}$, $D_\q(\b,l)$ is the graph distance in $\b$ between the endpoints of the $l$-th edge of $\b$ in breadth-first order, and the endpoint of the root edge of $\b$ which is closest to the root vertex of $\q$.
	
	Denote by $\b\mapsto F(\b)$ the mapping which reverses the rooted oriented edge of a simple quadrangulation. Introduce also for $\b$ a simple quadrangulation, $f_\b$ the permutation of $\{1,\dotsc,2|\b|\}$ which maps the breadth-first order on $\b$ to the breadth-first-order on $F(\b)$. Finally, define the event $\mathcal E_i$ that the root vertex of $b_{V_i}^\Q$ is closer to the root vertex of $\Q$ than the other endpoint of the root edge of $b_{V_i}^\Q$. Then by definition, for all $0\leq i< h$ we have that
	\[
	D_\Q(\b^{\mathrm{\Q}}_{V_i},L_i)
	= \indicBis{\mathcal E_i}\cdot D\Bigl(\b^{\mathrm{\Q}}_{V_i},L_i\Bigr)
		+\(1-\indicBis{\mathcal E_i}\)\cdot D\Bigl(F(\b^{\mathrm{\Q}}_{V_i}),f_{\b^{\mathrm{\Q}}_{V_i}}(L_i)\Bigr).
	\]
	Let $\mathcal F_i$ denote the sigma-algebra of the variables $(
	k_{V_j}(\T)\:,\:
	\b^\mathrm{\M}_{V_j}\:,\:
	\b^{\mathrm{\Q}}_{V_j}\:,\:
	L_{j}
	)_{0\leq j<i}$. Then by \cref{lemma:law-dist-along-spine}, we have that the tuple $(
	k_{V_i}(\T)\:,\:
	\b^\mathrm{\M}_{V_i}\:,\:
	\b^{\mathrm{\Q}}_{V_i}\:,\:
	L_{i}
	)$ is independent of $\mathcal F_i$, and has the same law as
	$
	\bigl(\xic_u\:,\:
	\widehat{\B}_u^{\mathrm{map}}\:,\:
	\widehat{\B}_u^{\mathrm{quad}}\:,\:
	U)$.
	Now the crucial point is that the event $\mathcal E_i$ is $\mathcal F_i$-measurable, since it can be decided whether or not it holds by looking only at the first $i$ blocks on the spine. In particular it is independent of $(
	k_{V_j}(\T)\:,\:
	\b^\mathrm{\M}_{V_j}\:,\:
	\b^{\mathrm{\Q}}_{V_j}\:,\:
	L_{j}
	)_{ j\geq i}$.
	This implies the following
	\begin{align*}
	&\mathrm{Law}\left(
	\(D_\Q(\b^{\mathrm{\Q}}_{V_i},L_i)\)_{0\leq i<h}\: ;\:\widehat{\mathbb P}_{u,h}\right)\\
	&=\bigotimes_{0\leq i<h}
		\biggl[
		\widehat{\mathbb P}_{u,h}(\mathcal E_i)\cdot
			\mathrm{Law}\Bigl(
			D(\widehat{\B}_u^{\mathrm{quad}},U)
			\Bigr)
		+(1-\widehat{\mathbb P}_{u,h}(\mathcal E_i))\cdot
			\mathrm{Law}\Bigl(
			D\bigl(F(\widehat{\B}_u^{\mathrm{quad}}),f_{\widehat{\B}_u^{\mathrm{quad}}}(U)\bigr)
			\Bigr)
		\biggr].
	\end{align*}
	The proposition is therefore proved if we justify the identity in law
	\begin{equation}\label{eq-identite-loi-dist-blocs-quad}
	D(\widehat{\B}_u^{\mathrm{quad}},U)
	\overset{(d)}{=}
	D\bigl(F(\widehat{\B}_u^{\mathrm{quad}}),f_{\widehat{\B}_u^{\mathrm{quad}}}(U)\bigr).
	\end{equation}
	To check this, first notice that $F$ is a bijection since it is involutive, so that in particular the uniform law on simple quadrangulations with $k$ edges is invariant under $F$. By definition, for $\b$ a simple quadrangulation, $f_\b$ is also a bijection so that the uniform measure on $\{1,\dotsc,2|\b|\}$ is invariant under it. Denoting $U_k$ a uniform random variable on $\{1,\dotsc,2k\}$, this gives for each $k\geq 1$ the identity in law
	\[
	D(B_k^{\mathrm{quad}},U_k)
	\overset{(d)}{=}
	D\bigl(F(B_k^{\mathrm{quad}}),f_{B_k^{\mathrm{quad}}}(U_k)\bigr).
	\]
	Since the pair $(\widehat{\B}_u^{\mathrm{quad}},U)$ is the $\xic_u/2$-mixture of the laws $(B_k,U_k)_{k\geq 1}$, the identity in law \cref{eq-identite-loi-dist-blocs-quad} also holds and this conludes the proof.
\end{proof}

\paragraph{Moments of typical distances in a size-biased block.}

We may now examine how fat are the tails of this i.i.d. family of distances along the spine, which we wish to sum.

\begin{proposition}\label{lemma:tail_D-hat}
	Let $D$ be either the variable $D(\widehat\B^{\mathrm{map}}_u,U)$ or $D(\widehat\B^{\mathrm{quad}}_u,U)$. Then for $u> u_C$, there exists $\epsilon>0$ such that $E[\exp(tD)]<\infty$ for all real $t<\epsilon$. And for $u=u_C$, we have $E\left[D^\beta\right]<\infty$ for all $0<\beta<2$.
\end{proposition}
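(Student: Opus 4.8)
The plan is to dominate both variables by the diameter of the size-biased block and then integrate against the size-biased offspring law. The starting point is the trivial inequality $D(\widehat{\B}_u^{\mathrm{map}},U)\leq\diam(\widehat{\B}_u^{\mathrm{map}})$ and $D(\widehat{\B}_u^{\mathrm{quad}},U)\leq\diam(\widehat{\B}_u^{\mathrm{quad}})$, valid for any value of $U$, so it suffices to bound exponential (resp.~polynomial) moments of $\diam(\widehat{\B}_u)$. By the construction of $\widehat{\B}_u$ (see the definitions preceding \cref{lemma:law-dist-along-spine}), conditionally on $\xic_u=2k$ the block $\widehat{\B}_u$ is uniform among $2$-connected maps with $k$ edges (resp.~simple quadrangulations with $k$ faces); so after this conditioning I only need two facts about a single block of fixed size $k$: the crude deterministic bound $\diam(B_k)\leq 2k$ (a block with $k$ faces/edges has at most $2k$ edges), and the stretched-exponential deviation estimate of \cref{deviation-diam-blocks}, which gives $P\(\diam(B_k)\geq k^{1/4+\epsilon}\)\leq C_\epsilon\exp(-c_\epsilon k^{\gamma_\epsilon})$ for any fixed $\epsilon>0$ and suitable constants $C_\epsilon,c_\epsilon,\gamma_\epsilon>0$.

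For the supercritical case $u>u_C$ (the case $t\leq 0$ being trivial since $D\geq 0$), I would split on whether $\diam(B_k)<k^{1/4+\epsilon}$ or not, yielding for $t>0$
\[
\E{e^{t\diam(B_k)}}\leq e^{tk^{1/4+\epsilon}}+C_\epsilon\,e^{2tk-c_\epsilon k^{\gamma_\epsilon}}.
\]
Then I would average this against $\widehat\mu^u$, using that $\widehat\mu^u(\{2k\})=2k\,\mu^u(\{2k\})$ decays geometrically with ratio $r:=\tfrac{27}{4}y(u)<1$ (by \cref{tree-critical}, since $\mu^u$ is critical with mean $1$): the $e^{tk^{1/4+\epsilon}}$ contribution is summable for every $t$ because $k^{1/4+\epsilon}=o(k)$, while the $e^{2tk-c_\epsilon k^{\gamma_\epsilon}}$ contribution is summable as soon as $2t<\ln(1/r)$. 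This gives $\E{e^{tD}}<\infty$ for all $t<\tfrac12\ln(1/r)=\tfrac12\ln\tfrac{4}{27y(u)}>0$, which is the claim.

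For the critical case $u=u_C$, the same split gives, for $\beta>0$,
\[
\E{\diam(B_k)^\beta}\leq k^{\beta(1/4+\epsilon)}+(2k)^\beta C_\epsilon e^{-c_\epsilon k^{\gamma_\epsilon}}\leq (1+C'')\,k^{\beta(1/4+\epsilon)},
\]
the last bound because $k\mapsto(2k)^\beta C_\epsilon e^{-c_\epsilon k^{\gamma_\epsilon}}$ is a bounded sequence. Now by \cref{tree-critical} one has $\mu^{u_C}(\{2k\})\sim \tfrac{1}{4\sqrt{3\pi}}\,k^{-5/2}$, hence $\widehat\mu^{u_C}(\{2k\})=2k\,\mu^{u_C}(\{2k\})\sim \tfrac{1}{2\sqrt{3\pi}}\,k^{-3/2}$, so $\xic_{u_C}$ has finite moments of every order $<1/2$. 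Averaging, $\E{D^\beta}\leq(1+C'')\,\E{(\xic_{u_C}/2)^{\beta(1/4+\epsilon)}}$, which is finite as soon as $\beta(1/4+\epsilon)<1/2$; for any $\beta<2$ one may choose $\epsilon>0$ small enough for this, which completes the proof.

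The argument is short, and the main point requiring care is the tension between the linear-in-$k$ worst-case diameter bound used on the (rare) bad event $\{\diam(B_k)\geq k^{1/4+\epsilon}\}$ and the true $k^{1/4}$ order of the diameter on the good event. This is precisely why a restriction on $t$ is unavoidable when $u>u_C$ — the geometric decay of $\widehat\mu^u$ must beat the factor $e^{2tk}$ coming from the event that a small block is nevertheless of large diameter — and why in the critical case the exponent $\beta$ is capped exactly at $2=(1/2)/(1/4)$, the ratio of the tail index $1/2$ of the size-biased block size to the diameter exponent $1/4$; reaching $\beta=2$ would require a genuinely sharper input than \cref{deviation-diam-blocks} and is (consistently) not claimed.
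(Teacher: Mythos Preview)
Your proof is correct and follows essentially the same strategy as the paper: reduce to bounding moments of $\diam(\widehat\B_u)$, condition on the block size, and integrate against the size-biased law $\widehat\mu^u$. The critical case $u=u_C$ is handled in the same way as in the paper.

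One minor remark: in the supercritical case $u>u_C$, the paper takes a shorter route than you do. It simply uses the deterministic bound $\diam(\widehat\B_u)\leq\xic_u$ and observes that $\xic_u$ itself has finite exponential moments (since $\widehat\mu^u(\{2k\})=2k\,\mu^u(\{2k\})$ decays geometrically by \cref{tree-critical}). There is no need to invoke the deviation estimate \cref{deviation-diam-blocks} here at all. Your argument, which splits on the deviation event and then sums, is correct and yields the same threshold $t<\tfrac12\ln\tfrac{4}{27y(u)}$, but is more work than necessary for this case.
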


\begin{proof}
	The variable $D$ is defined as a distance in $\widehat\B_u$, where $\widehat\B_u$ is either $\widehat\B^{\mathrm{map}}_u$ or $\widehat\B^{\mathrm{quad}}_u$. Hence it suffices to prove that the above moments are finite when we replace $D$ by $\diam(\widehat \B_u)$.
	
	Let $u>u_C$. Then $\diam(\widehat \B_u)\leq \xic_u$, and the latter variable has finite exponential moments since $P(\xic_u\geq x)=\sum_{2j\geq x}2j\mu^u(\{2j\})$, where $\mu^u$ has a tail decaying exponentially fast by \cref{tree-critical}.
	
	Now take $u=u_C=9/5$ and let $\delta\in (0,2)$. Also let $\epsilon>0$ to be chosen later depending on $\delta$. Using the notation $B_k$ for $B_k^\mathrm{map}$ or $B_k^\mathrm{quad}$, set
	\[
	p_\epsilon(k)=P\({\diam(B_k)\geq k^{1/4+\epsilon}}\).
	\]
	By \cref{deviation-diam-blocks}, we have that $p_\epsilon(k)$ decays stretched exponentially as $k\rightarrow\infty$. Therefore we get a constant $C>0$ such that $k^2 p_\epsilon(k)\leq C$ for all $k$.
	Recall that we have $\diam (\widehat\B_u)\leq \xic_u$. Distinguishing upon whether $\diam (\widehat\B_u)\leq (\xic_u)^{1/4+\epsilon}$ or $\diam (\widehat\B_u)>(\xic_u)^{1/4+\epsilon}$ and taking a conditional expectation with respect to $\xic_u$, we get
	\begin{align*}
	E\left[\bigl( \diam(\widehat \B_u)\bigr)^{2-\delta}\right]
	&\leq E\left[{\left({(\xic_u)}^{1/4+\epsilon}\right)^{2-\delta}\indic{\diam (\widehat\B_u)\leq (\xic_u)^{1/4+\epsilon}}}\right]
	+E\left[{(\xic_u)^{2-\delta} \:p_\epsilon\Bigl(\xic_u\Bigr)}\right]\\
	&\leq E\left[{(\xic_u)^{(1/4+\epsilon)(2-\delta)}}\right] +C\\
	&=\sum_{2j\geq 0} (2j)^{(1/4+\epsilon)(2-\delta)}\cdot 2j\mu^{u_C}(\{2j\})+C.
	\end{align*}
	If $\epsilon$ is small enough so that $(1/4+\epsilon)(2-\delta)<1/2$, then the last sum is finite since by \cref{tree-critical} we have $\mu^{u_C}(\{2j\})=O(j^{-5/2})$. Therefore $E\left[{\bigl(\diam(\widehat \B_u)\bigr)^{2-\delta}}\right]<\infty$.
\end{proof}

	Let us make a brief commentary, and justify that when $u=u_C$, \cref{lemma:tail_D-hat} is optimal, in the sense that $D(\widehat\B^{\mathrm{quad}}_{u_C},U)$ does not have moments of order $\beta$ for $\beta\geq 2$. Firstly, one easily checks that functionals on pointed measured metric spaces of the form
	\[
	(X,x_0,d_X,\nu_X)\mapsto\int_X \nu_X(\mathrm dx)\,\bigl(d_X(x_0,x)\bigr)^\beta
	\]
	are continuous with respect to the Gromov-Hausdorff-Prokhorov topology. Addario-Berry and Albenque~\cite{AddarioBerryAlbenque2021} prove the $\mathrm{GHP}$ convergence of size $k$ uniform simple quadrangulations, rescaled by $\mathrm{cst}\cdot k^{-1/4}$, to the measured Brownian sphere $(\mathcal S,D^*,\lambda)$. This holds when putting either the uniform measure on vertices of $B_k^{\mathrm{quad}}$ or the size-biased one by \cite{quadrang-coeur-cv}. In particular, by the abovementioned continuity, we have the convergence in distribution
	\[
	\,E\left[\left(\mathrm{cst}\cdot k^{-1/4}D(B_k^{\mathrm{quad}},U_k)\right)^\beta\Bigm|B_k^{\mathrm{quad}}\right]
	\xrightarrow[k\rightarrow\infty]{(d)}
	\int_{\mathcal S}\lambda(\mathrm dx)\bigl(D^*(x_0,x)\bigr)^\beta,
	\]
	where $U_k$ is uniform on $\{1,\dotsc,2k\}$ and $x_0$ is the distinguished point on the Brownian sphere. Since the variable $\int_{\mathcal S}\lambda(\mathrm dx)\bigl(D^*(x_0,x)\bigr)^\beta$ is almost surely positive, the left-hand-side forms a tight sequence of $(0,\infty)$-valued random variables. Therefore it is bounded away from $0$ with \textit{uniform} positive probability. This implies a lower bound $E\left[D(B_k^{\mathrm{quad}},U_k)^\beta\right]\geq c (k^{1/4})^\beta$, for some $c=c(\beta)>0$. In particular,
	\begin{align*}
	E\left[D(\widehat\B^{\mathrm{quad}}_{u_C},U)^\beta\right]
	=\sum_{2j\geq 0}E\left[D(B_j^{\mathrm{quad}},U_j)^\beta\right]\cdot\widehat\mu^{u_C}(\{2j\})
	&\geq \sum_{2j\geq 0} cj^{\beta/4}\cdot 2j\mu^{u_C}(\{2j\})\\
	&=\sum_{2j\geq 0}\Theta(j^{\beta/4+1-5/2}).
	\end{align*}
	The latter sum is infinite when $\beta\geq 2$, which proves that $D(\widehat\B^{\mathrm{quad}}_{u_C},U)$ does not have moments of order $\beta$ for $\beta\geq 2$.
	The same argument would hold for $D(\widehat\B^{\mathrm{map}}_{u_C},U)$, but we lack at the moment the GHP convergence of size-$k$ uniform 2-connected maps.

\subsubsection{Moderate deviations estimate}


When increments of a random walk possess only a polynomial moment of order $\beta>1$, as is the case of $D(\widehat\B^{\mathrm{map}}_{u},U)$ and $D(\widehat\B^{\mathrm{quad}}_{u},U)$ when $u=u_C$, moderate and large deviation events can possibly have probabilities which decay slowly, that is polynomially with $n$. In the case of heavy-tailed increments, this indeed happens since those moderate and large deviation events can be realised by taking one large increment. This \textit{one-big-jump} behaviour is actually precisely how these large deviations events are realised. This phenomenon, which we have already encountered in \cref{sec:phase-diagram} for $u<u_C$, is known as condensation. For a more precise statement, see \cite{survey-trees,ArmendarizLoulakis2009,2Louigi}.

One could hope that if we prevent the variables from condensating, we could still get stretched-exponentially small probabilities for large deviation events. We make this precise in the following proposition, by stating that this is the case when we suitably truncate the increments. We were not able to find an instance of such an estimate in the literature, although it has certainly been encountered in some form. We thus include a short proof, which as usual relies on a Chernoff bound.

\begin{proposition}\label{prop:moderate-dev}
	Let $X$ be a real random variable with i.i.d.~copies $(X_i)_{i\geq 1}$. Assume that there exists $\beta\in(1,2]$ such that $E\left[{|X|^\beta}\right]<\infty$ and that we have $E\left[{X}\right]= 0$.
	
	Then, for all $\delta>0$, $\gamma\in(0,1/\beta+\delta)$, and $\nu\in\bigl(0,\delta\wedge(1/\beta+\delta-\gamma)\bigr)$, there exists a constant $C>0$ such that for all $n\geq 1$,
	\[
	P\({\max_{1\leq k\leq n}\sum_{i=1}^{k}X_i\indic{X_i\leq n^\gamma}>n^{1/\beta+\delta}}\)\leq C\exp(-n^\nu).
	\]
\end{proposition}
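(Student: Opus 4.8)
The plan is to use a standard Chernoff/union-bound argument on the truncated variables, exploiting the fact that truncation at level $n^\gamma$ produces increments with a controlled exponential moment at scale $n^{-\gamma}$.

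\medskip

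\noindent\textbf{Step 1: Reduce to a Chernoff bound on the endpoint.}
First I would apply a union bound over $k$, writing
\[
P\({\max_{1\leq k\leq n}\sum_{i=1}^{k}X_i\indic{X_i\leq n^\gamma}>n^{1/\beta+\delta}}\)\leq \sum_{k=1}^{n} P\({\sum_{i=1}^{k}X_i\indic{X_i\leq n^\gamma}>n^{1/\beta+\delta}}\),
\]
so it suffices to bound each term by $C' n^{-1}\exp(-n^\nu)$ uniformly in $1\leq k\leq n$. Fix the truncation level $a=n^\gamma$ and write $Y_i=X_i\indic{X_i\leq a}$. For a parameter $t=t_n>0$ to be chosen, the exponential Markov inequality gives
\[
P\({\textstyle\sum_{i=1}^{k}Y_i>n^{1/\beta+\delta}}\)\leq e^{-t n^{1/\beta+\delta}}\,E[e^{tY_1}]^{k}\leq e^{-t n^{1/\beta+\delta}}\,E[e^{tY_1}]^{n}.
\]

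\medskip

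\noindent\textbf{Step 2: Estimate the truncated exponential moment.}
The key computation is to show that with $t=t_n\asymp n^{-\gamma}\cdot(\text{small factor})$ one has $n\log E[e^{tY_1}]\leq \tfrac12 t n^{1/\beta+\delta}$, say. Using $e^x\leq 1+x+\tfrac{x^2}{2}e^{|x|}$ and $Y_1\leq a$, one gets $E[e^{tY_1}]\leq 1+tE[Y_1]+\tfrac{t^2}{2}e^{ta}E[Y_1^2\indic{X_1\leq a}]$. Since $E[X_1]=0$, we have $E[Y_1]=-E[X_1\indic{X_1>a}]$, and since $E[|X_1|^\beta]<\infty$ with $\beta>1$ this is $O(a^{1-\beta})$ in absolute value. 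For the second-moment term, $E[Y_1^2\indic{X_1\leq a}]\leq E[|X_1|^2\indic{X_1\leq a}]\leq a^{2-\beta}E[|X_1|^\beta]=O(a^{2-\beta})$ (and if $\beta=2$ this is just a constant bound, which is even better). Choosing $ta=n^\nu$, i.e.\ $t=n^{\nu-\gamma}$, makes $e^{ta}=e^{n^\nu}$; then $\log E[e^{tY_1}]\leq t\,O(a^{1-\beta})+\tfrac{t^2}{2}e^{n^\nu}O(a^{2-\beta})$, so
\[
n\log E[e^{tY_1}]\leq O\(n^{1+\nu-\gamma}a^{1-\beta}\)+O\(n^{1+2(\nu-\gamma)}e^{n^\nu}a^{2-\beta}\)=O\(n^{1+\nu-\gamma\beta}\)+O\(n^{1+2\nu-\gamma\beta}e^{n^\nu}\),
\]
while $t n^{1/\beta+\delta}=n^{1/\beta+\delta+\nu-\gamma}$. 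The constraint $\nu<\delta\wedge(1/\beta+\delta-\gamma)$ together with $\gamma<1/\beta+\delta$ is exactly what one needs to make the exponents work out: one checks that $1+\nu-\gamma\beta<1/\beta+\delta+\nu-\gamma$ (equivalently $1-\gamma\beta<1/\beta+\delta-\gamma$, which holds since $\gamma(\beta-1)\leq(1/\beta+\delta)(\beta-1)$... I would verify this chain of inequalities carefully) and similarly the second term is dominated even after multiplying by $e^{n^\nu}$, because $\nu$ can be taken small enough that $2\nu-\gamma\beta+ (\text{contribution of }e^{n^\nu})$ stays below $1/\beta+\delta+\nu-\gamma$; more robustly, one absorbs the $e^{n^\nu}$ by noting $t n^{1/\beta+\delta}=n^\nu\cdot n^{1/\beta+\delta-\gamma}$ and $1/\beta+\delta-\gamma>\nu$, so $t n^{1/\beta+\delta}$ dominates $n^\nu$ times any fixed power strictly less than $n^{1/\beta+\delta-\gamma}$.

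\medskip

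\noindent\textbf{Step 3: Conclude.}
Combining, $P(\sum_{i=1}^k Y_i>n^{1/\beta+\delta})\leq \exp(-t n^{1/\beta+\delta}+n\log E[e^{tY_1}])\leq \exp(-\tfrac12 n^{1/\beta+\delta+\nu-\gamma})$ for $n$ large. Since $1/\beta+\delta-\gamma>\nu$, this is $\leq\exp(-\tfrac12 n^{\nu'})$ for some $\nu'>\nu$ (for large $n$), hence summing over the $n$ values of $k$ still leaves something $\leq C\exp(-n^\nu)$ after adjusting the constant. The main obstacle is purely bookkeeping: getting the exponents in Step 2 to satisfy all the required inequalities \emph{simultaneously} under the stated ranges of $\delta,\gamma,\nu$, and in particular handling the $e^{ta}$ blow-up from the second-order Taylor term — one has to choose $t$ at the \emph{right} scale ($ta\asymp n^\nu$, not larger) so that this blow-up is sub-dominant. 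The case $\beta=2$ is slightly special (the second-moment term needs $E[X^2]<\infty$ rather than a truncation gain) but is strictly easier. I would also double-check that the one-sided truncation $\indic{X_i\leq n^\gamma}$, as opposed to $\indic{|X_i|\leq n^\gamma}$, causes no issue: the lower tail of $X$ is not truncated, but since we only bound an \emph{upper} deviation $\sum Y_i>n^{1/\beta+\delta}$, and $E[e^{tX\indic{X\leq a}}]\leq E[e^{tX_+\indic{X_+\leq a}}]+P(X<0)$ is harmless, negative values only help.
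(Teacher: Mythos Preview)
Your skeleton (Chernoff bound plus union over $k$) is the same as the paper's, but Step~2 has two genuine gaps that make the argument fail as written.

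\textbf{The second-moment bound is invalid when $\beta<2$.} You bound $E[Y_1^2]=E[X_1^2\indic{X_1\leq a}]\leq a^{2-\beta}E[|X_1|^\beta]$, but the truncation is one-sided: the event $\{X_1\leq a\}$ includes $\{X_1<-a\}$, on which $|X_1|^{2}\geq a^{2-\beta}|X_1|^\beta$, not $\leq$. Thus $E[X_1^2\indic{X_1\leq a}]$ need not be $O(a^{2-\beta})$; for $\beta<2$ it can be infinite. Your remark that ``negative values only help'' is true at the level of $e^{tY_1}\leq 1$, but not once you insist on keeping the linear term $tE[Y_1]$ (which you need, to exploit centering) and then control the remainder by a second-order expansion.

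\textbf{The choice $ta=n^\nu$ is fatal.} Even granting a finite second moment, the remainder carries a factor $e^{ta}=e^{n^\nu}$, which is stretched-exponentially large. Your attempted absorption (``$tn^{1/\beta+\delta}$ dominates $n^\nu$ times any fixed power\dots'') conflates $n^\nu$ with $e^{n^\nu}$: the Chernoff gain $tn^{1/\beta+\delta}$ is only polynomial in $n$ and cannot compensate $n^{\text{anything}}\cdot e^{n^\nu}$.

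The paper's fix is to address both points at once. Instead of a quadratic Taylor bound, use the $\beta$-order inequality $e^t\leq 1+t+M|t|^\beta$ valid for all $t\in(-\infty,1]$ and some $M>0$ (this needs only $E[|X|^\beta]<\infty$, and handles the untruncated negative tail). Then choose the Chernoff parameter $t=n^{-\theta}$ with $\theta\in\bigl(\max(\gamma,1/\beta),\,1/\beta+\delta\bigr)$, so that $ta=n^{\gamma-\theta}\to 0$ and no blow-up occurs. One gets directly
\[
E\bigl[e^{n^{-\theta}X\indic{X\leq n^\gamma}}\bigr]\leq 1+O(n^{-\beta\theta})=1+O(n^{-1}),
\]
whence $(E[e^{tY_1}])^n=O(1)$ and the Chernoff bound yields $\exp(-n^{1/\beta+\delta-\theta})$; setting $\nu=1/\beta+\delta-\theta$ recovers exactly the stated range for $\nu$.
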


\begin{rem}
	A straightforward adaptation of the proof shows that the conclusion still holds if the only assumptions on the variables $(X_i)_i$ are $E\bigl[X_i\bigm|\,X_1,\dotsc,X_{i-1}\bigr]\leq 0$ and $\sup_{i\geq 1}E\bigl[|X_i|^\beta\,\bigm|\,X_1,\dotsc,X_{i-1}\bigr]<\infty$.
\end{rem}

\begin{proof}[Proof of \cref{prop:moderate-dev}]
	Fix an arbitrary $\theta$ such that $\max(\gamma,1/\beta)< \theta<1/\beta+\delta$. By Chernoff's bound, we get for all $1\leq k\leq n$,
	\begin{align*}
	P\({\sum_{i=1}^{k}X_i\indic{X_i\leq n^\gamma}>n^{1/\beta+\delta}}\)
	&\leq \exp(-n^{1/\beta+\delta-\theta})\Bigl(E\left[{\exp\bigl(n^{-\theta} X\indic{X\leq n^\gamma}\bigr)}\right]\Bigr)^k\\
	&\leq
	\exp(-n^{1/\beta+\delta-\theta}) \Bigl(1\vee E\left[{\exp\bigl(n^{-\theta} X\indic{X\leq n^\gamma}\bigr)}\right]\Bigr)^n.
	\end{align*}
	Therefore we obtain by a union bound the estimate
	\[
	P\({\max_{1\leq k\leq n}\sum_{i=1}^{k}X_i\indic{X_i\leq n^\gamma}>n^{1/\beta+\delta}}\)\leq
	n\cdot \exp(-n^{1/\beta+\delta-\theta}) \Bigl(1\vee E\left[{\exp\bigl(n^{-\theta} X\indic{X\leq n^\gamma}\bigr)}\right]\Bigr)^n.
	\]
	Since $\theta$ is arbitrary in the interval $\bigl(\max(\gamma,1/\beta)\:,\:1/\beta+\delta\bigr)$, the exponent $\nu:=1/\beta+\delta-\theta$ is arbitrary in the interval $\bigl(0,\delta\wedge(1/\beta+\delta-\gamma)\bigr)$. As a consequence, to prove the proposition it is sufficient to show that
	\begin{equation}\label{proof:mod-deviations-heavy:ineq-1}
	E\left[{\exp\Bigl(n^{-\theta} X\indic{X\leq n^\gamma}\Bigr)}\right]\leq 1+O(n^{-1}).
	\end{equation}
	
	Notice that since $\beta\in(1,2]$, for all $M>0$ the following inequality holds for $t$ near $0$ or~$-\infty$:
	\[
	\exp(t)\leq 1+t+M|t|^\beta.
	\]
	Therefore, if one takes $M$ large enough it holds for all $t\in(-\infty,1]$. Fix such a constant $M$.
	
	Given $\lambda,s\geq0$, distinguishing upon whether $\lambda x\in(-\infty,1]$ or not and using that $x\indic{x\leq s}\leq x$ and $x\indic{\lambda x\leq 1}\leq x$ (even when $x<0$), we get for all $x\in\R$,
	\begin{align}
	\label{proof-boundingexp}
	\exp\Bigl(\lambda x\indic{x\leq s}\Bigr)
	&\leq 
		\Bigl(1+\lambda x\indic{x\leq s}+M\lambda^\beta|x|^\beta\indic{x\leq s}\Bigr)\cdot\indic{\lambda x\leq 1}+\exp({\lambda x \indic{x\leq s}})\cdot\indic{\lambda x>1}\nonumber\\
	&\leq 
		1+\lambda x+M\lambda^\beta|x|^\beta +\mathrm \exp(\lambda s)\cdot\indic{\lambda x>1}.
	\end{align}
	Applying this inequality with $x=X$, $\lambda=n^{-\theta}$, $s=n^\gamma$ and taking expectations we obtain
	\[
	E\left[{\exp\Bigl(n^{-\theta} X\indic{X\leq n^\gamma}\Bigr)}\right]
		\leq 1+n^{-\theta}E\left[ X\right]+ Mn^{-\beta\theta}{\textstyle E\left[{|X|^\beta}\right]}+\exp\bigl(n^{\gamma-\theta}\bigr)\cdot\textstyle P\({X\geq n^\theta}\).
	\]
	Recall that $E[ X]= 0$ by hypothesis, that $\gamma-\theta<0$ by choice of $\theta$, and use Markov's inequality. This yields
	\[
	E\left[{\exp\Bigl(n^{-\theta} X\indic{X\leq n^\gamma}\Bigr)}\right]
		\leq 1+0+ Mn^{-\beta\theta}{\textstyle E\left[{|X|^\beta}\right]}+\mathrm \exp({1})\cdot n^{-\beta\theta}{\textstyle E\left[{|X|^\beta}\right]}.
	\]
	Since by hypothesis $E\left[{|X|^\beta}\right]<\infty$, we have
	\[
	E\left[{\exp\Bigl(n^{-\theta} X\indic{X\leq n^\gamma}\Bigr)}\right]
		\leq 1+ O(n^{-\beta\theta})\leq 1+O(n^{-1})
	\]
	where the last inequality comes from the choice of $\theta$, which is greater than $1/\beta$. Therefore \eqref{proof:mod-deviations-heavy:ineq-1} is satisfied and the proposition is proved.
\end{proof}

\subsubsection{A lemma to compare \texorpdfstring{$\m$}{m}, \texorpdfstring{$\q$}{q} and \texorpdfstring{$\t$}{t}}

Let us state a lemma that elaborates on the additivity of distances on consecutive blocks, so that we can bound the $\mathrm{GHP}$-distance between a map (resp.~a quadrangulation) and its block-tree scaled by some constant.
Let $\kappa_1$ and $\kappa_2$ be positive constants.
Let $\m$ be a map, $\q$ its associated quadrangulation by Tutte's bijection, and $\t$ their block-tree.

For $x$ a vertex of either $\m$ or $\q$, denote as in \cref{additivite-dist-carte,additivite-dist-quad} by $v_\star$ the vertex $v$ of $\t$ closest to the root of $\t$ such that $x$ is a vertex of $\b_v^\m$ (resp.~$\b_v^\q$). Set similarly $h_\star=h_\t(v_\star)$ the height of $v_\star$ in $\t$, and $(v_i)_{0\leq i\leq h_\star}$ the ancestor line of $v_\star$ in $\t$, with $v_0$ the root of $\t$ and $v_{h_\star}=v_\star$. Also denote by $x_i$ the root vertex of $\b_{v_i}^\m$ (resp.~$\b_{v_i}^\q$). Finally, let $(l_i)_{0\leq i< h_\star}$ be the respective breadth-first index of the corner in $\b^\m_{v_{i}}$ (resp. the edge in $\b^\q_{v_{i}}$) to which the root corner of $\b^\m_{v_{i+1}}$ (resp. the root edge of $\b^\q_{v_{i+1}}$) is attached.
Finally, denote by $\Delta(\m)$ (resp.~$\Delta(\q)$) the largest diameter of a block of $\m$ (resp.~$\q$). We set the quantities
\begin{align*}
R(\m,v_\star,\kappa_1)
&=\max_{0\leq i<h_\star} \left|\sum_{j=i}^{h_\star-1}\(D(\b^\m_{v_j},l_j)-\kappa_1\)\right|,\\
\text{and}\quad R(\q,v_\star,\kappa_2)
&=
\max_{0\leq i<h_\star} \left|\sum_{j=i}^{h_\star-1}\(D_{\q}(\b^\q_{v_j},l_j)-\kappa_2\)\right|.
\end{align*}
Notice that the preceding quantities depend on $x$ only through $v_\star$ and therefore make sense as functions of only $(\m,v_\star,\kappa_1)$ and $(\q,v_\star,\kappa_2)$ respectively.

\begin{lemma}\label{map-vs-tree}
	Let ${\mathbf f}^\m$ and ${\mathbf f}^\q$ be the functions on $V(\t)$ defined by ${\mathbf f}^\m(v)=|V(\b_v^\m)|-1$ and ${\mathbf f}^\q(v)=|V(\b_v^\q)|-2$ respectively. With the above notation, we have for all $\epsilon>0$,
	\[
	\mathrm d_\mathrm{GHP}\(\epsilon\cdot\underline\m\,,\,{\epsilon}\kappa_1\cdot\bigl(\t,d_\t,\tfrac{{\mathbf f}^\m}{|{\mathbf f}^\m|}\bigr)\)
	\leq \kappa_1\epsilon+ \frac{3\epsilon}{2}\Delta(\m)+\epsilon\max_{v_\star\in V(\t)} R(\m,v_\star,\kappa_1)+\frac2{|\mathbf f^\m|},
	\]
	and
	\[
	\mathrm d_\mathrm{GHP}\(\epsilon\cdot\underline\q\,,\,\epsilon\kappa_2\cdot\bigl(\t,d_\t,\tfrac{{\mathbf f}^\q}{|{\mathbf f}^\q|}\bigr)\)
	\leq (\kappa_2+3)\epsilon +\frac{3\epsilon}{2}\Delta(\q)+\epsilon\max_{v_\star\in V(\t)} R(\q,v_\star,\kappa_2)+\frac4{|\mathbf f^\q|}.
	\]
\end{lemma}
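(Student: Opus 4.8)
The plan is to exhibit an explicit correspondence between $V(\m)$ (resp.~$V(\q)$) and $V(\t)$, and a coupling between the rescaled measures, and then bound both the distortion of the correspondence and the mass the coupling places off the correspondence. I will treat the map case first and then indicate the (minor) modifications for quadrangulations. The natural correspondence is the following: to a vertex $x$ of $\m$, associate the vertex $v_\star=v_\star(x)$ of $\t$ which is the highest block-tree vertex whose block contains $x$ (as in the setup preceding the statement); conversely, to a block-tree vertex $v$, associate its root vertex $x_v$ of $\b_v^\m$. Call $C_\m\subset V(\m)\times V(\t)$ the resulting correspondence, which is indeed a correspondence since every $x$ has an associated $v_\star(x)$ and every $v$ has its root vertex. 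The key point is to estimate $\mathrm{dis}(C_\m;\epsilon\, d_\m,\epsilon\kappa_1 d_\t)$. Take $(x,v)$ and $(\widetilde x,\widetilde v)$ in $C_\m$, so $v$ is an ancestor or descendant (or unrelated) of $v_\star(x)$; by construction $v$ lies on the ancestor line of $v_\star(x)$ exactly when $v=v_\star(x)$ or $v$ is an ancestor — in general $v$ is one of the $v_i$'s only in the diagonal case, so I should be slightly careful. The cleanest route is: for any $x$, $d_\m(x,x_{v_\star(x)})\leq\diam(\b^\m_{v_\star(x)})\leq\Delta(\m)$, so up to an additive error $\leq 2\Delta(\m)$ in distances it suffices to compare $d_\m(x_{v_\star(x)},x_{v_\star(\widetilde x)})$ with $\kappa_1\, d_\t(v_\star(x),v_\star(\widetilde x))$. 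For two block-tree vertices $v,w$ lying on a common branch, Lemma~\ref{additivite-dist-carte} gives $d_\m(x_v,x_w)=\sum_{j} D(\b^\m_{v_j},l_j)$ along the path, which differs from $\kappa_1\,d_\t(v,w)$ by at most $2\max_{v_\star} R(\m,v_\star,\kappa_1)$ (split the path at its top vertex and apply the definition of $R$ to each half). For $v,w$ on different branches, insert their common ancestor $a$: $d_\m(x_v,x_w)\leq d_\m(x_v,x_a)+d_\m(x_a,x_w)$ and this is again matched by $\kappa_1(d_\t(v,a)+d_\t(a,w))=\kappa_1 d_\t(v,w)$ up to the same error (here one uses that along each branch the $D(\b,l)$ are nonnegative, so the triangle-inequality direction of Lemma~\ref{additivite-dist-carte} suffices — actually Lemma~\ref{additivite-dist-carte} already is an equality along branches; for the cross-branch case the bound $d_\m(x_v,x_w)\le d_\m(x_v,x_a)+d_\m(x_a,x_w)$ combined with the lower bound $d_\m(x_v,x_w)\ge |d_\m(x_v,x_a)-d_\m(x_a,x_w)|$... is not enough, but since geodesics in $\m$ between $x_v$ and $x_w$ must pass through $x_a$ by the same separating-vertex argument as in Lemma~\ref{additivite-dist-carte}, one gets equality $d_\m(x_v,x_w)=d_\m(x_v,x_a)+d_\m(x_a,x_w)$ as well). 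Collecting, $\tfrac12\mathrm{dis}(C_\m;\epsilon d_\m,\epsilon\kappa_1 d_\t)\leq \epsilon\bigl(\kappa_1+\tfrac32\Delta(\m)+\max_{v_\star}R(\m,v_\star,\kappa_1)\bigr)$, where the $\kappa_1$ term absorbs the rounding from the endpoint-block offsets and the $\tfrac32$ rather than $2$ comes from being careful about which of the two endpoint blocks actually contributes; I will double-check this constant but it is of no conceptual importance.

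For the measure part, I take the coupling $\gamma_\m$ supported on $C_\m$ that is "push $\nu_\m$ through $x\mapsto v_\star(x)$": concretely, $\gamma_\m$ is the law of $(x,v_\star(x))$ with $x\sim\nu_\m$, so its first marginal is $\nu_\m$ exactly and its second marginal is the pushforward measure, which assigns to $v$ the mass $|V(\b_v^\m)\setminus\{x_v\}|/|V(\m)|=({\mathbf f}^\m(v)+[\text{1 if }v=\text{root}])/|V(\m)|$ — up to the single root vertex this is exactly ${\mathbf f}^\m/|V(\m)|$. Hence $\gamma_\m$ is not quite a coupling of $\nu_\m$ and ${\mathbf f}^\m/|{\mathbf f}^\m|$ because (i) $|{\mathbf f}^\m|=|V(\m)|-1$ differs from $|V(\m)|$ by one, and (ii) one vertex (the global root) is assigned to two block-tree vertices. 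Both discrepancies involve only $O(1)$ vertices, i.e.~mass $O(1/|V(\m)|)$, so after a minor modification of $\gamma_\m$ on a set of mass $\leq 2/|V(\m)|$ one obtains a genuine coupling of $\nu_\m$ with ${\mathbf f}^\m/|{\mathbf f}^\m|$, supported on $C_\m$ except for mass $\leq 2/|{\mathbf f}^\m|$ (the factor being exactly the $\tfrac2{|{\mathbf f}^\m|}$ in the statement, coming from $\||\mathbf f^\m|^{-1}\mathbf f^\m-|V(\m)|^{-1}\mathbf f^\m\|_{TV}\le 1/|{\mathbf f}^\m|$ plus the double-counted root vertex contributing another $\le 1/|V(\m)|$). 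Plugging the correspondence $C_\m$ and this coupling into the definition of $d_\mathrm{GHP}$ gives precisely the claimed bound.

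For the quadrangulation case the argument is structurally identical, using Lemma~\ref{additivite-dist-quad} in place of Lemma~\ref{additivite-dist-carte}. Two new sources of $O(1)$ error appear, which together account for the worse constant $(\kappa_2+3)$ and the $\tfrac4{|{\mathbf f}^\q|}$: first, Lemma~\ref{additivite-dist-quad} is only an equality up to the additive term $\delta_{x,x_i}\in\{0,\pm1,\pm2\}$, contributing a bounded-by-$3$ extra additive distortion (bounded here rather than growing in $n$), and second, consecutive simple blocks share \emph{two} vertices rather than one, so the "double-counted" overlap in the measure pushforward involves one extra vertex per block-tree edge — but since we only subtract a constant ($2$ instead of $1$) in the definition ${\mathbf f}^\q(v)=|V(\b_v^\q)|-2$, the total mass mismatch is still $O(1/|{\mathbf f}^\q|)$, explicitly $\le 4/|{\mathbf f}^\q|$. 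The canonical choice of the root-side endpoint $y_j$ made in Lemma~\ref{additivite-dist-quad}, together with the fact that $D_\q(\b,l)$ was \emph{defined} using exactly that endpoint, is what makes the sums telescope correctly so that the cross-branch additivity still holds; this is the one place where bipartiteness of $\q$ is used, and it has already been dispatched in Lemma~\ref{additivite-dist-quad}, so here I only need to invoke it.

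\textbf{Main obstacle.} The conceptual content is entirely in the additivity Lemmas~\ref{additivite-dist-carte} and~\ref{additivite-dist-quad}, which are already proved; so the present lemma is essentially bookkeeping. The one genuinely delicate point is getting the $O(1)$ and $O(1/|\mathbf f|)$ correction terms to come out with exactly the stated constants ($\tfrac32\Delta$, $\kappa_1$, $\tfrac2{|\mathbf f^\m|}$ vs.~$(\kappa_2+3)$, $\tfrac4{|\mathbf f^\q|}$): one must track carefully (a) which endpoint block of a geodesic actually contributes a full diameter vs.~half, (b) the $\pm2$ slack $\delta_{x,x_i}$ in the quadrangulation case, and (c) the exact total-variation cost of turning the pushforward of $\nu$ into the normalized $\mathbf f$-measure, accounting both for the $|V|$-vs-$|\mathbf f|$ normalization mismatch and for vertices shared between consecutive blocks being assigned to only their highest block-tree vertex. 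None of this is hard, but it is the part that requires care rather than ideas.
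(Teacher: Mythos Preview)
Your approach is essentially the same as the paper's: the same correspondence $x\mapsto v_\star(x)$, the same pushforward coupling (the paper phrases it as ``uniform measure on $C$'' and then corrects the marginal via a total-variation/Prokhorov bound, which is exactly your modification step), and the same appeal to \cref{additivite-dist-carte,additivite-dist-quad} for the distortion.

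There is one genuine slip in your cross-branch argument. You write that geodesics between $x_v$ and $x_w$ ``must pass through $x_a$ by the same separating-vertex argument,'' where $a$ is the last common ancestor in $\t$. But $x_a$ is the \emph{root vertex} of the block $\b_a$, and it is not in general a cut vertex separating the two subtrees. The separating vertices are rather $x_{a_1}$ and $x_{a_2}$, the root vertices of the two children $a_1,a_2$ of $a$ on the paths to $v$ and $w$ respectively (equivalently, the attachment points of those pendant submaps inside $\b_a$). Hence the correct decomposition is
\[
d_\m(x_v,x_w)=d_\m(x_v,x_{a_1})+d_{\b_a}(x_{a_1},x_{a_2})+d_\m(x_{a_2},x_w),
\]
and it is the middle term $d_{\b_a}(x_{a_1},x_{a_2})\leq\Delta(\m)$ that produces the third $\Delta$ contribution (together with the two endpoint-block terms $d_{\b_{v_\star}}(x,x_{h_\star})$ and $d_{\b_{\widetilde v_\star}}(\widetilde x,\widetilde x_{\widetilde h_\star})$, this gives exactly $3\Delta$ in the distortion, hence $\tfrac32\Delta$ after halving). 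The two sums along the branches from $a_1$ up to $v_\star$ and from $a_2$ up to $\widetilde v_\star$ are then compared to $\kappa_1(h_\star-i-1)$ and $\kappa_1(\widetilde h_\star-i-1)$ via the definition of $R$, and the offset $d_\t(v_\star,\widetilde v_\star)-(h_\star-i-1)-(\widetilde h_\star-i-1)=2$ is the source of the additive $2\kappa_1$ (hence $\kappa_1$ after halving). This is precisely the paper's computation; once you fix the separating-vertex identification, your bookkeeping lines up with the stated constants. The quadrangulation case works the same way, with the extra $\delta_0,\delta,\widetilde\delta\in\{0,\pm1,\pm2\}$ from \cref{additivite-dist-quad} producing the additional $+3$ in the constant.
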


\begin{proof}
Let us first treat the inequality involving $\q$, which is a bit more involved. Consider the correspondence $C$ between $V(\q)$ and ${V}(\t)$ defined as follows. A vertex $x$ of $\q$ is set in correspondence with a vertex $v$ of $\t$ {if and only if} $v$ is the vertex $v_\star$ defined as above from $x$; and let $\rho(x) := v_\star$. Put differently, a vertex $v$ of $\t$ is put in correspondence precisely with the ${\mathbf f}^\q(v)=|V(\b_v^\q)|-2$ vertices of the block $\b_v^\q$ which are not incident to the root edge of $\b_v^\q$ (except when $v$ is the root vertex in which case $v$ is in correspondence with all the vertices of $\b_v^\q$).

Let $\gamma$ be the uniform measure on the previously defined set $C=\{(x,\rho(x))\colon x\in V(\q)\}\subset V(\q)\times {V}(\t)$. Let the function $\pi\colon C\rightarrow V(\t)$ be the restriction of the projection $\pi_\t : (x,y) \in V(\q)\times {V}(\t)\mapsto y \in {V}(\t)$. The preimages of $\pi$ have cardinal $|\pi^{-1}(v)|=|\rho^{-1}(v)|={\mathbf f}^\q(v)+2\delta_{\mathrm{root}}(v)$, where $\delta_{\mathrm{root}}(v)$ is an indicator that $x$ is incident to the root-edge of $\q$.
Tautologically, the measure $\gamma$ defines a coupling between its images by the projections $\pi_\q : (x,y) \in V(\q)\times {V}(\t)\mapsto x \in {V}(\q)$ and $\pi_\t$. That is to say, $\gamma$ is a coupling of the measures $\nu_\q$ and $\frac{{\mathbf f}^\q+2\delta_{\mathrm{root}}}{|{\mathbf f}^\q|+2}$. It is also supported by $C$, \textit{i.e.}~$\gamma\bigl((V(\q)\times V(\t))\setminus C\bigr)=0$.
By the triangle inequality and the preceding observations, we have
\begin{align*}
&d_\mathrm{GHP}\(\epsilon\cdot\underline\q\,,\,\epsilon\kappa_2\cdot\bigl(\t,d_\t,\tfrac{{\mathbf f}^\q}{|{\mathbf f}^\q|}\bigr)\)\\
&\leq{} 
	d_\mathrm{GHP}\(\epsilon\cdot\underline\q\,,\,\epsilon\kappa_2\cdot\bigl(\t,d_\t,\tfrac{{\mathbf f}^\q+2\delta_{\mathrm{root}}}{|{\mathbf f}^\q|+2}\bigr)\)
	+d_\mathrm{GHP}\(\epsilon\kappa_2\cdot\bigl(\t,d_\t,\tfrac{{\mathbf f}^\q+2\delta_{\mathrm{root}}}{|{\mathbf f}^\q|+2}\bigr)\,,\,\epsilon\kappa_2\cdot\bigl(\t,d_\t,\tfrac{{\mathbf f}^\q}{|{\mathbf f}^\q|}\bigr)\)\\
&\leq{}\frac{\epsilon}{2}\,\mathrm{dis}(C; d_\q,\kappa_2 d_\t)+d_{\mathrm P}^{(V(\t),\epsilon \kappa_2 d_\t)}(\tfrac{{\mathbf f}^\q+2\delta_{\mathrm{root}}}{|{\mathbf f}^\q|+2},\tfrac{{\mathbf f}^\q}{|{\mathbf f}^\q|}).
\end{align*}
The last inequality uses \cref{eq-Strassen} to bound the second GHP distance by a Prokhorov distance.
Now, the Prokhorov distance between two measures is bounded by their total variation distance, and for measures $\mu$ and $\nu$ we have elementarily $d_{\mathrm{TV}}(\frac{\mu+\nu}{|\mu|+|\nu|},\frac{\mu}{|\mu|})\leq \frac{2|\nu|}{|\mu|}$. Therefore, we have
\[
d_{\mathrm P}^{(V(\t),\epsilon\kappa_2 d_\t)}\left(\frac{{\mathbf f}^\q+2\delta_{\mathrm{root}}}{|{\mathbf f}^\q|+2},\frac{{\mathbf f}^\q}{|{\mathbf f}^\q|}\right)
\leq
d_{\mathrm{TV}}\left(\frac{{\mathbf f}^\q+2\delta_{\mathrm{root}}}{|{\mathbf f}^\q|+2},\frac{{\mathbf f}^\q}{|{\mathbf f}^\q|}\right)
\leq \frac{4}{|{\mathbf f}^\q|}.
\]

It remains to bound the distortion $\mathrm{dis}(C;d_\q,\kappa_2 d_\t)$.
This amounts to bounding $|d_\q(x,\widetilde x)-\kappa_2 d_\t(\rho(x),\rho(\widetilde x))|$ uniformly for all pairs of vertices $(x,\widetilde x)$ in $V(\q)\times V(\q)$. Let $x$ and $\widetilde x$ be vertices of $\q$. As before, we define the vertices $v_\star=\rho(x)$ and $\widetilde v_\star=\rho(\widetilde x)$ in $V(\t)$, their respective heights $h_\star$ and $\widetilde h_\star$, their ancestor lines $(v_i)_{0\leq i\leq h_\star}$ and $(\widetilde v_i)_{0\leq i\leq h_\star}$, the labels $(l_i)_{0\leq i<h_\star}$ and $(\widetilde l_i)_{0\leq i<h_\star}$, and the vertices $(x_i)_{0\leq i\leq h_\star}$ and $(\widetilde x_i)_{0\leq i\leq h_\star}$.

Let $i$ be such that $v_i=\widetilde v_i$ is the last common ancestor of $v_\star$ and $\widetilde v_\star$ in $\t$. First notice that there exists $\delta_0\in\{0,\pm 1,\pm 2\}$ such that
\begin{equation}\label{decomp-dist-selon-dernier-ancetre}
d_\q(x,\widetilde x)=\delta_0+d_\q(x,x_{i+1})+d_{\b_{v_i}^\q}(x_{i+1},\widetilde x_{i+1})+d_\q(\widetilde x_{i+1},\widetilde x).
\end{equation}
Indeed, similarly as in the proof of \cref{additivite-dist-quad}, a geodesic from $x$ to $\widetilde x$ must visit, once and in that order, 
\begin{itemize}
	\item the vertex $x$,
	\item either $x_{i+1}$, or $x'_{i+1}$ the other endpoint of the root-edge of $\b_{v_{i+1}}^\q$,
	\item either $\widetilde x_{i+1}$, or $\widetilde x'_{i+1}$ the other endpoint of the root-edge of $\b_{\widetilde v_{i+1}}^\q$,
	\item the vertex $\widetilde x$.
\end{itemize}
Since $x_{i+1}$ and $x'_{i+1}$, as well as $\widetilde x_{i+1}$ and $\widetilde x'_{i+1}$, are at distance $1$ respectively, and since a geodesic between points in $\b_{v_i}$ must stay in $\b_{v_i}$, we get that \cref{decomp-dist-selon-dernier-ancetre} holds, for some $\delta_0\in\{0,\pm1,\pm2\}$.
Then, \cref{additivite-dist-quad} allows to decompose the distances $d_\q(x,x_{i+1})$ and $d_\q(\widetilde x_{i+1},\widetilde x)$, with some $\delta,\widetilde \delta$ in $\{0,1\}$. Combining this with \cref{decomp-dist-selon-dernier-ancetre}, this gives
\begin{align*}
	d_\q(x,\widetilde x)
	={}&\delta_0+\left[\delta+d_{\b_{v_\star}^\q}(x,x_{h_\star})+\sum_{i+1\leq j<h_\star-1}D_{\q}(v_j,l_j)\right]
	+d_{b_{v_i}^\q}(x_{i+1},\widetilde x_{i+1})\\
	&+\left[\widetilde\delta+d_{\b_{\widetilde v_\star}^\q}(\widetilde x,\widetilde x_{\widetilde h_\star})+\sum_{i+1\leq j<\widetilde h_\star-1}D_{\q}(\widetilde v_j,\widetilde l_j)\right]\\
	={}&\delta_0+\delta +\widetilde\delta 
	+d_{\b_{v_\star}^\q}(x,x_{h_\star})
	+d_{\b_{v_i}^\q}(x_{i+1},\widetilde x_{i+1})
	+d_{\b_{\widetilde v_\star}^\q}(\widetilde x,\widetilde x_{\widetilde h_\star})\\
	&+\sum_{i+1\leq j<h_\star-1}\left(D_{\q}(v_j,l_j)-\kappa_2\right)
	+\sum_{i+1\leq j<\widetilde h_\star-1}\bigl(D_{\q}(\widetilde v_j,\widetilde l_j)-\kappa_2\bigl)\\
	&+\kappa_2 (h_\star-i-1)+\kappa_2(\widetilde h_\star-i-1).
\end{align*}
The sum of the first six terms has absolute value at most $6+3\Delta(\q)$, the two sums have absolute value at most $R(\q,x,\kappa_2)$ and $R(\q,\widetilde x,\kappa_2)$ respectively, and the two remaining terms sum to $\kappa_2 (h_\star-i-1)+\kappa_2(\widetilde h_\star-i-1)=\kappa_2d_\t(v,\widetilde v)-2\kappa_2$. Therefore by the triangle inequality,
\[
|d_\q(x,\widetilde x)-\kappa_2 d_\t(v,\widetilde v)|
\leq 2\kappa_2+6+3\Delta(\q)+R(\q,x,\kappa_2)+R(\q,\widetilde x,\kappa_2).
\]
Since this holds for every $(x,v)=(x,\rho(x))\in C$ and $(\widetilde x,\widetilde v)=(\widetilde x,\rho(\widetilde x))\in C$, the max of the right-hand side over $x,\widetilde x\in V(\q)$ is actually a bound on the distortion $\mathrm{dis}(C;d_\q,\kappa_2 d_\t)$, which is precisely what we needed to conclude.

For the inequality involving $\m$, the reasoning is quite similar. Take $C$ the correspondence such that $x\in V(\m)$ is in correspondence with $v\in {V}(\t)$ {if and only if} $v$ is the vertex $\rho(x)=v_\star$. Equivalently, a vertex $v$ of $\t$ is put in correspondence with the ${\mathbf f}^\m(v)=|V(\b_v^\m)|-1$ non-root vertices of the block $\b_v^\m$, except when $v$ is the root vertex of $\t$ in which case $v$ is in correspondence with all the vertices of $\b_v^\m$.
Then, similarly, the uniform measure $\gamma$ on $C$ defines a coupling between the measures $\nu_\m$ and $\frac{{\mathbf f}^\m+\delta_{\mathrm{root}}}{|{\mathbf f}^\m|+1}$, where $\delta_{\mathrm{root}}(x)$ is the indicator that $x$ is the root vertex of $\m$. As in the quadrangulation case, we have
\begin{align*}
	&d_\mathrm{GHP}\(\epsilon\cdot\underline\m\,,\,\epsilon\kappa_1\cdot\bigl(\t,d_\t,\tfrac{{\mathbf f}^\m}{|{\mathbf f}^\m|}\bigr)\)
	\leq{}\frac{\epsilon}{2}\,\mathrm{dis}(C; d_\m,\kappa_1 d_\t)+d_{\mathrm P}^{(V(\t),\epsilon \kappa_1 d_\t)}(\tfrac{{\mathbf f}^\m+\delta_{\mathrm{root}}}{|{\mathbf f}^\m|+1},\tfrac{{\mathbf f}^\m}{|{\mathbf f}^\m|}),
\end{align*}
with the similar bound
\[
d_{\mathrm P}^{(V(\t),\epsilon\kappa_1 d_\t)}\left(\frac{{\mathbf f}^\m+\delta_{\mathrm{root}}}{|{\mathbf f}^\m|+1},\frac{{\mathbf f}^\m}{|{\mathbf f}^\m|}\right)
\leq \frac{2}{|{\mathbf f}^\m|}.
\]
The distortion of $C$ is bounded with a very similar argument as above involving \cref{additivite-dist-carte} instead of \cref{additivite-dist-quad}, except that we do not need to introduce $\delta_0,\delta,\widetilde \delta$. We leave the details to the reader. This gives for all $(x,v)\in C$ and $(\widetilde x, \widetilde v)\in C$, the bound
\[
|d_\m(x,\widetilde x)-\kappa_1 d_\t(v,\widetilde v)|
\leq 2\kappa_1+3\Delta(\m)+R(\m,x,\kappa_1)+R(\m,\widetilde x,\kappa_1),
\]
which proves the inequality involving $\m$ in the statement.
\end{proof}

\subsubsection{Proof of Theorem \ref{scaling-limit-super-critical}}

%
	Let $u\geq u_C$.
	Let us first prove the claimed scaling limit for the block-tree $\T_{n,u}$. By \cref{th-trees}, $\T_{n,u}$ has law $\mathrm{GW}(\mu^u,2n)$, where the distribution $\mu^u$ has span $2$.

	\paragraph{Scaling limit of $\T_{n,u}$ for $u>u_C$.}
			If $u>u_C$, then by the third statement of \cref{tree-critical}, $\mu^u$ is critical and admits a variance $\sigma(u)^2<\infty$. \cref{scaling-lim-trees} thus gives the announced scaling limit for $\T_{n,u}$,
			\[
			(2n)^{-1/2}\cdot \underline{\T}_{n,u}\xrightarrow[n\rightarrow\infty]{\quad GHP,(d)\quad}
			\frac{\sqrt 2}{\sigma(u)}\cdot{\mathcal{T}}^{(2)}.
			\]
			The expression for $\sigma(u)$ in terms of the generating function $B$ which is given in the statement comes from a straightforward computation from the generating function of $\mu^u$, which by \cref{mu} is
			\[
			\sum_{k\geq 0}x^k \mu^u(k)=\frac{uB(x^2y(u))+1-u}{uB(y(u))+1-u}.
			\]
			This expression admits the explicit form in terms of $u$ which is given in the statement and explained in \cref{rem-sigma}.
		
		\paragraph{Scaling limit of $\T_{n,u}$ for $u=u_C$.}
			If $u=u_C$, then by the second statement of \cref{th-trees}, $\mu^{u_C}$ is critical and satisfies $\mu^{u_C}(\{2j\})\sim\frac{1}{4\sqrt{3\pi}}j^{-5/2} $. Therefore we get the equivalent
			\[
			\mu^{u_C}([x,\infty))
			=\sum_{2j\geq x}\mu^{u_C}(\{2j\})
			\sim\int_{x/2}^{\infty}\frac{1}{4\sqrt{3\pi}}s^{-5/2}\,\mathrm d s
			=\frac{1}{3}\sqrt{\frac{2}{3\pi}}x^{-3/2}.
			\]
			Therefore, using \cref{scaling-lim-trees} with $\theta=3/2$, we get
			\[
			(2n)^{1-2/3}\cdot\underline{\T}_{n,u_C} \xrightarrow[n\rightarrow\infty]{\quad GHP,(d)\quad}
			\left[\frac{\tfrac{3}{2}-1}{\tfrac{1}{3}\sqrt{\tfrac{2}{3\pi}}\Gamma(2-\tfrac{3}{2})}\right]^{2/3}\cdot{\mathcal{T}}^{(3/2)}.
			\]
			Using that $\Gamma(1/2)=\sqrt{\pi}$, the constant on the right-hand side simplifies and this translates as announced to
			\[
			\frac{2}{3}(2n)^{-1/3}\cdot\underline\T_{n,u_C}
			\xrightarrow[n\rightarrow\infty]{\quad GHP,(d)\quad}
			{\mathcal{T}}^{(3/2)}.
			\]
	
	\paragraph{Restatement of the problem.}
	We let $\alpha=2$ when $u>u_C$, and $\alpha=3/2$ when $u=u_C$. We have identified the $\mathrm{GHP}$-limit of $n^{-(\alpha-1)/\alpha}\cdot\underline\T_{n,u}$. By \cref{scaling-lim-trees-random-measure}, the measured metric spaces 
	\begin{align*}
	n^{-(\alpha-1)/\alpha}\cdot\left(V(\T_{n,u}),d_{\T_{n,u}},\tfrac{\mathbf f^{\M_{n,u}}}{|\mathbf f^{\M_{n,u}}|}\right)\quad\text{ and }\quad
	n^{-(\alpha-1)/\alpha}\cdot\left(V(\T_{n,u}),d_{\T_{n,u}},\tfrac{\mathbf f^{\Q_{n,u}}}{|\mathbf f^{\Q_{n,u}}|}\right),
	\end{align*}
	also converge to the same limit. It remains to compare them in the $\mathrm{GHP}$ sense to the measured metric spaces $n^{-(\alpha-1)/\alpha}\cdot\underline\M_{n,u}$ and $n^{-(\alpha-1)/\alpha}\cdot \underline\Q_{n,u}$ respectively. Let $\kappa_1=\kappa_u^{\mathrm{map}}$ and $\kappa_2=\kappa_u^{\mathrm{quad}}$.
	For the ease of reading, we introduce for $\eta>0$ the following ``bad'' events,
	\begin{align*}
		B^\M_{n,\eta}&=\Bigl\{d_\mathrm{GHP}\(n^{-\frac{\alpha-1}\alpha}\cdot\underline\M\,,\,n^{-\frac{\alpha-1}\alpha}\kappa_1\cdot\left(V(\T),d_{\T},\tfrac{\mathbf f^{\M}}{|\mathbf f^{\M}|}\right)\)\geq 2\eta\Bigr\},\\
		B^\Q_{n,\eta}&=\Bigl\{d_\mathrm{GHP}\(n^{-\frac{\alpha-1}\alpha}\cdot\underline\Q\,,\,n^{-\frac{\alpha-1}\alpha}\kappa_2\cdot\left(V(\T),d_{\T},\tfrac{\mathbf f^{\Q}}{|\mathbf f^{\Q}|}\right)\)\geq 2\eta\Bigr\},
	\end{align*}
	as well as auxiliary events for $\eta,\delta>0$,
	\begin{align*}
		A_{1;n,\eta}^\M&=\left\{\exists v\in V(\T),\,R(\M,v,\kappa_1)\geq \eta n^{\frac{\alpha-1}{\alpha}} \right\},\\
		A_{1;n,\eta}^\Q&=\left\{\exists v\in V(\T),\,R(\Q,v,\kappa_2)\geq \eta n^{\frac{\alpha-1}{\alpha}} \right\},\\
		A_{2;n,\delta}^\M&=\left\{\Delta(\M)\leq n^{(1+\delta)^2(\alpha-1)/2\alpha}\right\},\\
		A_{2;n,\delta}^\Q&=\left\{\Delta(\Q)\leq n^{(1+\delta)^2(\alpha-1)/2\alpha}\right\},\\
		A_{3;n,\eta}^\M&=\left\{\kappa_1n^{-\frac{\alpha-1}{\alpha}}+\tfrac{3n^{-\frac{\alpha-1}{\alpha}}}{2}\Delta(\M)+\frac{2}{|\mathbf f^\M|}\geq \eta\right\},\\
		A_{3;n,\eta}^\Q&=\left\{(\kappa_2+3)n^{-\frac{\alpha-1}{\alpha}}+\frac{3n^{-\frac{\alpha-1}{\alpha}}}{2}\Delta(\Q)+\frac{4}{|\mathbf f^\Q|}\geq \eta\right\}.
	\end{align*}
	With this notation, what we have to show is
	\[
	\lim_{\eta\rightarrow0}\limsup_{n\rightarrow\infty}\,\mathbb P_{n,u}(B^\M_{n,\eta})=0
	\quad \text{ and }\quad
	\lim_{\eta\rightarrow0}\limsup_{n\rightarrow\infty}\,\mathbb P_{n,u}(B^\Q_{n,\eta})=0.
	\]
	
	\paragraph{Using \cref{map-vs-tree}.} Thanks to the $\mathrm{GHP}$ upper bounds in \cref{map-vs-tree}, we have
	\begin{equation}\label{preuve-surcrit:eq:majoration-mauvaise-proba}
	\mathbb P_{n,u}(B^\M_{n,\eta})\leq \mathbb P_{n,u}(A_{1;n,\eta}^\M)+\mathbb P_{n,u}(A_{3;n,\eta}^\M)
	\quad\text{ and }\quad
	\mathbb P_{n,u}(B^\Q_{n,\eta})\leq \mathbb P_{n,u}(A_{1;n,\eta}^\Q)+\mathbb P_{n,u}(A_{3;n,\eta}^\Q).
	\end{equation}
		
	\paragraph{Bounding the diameters of the blocks.}
	By \cref{diam-blocs}, for $\delta>0$, the maximum diameter of blocks of either $\M_{n,u}$ or $\Q_{n,u}$
	is bounded with probability $1-o(1)$ by $\max(n^{1/6},W(\T_{n,u})^{(1+\delta)/4})$, where $W(\t)$ denotes the largest degree of $\t$.
	By \cref{scaling-lim-trees}, $W(\T_{n,u})$ is $o\(n^{(1+\delta)/\alpha}\)$ in probability. Since $(1+\delta)^2/4\alpha\geq 1/6$, what precedes gives that for all $\delta>0$,
	\[
	\max\bigl(\Delta(\M_{n,u}),\Delta({\Q_{n,u}})\bigr)=o\(n^{(1+\delta)^2/4\alpha}\)\qquad\text{in probability.}
	\]
	Notice that for $\delta$ small enough, $(1+\delta)^2/4\alpha<(1+\delta)^2(\alpha-1)/2\alpha$ since $\alpha\geq 3/2$. This implies that for all $\delta>0$ sufficiently small, we have
	\[
	\limsup_{n\rightarrow\infty}\,
	\mathbb P_{n,u}\Bigl((A_{2;n,\delta}^\M)^c\Bigr)=0
	\quad\text{ and }\quad
	\limsup_{n\rightarrow\infty}\,
	\mathbb P_{n,u}\Bigl((A_{2;n,\delta}^\Q)^c\Bigr)=0.
	\]
	By \cref{lemma-cvg-trees-LLN}, the quantities $|\mathbf f^\M|$ and $|\mathbf f^\Q|$ are $\Theta(n)$ in probability under $\mathbb P_{n,u}$. Therefore, the preceding bound on diameters also implies the following
	\[
	\lim_{\eta\rightarrow0}\limsup_{n\rightarrow\infty}\,\mathbb P_{n,u}(A_{3;n,\eta}^\M)=0
	\quad \text{ and }\quad
	\lim_{\eta\rightarrow0}\limsup_{n\rightarrow\infty}\,\mathbb P_{n,u}(A_{3;n,\eta}^\Q)=0.
	\]
	Thanks to \cref{preuve-surcrit:eq:majoration-mauvaise-proba}, it suffices to show that for sufficiently small $\delta>0$,
	\[
	\lim_{\eta\rightarrow0}\limsup_{n\rightarrow\infty}\,\mathbb P_{n,u}(A_{1;n,\eta}^\M\cap A_{2;n,\delta}^\M)=0
	\quad \text{ and }\quad
	\lim_{\eta\rightarrow0}\limsup_{n\rightarrow\infty}\,\mathbb P_{n,u}(A_{1;n,\eta}^\Q\cap A_{2;n,\delta}^\Q)=0.
	\]
	
	\paragraph{Bounding the height of $\T_{n,u}$.}
	We have identified above the scaling limit of $\T_{n,u}$ and the appropriate normalization of distances. In particular, $n^{(\alpha-1)/\alpha}\cdot\T_{n,u}$ is tight in the $\mathrm{GHP}$-topology. An immediate consequence is that $n^{-(\alpha-1)/\alpha} H(\T_{n,u})$ is tight, where $H(\T_{n,u})$ is the height of $\T_{n,u}$. In particular, our problem reduces once more to showing that for sufficiently small $\delta>0$,
	\begin{multline}\label{preuve-surcrit:eq:apres-coupe-hauteur}
		\lim_{\eta\rightarrow0}\limsup_{n\rightarrow\infty}\,\mathbb P_{n,u}\Bigl(A_{1;n,\eta}^\M\cap A_{2;n,\delta}^\M\cap \left\{H(\T)\leq \eta^{-1}n^{\frac{(\alpha-1)}{\alpha}}\right\}\Bigr)=0\\
		\text{ and }\quad
		\lim_{\eta\rightarrow0}\limsup_{n\rightarrow\infty}\,\mathbb P_{n,u}\Bigl(A_{1;n,\eta}^\Q\cap A_{2;n,\delta}^\Q\cap \left\{H(\T)\leq \eta^{-1}n^{\frac{(\alpha-1)}{\alpha}}\right\}\Bigr)=0.
	\end{multline}
	
	\paragraph{Using the spine decomposition.}
	Fix $\delta>0$, as small as necessary.
	Let us only treat the term involving $\Q$ in \cref{preuve-surcrit:eq:apres-coupe-hauteur}, as the expression for $R(\q,x,\kappa_2)$ we used to define the event $A_{1;n,\eta}^\Q$ carries more dependence than that of $R(\m,x,\kappa_1)$. Indeed the summands $D_\q(b_{v_i}^\q,l_i)$ involve in their definition a global metric property of $\q$. The case of the term involving $\M$ is similar and easier to deal with.
	
	Recall that by definition, the law $\mathbb P_{n,u}$ is the law $\mathbb P_{u}$, conditioned on the event $\{|\T|=n\}$. Since $\mathbb P_u(|\T|=n)$ decays polynomially by \cref{scaling-lim-trees}, we may get rid of the conditioning if the unconditional versions of the probabilities we wish to bound decay sufficiently fast. Namely, it suffices to prove that for all $\eta>0$, the following (unconditional) probability is stretched-exponential in $n$
	\begin{equation}\label{preuve-surcritique-proba-a-maj}
	\mathbb P_{u}\Bigl(A_{1;n,\eta}^\Q\cap A_{2;n,\delta}^\Q\cap \{H(\T)\leq \eta^{-1}n^{\frac{(\alpha-1)}{\alpha}}\}\Bigr).
	\end{equation}
	By a union bound and then by \cref{loi-biaisee}, one can bound this by
	\begin{align*}
	&\mathbb E_u\left[\sum_{v\in V(\T)} \indic{R(\Q,v,\kappa_2)\geq \eta n^{\frac{\alpha-1}{\alpha}}}\indic{ H(\T)\leq \eta^{-1}n^{\frac{\alpha-1}{\alpha}}}
	\indic{A_{2;n,\delta}^\Q}
	\right]\\
	&= 
	\sum_{h\geq 1}\widehat{\mathbb P}_{u,h}\left(\{R(\Q,V_\star,\kappa_2)\geq \eta n^{\frac{\alpha-1}{\alpha}} \}\cap \{H(\T)\leq \eta^{-1}n^{\frac{\alpha-1}{\alpha}}\}\cap A_{2;n,\delta}^\Q \right)\\
	&=
	\sum_{h= 1}^{\floor{\eta^{-1}n^{\frac{\alpha-1}{\alpha}}}}
	\widehat{\mathbb P}_{u,h}\left(\{R(\Q,V_\star,\kappa_2)\geq \eta n^{\frac{\alpha-1}{\alpha}} \}\cap A_{2;n,\delta}^\Q \right)\\
	&=
	\sum_{h= 1}^{\floor{\eta^{-1}n^{\frac{\alpha-1}{\alpha}}}}
	\widehat{\mathbb P}_{u,h}\left(\left\{\max_{0\leq i<h } \left|\sum_{j=i}^{h -1}\(D_{\Q}(\b^\Q_{v_j},L_j)-\kappa_2\)\right|\geq \eta n^{\frac{\alpha-1}{\alpha}} \right\}\cap A_{2;n,\delta}^\Q \right)\\
	&\leq
	\sum_{h= 1}^{\floor{\eta^{-1}n^{\frac{\alpha-1}{\alpha}}}}
	\Bigg[
	\widehat{\mathbb P}_{u,h}\Biggl(\max_{0\leq i<h } \sum_{j=i}^{h-1}\psi_{n,\delta}\(D_{\Q}(\b^\Q_{V_j},L_j)-\kappa_2\)\geq \eta n^{\frac{\alpha-1}{\alpha}}\Biggr)\\
	&\hspace{15em}+
	\widehat{\mathbb P}_{u,h}\Biggl(\max_{0\leq i<h } \sum_{j=i}^{h-1}\psi_{n,\delta}\(\kappa_2-D_{\Q}(\b^\Q_{V_j},L_j)\)\geq \eta n^{\frac{\alpha-1}{\alpha}}\Biggr)
	\Bigg],
	\end{align*}
	where 
	\[
	\psi_{n,\delta}(x)=x\indic{x\leq\max(\kappa_2, n^{(1+\delta)^2(\alpha-1)/2\alpha})}.
	\]
	The last inequality may require some explanations. First we apply a union bound with respect to the sign of the expression under the absolute value. Then we use the control that $A_{2;n,\delta}^\Q$ offers on $\Delta(\Q)$ the maximum diameter of blocks of $\Q$, and the positivity of the distances $D_{\Q}(\b^\Q_{v_j})$, to insert an indicator function. Hence the appearance of $\psi_{n,\delta}$.
	
	\paragraph{Reducing to a large deviations event with truncated variables.}
	We let $\bigl(\xic_{u,j}\:,\:
	\widehat{\B}^{\mathrm{quad}}_{u,j}\:,\:
	U_j\bigr)_{j\geq 0}$ be an i.i.d.~sequence of copies of the triple $\bigl(\xic_u\:,\:
	\widehat{\B}_u^{\mathrm{quad}}\:,\:
	U\bigr)$. We also let $X_j=D\bigl(\widehat{\B}_{u,j}^{\mathrm{quad}},U_j\bigr)-\kappa_2$. Then by \cref{prop-indep-dist-epine-quad}, the arguments of the function $\psi_{n,\delta}$ that appear in the last upper bound we obtained, are actually i.i.d. and have joint law under $\widehat{\mathbb P}_{u,h}$ the law of $(X_j)_{0\leq j<h}$. Therefore this last upper bound is equal to
	\begin{align*}
	&\sum_{h= 1}^{\floor{\eta^{-1}n^{\frac{\alpha-1}{\alpha}}}}
	\left[P\left(\max_{0\leq i<h } \sum_{j=i}^{h -1}\psi_{n,\delta}\(X_j\)\geq \eta n^{\frac{\alpha-1}{\alpha}}\right)
	+
	P\left(\max_{0\leq i<h } \sum_{j=i}^{h -1}\psi_{n,\delta}\(-X_j\)\geq \eta n^{\frac{\alpha-1}{\alpha}}\right)
	\right].
	\end{align*}
	Since the sequence $(X_j)_{0\leq j<h}$ is i.i.d., we re-order the terms of the two sums which appear inside the probabilities in the last display, so that they run on indices $j=1,\dotsc,i$. Hence, if we set $h_n=n^{(\alpha-1)/\alpha}$, then we can bound the last display by
	\begin{align*}
	&\eta^{-1}h_n
	\left[
	P\left(\max_{0\leq i<\eta^{-1}h_n } \sum_{j=1}^{i}\psi_{n,\delta}\(X_j\)\geq \eta h_n\right)
	+
	P\left(\max_{0\leq i<\eta^{-1}h_n } \sum_{j=1}^{i}\psi_{n,\delta}\(-X_j\)\geq \eta h_n\right)
	\right].
	\end{align*}
	
	\paragraph{Using the moderate deviations estimate.}
	Let $\gamma=\gamma(\delta)=(1+\delta)^2/2$. Then $(h_n)^\gamma\geq \kappa_2$ for $n$ large, so
	\[
	\psi_{n,\delta}(x)=x\indic{x\leq \max(\kappa_2,(h_n)^\gamma)}=x\indic{x\leq (h_n)^\gamma}.
	\]
	Let us check that the choice of $\kappa_2 = \kappa_u^{\mathrm{quad}}$, the latter quantity being defined in \cref{defin-kappa-u}, makes the variables $(X_j)$ centered. Conditionally on the event $\{|\widehat{\B}_{u}^{\mathrm{quad}}| = k\}$, the variable $\widehat{\B}_{u}^{\mathrm{quad}}$ is a uniform simple quadrangulation with $k$ edges, and $U$ is uniform in $\{1,\dots,2k\}$. Therefore it holds that
	\begin{align*}
	\E{D\bigl(\widehat{\B}_{u,j}^{\mathrm{quad}},U_j\bigr)} &= \sum_{k\geq1} P\left({|\widehat{\B}_{u}^{\mathrm{quad}}| = k}\right) \E{D\bigl(\widehat{\B}_{u}^{\mathrm{quad}},U\bigr) \mid |\widehat{\B}_{u}^{\mathrm{quad}}| = k}\\
	&= \sum_{k\geq1} \hat{\mu}^u(2k) \mathcal{D}_k^{\mathrm{quad}} = \kappa_2,
	\end{align*}
	where we used successively the definition of $(\b,l)\mapsto D(\b,l)$ in \cref{prop-indep-dist-epine-quad}, the definition of $\mathcal{D}_k^{\mathrm{quad}}$ after \cref{defin-kappa-u}, and the definition of $\kappa_2=\kappa_u^{\mathrm{quad}}$ in \cref{defin-kappa-u}.
	Therefore, the i.i.d.~variables $(X_j)$ are indeed centered.
	 Now by \cref{lemma:tail_D-hat}, they possess moments of order $\beta$ for all $1\leq \beta<2$.
	Since for $\delta$ sufficiently small we have $\gamma<1$, \cref{prop:moderate-dev} yields that
	\[
	P\left(\max_{0\leq i<\eta^{-1}h_n } \sum_{j=1}^{i}X_j \indic{X_j\leq (h_n)^\gamma}\geq \eta h_n\right)
	\]
	is stretched-exponential as $n\rightarrow\infty$,
	and the same holds when replacing $(X_j)$ by $(-X_j)$.
	
	This proves that for each $\eta>0$, the probability \cref{preuve-surcritique-proba-a-maj} is indeed stretched-exponential in $n$, and concludes the proof.
\qed

\subsection{Scaling limit of the quadrangulations in the subcritical case}\label{sec:sous-critique}

Let us finally identify the scaling limit of the quadrangulation $\underline{\Q}_{n,u}$ when $u<u_C$.

\subsubsection{Statement of the result}

Denote by $ {\mathcal S}=(\mathcal S,D^*,\lambda)$ the \textit{Brownian sphere}, also known as the \textit{Brownian Map}. One may take \cref{scaling-lim-quad} below as a definition.

\begin{theorem}
\label{scaling-limit-subcritical}
	Assume $u<u_C=9/5$. We have the following convergence in distribution for the \emph{Gromov-Hausdorff-Prokhorov} topology
	\[
	\(\frac{9(3+u)}{8(9-5u)}\)^{1/4}{n^{-1/4}}\cdot\underline\Q_{n,u} \xrightarrow[n\rightarrow\infty]{\quad(d), \mathrm{GHP}\quad} {\mathcal S}.
	\]
\end{theorem}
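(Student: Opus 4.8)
The plan is to show that, after rescaling distances by $n^{-1/4}$, the quadrangulation $\Q_{n,u}$ is close in the Gromov--Hausdorff--Prokhorov sense to its largest block, which by \cref{th-trees} is a uniform simple quadrangulation whose scaling limit is supplied by \cref{scaling-lim-quad}. First I would invoke \cref{souscritique}: with probability tending to $1$ there is a unique largest block $\b_\star=\b_{v_\star}^{\Q_{n,u}}$, of size $K_n=(1-E(u))n+O_{\mathbb P}(n^{2/3})$, while every other block has size $O_{\mathbb P}(n^{2/3})$; note $1-E(u)=\tfrac{9-5u}{3(3+u)}$. Conditionally on $\T_{n,u}$, by \cref{th-trees}, $\b_\star$ is a uniform simple quadrangulation with $K_n$ faces, so \cref{scaling-lim-quad}, the convergence $K_n/n\to 1-E(u)$ in probability, and the identity $\bigl(\tfrac{9(3+u)}{8(9-5u)}\bigr)^{1/4}=\bigl(\tfrac{3}{8(1-E(u))}\bigr)^{1/4}$ (consistent with the known constants at $u=1$ and $u\to 0$) together give $\bigl(\tfrac{9(3+u)}{8(9-5u)}\bigr)^{1/4}n^{-1/4}\cdot\underline{\b_\star}\xrightarrow{\mathrm{GHP},(d)}\mathcal S$. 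It then remains to show that $d_{\mathrm{GHP}}\bigl(n^{-1/4}\cdot\underline\Q_{n,u},\,n^{-1/4}\cdot\underline{\b_\star}\bigr)\to 0$ in probability.

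For the metric comparison, I would observe that every vertex $x$ of $\Q_{n,u}$ not in $V(\b_\star)$ lies in a single ``piece'' attached to $\b_\star$ along one of its edges (a pendant subquadrangulation grafted on an edge of $\b_\star$, or the piece containing the root of $\Q_{n,u}$ hanging from the root edge of $\b_\star$); let $\pi(x)$ be the endpoint of that edge closest to $x$, and $\pi(x)=x$ for $x\in V(\b_\star)$. Using \cref{additivite-dist-quad} together with the fact that graph distances inside a block coincide with those of the ambient quadrangulation, one checks that a geodesic of $\Q_{n,u}$ from $x$ to $\widetilde x$ passes within $O(1)$ of $\pi(x)$ and of $\pi(\widetilde x)$ and traverses $\b_\star$ in between, whence
\[
\bigl|d_{\Q_{n,u}}(x,\widetilde x)-d_{\b_\star}(\pi(x),\pi(\widetilde x))\bigr|\le 2\max_{y}d_{\Q_{n,u}}(y,V(\b_\star))+O(1).
\]
I would then bound the right-hand side by $o_{\mathbb P}(n^{1/4})$: \cref{additivite-dist-quad} controls $d_{\Q_{n,u}}(y,V(\b_\star))$ by a sum of block diameters along a path of $\T_{n,u}$, all these blocks having size $O_{\mathbb P}(n^{2/3})$ and total size $O_{\mathbb P}(n^{2/3})$ (they lie in one piece, plus the $O_{\mathbb P}(1)$ blocks on the short spine above $\b_\star$); \cref{diam-blocs} bounds each such diameter by $\max(n^{1/6},k^{(1+\delta)/4})$ for its degree $k$, and Hölder's inequality together with the $O_{\mathbb P}(\log n)$ bound on the relevant heights of $\T_{n,u}$ (standard for subcritical Galton--Watson forests) finishes the estimate. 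Consequently the correspondence $C=\{(x,\pi(x)):x\in V(\Q_{n,u})\}$ has distortion $o_{\mathbb P}(n^{1/4})$, and pushing $\nu_{\Q_{n,u}}$ forward by $x\mapsto(x,\pi(x))$ gives a coupling of $\nu_{\Q_{n,u}}$ and $\pi_*\nu_{\Q_{n,u}}$ supported on $C$; this yields
\[
d_{\mathrm{GHP}}\Bigl(n^{-1/4}\cdot\underline\Q_{n,u},\,n^{-1/4}\cdot\bigl(V(\b_\star),d_{\b_\star},\pi_*\nu_{\Q_{n,u}}\bigr)\Bigr)\xrightarrow[n\to\infty]{P}0.
\]

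Finally I would compare $\pi_*\nu_{\Q_{n,u}}$ with $\nu_{\b_\star}$. Writing $\pi_*\nu_{\Q_{n,u}}=\tfrac{K_n+2}{n+2}\,\nu_{\b_\star}+\tfrac{n-K_n}{n+2}\,\rho_n$, where $\rho_n$ is the ``excess'' probability measure putting mass proportional to the size of each piece at its attachment vertex, \cref{eq-decomp-Prokhorov} reduces matters to proving $d_{\mathrm P}^{(V(\b_\star),\,n^{-1/4}d_{\b_\star})}(\rho_n,\nu_{\b_\star})\to 0$ in probability. Conditionally on $\b_\star$, the attachment vertices are i.i.d.\ (degree-biased uniform) vertices $W_1,W_2,\dots$ of $\b_\star$ and $\rho_n=\tfrac{1}{\sum_i s_i}\sum_i s_i\delta_{W_i}$ with $s_i$ the piece sizes, $\sum_i s_i=n-K_n$, $\max_i s_i=O_{\mathbb P}(n^{2/3})$; since $\max_i s_i/\sum_i s_i=O_{\mathbb P}(n^{-1/3})\to 0$ and $\mathcal S$ is compact, a covering and union-bound argument shows $\rho_n$ stays within rescaled Prokhorov distance $o_{\mathbb P}(n^{1/4})$ of its conditional mean, the degree-biased uniform measure on $\b_\star$, which converges after rescaling to $\mathcal S$ by \cite{quadrang-coeur-cv} (as does $\nu_{\b_\star}$ by \cref{scaling-lim-quad}). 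Combining the three displays gives the theorem, with the constant obtained from $1-E(u)=\tfrac{9-5u}{3(3+u)}$.

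The main obstacle is the last step: the deleted pieces carry a macroscopic fraction $E(u)$ of the mass, so $\pi_*\nu_{\Q_{n,u}}$ is \emph{not} close to $\nu_{\b_\star}$ in total variation, and one must use that this excess mass is conditionally spread over asymptotically independent uniform locations with no single weight macroscopic, so that it still produces the same measured scaling limit. A secondary difficulty is the metric step, where one must rule out long chains of small blocks inside a piece inflating distances; this is handled by the deviation estimate \cref{diam-blocs} (which already controls every block simultaneously) combined with the fact that each piece itself undergoes condensation, so that the sizes along any root-to-leaf path in $\T_{n,u}$ decay fast enough for the Hölder bound to beat $n^{1/4}$.
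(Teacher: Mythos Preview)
Your proposal is correct and follows essentially the same route as the paper: identify the scaling limit of the largest block via \cref{scaling-lim-quad} and \cref{souscritique}, then compare $\underline\Q_{n,u}$ to $\underline{\b_\star}$ first metrically (via diameter and height bounds) and then in measure (via an exchangeability/concentration argument for the pendant masses). Two of your ``standard'' inputs are in fact the non-trivial content of specific results and should be cited as such: the height bound $H(\T_{n,u})=O_{\mathbb P}(\log n)$ (and the $O_{\mathbb P}(1)$ depth of $v_\star$) is Kortchemski's work on non-generic subcritical conditioned trees \cite{kortchemskiLimitTheoremsConditioned2015}, not a generic subcritical-forest fact; and your ``covering and union-bound'' step for the measure $\rho_n$ is precisely \cite[Corollary~6.2 and Lemma~5.3]{quadrang-coeur-cv}, which the paper packages as \cref{deco-echangeables} after checking exchangeability and $|\mathbf n|_2/|\mathbf n|_1\to 0$.
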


In the case $u=1$, one recovers the Brownian sphere as the scaling limit of uniform quadrangulations with $n$ faces, which has been proven in \cite{LeGall2013} and \cite{Miermont2013}. It is also the scaling limit of uniform \textit{simple} quadrangulations with $n$ faces, which was proven in \cite{add-alb}. The latter corresponds informally to the case $u\rightarrow0$.

We emphasize that those results, and especially the one of \cite{add-alb}, serve as an input in our proof and we do not provide a new proof of them. Accordingly, let us precisely state the latter result, so that we can use it in the subsequent proof.

\begin{proposition}[\cite{add-alb}]\label{scaling-lim-quad}\label{add-alb}
	Uniform simple quadrangulations with $k$ faces admit the Brownian sphere as scaling limit, with the following normalization
	\[
	\(\frac{3}{8k}\)^{1/4}\cdot B_k^{\mathrm{quad}}
	\xrightarrow[k\rightarrow\infty]{\quad(d), \mathrm{GHP}\quad} {\mathcal S}.
	\]
\end{proposition}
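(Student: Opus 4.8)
The plan is to transfer the known scaling limit of uniform \emph{general} quadrangulations through the core (block) decomposition of \cref{subseq:quad}. At $u=1$ the quadrangulation $\Q_{n,1}$ is uniform among quadrangulations with $n$ faces, and its core decomposition is governed by \cref{bivar-quad} with $u=1$, i.e.~$Q(z)+1=B\(z(Q(z)+1)^2\)$ (recall that $S=B$). Let $\Q^{\bullet}$ be the largest simple block of $\Q_{n,1}$, with $L_n:=\mathrm{LB}_1(\Q_{n,1})=|\Q^{\bullet}|$ faces. By \cref{th-trees}, conditionally on $\{L_n=k\}$ the block $\Q^{\bullet}$ is uniform among simple blocks with $k$ faces, hence has the law of $B_k^{\mathrm{quad}}$ up to a re-rooting that does not change its measured metric space. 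So it suffices to understand the geometry of $\Q^{\bullet}$ inside $\Q_{n,1}$, and the law of $L_n$.

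First I would collect the inputs. (a) The condensation of \cref{souscritique} applies at $u=1$: since $E(1)=\tfrac{8}{3(3+1)}=\tfrac23$, one has $L_n=\tfrac n3+O_{\mathbb P}(n^{2/3})$ and $\mathrm{LB}_2(\Q_{n,1})=O_{\mathbb P}(n^{2/3})$; a local limit refinement of the same ($3/2$-stable) analysis gives $\mathbb P(L_n=k)\asymp n^{-2/3}$ uniformly over $k$ in a window of width $\asymp n^{2/3}$ around $n/3$. (b) As an external input, $\(\tfrac{9}{8n}\)^{1/4}\cdot\underline{\Q}_{n,1}\xrightarrow[n\to\infty]{\mathrm{GHP},(d)}\mathcal S$ by \cite{Miermont2013} (and \cite{LeGall2013}). (c) From \cref{deviation-diam-blocks,diam-blocs} at $u=1$, together with the fact that the block-tree $\T_{n,1}$ has height $O_{\mathbb P}(\log n)$, every block other than $\Q^{\bullet}$ has diameter $o(n^{1/4})$ with probability $1-o(1)$, so that every vertex of $\Q_{n,1}$ lies within graph distance $o(n^{1/4})$ of a vertex of $\Q^{\bullet}$.

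The heart of the proof is the $\mathrm{GHP}$ comparison $d_{\mathrm{GHP}}\bigl(\(\tfrac{9}{8n}\)^{1/4}\cdot\underline{\Q}_{n,1},\,\(\tfrac{9}{8n}\)^{1/4}\cdot\underline{\Q^{\bullet}}\bigr)\to 0$ in probability. On the metric side this is essentially exact: $\Q^{\bullet}$ is a submap of $\Q_{n,1}$, and the block structure combined with the bipartiteness argument of \cref{additivite-dist-quad} shows that a geodesic of $\Q_{n,1}$ between two vertices of $\Q^{\bullet}$ may be taken inside $\Q^{\bullet}$ — any excursion into a pendant subquadrangulation grafted along a $2$-cycle of $\Q^{\bullet}$ can be replaced by the corresponding edge of $\Q^{\bullet}$ — so $d_{\Q_{n,1}}$ and $d_{\Q^{\bullet}}$ agree on $V(\Q^{\bullet})$, and $V(\Q^{\bullet})$ is $o(n^{1/4})$-dense by (c). The delicate point is the measure: transporting $\nu_{\Q_{n,1}}$ onto $V(\Q^{\bullet})$ along the nearest-core-vertex map produces a measure that is \emph{a priori} biased by local degrees in $\Q^{\bullet}$ (each vertex absorbs the mass of the pendant pieces grafted at its incident edges), so one must show this bias is washed out in the $\mathrm{GHP}$ limit; this can be done via a law of large numbers equidistributing the pendant mass along the edges of $\Q^{\bullet}$, or by invoking the joint convergence of a quadrangulation together with its largest $2$-connected and largest simple blocks, valid with either the uniform or the size-biased vertex measure, established in \cite{quadrang-coeur-cv}.

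Combining (a)--(c) with the comparison yields $\(\tfrac{9}{8n}\)^{1/4}\cdot\underline{\Q^{\bullet}}\xrightarrow[n\to\infty]{(d)}\mathcal S$; since $L_n=\tfrac n3(1+o_{\mathbb P}(1))$ and $\(\tfrac{9}{8n}\)^{1/4}=\(\tfrac{3}{8(n/3)}\)^{1/4}$, this says that a uniform simple quadrangulation with the \emph{random} number $L_n\approx n/3$ of faces, rescaled by $\(\tfrac{3}{8L_n}\)^{1/4}(1+o(1))$, converges to $\mathcal S$. To de-randomise, note that the law of $\(\tfrac{3}{8L_n}\)^{1/4}\cdot\underline{\Q^{\bullet}}$ is the mixture $\sum_k\mathbb P(L_n=k)\,\mathrm{Law}\bigl(\(\tfrac{3}{8k}\)^{1/4}\cdot B_k^{\mathrm{quad}}\bigr)$, supported on a $n^{2/3}$-window around $n/3$ and converging to $\mathrm{Law}(\mathcal S)$; re-running the comparison conditionally on $\{L_n=k\}$ — legitimate because this event has probability $\asymp n^{-2/3}$ while the error probabilities in (c) and in the comparison are stretched-exponential, and the limit in (b) is insensitive to so mild a conditioning — extracts the desired pointwise convergence $\(\tfrac{3}{8k}\)^{1/4}\cdot B_k^{\mathrm{quad}}\xrightarrow[k\to\infty]{(d)}\mathcal S$. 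The main obstacle is the measure part of the comparison — proving that the degree-bias incurred when transporting the vertex measure onto $V(\Q^{\bullet})$ is asymptotically invisible to the $\mathrm{GHP}$ topology — together with the local limit theorem for $L_n$ and the ensuing de-randomisation; the isometric-embedding part is, by contrast, a soft consequence of the block decomposition and bipartiteness.
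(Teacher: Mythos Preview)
The paper does not prove this proposition at all: it is quoted as an external result from \cite{add-alb}, with only the remark that their statement in terms of number of vertices translates into the one here in terms of number of faces via Euler's formula. So your proposal is not a variant of the paper's argument but an attempt at an independent proof, deriving the convergence of uniform \emph{simple} quadrangulations from the Miermont--Le~Gall convergence of uniform quadrangulations through the block decomposition.

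That strategy is attractive but has a genuine gap at the de-randomisation step. You claim that ``the limit in (b) is insensitive to so mild a conditioning'', i.e.\ that $n^{-1/4}\cdot\underline\Q_{n,1}$ still converges to $\mathcal S$ conditionally on $\{L_n=k\}$. This is not a consequence of distributional convergence together with $\mathbb P(L_n=k)\asymp n^{-2/3}$: conditioning on an event of polynomially small probability can destroy a weak limit, and indeed under $\{L_n=k\}$ the quadrangulation $\Q_{n,1}$ is no longer uniform. Your own comparison shows that, conditionally on $\{L_n=k\}$, one has $n^{-1/4}\cdot\underline\Q_{n,1}\approx (3/8k)^{1/4}\cdot B_k^{\mathrm{quad}}$; so the conditional convergence of $\Q_{n,1}$ to $\mathcal S$ is \emph{equivalent} to the statement you are trying to prove, and the argument is circular as written. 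Making this rigorous would require either a local limit theorem strong enough to yield quantitative GHP estimates, or a coupling showing that the laws of $B_k^{\mathrm{quad}}$ and $B_{k+1}^{\mathrm{quad}}$ are close so that the mixture limit pins down each summand.

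There is a second circularity risk in the measure step. You invoke \cite{quadrang-coeur-cv} to pass from the degree-biased projected measure on $V(\Q^\bullet)$ to $\nu_{\Q^\bullet}$, but that paper takes \cite{add-alb} as an input (the comparison of the degree-biased and uniform vertex measures on the simple core uses the GHP convergence of simple quadrangulations plus re-rooting invariance of the limit). If you want a self-contained route from Miermont--Le~Gall to \cite{add-alb}, you would need to redo that comparison using only the convergence of \emph{general} quadrangulations, which is not what your sketch does.
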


	This is precisely the result \cite[Theorem 1.1]{add-alb}, restricted to the case of simple quadrangulations. Notice that in their result, the scaling limit is stated in terms of $M_n$, a uniform simple quadrangulation with $n$ vertices, not faces. This is not a problem since by Euler's formula, a quadrangulation has $n$ vertices if and only if it has $n-2$ faces. Therefore $B_k^{\mathrm{quad}}$ has the same law as $M_{k+2}$.

	Note that \cref{scaling-limit-subcritical} only deals with the quadrangulation $\underline{\Q}_{n,u}$, but not the map $\underline{\M}_{n,u}$. Let us detail what would be needed to obtain a similar statement for $\underline\M_{n,u}$.
	\begin{itemize}
		\item To obtain a Gromov-Hausdorff scaling limit, the missing ingredient is the equivalent for $2$-connected maps of the result of \cite{add-alb}, that is to say GH(P) convergence of uniform 2-connected maps with $n$ edges, rescaled by a constant times $n^{-1/4}$, to the Brownian sphere.
		\item In order to strengthen this to GHP convergence when the map is equipped with the uniform measure on vertices, one would need the above mentioned convergence of 2-connected maps, but in the GHP sense. It would also require a way to compare, in the Prokhorov sense, the degree-biased measure on vertices of $\underline\M_{n,u}$, and the uniform measure. For quadrangulations on the other hand, this comparison can be done using \cite[Lemma 5.1]{quadrang-coeur-cv}.
	\end{itemize}

The paper \cite{quadrang-coeur-cv} makes precise the relationship between the convergence of \textbf{uniform} quadrangulations with $n$ faces \cite{LeGall2013,Miermont2013}, and the convergence of \textbf{simple uniform} quadrangulations with $n$ faces \cite{add-alb}. It is shown that a quadrangulation sampled uniformly among those which have size $n$ and whose biggest block has size $k(n)\sim cn$ with an adequate $c>0$, converges jointly with said biggest block to the Brownian sphere, in the GHP sense.

The proof of Gromov-Hausdorff convergence for these quadrangulations amounts to showing that pendant submaps that are grafted on the macroscopic block have negligible diameter, that is $o\(n^{1/4}\)$, which is done by \cite[Proposition 1.12]{quadrang-coeur-cv}. The strategy of proof is not directly applicable here, since it uses an \emph{a priori} diameter bound on the pendant submaps, which we do not have for general $u$.
As explained in what follows, it is sufficient to have an \emph{a priori} diameter bound on single blocks themselves, which is why we need \cref{deviation-diam-blocks}. To strengthen GH convergence to GHP convergence however, we use the same arguments as those exposed in \cite{quadrang-coeur-cv} modulo some technical details.

\subsubsection{Sketch of the proof}

On the combinatorics side, \cref{souscritique} characterizes the phase $u<u_C$ by a condensation phenomenon: when $n$ is large, there is precisely one block of linear size, while others have size $O(n^{2/3})$. This theorem is stated for a map with law $\mathbb P_{n,u}$, that is the law of $\M_{n,u}$, but by \cref{subsec:conseq-proba}, Tutte's bijection commutes with the block decomposition, so that the same happens for $\Q_{n,u}$.

On the metric side, there is not much more going on. The block-tree is subcritical in this phase by \cref{tree-critical} and therefore has small height. Combining this with the $O(n^{2/3})$ bound on the size of non-macroscopic blocks, and the deviation estimate of \cref{deviation-diam-blocks} on diameters of blocks, we get that $\Q_{n,u}$ is approximately equal to its largest block, in the Gromov-Hausdorff sense in the scale $n^{1/4}$. This argument is rather general and should be easy to adapt to other models of graphs or maps with a block-tree decomposition under a condensation regime.

In order to strengthen this convergence to one in the Gromov-Hausdorff-Prokhorov sense, we use the rather general result \cite[Corollary 7.2]{quadrang-coeur-cv}, by comparing the mass measure on vertices with a projection on the macroscopic block, which is modulo some technical details an exchangeable vector on the edges where the pendant submaps are attached. This corollary tells that this random measure is well-approximated by its expectation, which is uniform on the edges of the macroscopic block, or equivalently that it is degree-biased on its vertices. The last part of the argument is specific to quadrangulations, for which we can compare the degree-biased and the uniform measure on vertices by \cite[Lemma 6.1]{quadrang-coeur-cv}.

\subsubsection{Comparison of a quadrangulation and its biggest block}

Let us introduce some notation.
Let $v^\circ$ be the vertex of $\t$ with largest outdegree, choosing one arbitrarily if there are several, and let $\q^\circ=\b_{v^\circ}^\q$.
Denote by $\t[v]$ the subtree of descendants of a node $v$ in $\t$, rooted at $v$. For an edge $e$ of $\q^\circ=\b_{v^\circ}^\q$, the block-tree decomposition associates to it a vertex $v$, so that we can denote by $\q[e]$ the quadrangulation whose block-tree decomposition is $(\b^\q_w)_{w\in\t[v]}$. Recall that by convention, if $v$ is a leaf then $\q[e]$ is the edge map, with $2$ vertices and $1$ edge, the edge $e$.
Write also $\q^+$ for the quadrangulation whose block decomposition is $(\b^\q_v)_{v\in\t[v^\circ]}$. In particular, $\q^\circ$ is the simple core of $\q^+$, and its other blocks are the blocks of the pendant subquadrangulations $(\q[e])_{e\in E(\q^\circ)}$.
Finally, let $\pi^{\q^+}_{\q^\circ}$ be the probability measure on vertices of $\q^\circ$ obtained by projection of the contribution to $\nu_\q$ of each pendant map $(\q[e])_{e\in E(\q^\circ)}$ to the biggest block $\q^\circ$. More formally, for each edge $e$ of $\q^\circ$, let $\{e^+,e^-\}$ be its extremities. Then,
\[
\pi^{\q^+}_{\q^\circ}
=\frac 1{|V(\q^+)|-|V(\q^\circ)|}\sum_{e\in E(\q^\circ)}\left(\bigl|V(\q[e])\bigr|-2\right)\bigl(\tfrac{1}{2}\delta_{e^-}+\tfrac{1}{2}\delta_{e^+}\bigr).
\]
Observe that  $V(\q^\circ)$ shares exactly two elements with each $(V(\q[e]))_{e\in E(\q^\circ)}$, when those vertex-sets are naturally embedded in $V(\q)$. Hence the last display indeed defines a probability measure.

\begin{lemma}\label{lemma-comp-GHP-core}
	For any $\varepsilon > 0$, it holds that
	\[
	d_{\mathrm{GHP}}(\epsilon\cdot\underline\q,\epsilon\cdot\underline\q^\circ)
	\leq 2r_{\mathrm{GH}}+r_{\mathrm {P}}+
	\bigl(1-\tfrac{|V(\q^\circ)|}{|V(\q^+)|}\bigr)
	d_{\mathrm P}^{(V(\q^\circ),\epsilon d_\q)}\Bigl({\pi^{\q^+}_{\q^\circ}},\nu_{\q^\circ}\Bigr),
	\]
	where
	\begin{align*}
	r_{\mathrm{GH}}
		=2\epsilon H(\t)\, \max_{v\neq v^\circ}\diam(\b_v^\q)
		\quad\text{ and }\quad	
	r_{\mathrm{P}}
		=\frac{2\bigl|V(\q)\setminus V(\q^+)\bigr|}{|V(\q)|}.
	\end{align*}
\end{lemma}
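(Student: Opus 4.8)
The plan is to decompose the quadrangulation $\q$ into three pieces along the block-tree: the biggest block $\q^\circ$, the rest of $\q^+$ (the pendant subquadrangulations grafted on $\q^\circ$), and the remaining part $\q\setminus\q^+$ (everything hanging off blocks other than $v^\circ$). Each contributes an error term. First I would build a correspondence between $V(\q)$ and $V(\q^\circ)$ as follows: every vertex of $\q^\circ$ is in correspondence with itself, and every vertex $x\in V(\q)\setminus V(\q^\circ)$ is sent to the vertex $\psi(x)\in V(\q^\circ)$ obtained by walking up the block-tree until we hit the block $\q^\circ$, then picking (say) the endpoint of the relevant edge of $\q^\circ$ closest to the root — or more simply, any fixed one of the two endpoints. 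The point is that $d_\q(x,\psi(x))$ is controlled: any geodesic from $x$ to $\psi(x)$ passes through a chain of at most $H(\t)$ blocks, each of diameter at most $\max_{v\neq v^\circ}\diam(\b_v^\q)$, so $d_\q(x,\psi(x))\leq H(\t)\max_{v\neq v^\circ}\diam(\b_v^\q)$. This immediately gives $\mathrm{dis}(C;\epsilon d_\q,\epsilon d_{\q^\circ})\leq 2\epsilon H(\t)\max_{v\neq v^\circ}\diam(\b_v^\q)$ once one also notes that within $\q^\circ$ the graph distances of $\q$ and $\q^\circ$ agree (a geodesic between two vertices of a block stays in the block). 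Hence the GH-part contributes $r_{\mathrm{GH}}=2\epsilon H(\t)\max_{v\neq v^\circ}\diam(\b_v^\q)$, and this is half of the distortion so it appears as $2r_{\mathrm{GH}}$ after accounting for the factor conventions, wait — more carefully, $d_{\mathrm{GHP}}\leq \frac12\mathrm{dis}(C)+(\text{measure term})$, so one gets $r_{\mathrm{GH}}$ from the $\frac12\mathrm{dis}$ and the statement's $2r_{\mathrm{GH}}$ absorbs a further triangle-inequality step; I would simply be generous with constants.

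Next I would handle the measure term. We want to couple $\nu_\q$ (uniform on $V(\q)$) with $\nu_{\q^\circ}$ (uniform on $V(\q^\circ)$), both viewed on the metric space $(V(\q^\circ),\epsilon d_\q)$ via the correspondence. Split $\nu_\q$ according to where a uniform vertex of $\q$ lies: with probability $|V(\q^\circ)|/|V(\q)|$ it is in $\q^\circ$ and then it is already uniform-ish on $\q^\circ$ up to the overcounting of shared vertices; with probability $(|V(\q^+)|-|V(\q^\circ)|)/|V(\q)|$ it is in some pendant subquadrangulation $\q[e]$ but not on $\q^\circ$; and with the remaining probability $|V(\q)\setminus V(\q^+)|/|V(\q)|$ it is outside $\q^+$. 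The last case is handled crudely: those vertices contribute total mass $r_{\mathrm P}/2=|V(\q)\setminus V(\q^+)|/|V(\q)|$ and we just throw them away, paying $r_{\mathrm P}$ in Prokhorov distance (moving mass anywhere costs at most its total mass). For the pushforward of the vertices lying in pendant submaps but not on $\q^\circ$, the correspondence sends such a vertex in $\q[e]$ to an endpoint of $e$; the pushforward of the uniform measure on $\bigcup_e(V(\q[e])\setminus V(\q^\circ))$ is, up to the bipartite-vs-arbitrary endpoint choice, precisely $\pi^{\q^+}_{\q^\circ}$ — which is why that measure was defined with the $\tfrac12\delta_{e^-}+\tfrac12\delta_{e^+}$ weights, to symmetrize over the endpoint choice. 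Combining, $\nu_\q$ restricted to $V(\q^+)$ and renormalized pushes forward to a convex combination $a\,\nu_{\q^\circ}+(1-a)\,\pi^{\q^+}_{\q^\circ}$ with $a$ close to $|V(\q^\circ)|/|V(\q^+)|$, up to $O(1/|V(\q)|)$ corrections from the shared vertices; I would absorb these into a constant and into $r_{\mathrm P}$. Then by the elementary identity \cref{eq-decomp-Prokhorov}, $d_{\mathrm P}(a\nu_{\q^\circ}+(1-a)\pi^{\q^+}_{\q^\circ},\nu_{\q^\circ})=(1-a)d_{\mathrm P}(\pi^{\q^+}_{\q^\circ},\nu_{\q^\circ})$, which yields the last term $(1-|V(\q^\circ)|/|V(\q^+)|)d_{\mathrm P}^{(V(\q^\circ),\epsilon d_\q)}(\pi^{\q^+}_{\q^\circ},\nu_{\q^\circ})$.

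Assembling: by the triangle inequality for $d_{\mathrm{GHP}}$, bound $d_{\mathrm{GHP}}(\epsilon\underline\q,\epsilon\underline\q^\circ)$ by the GHP-distance coming from the correspondence $C$ equipped with a near-optimal coupling $\gamma$ supported on $C$ whose marginals are $\nu_\q$ and its pushforward, plus the Prokhorov distance on $(V(\q^\circ),\epsilon d_\q)$ between that pushforward and $\nu_{\q^\circ}$. The first contributes $\tfrac12\mathrm{dis}(C;\epsilon d_\q,\epsilon d_{\q^\circ})+\gamma(\text{complement of }C)=r_{\mathrm{GH}}+0$ (the coupling is supported on $C$ once we've handled the discarded mass as measure error), and the second is bounded by $r_{\mathrm P}$ (discarded outside-$\q^+$ mass) plus the $(1-\ldots)d_{\mathrm P}(\pi^{\q^+}_{\q^\circ},\nu_{\q^\circ})$ term, with a spare $r_{\mathrm{GH}}$ and $O(1/|V(\q)|)$ soaked up by writing $2r_{\mathrm{GH}}$ and being loose with $r_{\mathrm P}$. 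The main obstacle I anticipate is purely bookkeeping: correctly accounting for the vertices shared between $\q^\circ$ and the pendant submaps (so that the pushforward of $\nu_\q$ on $V(\q^+)$ is exactly the stated convex combination, not merely approximately), and making sure the $\tfrac12\delta_{e^-}+\tfrac12\delta_{e^+}$ symmetrization in the definition of $\pi^{\q^+}_{\q^\circ}$ exactly matches the endpoint-choice ambiguity in the correspondence $\psi$; everything else is a routine application of the triangle inequality together with \cref{eq-Strassen} and \cref{eq-decomp-Prokhorov}.
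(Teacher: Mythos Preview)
Your approach is essentially correct and uses the same ingredients as the paper: bound the distortion by a chain of at most $2H(\t)$ non-macroscopic blocks, discard the mass outside $\q^+$ at cost $r_{\mathrm P}$, and identify the pushforward of the remaining mass as a convex combination $a\nu_{\q^\circ}+(1-a)\pi^{\q^+}_{\q^\circ}$ so that \cref{eq-decomp-Prokhorov} gives the last term. The two places where you are being vague are exactly where the paper is more careful, and they are linked.

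First, the paper does the comparison in \emph{two} steps rather than one: it introduces correspondences $B_1$ between $V(\q)$ and $V(\q^+)$, and $B_2$ between $V(\q^+)$ and $V(\q^\circ)$, and applies the triangle inequality for $d_{\mathrm{GHP}}$. Each step contributes one $r_{\mathrm{GH}}$ to the distortion, which is where the $2r_{\mathrm{GH}}$ comes from --- not from ``being generous with constants'' or ``soaking up'' error terms. Your single-correspondence approach can be made to work, but the two-step structure is what makes the exact constants and the exact coefficient $1-|V(\q^\circ)|/|V(\q^+)|$ fall out without residual $O(1/|V(\q)|)$ junk.

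Second, in each correspondence the paper puts a vertex $x\notin V(\q^\circ)$ in correspondence with \emph{both} endpoints of the relevant edge simultaneously (so the correspondence is a genuine relation, not a function $\psi$), and then takes the uniform measure on the correspondence as the coupling. This is precisely what produces the symmetrized weights $\tfrac12\delta_{e^-}+\tfrac12\delta_{e^+}$ and makes the pushforward equal to $\pi^{\q^+}_{\q^\circ}$ on the nose --- resolving the ``endpoint-choice ambiguity'' you flag at the end. With that choice, the measure $\mu^\circ$ obtained from $B_2$ decomposes \emph{exactly} as $\tfrac{|V(\q^\circ)|}{|V(\q^+)|}\nu_{\q^\circ}+\bigl(1-\tfrac{|V(\q^\circ)|}{|V(\q^+)|}\bigr)\pi^{\q^+}_{\q^\circ}$, with no correction from shared vertices, because the partition $V(\q^+)=V(\q^\circ)\sqcup\bigsqcup_e\bigl(V(\q[e])\setminus\{e^+,e^-\}\bigr)$ is already exact.
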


\begin{proof}
	There are successive comparisons to be made for the GHP distance.
	
	\paragraph{Metric comparison.} The term $r_{\mathrm{GH}}$ bounds how distant the spaces $\epsilon\cdot\q$, $\epsilon\cdot\q^+$ and $\epsilon\cdot\q^\circ$ are, from a metric point of view, \textit{i.e.}~in the GH sense. Recall that we can see $\q$ and $\q^+$ as their biggest block $\q^\circ$, together with some maps attached to it. Therefore one needs to bound the maximal diameter of the attached maps. We use a brutal bound on the diameter of the non-macroscopic blocks by their maximal diameter, together with a bound on the number of consecutive blocks in the attached maps. This number is bounded by $\mathrm{diam}(\t)\leq 2H(\t)$. Therefore the maximal diameter of attached maps in $\epsilon\cdot\q$ or $\epsilon\cdot\q^+$ is bounded by
	\[
	r_{\mathrm{GH}}:=2\epsilon H(\t)\, \max_{v\neq v^\circ}\diam(\b_v^\q).
	\]
	In particular, take the correspondence $B_1$ on $V(\q)\times V(\q^+)$ such that $x\in V(\q)$ is in correspondence with only itself if it belongs to $V(\q^+)$, or otherwise with both endpoints of the root-edge of $\q^+$ if it belongs to $V(\q)\setminus V(\q^+)$. The uniform measure on $B_1$ is a coupling between $\nu_\q$ and some measure $\mu^+$ on $V(\q^+)$. One therefore gets, using the triangle inequality and \cref{eq-Strassen},
	\begin{align}\label{eq:prokh-quad-1}
	d_{\mathrm{GHP}}(\epsilon\cdot\underline\q,\epsilon\cdot\underline\q^+)\leq r_{\mathrm{GH}}+d_{\mathrm P}^{(V(\q^+),\epsilon d_\q)}(\mu^+,\nu_{\q^+}).
	\end{align}
	Similarly, take the correspondence $B_2$ on $V(\q^+)\times V(\q^\circ)$ such that $x\in V(\q^+)$ is in correspondence with only itself if it belongs to $V(\q^\circ)$, or otherwise with both endpoints $\{e^+,e^-\}$ of the root-edge of $\q[e]$ if $x$ belongs to $V(\q[e])\setminus \{e^+,e^-\}$ for some edge $e\in E(\q^\circ)$. Then the uniform measure on $B_2$ is a coupling between $\nu_{\q^+}$ and some measure $\mu^\circ$ on $V(\q^+)$. We get as above
	\begin{align}\label{eq:prokh-quad-2}
	d_{\mathrm{GHP}}(\epsilon\cdot\underline\q^+,\epsilon\cdot\underline\q^\circ)\leq r_{\mathrm{GH}}+d_{\mathrm P}^{(V(\q^\circ),\epsilon d_\q)}(\mu^\circ,\nu_{\q^\circ}).
	\end{align}

	\paragraph{Comparing the uniform measures on vertices of $\q$ and $\q^+$.}
	Observe that $\nu_{\q^+}$ is the counting measure on $V(\q^+)$ renormalized to a probability distribution, while $\mu^+$ is the renormalized version of the same counting measure but with additional mass
	\[
	m:={|V(\q)\setminus V(\q^+)|-2},
	\]
	the latter being split equally on the endpoints of the root-edge of $\q^+$. Elementarily, this yields a total variation bound, as follows
	\[
	d_{\mathrm{TV}}(\mu^+,\nu_{\q^+})
		\leq \frac{2 m}{V(\q)}\leq \frac{2|V(\q)\setminus V(\q^+)|}{V(\q)}=:r_{\mathrm P}.
	\]
	Since the Prokhorov distance is bounded by the total variation distance, we have
	\begin{align}\label{eq:prokh-quad-3}
	d_{\mathrm P}^{(V(\q^+),\epsilon d_\q)}(\mu^+,\nu_{\q^+})\leq r_{\mathrm P}.
	\end{align}
	
	\paragraph{Comparing the uniform measures on vertices of $\q^+$ and $\q^\circ$.}
	From the definition of $\pi^{\q^+}_{\q^\circ}$ and from the following partitioning, under the natural embedding of the vertex-sets in $V(\q)$,
	\[
	V(\q^+)=V(\q^\circ)\bigsqcup_{e\in E(\q^\circ)}V(\q[e])\setminus\{e^+,e^-\},
	\]
	observe that the measure $\mu^\circ$ obtained from the correspondence $B_2$ above decomposes as follows
	\[
	\mu^\circ=\tfrac{|V(\q^\circ)|}{|V(\q^+)|}\nu_{\q^\circ}+
	\tfrac{|V(\q^+)|-|V(\q^\circ)|}{|V(\q^+)|}\pi^{\q^+}_{\q^\circ}.
	\]
	In particular, we obtain from \cref{eq-decomp-Prokhorov} that
	\begin{align}\label{eq:prokh-quad-4}
		d_{\mathrm P}^{(V(\q^\circ),\epsilon d_\q)}(\mu^\circ,\nu_{\q^\circ})
			=\Bigl(1-\tfrac{|V(\q^\circ)|}{|V(\q^+)|}\Bigr)\,
			d_{\mathrm P}^{(V(\q^\circ),\epsilon d_\q)}\Bigl({\pi^{\q^+}_{\q^\circ}},\nu_{\q^\circ}\Bigr).
	\end{align}
	\paragraph{Concluding the proof.}
	By the triangle inequality, we have
	\[
		d_{\mathrm{GHP}}(\epsilon\cdot\underline\q,\epsilon\cdot\underline\q^\circ)
		\leq 	d_{\mathrm{GHP}}(\epsilon\cdot\underline\q,\epsilon\cdot\underline\q^+)
		+ 	d_{\mathrm{GHP}}(\epsilon\cdot\underline\q^+,\epsilon\cdot\underline\q^\circ).
	\]
	Using \cref{eq:prokh-quad-1} and \cref{eq:prokh-quad-3} to bound the first term, and \cref{eq:prokh-quad-2} and \cref{eq:prokh-quad-4} to bound the second one, we get the claimed inequality.
	
\end{proof}

\subsubsection{Exchangeable decorations}

We aim to use Addario-Berry \& Wen's argument for \cite[Lemma 6.2]{quadrang-coeur-cv} which tells that for exchangeable attachments of mass on edges of $Q_n$, a quadrangulation with $n$ faces sampled uniformly, the resulting measure on $Q_n$ is asymptotically close to the uniform measure on vertices, in the sense of the Prokhorov distance on $n^{-1/4}\cdot Q_n$. They use the following ingredients:
\begin{enumerate}
	\item A concentration inequality \cite[Lemma 5.2]{quadrang-coeur-cv} which compares the measure with exchangeables attachments of mass on edges, to the degree-biased measure on vertices.
	\item A Prokhorov comparison \cite[Lemma 5.1]{quadrang-coeur-cv} between the degree-biased and uniform measure on vertices of a {quadrangulation}.
	\item GHP convergence of $n^{-1/4}\cdot \underline Q_n$ to the Brownian sphere.
	\item Properties of the Brownian sphere such as compacity and re-rooting invariance.
\end{enumerate}
The first ingredient is rather general and actually stated for any graph in \cite[Lemma 5.3]{quadrang-coeur-cv}. We will ever-so-slightly adapt its proof since there is a double edge in their setting which we do not have, and the mass is not projected on vertices in the exact same way.
The second ingredient is specific to quadrangulations and one may need different arguments to compare the degree-biased and uniform measures for other classes of maps.

Let us state which result we extract for our purpose from Addario-Berry \& Wen's paper. For $\mathbf n=(\mathbf n(e))_{e\in E(G)}$ a family of nonnegative numbers indexed by edges of a graph $G$, we denote its $p$-norm for $p\geq 1$ by
\[|\mathbf n|_p:=\left(\sum_{e\in E(G)}(\mathbf n(e))^p\right)^{1/p}.\]
Then define the following measure on $V(G)$:
\[
\mu^{\mathbf n}_{G}:=\frac{1}{|\mathbf n|_1}\sum_{e\in E(G)}\mathbf n(e)\bigl(\tfrac{1}{2}\delta_{e^+}+\tfrac{1}{2}\delta_{e^-}\bigr),
\]
with $\{e^+,e^-\}$ the set of endpoints of the edge $e$. Notice that this definition is slightly different from that of $\nu^{\mathbf n}_G$ in \cite[Section 5]{quadrang-coeur-cv}, because the mass of an edge is projected uniformly and independently on either of its enpoints in their case, while we deterministically split this mass on both endpoints. This does not change much except that we find it easier to work with. One of their results translates as the following.
\begin{proposition}[{\cite[Corollary 6.2]{quadrang-coeur-cv}}]\label{deco-echangeables}
	Let $Q_k=B_k^{\mathrm{quad}}$, which is a simple quadrangulation with $k$ faces, sampled uniformly. Consider for each $k\geq 1$, a random family $\mathbf n_k=(\mathbf n_k(e))_{e\in E(Q_k)}$ of nonnegative numbers, such that conditionally on $Q_k$ it is an exchangeable family. Assume that $|\mathbf n_k|_2/|\mathbf n_k|_1\rightarrow 0$ in probability as $k\rightarrow\infty$.
	Then there holds the convergence in probability
	\[
	d_{\mathrm P}^{(V(Q_k),\epsilon_k d_{Q_k})}\Bigl(\mu^{\mathbf n_k}_{B_k},\nu_{Q_k}\Bigr)\xrightarrow[k\rightarrow\infty]{\mathbb P}0,
	\]
	where $\nu_{Q_k}$ is the uniform measure on vertices of $Q_k$ and $\epsilon_k=k^{-1/4}$.
\end{proposition}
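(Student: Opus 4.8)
The plan is to deduce this from \cite[Corollary 6.2]{quadrang-coeur-cv}, which is essentially the same statement, after accounting for the two cosmetic discrepancies between our setting and theirs: in \cite{quadrang-coeur-cv} the quadrangulation carries a distinguished root face of degree $2$ (a double edge), which we do not have, and the mass $\mathbf n_k(e)$ of an edge is there spread onto a \emph{uniformly chosen} endpoint of $e$ rather than split deterministically as $\tfrac12\delta_{e^-}+\tfrac12\delta_{e^+}$. Neither changes anything of substance. Let $\nu^{\deg}_{Q_k}$ denote the degree-biased probability measure on $V(Q_k)$, that is $\nu^{\deg}_{Q_k}(\{v\})=\deg_{Q_k}(v)/(2|E(Q_k)|)$; it coincides with $\mu^{\mathbf 1}_{Q_k}$, the image of the uniform measure on $E(Q_k)$ under $e\mapsto \tfrac12\delta_{e^-}+\tfrac12\delta_{e^+}$. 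By the triangle inequality for the Prokhorov distance, it suffices to prove that both
\[
d_{\mathrm P}^{(V(Q_k),\epsilon_k d_{Q_k})}\!\left(\mu^{\mathbf n_k}_{Q_k},\nu^{\deg}_{Q_k}\right)
\qquad\text{and}\qquad
d_{\mathrm P}^{(V(Q_k),\epsilon_k d_{Q_k})}\!\left(\nu^{\deg}_{Q_k},\nu_{Q_k}\right)
\]
tend to $0$ in probability as $k\to\infty$.

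For the first quantity, I would use the concentration inequality \cite[Lemma 5.3]{quadrang-coeur-cv}, which is stated for an arbitrary finite graph equipped with a length scale and an exchangeable family of nonnegative edge-weights. Conditionally on $Q_k$ and on the multiset $\{\mathbf n_k(e):e\in E(Q_k)\}$, the assignment $e\mapsto\mathbf n_k(e)$ is a uniformly random bijection, and the crux is that for every $1$-Lipschitz function $f$ on $(V(Q_k),\epsilon_k d_{Q_k})$ the integral $\int f\,\mathrm d\mu^{\mathbf n_k}_{Q_k}$ concentrates around its conditional mean $\int f\,\mathrm d\nu^{\deg}_{Q_k}$, with fluctuations of order $\diam(\epsilon_k\cdot Q_k)\,|\mathbf n_k|_2/|\mathbf n_k|_1$, via a bounded-differences and second-moment argument. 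The double edge of \cite{quadrang-coeur-cv} enters their proof only through harmless bookkeeping and can be dropped, and replacing a uniformly chosen endpoint by the deterministic half-half split changes the variance estimates by at most a bounded factor; re-running their proof with these adjustments is the one step I expect to be genuinely delicate, although it is routine. Granting it, the conclusion follows from the hypothesis $|\mathbf n_k|_2/|\mathbf n_k|_1\to 0$ in probability together with the tightness of $\diam(\epsilon_k\cdot Q_k)=k^{-1/4}\diam(Q_k)$, which holds because $k^{-1/4}\cdot Q_k$ converges in distribution by \cref{scaling-lim-quad}.

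For the second quantity, I would invoke the comparison \cite[Lemma 5.1]{quadrang-coeur-cv}, specific to quadrangulations, which asserts precisely that the degree-biased and uniform measures on the vertices of a uniform quadrangulation with $k$ faces become indistinguishable in the Prokhorov sense at scale $k^{-1/4}$; its proof relies on the GHP convergence of $k^{-1/4}\cdot\underline Q_k$ to the Brownian sphere together with the latter's compactness and its invariance under uniform re-rooting. Since $Q_k=B_k^{\mathrm{quad}}$ is a uniform \emph{simple} quadrangulation with $k$ faces, \cref{add-alb} supplies exactly this GHP convergence, so the argument applies without modification and yields $d_{\mathrm P}^{(V(Q_k),\epsilon_k d_{Q_k})}(\nu^{\deg}_{Q_k},\nu_{Q_k})\to 0$ in probability. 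Combining the two bounds via the triangle inequality proves \cref{deco-echangeables}.
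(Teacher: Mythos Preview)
Your proposal is correct and follows essentially the same route as the paper: split via the triangle inequality through the degree-biased measure, handle the first term by adapting the concentration inequality \cite[Lemma 5.3]{quadrang-coeur-cv} to account for the absent double edge and the deterministic half-half projection, and handle the second term via \cite[Lemma 5.1]{quadrang-coeur-cv} together with the GHP convergence of simple quadrangulations from \cref{add-alb}. The paper's own proof is terser---it simply says the argument of \cite[Corollary 6.2]{quadrang-coeur-cv} goes through \textit{mutatis mutandis} and then records the adapted concentration lemma (their \cref{lemme-concentration-deco-echangeable}, which bounds $|\mu_G^{\mathbf n}(V)-\nu_G^{\deg}(V)|$ for each subset $V$ via a Hoeffding-type inequality for exchangeable vectors)---whereas you phrase the concentration in terms of Lipschitz test functions; both formulations lead to the same Prokhorov control.
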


	This is the statement of \cite[Corollary 6.2]{quadrang-coeur-cv}, adapted to our setting. The proof goes \textit{mutatis mutandi}, except for an adjustement in the concentration inequality \cite[Lemma 5.3]{quadrang-coeur-cv}, which we adapt below in \cref{lemme-concentration-deco-echangeable}.

\begin{lemma}[{\cite[Lemma 5.3]{quadrang-coeur-cv}}]\label{lemme-concentration-deco-echangeable}
	Let $G$ be a graph and $\mathbf n=(\mathbf n(e))_{e\in G}$ a random and exchangeable family of nonnegative numbers with $|\mathbf n|_2>0$ almost surely. Then for any $V\subset V(G)$, and any $t>0$,
	\[
	\mathbb P\left(\Bigl|\mu_G^{\mathbf n}(V)-\nu_G(V)\Bigr|>\frac{2t}{|\mathbf n|_1}\biggm||\mathbf n|_2\right)\leq 2\exp\left(-\frac{2t^2}{|\mathbf n|^2_2}\right).
	\]
\end{lemma}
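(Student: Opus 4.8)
The plan is to condition on the unordered collection of values $\{\mathbf n(e):e\in E(G)\}$ and thereby reduce the statement to a sub-Gaussian tail bound for a weighted sum indexed by a uniformly random permutation. Fix an enumeration $e_1,\dots,e_m$ of $E(G)$, with $m=|E(G)|$, and set $g_i=\tfrac12\indic{e_i^+\in V}+\tfrac12\indic{e_i^-\in V}\in[0,1]$, so that $\mu_G^{\mathbf n}(V)=|\mathbf n|_1^{-1}\sum_{i=1}^m \mathbf n(e_i)g_i$, and $|\mathbf n|_1>0$ almost surely since $|\mathbf n|_2>0$. By exchangeability, conditionally on the collection $\{\mathbf n(e)\}_{e\in E(G)}$ — a $\sigma$-field with respect to which $|\mathbf n|_1$ and $|\mathbf n|_2$ are measurable — the vector $\bigl(\mathbf n(e_1),\dots,\mathbf n(e_m)\bigr)$ is a uniformly random ordering of fixed nonnegative reals $a_1,\dots,a_m$ with $\sum_i a_i=|\mathbf n|_1$ and $\sum_i a_i^2=|\mathbf n|_2^2$. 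Hence, given this collection, $|\mathbf n|_1\,\mu_G^{\mathbf n}(V)$ has the law of $S_\sigma:=\sum_{i=1}^m a_{\sigma(i)}\,g_i$ with $\sigma$ uniform on the symmetric group, and its conditional mean is $\bigl(\tfrac1m\sum_i a_i\bigr)\sum_i g_i=|\mathbf n|_1\,\overline g$ where $\overline g:=\tfrac1m\sum_i g_i=\tfrac{1}{2|E(G)|}\sum_{v\in V}\deg_G(v)$ is deterministic; this is precisely $\nu_G(V)$ in the notation of the statement (equivalently, $\nu_G(V)=\mathbb E[\mu_G^{\mathbf n}(V)\mid|\mathbf n|_1]$, which is $\mathbf n$-independent by exchangeability). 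It therefore suffices to prove that, conditionally on the collection $\{\mathbf n(e)\}_e$,
\[
\mathbb P\bigl(|S_\sigma-|\mathbf n|_1\overline g|>s\bigr)\le 2\exp\bigl(-2s^2/|\mathbf n|_2^2\bigr)\qquad\text{for every }s>0,
\]
since taking $s=2t$, dividing the event through by the positive constant $|\mathbf n|_1$, and averaging over $\{\mathbf n(e)\}_e$ given $|\mathbf n|_2$ — on which this bound depends only through $|\mathbf n|_2$, itself a function of the collection — then yields the claimed inequality, in fact with a better constant in the exponent than stated.

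The crux is to obtain this conditional tail bound with the \emph{sharp} variance proxy $\tfrac14\sum_i a_i^2=|\mathbf n|_2^2/4$, rather than the much larger proxy $\tfrac14\,m\max_i a_i^2$ that a naive bounded-differences estimate over transpositions of $\sigma$ would give. I would deduce it from the classical convexity comparison between sampling without and with replacement, with deterministic nonnegative weights: for every convex $\phi$,
\[
\mathbb E\bigl[\phi(S_\sigma)\bigr]\le\mathbb E\Bigl[\phi\bigl(\textstyle\sum_{i=1}^m a_i G_i\bigr)\Bigr],
\]
where $G_1,\dots,G_m$ are i.i.d.\ uniform draws from the multiset $\{g_j\}_{1\le j\le m}$, so that $\mathbb E[G_i]=\overline g$. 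Applying this with $\phi(x)=e^{\pm\lambda(x-|\mathbf n|_1\overline g)}$ and then Hoeffding's lemma to each independent, $[0,a_i]$-valued summand $a_iG_i$ gives $\mathbb E[e^{\pm\lambda(S_\sigma-|\mathbf n|_1\overline g)}]\le\prod_{i=1}^m e^{\lambda^2a_i^2/8}=e^{\lambda^2|\mathbf n|_2^2/8}$, so that $S_\sigma-|\mathbf n|_1\overline g$ is conditionally sub-Gaussian with proxy $|\mathbf n|_2^2/4$, and the two-sided tail bound follows from Chernoff's inequality. Alternatively, one can run the Doob martingale $M_j=\mathbb E[S_\sigma\mid\mathbf n(e_1),\dots,\mathbf n(e_j)]$, whose increments satisfy the deterministic bound $\sum_j(M_j-M_{j-1})^2\le|\mathbf n|_2^2$ by a telescoping ANOVA-type identity, and feed this into a standard martingale tail estimate.

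The only genuine difficulty is this sharp conditional concentration step; the conditioning on the empirical collection, the identification of the conditional mean via exchangeability, and the descent to conditioning on $|\mathbf n|_2$ are all routine. Finally, the present ``split the mass on both endpoints'' convention for $\mu_G^{\mathbf n}$ makes the adaptation from \cite[Lemma 5.3]{quadrang-coeur-cv} painless: here the weights $g_i$ are deterministic elements of $[0,1]$, whereas under the convention of \cite{quadrang-coeur-cv} each $g_i$ would instead be an independent $[0,1]$-valued variable of mean $g_i$ (the indicator that a uniformly chosen endpoint of $e_i$ lies in $V$); conditioning additionally on those variables reduces to the case treated here, while conditioning on $\sigma$ turns $S_\sigma$ into a sum of independent bounded variables, and in both cases the relevant variance proxy remains $O(|\mathbf n|_2^2)$.
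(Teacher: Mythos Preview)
Your proposal is correct and follows essentially the same route as the paper: identify $|\mathbf n|_1\,\mu_G^{\mathbf n}(V)$ as a linear statistic of an exchangeable vector, compute its conditional mean as $|\mathbf n|_1\,\nu_G(V)$ (with $\nu_G$ the degree-biased measure, exactly as in the paper's computation via $E(G[V])$ and $\partial_e V$), and then apply a Hoeffding-type bound for exchangeable vectors. The paper simply defers the concentration step to \cite[Lemma 5.3]{quadrang-coeur-cv}, whereas you spell it out via the without-/with-replacement convexity comparison (equivalently, negative association of permutation coordinates) together with Hoeffding's lemma, which is precisely what underlies that cited result; your observation that one actually obtains the sharper exponent $-8t^2/|\mathbf n|_2^2$ is also correct.
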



	The proof goes the same way as that of \cite[Lemma 5.3]{quadrang-coeur-cv}, except that we do not have a double edge here, and the mass on edges is projected deterministically on vertices in our case, instead of randomly. The reader may notice that there is an extra term inside the probability in their lemma. This term accounts for the double edge, which we do not have here. The same line of arguments still works though. Indeed, we have
	\[
	\mu_G^{\mathbf n}(V)=\sum_{e\in E(G[V])}\frac{\mathbf n(e)}{|\mathbf n|_1}+\frac{1}{2}\sum_{e\in \partial_e V}\frac{\mathbf n(e)}{|\mathbf n|_1},
	\]
	with $G[V]$ the induced-graph on $V$ by $G$, and $\partial_e V$ the subset of the edges of $V$ who have only one endpoint which belongs to $V$. By exchangeability, we have the expectation
	\[
	\mathbb E\left[\sum_{e\in E(G[V])}{\mathbf n(e)}+\frac{1}{2}\sum_{e\in \partial_e V}{\mathbf n(e)}\Bigm||\mathbf n|_1\right]=|\mathbf n|_1\frac{|E(G[V])|}{|E(G)|}+|\mathbf n|_1\frac{\frac12|\partial_e V|}{|E(G)|}=|\mathbf n|_1 \nu_G(V).
	\]
	The last equality holds because the degree biased-measure counts twice each edge of $G[V]$, since this edge appears in the degree of both its endpoints, while the edges of $\partial_e V$ are only counted once, in the degree on the only one of its endpoints which is in $V$.
	
	Then one concludes as in the proof of \cite[Lemma 5.3]{quadrang-coeur-cv}, by a Hoeffding-type bound for exchangeable vectors.

\subsubsection{Proof of Theorem \ref{scaling-limit-subcritical}}

	\paragraph{Scaling limit of the biggest block.}
	By \cref{prop-t-m-n} and \cref{bij-quad-maps}, the biggest block of $\Q_{n,u}$, whose size we denote $C(n,u)$, is a simple quadrangulation sampled uniformly with size $C(n,u)$. Also by \cref{souscritique}, this size is asymptotically in probability,
	\[
	C(n,u)=(1-E(u))n+O_{\mathbb P}(n^{2/3})=\frac{9-5u}{3(3+u)}n+O_{\mathbb P}(n^{2/3}).
	\]
	By conditioning on $C(n,u)$ and using \cref{scaling-lim-quad}, we therefore get the following GHP scaling limit for the biggest block
	\[
	\(\frac{3}{8C(n,u)}\)^{1/4}\cdot \underline\Q_{n,u}^\circ
	\xrightarrow[n\rightarrow\infty]{\quad(d), \mathrm{GHP}\quad} {\mathcal S},
	\]
	which by the preceding equivalent in probability for $C(n,u)$ reduces to
	\[
	\(\frac{9(3+u)}{8(9-5u)}\)^{1/4}n^{-1/4}\cdot \underline\Q_{n,u}^\circ
	\xrightarrow[n\rightarrow\infty]{\quad(d), \mathrm{GHP}\quad} {\mathcal S}.
	\]
	
	\paragraph{GHP comparison of $\Q_{n,u}$ with its biggest block.}
	By the preceding scaling limit, and the use of \cref{lemma-comp-GHP-core} with $\q=\Q_{n,u}$ and $\epsilon=n^{-1/4}$, the proof of the theorem reduces to showing the convergence to $0$ in probability of the following quantities
	\begin{align*}
		r_{\mathrm{GH}}
			&:=\frac{2}{n^{1/4}} H(\T_{n,u})\, \max_{v\neq v^\circ}\diam(\b_v^{\Q_{n,u}})	\\
		r_{\mathrm{P}}
			&:=\frac{2\bigl|V(\Q_{n,u})\setminus V(\Q_{n,u}^+)\bigr|}{|V(\Q_{n,u})|}	\\
		d_P
			&:=d_{\mathrm P}^{(V(\Q_{n,u}^\circ),\epsilon_n d_{\Q_{n,u}})}\Bigl({\pi^{\Q_{n,u}^+}_{\Q_{n,u}^\circ}},
			\nu_{\Q_{n,u}^\circ}\Bigr),
	\end{align*}
	where $\epsilon_n=n^{-1/4}$.
	
	\paragraph{Bounding $r_{\mathrm{GH}}$.}
	By \cref{souscritique}, the second-biggest block of $\Q_{n,u}$ has size $O(n^{2/3})$ in probability. Combining this with \cref{diam-blocs}, one gets for all $\delta>0$ the bound in probability
	\[
	\max_{v\neq v^\circ}\diam(\b_v^{\Q_{n,u}})=o\(n^{(1+\delta)/6}\).
	\]
	Also, by \cref{tree-critical}, $\T_{n,u}$ is a \textit{non-generic subcritical} Galton-Watson tree conditioned to have $2n+1$ vertices, in the terminology of \cite{kortchemskiLimitTheoremsConditioned2015}. We may therefore use \cite[Theorem 4]{kortchemskiLimitTheoremsConditioned2015} to get for all $\delta>0$ the bound in probability
	\[
	H(\T_{n,u})=o\(n^\delta\).
	\]
	Combining the two preceding estimates, we get in probability
	\[
	r_{\mathrm{GH}}=o\(n^{-\tfrac14+\delta+\tfrac{(1+\delta)}6}\)\xrightarrow[n\rightarrow\infty]{}0,
	\]
	provided that we chose $\delta>0$ small enough so that $\delta+{(1+\delta)}/6<1/4$.
	
	\paragraph{Bounding $r_{\mathrm P}$.}
	First, notice that since $\Q_{n,u}$ is a quadrangulation we have
	\[
	|V(\Q_{n,u})|=|E(\Q_{n,u})|=2n,
	\]
	and by the block-tree decomposition which puts in correspondence edges of $\Q_{n,u}$ and edges of $\T_{n,u}$, we also have
	\[
	\bigl|V(\Q_{n,u})\setminus V(\Q_{n,u}^+)\bigr|=\bigl|E(\Q_{n,u})\bigr|- \bigl|E(\Q_{n,u}^+)\bigr|= \bigl|E(\T_{n,u})\bigr|- \bigl|E(\T_{n,u}^+)\bigr|=\bigl|E(\T_{n,u}\setminus\T_{n,u}^+)\bigr|.
	\]
	Therefore we have to bound the size of the subtree $\T_{n,u}\setminus\T_{n,u}^+$. A moment of thought shows that it is bounded by
	\[
	U_{\rightarrow}(\T_{n,u})+U_{\leftarrow}(\T_{n,u}),
	\]
	where $U_{\rightarrow}(\t)$ is the index in lexicographical order of the vertex with largest degree of the tree $\t$, and $U_{\leftarrow}(\t)$ is the index in reverse lexicographical order of that same vertex. Now, \cite[Theorem 2]{kortchemskiLimitTheoremsConditioned2015} shows that $(U_{\rightarrow}(\T_{n,u}))_{n\geq 1}$ is a tight sequence. Since $U_{\leftarrow}(\T_{n,u})$ has the same law as $U_{\rightarrow}(\T_{n,u})$, the respective sequence is also tight. All in all, we get in probability
	\[
	r_{\mathbf P}=O(1/n)\xrightarrow[n\rightarrow\infty]{}0.
	\]
	
	\paragraph{Bounding $d_{\mathrm P}$.}
	Notice that
	\begin{align*}
		d_P
		=d_{\mathrm P}^{(V(\Q_{n,u}^\circ),\epsilon_n d_{\Q_{n,u}})}\Bigl({\pi^{\Q_{n,u}^+}_{\Q_{n,u}^\circ}},
		\nu_{\Q_{n,u}^\circ}\Bigr)
		=
		d_{\mathrm P}^{(V(\Q_{n,u}^\circ),\epsilon_n d_{\Q_{n,u}})}\Bigl(\mu^{\mathbf n}_{\Q_{n,u}},\nu_{\Q_{n,u}^\circ}\Bigr),
	\end{align*}
	where $\mathbf n=\mathbf n_{n,u}$ is the family of nonnegative numbers defined by
	\[
	\forall e\in E(\Q_{n,u}^\circ),\quad\mathbf n(e)=|V(\Q_{n,u}[e])|-2.
	\]
	Let us argue that conditionally on $\Q_{n,u}^\circ$, this family $\mathbf n$ is exchangeable. Recall that $\Q_{n,u}$ has the law of $\Q$ under $\mathbb P_u$, conditioned to the event $\{|\Q|=n\}$. By the symmetries of the Galton-Watson law and \cref{th-trees}, the family
	\[
	\left(|V(\Q[e])|-2\right)_{e\in E(\Q^\circ)}
	=\left(|E(\T[v_e])|-2\right)_{e\in E(\Q^\circ)}
	\]
	is i.i.d. conditionally on $\Q^\circ$, where $v_e$ is the child of $v^\circ$ that the block-tree decomposition associates to $e$. In particular, this family is exchangeable. Since the event $\{|\Q|=n\}$ is invariant by each permutation of the subtrees attached to the node $v^\circ$ with their respective blocks, the above family stays exchangeable when conditioning by this event. Therefore $\mathbf n$ is indeed exchangeable.
	
	Now, \cite[Corollary 1]{kortchemskiLimitTheoremsConditioned2015} tells that the subtrees $(\T[v_e])_{e\in \Q_{n,u}^\circ}$ have size $O(n^{2/3})$ in probability, uniformly in the edge $e$. We thus get that
		\[
		|\mathbf n|_2=O \(\sqrt{n^{5/3}}\).
		\]
	On the other hand, we have in probability
	\[
	|\mathbf n|_1=|V(\Q_{n,u})\setminus V(\Q_{n,u}^+)|\sim cn,
	\]
	for some constant $c>0$.
	Hence, in probability
	\[
	\frac{|\mathbf n|_2}{|\mathbf n|_1}=O\(n^{-1/6}\)\xrightarrow[n\rightarrow\infty]{}0.
	\]
	All the hypotheses of \cref{deco-echangeables} have been checked, so that we may apply it, after conditioning by the size of $\Q_{n,u}^\circ$, since conditionally on its size $k$ it is a uniform simple quadrangulation of size $k$. We obtain in probability
	\[
	d_{\mathrm P}^{(V(\Q_{n,u}^\circ),\epsilon_n d_{\Q_{n,u}})}\Bigl(\mu^{\mathbf n}_{\Q_{n,u}},\nu_{\Q_{n,u}^\circ}\Bigr)
	\xrightarrow[n\rightarrow\infty]{}0.
	\]
	Hence, $d_{\mathrm P}$ also tends to $0$ in probability and this concludes the proof.
	\qed


\section{Concluding remarks and perspectives}
\label{conclusion}

We have exhibited a phase transition phenomenon for two closely related models of random maps with a weight $u>0$ per block. The phase transition occurs at $u=9/5$, and we have established the existence of three regimes, regarding the size of the largest block, and regarding the scaling limit (and the order of magnitude of distances).

\paragraph{Extension to other models.} Our method can be generalised to other models which can be decomposed into appropriate blocks with an underlying tree structure, for example the models described in \cite[Table 3]{airy}, which is partially reproduced in \cref{repro-airy-table-3}.
 A \emph{triangulation} is a map where all faces have degree $3$. It is \emph{irreducible} if every $3$-cycle defines a face. In this section, we use the same notation for the various models as in the rest of the article.

Models described in \cite[Table 3]{airy} where maps are decomposed into blocks weighted with a weight $u>0$ undergo a phase transition at the critical value $u_C$ written down in \cref{tab:values-models-airy}. More precisely, \crefrange{tree-critical}{size-bloc-critical} hold for these models with the constants of \cref{tab:values-models-airy}. Notice that for the decomposition of general maps into $2$-connected maps (\emph{i.e.} the schema linking $\mathcal{M}_1$ and $\mathcal{M}_4$)~---~which is the case studied in this paper~---~we get results consistent with \cref{tree-critical}. Moreover, the values of $u_C$ and $E(u)$ are consistent since it always holds that $E(u_C) = 1$. Furthermore, for $u=1$, we retrieve the results of \cite[Table 4]{airy}: indeed, our $1-E(1)$ equals their $\alpha_0$\footnote{It is not obvious at first glance that this should be the case for the case of simple triangulations decomposed into irreducible cores, because each node of the Galton-Watson tree corresponds to a sequence of blocks. However, an extreme condensation phenomenon occurs and the mass is concentrated in only one element of the sequence, so the behaviour remains similar.}.

Models from \cite[Table 3]{airy} are amenable to computations similar to this article's in order to get the values above. We show in \cref{tab:values-models-airy} the most obvious results and models requiring more care will be described in a separate note. In the cases of \cref{repro-airy-table-3}, there is $d\in\Z_{>0}$ such that $H(z) = z(1+M)^d$, and the corresponding law $\mu^u$ (except for triangulations) comes naturally as:
\[\mu^u(dm) =\frac{\mathbb{1}_{m\ne0} u b_m y(u)^m + \mathbb{1}_{m=0}}{u B(y(u)) + 1 - u},\qquad \mu^u(m) = 0\quad\text{when}\quad d \nmid m.\]
The cases dealing with triangulations require more care as the series are counted by vertices but the substitution is done on edges in one case, and on internal faces in the other; but keeping this in mind, the same methods can be applied.

For all models, we expect to get similar regimes as in \cref{results} (assuming the convergence of the family of blocks is known, as well as diameter estimates). However, conditioning is more difficult for some models, as the size of the map is not always immediately deduced from the size of the Galton Watson tree (\emph{e.g.} for simple triangulations ($\mathcal{T}_2$) decomposed into irreducible triangulations ($\mathcal{T}_3$), the size is the number of leaves of the Galton-Watson tree).

\begin{table}
\begin{center}
\begin{tabular}{llc}
maps, $M(z)$ & cores, $C(z)$ & submaps, $H(z)$\\
\hline
loopless, $M_2(z)$ & simple, $M_3(z)$ & $z(1+M)$ \\
all, $M_1(z)$ & $2$-connected, $M_4(z)$ & $z(1+M)^2$\\
$2$-connected $M_4(z)-z$ & $2$-connected simple, $M_5(z)$ & $z(1+M)$\\
\hline
bipartite, $B_1(z)$ & bipartite simple, $B_2(z)$ & $z(1+M)$\\
bipartite, $B_1(z)$ & bipartite $2$-connected, $B_4(z)$ & $z(1+M)^2$\\
bipartite $2$-connected, $B_4(z)$ & bipartite $2$-connected simple $B_5(z)$ & $z(1+M)$\\
\hline
loopless triangulations, $T_1(z)$ & simple triangulations, $z+zT_2(z)$ & $z(1+M)^3$\\
simple triangulations, $T_2(z)$ & irreducible triangulations, $T_3(z)$ & $z(1+M)^2$\\
\end{tabular}
\caption{Partial reproduction of \cite[Table 3]{airy}, which describes composition schemas of the form $\mathcal{M} = \mathcal{C}\circ\mathcal{H}$ except the last one where $\mathcal{M} = (1+\mathcal{M})\times \mathcal{C}\circ\mathcal{H}$. The parameter $z$ counts vertices (up to a fixed shift) in the case of triangulations, edges otherwise. Some terms have been changed to correspond to the conventions used in this article.}
\label{repro-airy-table-3}
\end{center}
\end{table}

\begin{table}
\begin{center}
\def\arraystretch{1.5}
\begin{tabular}{cc|ccc}
Maps & Cores & $u_C$ & $E(u)$ & $1-E(1)$\\
\hline
$\mathcal{M}_2$ & $\mathcal{M}_3$ & $\frac{81}{17}$ & $\frac{32 u}{3 (5 u+27)}$ & $\frac{2}{3}$\\
$\mathcal{M}_1$ & $\mathcal{M}_4$ & $\frac{9}{5}$ & $\frac{8u}{3(u + 3)}$ & $\frac{1}{3}$\\
$\mathcal{M}_4 - \mathcal{Z}$ & $\mathcal{M}_5$ & $\frac{135}{7}$ & $\frac{32u}{5(5u + 27)}$ & $\frac{4}{5}$\\
\hline
$\mathcal{B}_1$ & $\mathcal{B}_2$ & $\frac{36}{11}$ & $\frac{20u}{9(u + 4)}$ & $\frac{5}{9}$ \\
$\mathcal{B}_1$ & $\mathcal{B}_4$ & $\frac{52}{27}$ & $\frac{40u}{13(u + 4)}$ & $\frac{5}{13}$ \\
$\mathcal{B}_4$ & $\mathcal{B}_5$ & $\frac{68}{3}$ & $\frac{20u}{17(u + 4)}$ & $\frac{13}{17}$\\
\hline
$\mathcal{T}_1$ & $\mathcal{Z}+\mathcal{Z}\times\mathcal{T}_2$ & $\frac{16}{7}$ & $\frac{9u}{2(u + 8)}$ & $\frac{1}{2}$\\
$\mathcal{T}_2$ & $\mathcal{T}_3$ & $\frac{64}{37}$ & $\frac{27u}{2(32 - 5u)}$ & $\frac{1}{2}$\\
\end{tabular}
\caption{Values of $u_C$, $E(u)$ when $u\leq u_C$ and $1-E(1)$ for all the decomposition schemes of \cref{repro-airy-table-3}.}
\label{tab:values-models-airy}
\end{center}
\end{table}

\paragraph{Perspectives.} We plan to study similar models in the context of decorated planar maps (\emph{e.g.} tree-rooted maps or Schnyder woods), where the generating series exhibit different singular behaviours. In future work, we also want to investigate more closely the rate of the phase transition at the critical value $u=9/5$, in analogy to the study of the largest component for the Erd\"os-Rényi random graph \cite{erdos-renyi-transition}.

Finally, as mentioned in the introduction, the model of maps with a weight $u$ per 2-connected block has been studied as encoding certain discrete spaces of dimension larger than 2, with motivations from theoretical physics \cite{bonzomLagrange,lionni-phd}. The metric properties are however modified via the correspondence, and it would be interesting to determine if the scaling limits remain the same.


\begin{figure}
\begin{center}
\includegraphics[width=16.5cm, center]{images/Pu_Quad_u1_55887.png}
\end{center}
\caption{Map drawn according to the subcritical model $\mathbb{P}_{n,1}$ of size around $55\ 000$.}
\label{fig:simul-sub-1}
\end{figure}

\begin{figure}
\begin{center}
\includegraphics[width=16.5cm, center]{images/Pu_Quad_u1.6_53945.png}
\end{center}
\caption{Map drawn according to the subcritical model $\mathbb{P}_{n,8/5}$ of size around $55\ 000$.}
\label{fig:simul-sub-8/5}
\end{figure}

\begin{figure}
\begin{center}
\includegraphics[width=16.5cm, center]{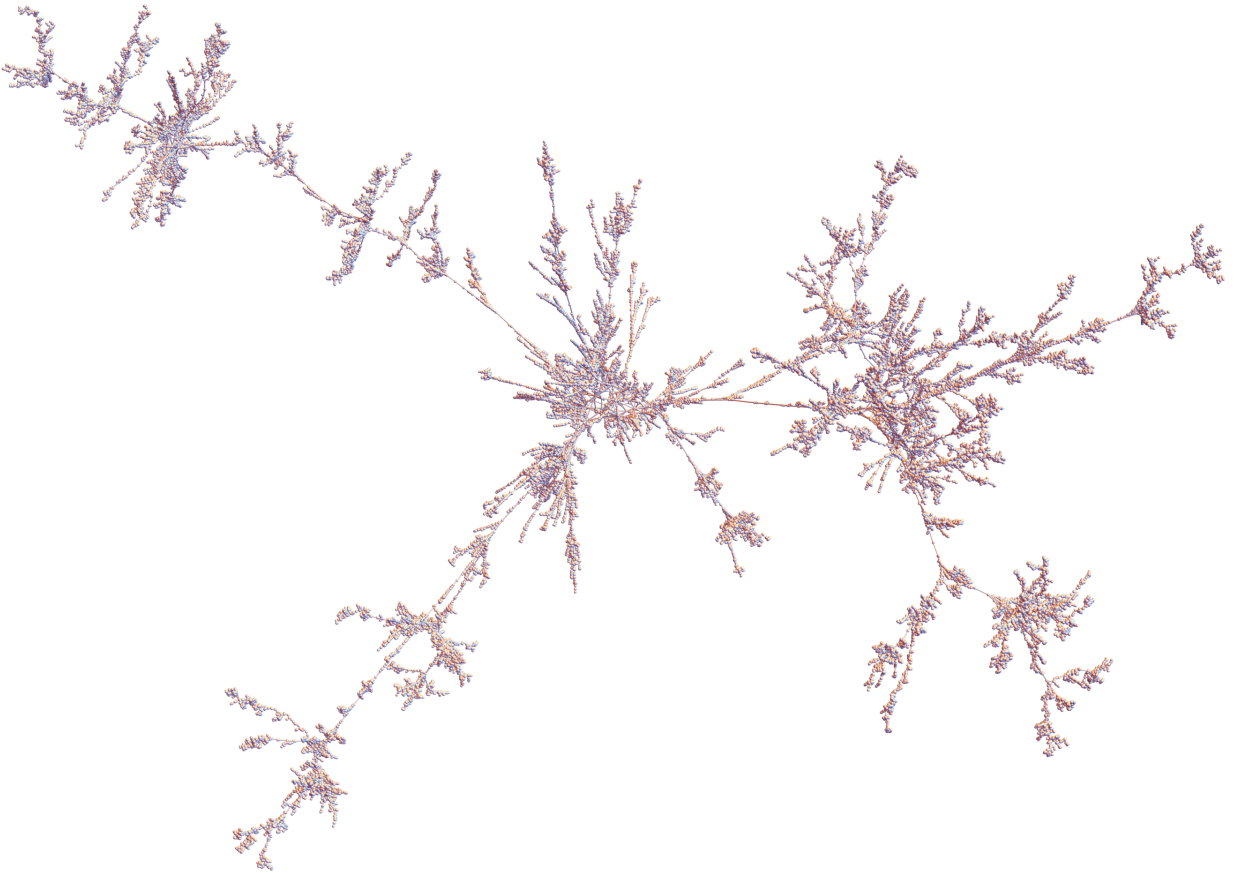}
\end{center}
\caption{Map drawn according to the critical model $\mathbb{P}_{n,9/5}$ of size around $80\ 000$.}
\label{fig:simul-crit-9/5}
\end{figure}

\begin{figure}
\begin{center}
\includegraphics[width=16.5cm, center]{images/Pu_Quad_u2.5_75123.png}
\end{center}
\caption{Map drawn according to the supercritical model $\mathbb{P}_{n,5/2}$ of size around $75\ 000$.}
\label{fig:simul-super-5/2}
\end{figure}

\begin{figure}
\begin{center}
\includegraphics[width=16.5cm, center]{images/Pu_Quad_u5_52923.png}
\end{center}
\caption{Map drawn according to the supercritical model $\mathbb{P}_{n,5}$ of size around $50\ 000$.}
\label{fig:simul-super-5}
\end{figure}

\pagebreak
\bibliographystyle{alpha}
\bibliography{references}

\end{document}